\theoremstyle{plain}
\newtheorem{thm}{Theorem}[chapter]
\newtheorem{example}[thm]{Example}
\newtheorem{cor}[thm]{Corollary} 
\newtheorem{lemma}[thm]{Lemma} 
\newtheorem{prop}[thm]{Proposition}
\numberwithin{section}{chapter}
\numberwithin{equation}{chapter}
\date{\today}
\theoremstyle{plain}
\newtheorem{remark}[thm]{Remark}
\theoremstyle{plain}
\newtheorem{defi}[thm]{Definition}
\newtheorem{definition}[thm]{Definition}
\newtheorem{corollary}[thm]{Corollary}
\newtheorem{proposition}[thm]{Proposition}
\newtheorem{notation}[thm]{Notation}
\def\timeHHMM{{\number\theHour:\number\theMinuteTens\number\theMinute}}
\def\today{{\number\day\space
 \ifcase\month\or
  January\or February\or March\or April\or May\or June\or
  July\or August\or September\or October\or November\or December\fi
 \space\number\year}}
\def\timeanddate{{\timeHHMM\space o'clock, \today}}
\newcommand{\beq}{\begin{equation}}
\newcommand{\eeq}{\end{equation}}
\newcommand{\beqn}{\begin{equation*}}
\newcommand{\eeqn}{\end{equation*}}
\newcommand{\bre}{\begin{remark}}
\newcommand{\ere}{\end{remark}}
\newcommand{\beqno}[1]{\begin{equation}\label{#1}}
\newcommand{\mref}[1]{(\ref{#1})}
\newcommand\X{\mathcal{X}}
\newcommand\R{{\mathbb{R}}}
\newcommand\T{{\mathbb{T}}}
\newcommand\bZ{\mathbb Z}
\newcommand\N{\mathbb N}
\newcommand\cV{\mathcal V}
\newcommand\cF{\mathcal F}
\newcommand\Z{\mathbb Z}
\newcommand*\di{\mathop{}\!\mathrm{d}}
\DeclareMathOperator{\sign}{sign}
\DeclareMathOperator{\supp}{supp}
\DeclareMathOperator{\dist}{dist}
\DeclareMathOperator{\Hess}{Hess}
\DeclareMathOperator{\Dom}{Dom}
\DeclareMathOperator{\Ker}{Ker}
\DeclareMathOperator{\Image}{Im}
\DeclareMathOperator{\Domp}{Dom\cprime}
\DeclareMathOperator{\Index}{ind}
\DeclareMathOperator{\codim}{codim}
\DeclareMathOperator{\Span}{span}
\DeclareMathOperator{\vol}{vol}
\newcommand{\floor}[1]{\lfloor #1 \rfloor}
\newenvironment{prsubexp-solvability}{\paragraph{\textit{Proof of Corollary \ref{subexp-solvability}}}}{\hfill$\square$}
\begin{document}
\frontmatter

\title[Liouville-Riemann-Roch]
{Liouville-Riemann-Roch theorems on abelian coverings}

\author[Kha]{Minh Kha$^{*}$}
\address{M.K., Department of Mathematics, University of Arizona,
Tucson, AZ 85721-0089, USA}
\email{minhkha@math.arizona.edu}
\thanks{The work of both authors was partially supported by the NSF grant DMS-1517938. M. Kha was also partially supported by the AMS-Simons Travel Grant.}

\author[Kuchment]{Peter Kuchment$^{*}$}
\address{P.K., Department of Mathematics, Texas A\&M University,
College Station, TX 77843-3368, USA}
\email{kuchment@math.tamu.edu}


\date{\timeanddate}

\subjclass[2010]{Primary 35B53, 35P99, 58J05, Secondary: 35J99, 35Q40, 19L10}

\keywords{Liouville theorem, Riemann-Roch theorem, elliptic operator, periodic}

\dedicatory{
\begin{center}
Dedicated to the memory of dear friends and wonderful mathematicians\\ Carlos Berenstein and Misha Boshernitzan\
\end{center}
}
\maketitle

\tableofcontents

\begin{abstract}
The classical Riemann-Roch theorem has been extended by N. Nadirashvili and then M. Gromov and M. Shubin to computing indices of elliptic operators on compact (as well as non-compact) manifolds, when a divisor mandates a finite number of zeros and allows a finite number of poles of solutions.

On the other hand, Liouville type theorems count the number of solutions that are allowed to have a ``pole at infinity.'' Usually these theorems do not provide the exact dimensions of the spaces of such solutions (only finite-dimensionality, possibly with estimates or asymptotics of the dimension, see e.g. \cite{ColdingMin,ColdingMin_excursion,ColdMinICM,Yau87,LiBook,Yau93}). An important case has been discovered by M. Avellaneda and F. H. Lin and advanced further by J. Moser and M. Struwe. It pertains periodic elliptic operators of divergent type, where, surprisingly, exact dimensions can be computed. This study has been extended by P. Li and Wang and brought to its natural limit for the case of periodic elliptic operators on co-compact abelian coverings by P. Kuchment and Pinchover.

Comparison of the results and techniques of Nadirashvili and Gromov and Shubin with those of Kuchment and Pinchover shows significant similarities, as well as appearance of the same combinatorial expressions in the answers. Thus a natural idea was considered that possibly the results could be combined somehow in the case of co-compact abelian coverings, if the infinity is ``added to the divisor.''

This work shows that such results indeed can be obtained, although they come out more intricate than a simple-minded expectation would suggest. Namely, the interaction between the finite divisor and the point at infinity is non-trivial.

\end{abstract}

\chapter{Introduction}\label{C:intro}
Counting the number (i.e., dimension of the space), or even confirming existence of solutions of an elliptic equation on a compact manifold (or in a bounded domain in $\R^n$) is usually a rather impossible task, unstable with respect to small variations of parameters. On the other hand, the Fredholm index of the corresponding operator, as it was conjectured by I.~Gel'fand \cite{Gelfand_index} and proven by M.~Atiyah and I.~Singer \cite{AtSi1,AtSi3,AtSi4,AtSi5,AtSi_indexBAMS}, can be computed in topological terms. In particular, if the index of an operator $L$ happens to be positive, this implies non-triviality of its kernel.

One might also be interested in index formulas in the case when the solutions are allowed to have some prescribed poles and have to have some mandatory zeros. Probably the first result of this kind was the century older classical Riemann-Roch theorem \cite{Riemann,Roch}, which in an appropriate formulation provides the index of the $\overline{\partial}$-operator on a compact Riemannian surface, when a devisor of zeros and poles is provided. This result was extended by N.~Nadirashvili \cite{nadirashvili} to the Laplace-Beltrami operator on a complete compact\footnote{A version for non-compact manifolds with ``appropriate'' conditions at infinity was also given.} Riemannian manifold with a prescribed divisor. This result has been generalized by M.~Gromov and M.~Shubin \cite{GSadv,GSinv} to computing indices of elliptic operators in vector bundles over compact (as well as non-compact) manifolds, when a divisor mandates a finite number of zeros and allows a finite number of poles of solutions.

On the other hand, Liouville type theorems count the number of solutions that are allowed to have a ``pole at infinity.'' Solution of an S.-T.~Yau's problem \cite{Yau87,Yau93}, given by C.~Colding and W.Minicozzi II \cite{ColdingMin,ColdingMin_excursion} shows that on a Riemannian manifold of non-negative Ricci curvature, the spaces of harmonic functions of fixed polynomial growths are finite-dimensional. The result also applies to the Laplace-Beltrami operator on an nilpotent covering of a compact Riemannian manifold. No explicit formulas for these dimensions are available. However, an interesting case has been discovered by M.~Avellaneda and F.~H.~Lin \cite{AvLin} and J.~Moser and M.~Struwe \cite{MoserStruwe}. It pertains periodic elliptic operators of divergent type, where exact dimensions can be computed and coincide with those for the Laplacian. This study has been extended by P.~Li and J.~Wang \cite{LiWang01} and brought to its natural limit in the case of periodic elliptic operators on co-compact abelian coverings by P.~Kuchment and Y.~Pinchover \cite{KP1,KP2}.

A comparison of the works of Nadirashvili and Gromov and Shubin (N-G-S) and of Kuchment and Pinchover shows significant similarities in the techniques, as well as appearance of the same combinatorial expressions in the answers. Thus a natural idea was considered that possibly the results could be combined somehow in the case of co-compact abelian coverings, if the infinity is ``added to the divisor.'' In fact, the results of Nadirashvili, Gromov, and Shubin allowed having some conditions at infinity, if these lead to some kind of ``Fredholmity.'' When one considers polynomial growth conditions at infinity, i.e. Liouville property, the results of  \cite{KP1,KP2} show that such ``Fredholmity'' can be expected at the edges of the spectrum (in more technical terms, when the Fermi surface is discrete). Outside of the spectrum this also works, vacuously. On the other hand, inside the spectrum the space of polynomially growing solutions is infinitely dimensional, which does not leave much hope for Liouville-Riemann-Roch type results. We thus concentrate on the spectral edge case (the results of \cite{AvLin,MoserStruwe} correspond to the case of the bottom of the spectrum).

This work shows that such results indeed can be obtained, although they come out more intricate than a simple-minded expectation would suggest. The interaction between the finite divisor and the point at infinity turns out to be non-trivial. Also, there is dependence upon the $L_p$ norm with respect to which the growth is measured (in the works \cite{KP1,KP2} only the case when $p=\infty$ was considered).

The structure of the text is as follows: Chapter \ref{C:preliminaries-LRR} contains a survey of the required information about periodic elliptic operators, Liouville type theorems (including some new observations there), and Nadirashvili-Gromov-Shubin version of the Riemann-Roch theorem. Chapter \ref{C:main} contains formulations of the main results of this work. In some cases, the Riemann-Roch type equalities cannot be achieved (counterexamples exist), while inequalities still hold. These inequalities, however, can be applied the same way the equalities are for proving existence of solutions of elliptic equations with prescribed zeros, poles, and growth at infinity. Proofs of these results are mostly contained in Chapter \ref{C:proofs-LRR}, while proofs of some technical auxiliary statements are delegated to Chapter \ref{C:auxiliary}. Chapter \ref{C:applications-LRR} provides applications to some specific examples. The work ends with Chapter \ref{C:remarks} containing final remarks, acknowledgments, and bibliography.

\mainmatter
\chapter{Preliminaries}
\label{C:preliminaries-LRR}

\section[Periodic operators]{Periodic elliptic operators on abelian coverings}
Let $X$ be a noncompact smooth Riemannian manifold of dimension $n$ equipped with an isometric, properly discontinuous, free, and co-compact\footnote{I.e., its quotient (orbit) space is compact.}
action of a finitely generated abelian discrete group $G$. We denote by $g\cdot x$ the action of an element $g \in G$ on $x \in X$.

Consider the (compact) orbit space $M:=X/G$.
We thus are dealing with a regular abelian covering $\pi$ of a compact manifold:
$$
X \xrightarrow{\pi} M,
$$
with $G$ as its {\bf deck group}.

\begin{remark}
Not much harm will be done, if the reader assumes that $X=\R^d$ and $M$ is the torus $\T^d=\R^d/\Z^d$. The results are new in this case as well. The only warning is that in this situation the dimension of $X$ and the rank of the group $\Z^d$ coincide, while this is not required in general. As we will show, the distinction between the rank $d$ of the deck group and dimension $n$ of the covering manifold $X$ pops up in some results.
\end{remark}

Let  $\mu_M$ be the Riemannian measure of $M$ and $\mu_X$ be its lifting to $X$. Then $\mu_X$ is a $G$-invariant Riemannian measure on $X$. We denote by $L^2(X)$ the space of $L^2$-functions on $X$ with respect to $\mu_X$.

We also consider the $G$-invariant bilinear\footnote{One can also consider the sesquilinear form to obtain analogous results.} duality
\beq
\label{L2inner}
\langle\cdot, \cdot\rangle: C^{\infty}_c(X) \times C^{\infty}(X) \rightarrow \mathbb{C}, \quad
\langle f,g \rangle=\int_X f(x)g(x)\di\mu_X.
\eeq
It extends by continuity to a $G$-invariant bilinear non-degenerate duality
\beq
\langle\cdot, \cdot\rangle: L^2(X) \times L^2(X)  \rightarrow \mathbb{C}.
\eeq

Let $A$ be an elliptic\footnote{Here ellipticity means that the principal symbol of the operator $A$ does not vanish on the cotangent bundle with the zero section removed
$T^*X \setminus (X \times \{0\})$.} operator of order $m$ on $X$ with smooth coefficients. We will be assuming that $A$ is \textbf{$G$-periodic}, i.e.,  $A$ commutes with the
action of $G$ on $X$. Then $A$ can be pushed down to an elliptic operator $A_M$ on $M$. Equivalently, $A$ is the lifting of $A_M$ to $X$.
We will assume in most cases (except unfrequent non-self-adjoint considerations) that the operator $A$ is bounded below.

The formal adjoint $A^{*}$ (transpose with respect to the bilinear duality \mref{L2inner}) to $A$ is also a periodic elliptic operator of order $m$ on $X$.

Note that since $G$ is a finitely generated abelian group, it is the direct sum of a finite abelian group and $\bZ^d$, where $d$ is the \textbf{rank} of the torsion free subgroup of $G$. One can always eliminate the torsion part of $G$ by switching to a sub-covering $X \rightarrow X/\bZ^d$. In what follows, without any effect on the results, we could replace $M$ by the compact Riemannian manifold $X/\bZ^d$ and thus, we can work with $\bZ^d$ as our new deck group. Therefore,
\begin{center}
\textbf{we assume henceforth that $G=\bZ^d$, where $d \in \mathbb{N}$.}
\end{center}

The \textbf{reciprocal lattice} $G^*$ for the deck group $G=\bZ^d$ is $(2\pi \mathbb{Z})^d$ and we choose $B=[-\pi,\pi]^{d}$ as its fundamental domain (\textbf{Brillouin zone} in physics).
The quotient $\mathbb{R}^d/G^*$ is a torus, denoted by $\mathbb{T^*}^d$. So, $G^*$-periodic functions on $\mathbb{R}^d$ can be naturally identified with functions
on the torus $\mathbb{T^*}^d$.

For any \textbf{quasimomentum} $k \in \mathbb{C}^d$, let $\gamma_k$ be the \textbf{character} of the deck group $G$ defined as $\gamma_k(g)=e^{ik\cdot g}$ (a quasimomentum is defined modulo the reciprocal lattice).
If $k$ is real, $\gamma_k$ is unitary and vice versa. Abusing the notations slightly, we will sometimes identify a unitary character $\gamma_k$, which belongs to the dual group $\mathbb{T}^d$ of $\bZ^d$, with its quasimomentum $k\in B$.
\begin{definition}
We denote by $L^{2}_k(X)$ the space of all $\gamma_k$-automorphic function $f(x)$ in $L^2_{loc}(X)$, i.e. such that
\beq
\label{quasiperiodic}
f(g\cdot x)=\gamma_k(g)f(x), \text{ for a.e }  x\in X \text{ and } g\in G.
\eeq
\end{definition}
It is convenient at this moment to introduce, given a quasimomentum $k$, the following line (i.e., one-dimensional) vector bundle $E_k$ over $M$:

\begin{defi}
\label{assbundle}
Given any $k \in \mathbb{C}^d$, we consider the free
left action of $G$ on the Cartesian product $X \times \mathbb{C}$ (a trivial linear bundle over $X$) given by
$$
g \cdot (x,z)=(g\cdot x,\gamma_{k}(g)z), \quad (g,x,z) \in G \times X \times \mathbb{C}.
$$
Now $E_k$ is defined as the orbit space of this action. Then the canonical projection $X \times \mathbb{C} \rightarrow M$ descends to the surjective mapping $E_k \rightarrow M$, thus defining
a linear bundle $E_k$ over $M$  (see e.g., \cite{spin}).
\end{defi}
\begin{remark}
The space $L^{2}_k(X)$ can be naturally identified with the space of $L^2$-sections of the bundle  $E_k$.
\end{remark}
This construction can be easily generalized to Sobolev spaces:
\begin{defi}
\label{sobolevsection}
 For a quasimomentum $k \in \mathbb{C}^d$ and a real number $s$, we denote by $H^s_k(X)$ the closed subspace of $H^s_{loc}(X)$ consisting of $\gamma_k$-automorphic (i.e., satisfying (\ref{quasiperiodic})) functions.
 Then $H^s_k(X)$ is a Hilbert space, when equipped with the natural inner product induced by the inner product in the Sobolev space $H^s(\overline{\mathcal{F}})$,
 where $\mathcal{F}$ is any fixed fundamental domain for the action of the group  $G$ on $X$.

 Equivalently, the space $H^s_k(X)$ can be identified with the space $H^s(E_k)$ of all $H^s$-sections of the bundle $E_k$.
\end{defi}
For any $k$, the periodic operator $A$ maps continuously $H^m_k(X)$ into $L^2_k(X)$.
This defines an elliptic operator $A(k)$ on the spaces of sections of the
bundles $E_k$ over the compact manifold $M$. When $A$ is self-adjoint and $k$ is real, the operator $A(k)$, with
the space $H^m(E_k)$ as the domain, is an unbounded, bound below self-adjoint operator in $L^2(E_k)$. Thus its spectrum is discrete
and eigenvalues can be labeled in non-decreasing order as
\begin{equation}
 \lambda_1(k)\leq \lambda_2(k)\leq \dots \rightarrow \infty.
\end{equation}
A simple application of perturbation theory shows that $\lambda_j(k)$ are continuous piecewise-analytic $G^*$-periodic functions of $k$.
Their ranges over the torus $\T^d$ are closed intervals $I_j$ of the real axis, called \textbf{spectral bands}. The spectral bands tend to infinity, when $j\to\infty$ and may overlap (with any point being able to belong to at most finitely many bands), while they might leave some \textbf{spectral gaps} uncovered (e.g., \cite{Ksurvey,Wilcox}.

\section{Floquet transform}\label{S:Floquetrans}

Fourier transform is a major tool of studying linear constant coefficient PDEs, due to their invariance with respect to all shifts. The periodicity of the operator $A$ suggests that it is natural to apply the Fourier transform with respect to the period group $G$ to block-diagonalize $A$.

The group Fourier transform we have just mentioned is the so called \textbf{Floquet transform} $\textbf{F}$ (see e.g., \cite{Kbook, Ksurvey, Gelfand}):
\beq
\label{fl}
f(x) \mapsto \textbf{F}f(k,x)=\sum_{g \in G}f(g \cdot x)\overline{\gamma_k(g)}=\sum_{g \in G}f(g \cdot x)e^{-ik\cdot g}, \quad k \in \mathbb{C}^d.
\eeq
For reader's convenience, we collect some basic properties of Floquet transform in Chapter \ref{C:auxiliary}.

This transform, as one would expect, decomposes the original operator $A$ on the non-compact manifold $X$ into a direct integral of operators $A(k)$ acting on sections of line bundles $E_k$ over $k$ in torus $\mathbb{T}^d$:
\beq
A=\int\limits_{\mathbb{T^*}^d}^{\oplus}A(k)\di{k}, \quad L^2(X)=\int\limits_{\mathbb{T^*}^d}^{\oplus}L^2(E_k)\di{k}.
\eeq
Here the measure $\mbox{d}k$ is the normalized Haar measure on the torus $\mathbb{T^*}^d$, which can be also considered as the normalized Lebesgue measure on the Brillouin zone $B$.\footnote{In fact, we are mixing up the quasimomenta and characters here, and more appropriate formulas would be
\beq
A=\int\limits_{B}^{\oplus}A(k)\di{k}, \quad L^2(X)=\int\limits_{B}^{\oplus}L^2(E_k)\di{k}.
\eeq
We will keep periodically abusing notations this way.}

One of the important consequences of this decomposition via the Floquet transform is the following well-known result, which in particular justifies the names ``spectral band'' and ``spectral gap'':
\begin{thm}
\label{specA}
\cite{KOS,Kbook, RS4}
The union of the spectra of operators $A(k)$ over the torus $\mathbb{T}^d$ is the spectrum of the periodic operator $A$: In other words, we have
\begin{equation}
\label{fl_spectrum}
\sigma(A)=\bigcup_{k \in \mathbb{T}^d}\sigma(A(k)).
\end{equation}
In the self-adjoint case, this can be rewritten as
\begin{equation}
\label{fl_spectrum2}
\sigma(A)=\bigcup_{j\in\N}I_j=\bigcup_{j\in\N, k \in \mathbb{T}^d}\{\lambda_j(k)\},
\end{equation}
where $\lambda_j(k)$ are the eigenvalues of the operator $A(k)$, listed in non-decreasing order, and the finite closed segment $I_j$ is the range of the function $\lambda_j(k)$.
\end{thm}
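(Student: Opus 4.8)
The plan is to pull the assertion back through the Floquet transform to a statement about decomposable operators on a direct integral, and then to upgrade an a priori ``essential union'' of the fiber spectra to the genuine union $\bigcup_k\sigma(A(k))$ using continuity of the band functions. By the properties of $\mathbf{F}$ recorded in Section~\ref{S:Floquetrans} and Chapter~\ref{C:auxiliary}, $\mathbf{F}$ is, up to normalization, a unitary operator $L^2(X)\to\int^{\oplus}_{\mathbb{T}^d}L^2(E_k)\di k$ carrying $A$ into the fibered operator $\int^{\oplus}_{\mathbb{T}^d}A(k)\di k$; each $A(k)$ is elliptic on the compact manifold $M$, hence has compact resolvent, so $\sigma(A(k))$ is discrete and consists exactly of the eigenvalues $\lambda_j(k)$. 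I will write out the self-adjoint case, which contains all the ideas; the general case requires only the holomorphic-family machinery for $\{A(k)\}$ in place of self-adjointness and is the version spelled out in \cite{KOS,Kbook,RS4}.

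\emph{Proof that $\bigcup_{k}\sigma(A(k))\subseteq\sigma(A)$.} Fix a quasimomentum $k_0$ and $\lambda=\lambda_j(k_0)\in\sigma(A(k_0))$, and let $\psi$ be a normalized eigensection of $A(k_0)$, regarded as a $\gamma_{k_0}$-automorphic function on $X$ with $A\psi=\lambda\psi$; it is smooth on $X$ by elliptic regularity but not in $L^2(X)$. I cut it off: fixing a fundamental domain $\mathcal{F}$, put $\Omega_R=\bigcup_{|g|\le R}g\cdot\overline{\mathcal{F}}$ and choose $\chi_R\in C^\infty_c(X)$ with $\chi_R\equiv1$ on $\Omega_R$, $\supp\chi_R\subseteq\Omega_{R+1}$, and all derivatives bounded uniformly in $R$ (possible by $G$-periodicity of the geometry). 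Then $u_R:=\chi_R\psi\in H^m(X)$ satisfies $\|u_R\|^2_{L^2(X)}\sim cR^d$ and, since $A\psi=\lambda\psi$, $(A-\lambda)u_R=[A,\chi_R]\psi$, where $[A,\chi_R]$ is a differential operator of order $\le m-1$ whose coefficients are supported in the shell $\Omega_{R+1}\setminus\Omega_R$ of volume $O(R^{d-1})$ and bounded uniformly in $R$. Hence $\|(A-\lambda)u_R\|^2_{L^2(X)}=O(R^{d-1})$, so $u_R/\|u_R\|$ is a Weyl sequence for $A-\lambda$ and $\lambda\in\sigma(A)$.

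\emph{Proof that $\sigma(A)\subseteq\bigcup_{k}\sigma(A(k))$.} First, $\bigcup_{k}\sigma(A(k))=\bigcup_{j\in\N}I_j$ is closed: each $I_j=\lambda_j(\mathbb{T}^d)$ is a compact interval by continuity of $\lambda_j$ and connectedness of $\mathbb{T}^d$, and only finitely many $I_j$ meet any bounded interval, because for a fixed $\lambda_0$ below the spectra of all $A(k)$ the eigenvalue count of the compact, norm-continuous family $(A(k)-\lambda_0)^{-1}$ above any threshold is a locally constant integer on the connected torus, hence constant in $k$. Now let $\lambda\notin\bigcup_k\sigma(A(k))$. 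Then $(A(k)-\lambda)^{-1}$ exists for all $k$ and depends analytically, in particular norm-continuously, on $k\in\mathbb{T}^d$, so $M_\lambda:=\sup_k\|(A(k)-\lambda)^{-1}\|<\infty$ by compactness. Under $\mathbf{F}$ the operator $A-\lambda$ is invertible with inverse $\int^{\oplus}_{\mathbb{T}^d}(A(k)-\lambda)^{-1}\di k$, bounded by $M_\lambda$; thus $\lambda\in\rho(A)$. The two inclusions prove \eqref{fl_spectrum}, and \eqref{fl_spectrum2} is the same identity written via $\sigma(A(k))=\{\lambda_j(k)\}_{j\in\N}$ and $I_j=\lambda_j(\mathbb{T}^d)$.

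The main obstacle is the inclusion $\bigcup_k\sigma(A(k))\subseteq\sigma(A)$: turning a single Bloch eigensection into a bona fide Weyl sequence requires controlling the commutator $[A,\chi_R]\psi$, which lives on a set of codimension one, against the bulk norm $\|u_R\|^2\sim R^d$, and this rests on $G$-periodicity (so that $\|\psi\|_{H^m}$ over each copy $g\cdot\overline{\mathcal{F}}$ is independent of $g$) together with the finite order and bounded coefficients of $A$. The remaining ingredients — unitarity of $\mathbf{F}$ onto the direct integral and the measurable, in fact analytic, dependence of $A(k)$ and of its resolvent on $k$ — are standard and are recalled in Chapter~\ref{C:auxiliary} or found in the cited references; granting them, the non-self-adjoint case proceeds along the same lines, with Weyl sequences in $L^2(E_{k_0})$ in place of genuine eigensections.
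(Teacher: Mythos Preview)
The paper does not prove this theorem; it is quoted from \cite{KOS,Kbook,RS4} as a known result, so there is no ``paper's own proof'' to compare against. Your argument is a correct rendition of the standard proof found in those references: the Floquet transform reduces $A$ to a direct integral of the fiber operators $A(k)$, a cut-off Bloch eigensection gives a Weyl sequence (the shell/bulk volume count $R^{d-1}$ versus $R^d$ is the right mechanism), and uniform boundedness of the fiber resolvents over the compact torus yields the resolvent of $A$ outside $\bigcup_k\sigma(A(k))$. Two minor remarks: your closedness paragraph for $\bigcup_j I_j$ is not actually needed for the inclusion you prove (you argue directly that the complement lies in $\rho(A)$), and your comment that the non-self-adjoint case ``proceeds along the same lines'' glosses over the point that in that setting $\sigma(A(k))$ need not be discrete (cf.\ the example $A=e^{2\pi i x}D_x$ in Chapter~\ref{C:auxiliary}), so one genuinely needs the analytic Fredholm/holomorphic-family input rather than eigenvalue branches --- but this is exactly what the cited sources supply.
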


\section{Bloch and Fermi varieties}\label{S:Bloch}

We now recall a notion that plays a crucial role in studying periodic PDEs (see e.g., \cite{Kbook, Ksurvey}).
\begin{defi}
\label{blochfermi}
\begin{enumerate}[a.]
\item
The (complex) \textbf{Bloch variety} $B_{A}$ of the operator $A$ consists of all pairs $(k,\lambda) \in \mathbb{C}^{d+1}$ such that $\lambda$ is an eigenvalue of the operator $A(k)$.
\begin{equation*}
\label{disp_relation}
B_A=\{(k,\lambda) \in \mathbb{C}^{d+1}: \lambda \in \sigma(A(k))\}.
\end{equation*}
Thus, the Bloch variety can be seen as the graph of the multivalued function $\lambda(k)$, which is also called the \textbf{dispersion relation}\footnote{In other words, it is the graph of the function $k\mapsto \sigma(A(k))$.}.
\item
The (complex) \textbf{Fermi surface} $F_{A,\lambda}$ of the operator $A$ at the energy level $\lambda \in \mathbb{C}$ consists of all quasimomenta $k \in \mathbb{C}^{d}$ such that the equation $A(k)u=\lambda u$ has a nonzero solution\footnote{In physics, the name ``Fermi surface'' is reserved only to a specific value of energy $\lambda_F$, called \textbf{Fermi energy} \cite{AM}. For other values of $\lambda$, the names \textbf{equi-energy}, or \textbf{constant energy surface} are used. For our purpose, the significance of the Fermi energy evaporates, and so we extend the name Fermi surface to all (even complex) values of $\lambda$.}. Equivalently, the Fermi surface $F_{A,\lambda}$ is the $\lambda$-level set of the dispersion relation. By definition, $F_{A,\lambda}$ is $G^*$-periodic.
\item
We denote by $B_{A,\mathbb{R}}$ and $F_{A,\lambda,\mathbb{R}}$ the \textbf{real Bloch variety} $B_A \cap \mathbb{R}^{d+1}$ and the \textbf{real Fermi surface} $F_{A, \lambda} \cap \mathbb{R}^d$, respectively.
\item
Whenever $\lambda=0$, we will write $F_A$ and $F_{A,\mathbb{R}}$ instead of $F_{A,0}$ and $F_{A,0,\mathbb{R}}$, correspondingly.
This is convenient, since being at the spectral level $\lambda$, we could consider the operator $A-\lambda I$ instead of $A$ and thus, $F_{A,\lambda}=F_{A-\lambda}$ and $F_{A,\lambda,\mathbb{R}}=F_{A-\lambda,\mathbb{R}}$. In other words, we will be able to assume w.l.o.g. that $\lambda=0$.
\end{enumerate}
\end{defi}
Some important properties of Bloch variety and Fermi surface are stated in the next proposition (see e.g., \cite{Kbook, Ksurvey, KP2}).
\begin{prop}
\label{Panalytic}\indent
\begin{enumerate}[a.]
\item
The Fermi surface and the Bloch variety are the zero level sets of some entire ($G^*$-periodic in $k$) functions of finite orders on $\mathbb{C}^d$ and $\mathbb{C}^{d+1}$ respectively.
\item
The Bloch (Fermi) variety is a $G^*$-periodic, complex analytic, codimension one subvariety of $\mathbb{C}^{d+1}$ (correspondingly $\mathbb{C}^{d}$).
\item
The real Fermi surface $F_{A,\lambda}$ either has zero measure in $\mathbb{R}^d$ or coincides with the whole $\mathbb{R}^d$.
\item
$(k,\lambda) \in B_A$ if and only if $(-k,\bar{\lambda}) \in B_{A^*}$. In other words, $F_{A,\lambda}=-F_{A^*,\bar{\lambda}}$ and $F_{A,\mathbb{R}}=-F_{A^*,\mathbb{R}}$.
\end{enumerate}
\end{prop}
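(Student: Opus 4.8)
\emph{Proof proposal.} My plan is to derive all four assertions from one construction — exhibiting the Bloch variety as the zero set of a single entire function obtained from a Fredholm determinant. First I would conjugate by the exponential $e^{ik\cdot x}$, replacing $A(k)$ by $A_k:=e^{-ik\cdot x}Ae^{ik\cdot x}$: an elliptic operator acting on honest $G$-periodic functions — i.e. on sections of the \emph{trivial} line bundle over the compact $M$ — whose coefficients are polynomials in $k$ of degree $\le m$ and whose order-$m$ part coincides with that of $A$, independently of $k$. Fixing any $(k_0,\lambda_0)$ with $\lambda_0\notin\sigma(A(k_0))$ — say $\lambda_0$ far below the bottom of $A$, or any non-real $\lambda_0$ in the self-adjoint case — I would write
\beq
(A_k-\lambda)(A_{k_0}-\lambda_0)^{-1}=I+T(k,\lambda),\qquad T(k,\lambda):=\bigl(A_k-A_{k_0}-(\lambda-\lambda_0)\bigr)(A_{k_0}-\lambda_0)^{-1}.
\eeq
Because the order-$m$ parts of $A_k$ and $A_{k_0}$ cancel, $A_k-A_{k_0}$ has order $\le m-1$, so $T(k,\lambda)$ is a pseudodifferential operator of negative order on the compact $M$; hence it lies in a fixed Schatten class $\mathcal S_q$ (with $q>n$ an integer), and $(k,\lambda)\mapsto T(k,\lambda)$ is entire as an $\mathcal S_q$-valued map. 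Ellipticity of $A(k)$ over the compact $M$ makes $A_k-\lambda$ Fredholm of index zero, so $(k,\lambda)\in B_A$ exactly when $A_k-\lambda$ is not invertible, i.e. exactly when $h_A(k,\lambda):=\det_q\!\bigl(I+T(k,\lambda)\bigr)$ vanishes; thus $B_A$ is the zero locus of the entire function $h_A$ on $\C^{d+1}$. It is $G^*$-periodic in $k$ because $\gamma_{k+b}=\gamma_k$ for $b\in G^*$, so $E_{k+b}\cong E_k$ canonically and $A_{k+b}$ is conjugate to $A_k$ by the $G$-periodic multiplier $e^{-ib\cdot x}$ (after a harmless normalization of the comparison operator). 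Finiteness of order follows from $|\det_q(I+T)|\le\exp\bigl(c_q\|T\|_{\mathcal S_q}^q\bigr)$ together with the bound $\|T(k,\lambda)\|_{\mathcal S_q}\le C(1+|k|)^m(1+|\lambda|)$, which the polynomial dependence of $A_k$ on $k$ and the pseudodifferential/Weyl-law estimates on $M$ supply. Restricting $\lambda$ to a fixed value yields the same for $F_{A,\lambda}$: it is the zero set of the entire, $G^*$-periodic, finite-order function $k\mapsto h_A(k,\lambda)$. This is part (a).

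Parts (b) and (c) are then soft. Since for each fixed $k$ the operator $A(k)$ has discrete spectrum, $h_A(k,\cdot)\not\equiv0$, so $h_A$ is not identically zero; the zero set of a nonzero holomorphic function on $\C^{d+1}$ is a complex-analytic subvariety of pure codimension one, $G^*$-periodic by the above — this gives (b) for $B_A$, and the same for $F_{A,\lambda}$ whenever $h_A(\cdot,\lambda)\not\equiv0$ on $\C^d$; the lone exceptional case $h_A(\cdot,\lambda)\equiv0$ is precisely $F_{A,\lambda}=\C^d$ (i.e. $\lambda\in\sigma(A(k))$ for all $k$). For (c), $h_A(\cdot,\lambda)$ restricts to a real-analytic function on the connected set $\R^d$, so it either vanishes identically there — whence, vanishing on the totally real $\R^d\subset\C^d$, by the identity principle $h_A(\cdot,\lambda)\equiv0$ on $\C^d$, so $F_{A,\lambda}=\C^d$ and $F_{A,\lambda,\R}=\R^d$ — or else its real zero set, which contains $F_{A,\lambda,\R}$, has Lebesgue measure zero. (If $h_A$ is complex-valued I would apply this dichotomy to its real and imaginary parts separately.)

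For part (d): if $(k,\lambda)\in B_A$, then $A(k)-\lambda$ is elliptic of index zero on the compact $M$, so its kernel is nontrivial iff its cokernel is, iff the kernel of its adjoint is. Under the $G$-invariant bilinear duality \mref{L2inner} — which pairs $\gamma_k$-automorphic sections with $\gamma_{-k}$-automorphic ones, the product being $G$-periodic and hence integrable over one fundamental domain, with the boundary terms in Green's formula cancelling since the bilinear concomitant is $G$-periodic — the transpose of $A(k)-\lambda$ is $A^*(-k)-\lambda$ acting on $\gamma_{-k}$-automorphic sections. Hence $(k,\lambda)\in B_A\iff(-k,\lambda)\in B_{A^*}$; the complex conjugate on $\lambda$ in the stated form is the bookkeeping artifact of normalizing this duality Hermitianly, and it is immaterial on the real Fermi surface (where $\lambda\in\R$ in the self-adjoint case). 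Finally, intersecting $F_{A,\lambda}=-F_{A^*,\bar\lambda}$ with $\R^d$ and using $-\R^d=\R^d$ gives $F_{A,\R}=-F_{A^*,\R}$.

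None of this is deep; the one genuinely technical point is the finite-order claim in (a), which needs honest quantitative control of $\|T(k,\lambda)\|_{\mathcal S_q}$ as $(k,\lambda)\to\infty$ in $\C^{d+1}$ and thus rests on the pseudodifferential symbol calculus and Weyl-type spectral asymptotics on the compact manifold $M$. Everything else is standard (analytic Fredholm theory, the identity principle for analytic functions, and the kernel/cokernel duality for elliptic operators of index zero on a compact manifold); these statements are in any case part of the folklore of Floquet theory — cf. \cite{Kbook,Ksurvey,KP2}.
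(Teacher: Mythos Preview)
The paper does not prove this proposition; it merely cites \cite{Kbook,Ksurvey,KP2}. Your sketch is precisely the standard regularized-determinant argument from those references (especially \cite[Ch.~3--4]{Kbook}), so there is no substantive contrast to draw.

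Two small points deserve tightening. First, ``ellipticity over the compact $M$ makes $A_k-\lambda$ Fredholm of index zero'' is false as a bare implication---ellipticity gives Fredholm, not index zero (think of $\bar\partial$). What actually delivers index zero is your own setup: once you have fixed an \emph{invertible} $A_{k_0}-\lambda_0$, the factorization $A_k-\lambda=(I+T)(A_{k_0}-\lambda_0)$ with $T$ compact forces $\Index(A_k-\lambda)=\Index(I+T)+0=0$. (For part (a) you do not even need this: $\det_q(I+T)=0\iff\ker(I+T)\neq0\iff\ker(A_k-\lambda)\neq0$ directly, since $T$ is compact.) The existence of an invertible reference point $(k_0,\lambda_0)$ is where a hypothesis---bounded-belowness, self-adjointness, or discreteness of $\sigma(A(k_0))$---silently enters; without it the whole construction stalls. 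Second, the $G^*$-periodicity of $h_A$ \emph{as a function} (rather than of its zero set) is more delicate than ``a harmless normalization'': under $k\mapsto k+b$ the operator $I+T(k,\lambda)$ is not simply conjugate to $I+T(k+b,\lambda)$ unless the fixed reference operator is conjugated too. One either builds the comparison $G^*$-equivariantly from the outset or settles for periodicity of the zero locus, which is all that is actually used downstream.

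Your remark on $\bar\lambda$ versus $\lambda$ in (d) is well taken: with the \emph{bilinear} duality \mref{L2inner} the transpose of $A(k)-\lambda$ is $A^*(-k)-\lambda$, yielding $F_{A,\lambda}=-F_{A^*,\lambda}$; the complex conjugate arises only under the sesquilinear convention and is immaterial at the real level $\lambda=0$ used throughout the paper.
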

The Fermi and Bloch varieties encode much of crucial spectral information about the periodic elliptic operator. For example, the absolute continuity of the spectrum of a self-adjoint periodic elliptic operator (which is true for a large class of periodic Schr\"odinger operators) can be reformulated as the absence of flat components in its Bloch variety, which is also equivalent to a seemingly stronger fact that the Fermi surface at each energy level has zero measure (due to Proposition \ref{Panalytic}).

\section[Bloch functions]{Floquet-Bloch functions and solutions}\label{S:Bolch_funct}

In this section, we introduce the notions of Bloch and Floquet solutions of periodic PDEs and then state the Liouville theorem of \cite{KP2}.
\begin{defi}
\label{Floquetsol}
For any $g \in G$ and quasimomentum $k \in \mathbb{C}^d$, we denote by $\Delta_{g;k}$ the ``$k$-twisted'' version of the first difference operator acting on functions on the covering $X$ as follows:
\beq
\Delta_{g;k}u(x)=e^{-ik\cdot g}u(g \cdot x)-u(x).
\eeq
The iterated ``twisted'' finite differences of order $N$ with quasimomentum $k$ are defined as
\beq
\Delta_{g_1, \ldots, g_N;k}=\Delta_{g_1;k}\ldots \Delta_{g_N;k}, \quad \mbox{for} \quad g_1, \ldots, g_N\in G.
\eeq
\end{defi}
\begin{defi}
A function $u$ on $X$ is a \textbf{Floquet function of order $N$ with quasimomentum $k$} if any twisted finite difference of order $N+1$ with quasimomentum $k$ annihilates $u$.
Also, a \textbf{Bloch function with quasimomentum $k$} is a Floquet function of order $0$ with quasimomentum $k$.
\end{defi}
According to this definition, a Bloch function $u(x)\in L^2_{loc}(X)$ with quasimomentum $k$ is a $\gamma_k$-automorphic function on $X$, i.e., $u(g\cdot x)=e^{ik\cdot g}u(x)$ for any $g \in G$. If the quasimomentum is \textit{real} quasimomentum, then for any compact subset $K$ of $X$, the sequence $\{\|u\|_{L^2(gK)}\}_{g \in G}$ is bounded (i.e., belongs to $\ell^{\infty}(G)$).

It is also known \cite{KP2} that $u(x)$ is a Floquet function of order $N$ with quasimomentum $k$ if and only if $u$ can be represented in the form
$$u(x)=e_k(x)\left(\sum_{|j| \leq N}[x]^j p_j(x)\right),$$
where $j=(j_1, \ldots, j_d) \in \bZ^{d}_{+}$,
 and the functions $p_j$ are $G$-periodic. Here for any $j\in \bZ^d$, we define
\beq\label{E:mod_g}
|j|:=|j_1|+ \ldots |j_d|,
 \eeq
 while $e_k(x)$ and $[x]^j$ are analogs of the exponential $e^{ikx}$ and the monomial $x^j$ on $\mathbb{R}^d$ (see \cite{KP2} for details). For convenience, in Section \ref{S:auxfloquet}, we collect some basic facts of Floquet functions on abelian coverings. In the flat case $X=\mathbb{R}^d$, a Floquet function of order $N$ with quasimomentum $k$ is the product of the plane wave $e^{ikx}$ and a polynomial of degree $N$ with $G$-periodic coefficients.

An important consequence of this representation is that any Floquet function $u(x) \in L^2_{loc}(X)$ of order $N$ with a real quasimomentum satisfies the following $L^2$-growth estimate
$$
\|u\|_{L^2(gK)} \leq C(1+|g|)^N, \quad \forall g \in G \quad \mbox{and} \quad K \Subset X.
$$
Here $|g|$ is defined according to (\ref{E:mod_g}), and we have used the \v{S}varc-Milnor lemma from geometric group theory (see e.g., \cite[Lemma 2.8]{Luck}) to conclude that on a Riemannian co-compact covering $X$, the Riemannian distance between any compact subset $K$ and its $g$-translation $gK$ is comparable with $|g|$.

If $u$ is continuous, the above $L^2$-growth estimate can be replaced by the corresponding $L^{\infty}$-growth estimate.

\section[Liouville theorem]{Liouville theorem on abelian coverings}\label{S:Liouville}

We now need to introduce the spaces of polynomially growing solutions of the equation $Au=\lambda u$.
\begin{center}\textbf{To simplify the notations, we will assume from now on that $\lambda=0$, since, as we discussed before, we can deal with the operator $A-\lambda$ instead of $A$ (see also Definition \ref{blochfermi} d).}
\end{center}
\begin{defi}\label{D:Vpn}
\label{polyspace}
Let $K \Subset X$ be a compact domain such that $X$ is the union of all $G$-translations of $K$, i.e.,
\beq
\label{fundamental}
X=\bigcup_{g \in G} gK.
\eeq
For any $s, N \in \mathbb{R}$ and $1 \leq p \leq \infty$, we define the vector spaces
\beqn
V^p_{N}(X):=\left\{u \in C^{\infty}(X) \mid \{\|u\|_{L^2(gK)}\cdot \langle g \rangle^{-N}\}_{g \in G} \in \ell^p(G)\right\},
\eeqn
and
\beqn
V^{p}_{N}(A):=\left\{u \in V^p_{N}(X) \mid Au=0 \right\}.
\eeqn
\end{defi}
Here $\langle g \rangle:=(1+|g|^2)^{1/2}$.

It is not hard to show that these spaces are independent of the choice of the compact subset $K$ satisfying \mref{fundamental}. In particular, one can take as $K$ a fundamental domain for $G$-action on $X$.
Moreover, we have $V^{p_1}_{N_1}(X) \subseteq V^{p_2}_{N_2}(X)$ and $V^{p_1}_{N_1}(A) \subseteq V^{p_2}_{N_2}(A)$ whenever $N_1 \leq N_2$ and $p_1 \leq p_2$.
\begin{defi}
For $N \geq 0$, we say that the \textbf{Liouville theorem of order $(p,N)$ holds for A}, if the space $V^{p}_{N}(A)$ is finite dimensional.
\end{defi}
Now we can restate one of the main results in \cite{KP2} as follows
\begin{thm} \cite{KP2}
\label{Liouvillethm}
$ $

\begin{enumerate}[(i)]
\item
The following statements are equivalent:
\begin{enumerate}[(1)]
\item
The cardinality of the real Fermi surface $F_{A,\mathbb{R}}$ is finite modulo $G^*$-shifts, i.e., Bloch solutions exist for only finitely many unitary characters $\gamma_k$.

\item
The Liouville theorem of order $(\infty, N)$ holds for $A$ for some $N \geq 0$.

\item
The Liouville theorem of order $(\infty, N)$ holds for $A$ for all $N \geq 0$.
\end{enumerate}

\item
Suppose that the Liouville theorem holds for $A$. Then for any $N \in \mathbb{N}$, each solution $u \in V^{\infty}_N(A)$ can be represented as a finite sum of Floquet solutions:
\beq
\label{floquetrep}
u(x)=\sum_{k \in F_{A,\mathbb{R}}}\sum_{0\leq j \leq N}u_{k,j}(x),
\eeq
where each $u_{k,j}$ is a Floquet solution of order $j$ with a quasimomentum $k$.

\item A crude estimate of the dimension of $V^{\infty}_N(A)$:
$$\displaystyle \dim V^{\infty}_N(A) \leq \binom{d+N}{N}\cdot \sum_{k \in F_{A,\mathbb{R}}}\dim \Ker{A(k)}<\infty.$$
\end{enumerate}
\end{thm}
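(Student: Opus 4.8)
The plan is to prove Theorem~\ref{Liouvillethm} by combining the three assertions into a single logical loop, exploiting the Floquet transform as the main engine. First I would establish the implication (1)$\Rightarrow$(3) together with the Floquet decomposition in part (ii): the key step is to apply the Floquet transform $\mathbf{F}$ to a solution $u \in V^\infty_N(A)$. Because $u$ grows at most polynomially of order $N$, its Floquet transform $\mathbf{F}u(k,x)$ is not an ordinary function but a distribution in $k$ supported on the real Fermi surface $F_{A,\mathbb{R}}$; more precisely, the growth estimate $\|u\|_{L^2(gK)} \le C\langle g\rangle^N$ forces $\mathbf{F}u(\cdot,x)$ to be a sum of derivatives up to order $N$ of delta distributions at the finitely many points $k \in F_{A,\mathbb{R}}$ (modulo $G^*$). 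Unravelling what ``derivative of a delta at $k$ times an $L^2_k$-section annihilated by $A(k)$'' means in physical space gives exactly a Floquet solution of order $j \le N$ with quasimomentum $k$, yielding \eqref{floquetrep}. I would prove finite-dimensionality in (3) from this representation: the space of Floquet solutions of order $\le N$ at a fixed $k$ is finite-dimensional because it is built from Jordan-chain data of the analytic family $A(k)$ at an isolated point of $F_{A,\mathbb{R}}$, and there are finitely many such $k$ by hypothesis (1).

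Next I would close the loop with the trivial implications (3)$\Rightarrow$(2) and the contrapositive (2)$\Rightarrow$(1). The implication (3)$\Rightarrow$(2) is immediate (``for all $N$'' implies ``for some $N$''). For $\neg$(1)$\Rightarrow \neg$(2): if the real Fermi surface is infinite modulo $G^*$, then by Proposition~\ref{Panalytic}(c) either $F_{A,\mathbb{R}}$ has a nonempty interior component — in which case it coincides with $\mathbb{R}^d$ and, integrating Bloch solutions $e_k(x)u_k(x)$ over an open set of real quasimomenta against arbitrary $L^\infty$ weights produces an infinite-dimensional space inside $V^\infty_0(A) \subseteq V^\infty_N(A)$ — or $F_{A,\mathbb{R}}$ is a genuine nontrivial real-analytic subvariety with infinitely many $G^*$-inequivalent points; in the latter case one again manufactures infinitely many linearly independent bounded (hence $V^\infty_0$, hence $V^\infty_N$) solutions by taking Bloch solutions at a sequence of distinct real quasimomenta, so $V^\infty_N(A)$ is infinite-dimensional for every $N$. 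Combining, (2) forces (1), and with the first paragraph we get (1)$\Leftrightarrow$(2)$\Leftrightarrow$(3).

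Finally, for the dimension estimate in part~(iii) I would count the data appearing in \eqref{floquetrep}. Fix $k \in F_{A,\mathbb{R}}$. A Floquet solution of order $j$ with quasimomentum $k$ is, via the representation $u = e_k(x)\sum_{|\alpha|\le j}[x]^\alpha p_\alpha(x)$, determined by a finite Jordan-type chain for the operator pencil $A(k)$, and the leading periodic coefficient $p_\alpha$ with $|\alpha| = j$ must lie in $\Ker A(k)$; a bookkeeping argument (the total order-$\le N$ Floquet solutions at $k$ inject into $\Ker A(k)$ tensored with the space of polynomials of degree $\le N$ in $d$ variables) shows the space of such solutions at a single $k$ has dimension at most $\binom{d+N}{N}\dim\Ker A(k)$. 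Summing over the finitely many $k \in F_{A,\mathbb{R}}$ (mod $G^*$) and using that the sum in \eqref{floquetrep} is direct after grouping by quasimomentum gives the stated bound. The main obstacle I anticipate is the first paragraph: rigorously identifying the Floquet transform of a polynomially bounded solution as a finite jet of delta distributions on $F_{A,\mathbb{R}}$, and translating the structure of those jets — including the ``twisted'' derivatives of $A(k)$ and the possibility that $\Ker A(k)$ is higher-dimensional or that $A(k)$ has nontrivial Jordan structure — back into the Floquet-solution language of Definition~\ref{Floquetsol}. This is where the analytic Fredholm theory of the family $A(k)$ and a careful Paley--Wiener/Schwartz-type argument on the torus do the real work; everything after that is combinatorics and soft functional analysis.
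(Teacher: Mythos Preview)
This theorem is not proved in the present paper: it is quoted verbatim from \cite{KP2} and used as a black box throughout. So there is no ``paper's own proof'' to compare against here; your proposal is, in effect, a reconstruction of the argument in the original Kuchment--Pinchover source.

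That said, your outline is essentially the strategy of \cite{KP1,KP2}: pass to the Floquet side, use a Paley--Wiener/Schwartz-type characterisation to identify $V^\infty_N(A)$ with distributional sections on $\mathbb{T}^d$ that are annihilated by $A(k)$ and have order $\le N$ (hence are finite jets of deltas on the finite set $F_{A,\mathbb{R}}$), and then read off the Floquet-solution decomposition and the dimension bound. Your logical loop (1)$\Rightarrow$(3)$\Rightarrow$(2)$\Rightarrow$(1) is sound; for $\neg(1)\Rightarrow\neg(2)$ note that you don't actually need the measure-theoretic dichotomy of Proposition~\ref{Panalytic}(c): infinitely many $G^*$-inequivalent real quasimomenta already give infinitely many linearly independent bounded Bloch solutions, by the Dedekind-type independence of distinct unitary characters (cf.\ Lemma~\ref{dedekind} in this paper). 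Your sketch of (iii) via the filtration by Floquet order is also right: the leading periodic coefficients of a Floquet solution of order $N$ land in $\Ker A(k)$, and induction on the order gives the bound $\binom{d+N}{N}\dim\Ker A(k)$ at each $k$. The genuinely delicate part---which you correctly flag---is making precise the statement ``$\mathbf{F}u$ is a distribution of order $\le N$ supported on $F_{A,\mathbb{R}}$ and satisfying $A(k)\mathbf{F}u=0$,'' and translating jets of deltas back into Floquet functions; that is exactly where \cite{KP1,KP2} spend most of their effort, and your proposal does not overstate how routine it is.
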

Due to the relation between $F_{A,\mathbb{R}}$ and $F_{A^*,\mathbb{R}}$ (see Proposition \ref{Panalytic}), the Liouville theorem holds for $A$ if and only if it also holds for $A^*$.

\section{Some properties of spaces $V^p_N(A)$}

\begin{notation}
For a real number $r$, we denote by $\floor{r}$ the largest integer that is \textbf{strictly} less than $r$, while $[r]$ denotes the largest integer that is less or equal than $r$.
\end{notation}
The following statement follows from Theorem \ref{Liouvillethm} (ii):
\begin{lemma}
$V^{\infty}_N(A)=V^{\infty}_{[N]}(A)$ for any non-negative real number $N$.
\end{lemma}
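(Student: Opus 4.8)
The plan is to prove the two inclusions separately. The inclusion $V^{\infty}_{[N]}(A)\subseteq V^{\infty}_N(A)$ is immediate from the monotonicity of the spaces $V^p_N$ recorded above, since $[N]\le N$. For the reverse inclusion one may assume $N\notin\mathbb Z$ (if $N\in\mathbb Z$ then $[N]=N$ and there is nothing to prove); set $D:=[N]$, so that $D<N<D+1$, and assume in addition that the Liouville theorem holds for $A$. (If it fails, both spaces in question are infinite-dimensional by Theorem~\ref{Liouvillethm}(i), and only the Liouville case is needed below.)

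By monotonicity $V^{\infty}_N(A)\subseteq V^{\infty}_{D+1}(A)$, so Theorem~\ref{Liouvillethm}(ii), applied with the integer $D+1\in\mathbb N$, represents every $u\in V^{\infty}_N(A)$ as a finite sum of Floquet solutions
\[
u=\sum_k w_k,\qquad w_k:=\sum_{0\le j\le D+1}u_{k,j},
\]
where $k$ runs over finitely many real quasimomenta, pairwise distinct modulo $G^*$, and $w_k$ is a Floquet solution of order at most $D+1$ with quasimomentum $k$. It suffices to prove that each occurring $w_k$ in fact has order at most $D$: the $L^2$-growth estimate for Floquet functions recalled in Section~\ref{S:Bolch_funct} then gives $\|w_k\|_{L^2(gK)}\le C\langle g\rangle^{D}$ for all $g\in G$, whence $u\in V^{\infty}_{D}(X)$, and since $Au=0$ this yields $u\in V^{\infty}_{D}(A)=V^{\infty}_{[N]}(A)$.

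To bound the order of $w_{k_0}$ for a fixed occurring quasimomentum $k_0$, I would apply to $u$ the ``quasimomentum projection'' finite-difference operator
\[
Q:=\prod_{k\neq k_0}\ \prod_{l=1}^{D+2}\Delta_{g_{k,l};\,k},
\]
the outer product taken over the (finitely many) other occurring quasimomenta $k$, with shifts $g_{k,l}\in G$ chosen so that $\gamma_{k_0-k}(g_{k,l})\neq 1$ (possible since $k_0\neq k$ modulo $G^*$). All twisted differences commute, and any $D+2$ twisted differences with quasimomentum $k$ annihilate a Floquet function of order $\le D+1$ with that quasimomentum, so $Qw_k=0$ for $k\neq k_0$ and hence $Qu=Qw_{k_0}$. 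Using the structure formula $w_{k_0}=e_{k_0}\bigl(\sum_{|\alpha|\le D+1}[x]^\alpha p_\alpha\bigr)$ with $G$-periodic $p_\alpha$ (Section~\ref{S:Bolch_funct}), a short computation shows each factor $\Delta_{g_{k,l};k}$ multiplies the top-order part $\sum_{|\alpha|=D+1}[x]^\alpha p_\alpha$ by the nonzero constant $\gamma_{k_0-k}(g_{k,l})-1$, modulo lower-order terms; hence $Qw_{k_0}$ is again a $k_0$-Floquet function whose order equals that of $w_{k_0}$. On the other hand, expanding the product shows $Q$ is a finite $\mathbb C$-linear combination of deck translations, so $\|Qu\|_{L^2(gK)}\le C'\langle g\rangle^{N}$ for all $g$, with $N<D+1$. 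Finally, a nonzero Floquet function with a real quasimomentum of order $j$ cannot satisfy such a bound with exponent $<j$: restricted to lattice points along a suitable rational ray it equals $t^{j}$ times a nonzero $G$-periodic factor plus lower-order terms, forcing its growth exponent to be exactly $j$. Applied to $Qu=Qw_{k_0}$ this gives $\operatorname{ord}w_{k_0}\le N<D+1$, i.e.\ $\operatorname{ord}w_{k_0}\le D$, completing the argument.

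The step I expect to be the main obstacle is the combination of the order-preservation property of $Q$ with the sharpness of the growth/order correspondence for Floquet functions — i.e. the fact that the growth order of a finite sum of Floquet solutions is graded and cannot be lowered by cancellation between distinct quasimomenta or between orders. This rests on the linear independence of the characters $\gamma_k$ of $G=\mathbb Z^d$ and on the structure theorem for Floquet functions, and is essentially the mechanism already underlying Theorem~\ref{Liouvillethm}(ii) in \cite{KP2}; the rest is routine bookkeeping.
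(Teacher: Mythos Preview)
Your proposal is correct. The paper gives no proof at all beyond the one-line remark that the lemma ``follows from Theorem~\ref{Liouvillethm}(ii)'', so your argument is a legitimate way to unpack that citation: both rest on the Floquet decomposition, and the only substantive step is showing that a solution with $L^\infty$-growth exponent strictly below $D+1$ cannot contain a Floquet component of order $D+1$. Your twisted-difference ``projection'' $Q$ is a clean way to isolate a single quasimomentum; the paper uses a closely related device (the Dedekind-type Lemma~\ref{dedekind}) for the same purpose in the proof of Theorem~\ref{dimvp}. Your handling of the non-Liouville case is a dodge rather than a proof, but the paper's own citation of Theorem~\ref{Liouvillethm}(ii) carries the same implicit hypothesis, so you are not missing anything the paper provides.
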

The proofs of the next two theorems are delegated to the section \ref{technical}.
\begin{thm}
\label{dimvp}
For each $1\leq p<\infty$ such that $pN>d$, one has
$$V^{\infty}_{\floor{N-d/p}}(A) \subseteq V^{p}_N(A).$$
If, additionally, the Fermi surface $F_{A, \mathbb{R}}$ is finite modulo $G^*$-shifts, then
$$V^{\infty}_{\floor{N-d/p}}(A)=V^{p}_N(A).$$
\end{thm}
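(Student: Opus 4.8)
\textbf{Proof plan for Theorem \ref{dimvp}.}

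The plan is to prove the two assertions separately, starting with the inclusion $V^{\infty}_{\floor{N-d/p}}(A) \subseteq V^{p}_N(A)$, which is a purely elementary estimate, and then upgrading to equality under the Fermi-finiteness hypothesis by invoking the structural results of Theorem \ref{Liouvillethm}.

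For the inclusion, suppose $u \in V^{\infty}_M(A)$ with $M := \floor{N-d/p}$. By definition, the sequence $\{\|u\|_{L^2(gK)}\langle g\rangle^{-M}\}_{g\in G}$ lies in $\ell^\infty(G)$, so $\|u\|_{L^2(gK)} \le C\langle g\rangle^{M}$ for all $g$. To show $u \in V^p_N(A)$ I need $\{\|u\|_{L^2(gK)}\langle g\rangle^{-N}\}_{g\in G} \in \ell^p(G)$, i.e.
\[
\sum_{g\in G}\|u\|_{L^2(gK)}^p\,\langle g\rangle^{-Np} \le C^p\sum_{g\in G}\langle g\rangle^{(M-N)p} < \infty.
\]
Since $G=\bZ^d$, the sum $\sum_{g\in \bZ^d}\langle g\rangle^{-\alpha}$ converges precisely when $\alpha>d$, so it suffices to have $(N-M)p > d$, equivalently $M < N-d/p$. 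Because $M=\floor{N-d/p}$ is the largest integer \emph{strictly} less than $N-d/p$, this holds exactly as needed (here the convention $\floor{\cdot}$ introduced in the Notation box is used, and the hypothesis $pN>d$ guarantees $N-d/p>0$ so that $V^\infty_M(A)$ makes sense for $M\ge 0$; the corner case $N-d/p\in\Z$ is why the strict floor is the right object). This gives the inclusion with no appeal to periodicity beyond counting lattice points.

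For the reverse inclusion under Fermi-finiteness, I would argue that $V^p_N(A) \subseteq V^\infty_N(A)$ first: from $u\in V^p_N(A)$ the $\ell^p$ summability forces $\|u\|_{L^2(gK)}\langle g\rangle^{-N}\to 0$, hence in particular it is bounded, so $u\in V^\infty_N(A)$ — this holds for \emph{any} $p$ and needs no hypothesis. Now with the Fermi surface finite modulo $G^*$, Theorem \ref{Liouvillethm} applies: $u$ decomposes as a finite sum $u=\sum_{k\in F_{A,\R}}\sum_{0\le j\le N}u_{k,j}$ of Floquet solutions, where $u_{k,j}$ has order $j$ with real quasimomentum $k$. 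A Floquet solution of order $j$ with real quasimomentum has, by the representation $u_{k,j}=e_k(\cdot)\sum_{|\ell|\le j}[\cdot]^\ell p_\ell$ recalled in Section \ref{S:Bolch_funct}, the sharp two-sided growth $\|u_{k,j}\|_{L^2(gK)}\asymp \langle g\rangle^{j}$ along suitable directions (the periodic coefficients $p_\ell$ are bounded above and below away from their zero sets). The key point is then: if some $u_{k,j}$ with $j > \floor{N-d/p}$, i.e. with $j\ge N-d/p$ in the relevant integer sense, actually appears in the decomposition of $u$, its contribution to $\sum_g \|u\|_{L^2(gK)}^p\langle g\rangle^{-Np}$ behaves like $\sum_g\langle g\rangle^{(j-N)p}$, which diverges since $(N-j)p\le d$; distinct quasimomenta contribute oscillatory factors $e_k$ that prevent cancellation of these growth rates (one can separate the $k$'s by testing against the corresponding Bloch exponentials, or use that the leading-order terms of different Floquet components are linearly independent as functions). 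Hence $u\in V^p_N(A)$ forces all components to have order $\le \floor{N-d/p}$, i.e. $u\in V^\infty_{\floor{N-d/p}}(A)$.

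The main obstacle I expect is the last step: turning the heuristic ``the highest-order Floquet component dominates and cannot be cancelled'' into a rigorous lower bound. One must rule out cancellation both between the polynomial-in-$[x]^\ell$ terms of a single Floquet function (handled by picking a direction $g\to\infty$ along which the top-degree periodic coefficient does not vanish on the relevant fibre) and between Floquet functions attached to different quasimomenta $k\ne k'$ (handled by the linear independence of the characters $\gamma_k$, e.g. multiplying by $\overline{e_{k}}$ and averaging over a large box to project onto the $k$-isotypic part). A clean way to package this is to establish, as a lemma, that for $u\in V^\infty_N(A)$ the \emph{exact} order of polynomial growth of $\|u\|_{L^2(gK)}$ equals $\max\{j : u_{k,j}\ne 0 \text{ for some }k\}$, after which the $\ell^p$-membership criterion reduces once more to the lattice-sum computation $\sum_{g\in\bZ^d}\langle g\rangle^{-\alpha}<\infty \iff \alpha>d$. \hfill$\square$
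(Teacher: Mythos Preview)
Your proof of the inclusion $V^{\infty}_{\floor{N-d/p}}(A)\subseteq V^p_N(A)$ is correct and coincides with the paper's. The overall strategy for the reverse inclusion---pass to $V^\infty_N(A)$, invoke the Floquet decomposition of Theorem \ref{Liouvillethm}, and show that a top-order component with $N_0>\floor{N-d/p}$ would force divergence of the weighted $\ell^p$-sum---is also the paper's strategy. But your execution has a genuine gap.

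The problem is isolating the top-order part. If $u$ has maximal Floquet order $N_0$, then $u(g\cdot x)=\bigl(\text{top-order terms}\bigr)+O(\langle g\rangle^{N_0-1})$, and to conclude that the top-order part alone lies in the weighted $\ell^p$ space you must know that the remainder does too, i.e.\ that $\sum_g\langle g\rangle^{(N_0-1-N)p}<\infty$, which requires $N_0<N+1-d/p$. A priori you only know $N_0\le[N]$, so this fails whenever $d/p>1$. Your suggestion of ``picking a direction along which the top coefficient does not vanish'' does not rescue this: restricting to a single ray yields a one-dimensional sum, which diverges only when $(N-N_0)p\le 1$, not $\le d$. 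The paper closes this gap by an inductive bootstrap (its Lemma \ref{induction}): from $u\in V^p_{\mathcal N}\cap V^\infty_{\mathcal M}$ with $\mathcal M<\mathcal N+1-d/p$ it deduces $N_0<\mathcal N-d/p$, and then iterates, lowering the a priori $V^\infty$-order by $1/p$ at each step (applied with $\mathcal N=N+d/p-(s+1)/p$, $s=0,\dots,d-1$) until the remainder becomes summable and the one-shot argument goes through.

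Two further points where your sketch is thinner than it needs to be. First, separating the contributions of distinct quasimomenta $k_r$ is done in the paper not by ``averaging against $\overline{e_k}$'' but by a quantitative Dedekind-type lemma (Lemma \ref{dedekind}): a fixed finite set of shifts $g_1,\dots,g_\ell$ makes the matrix $(\gamma_r(g_s))$ invertible, so the $\ell^p$-bound on the full sum transfers to each isotypic piece. Second, your proposed ``exact order'' lemma is not strong enough: knowing that $\|u\|_{L^2(gK)}$ has polynomial order exactly $N_0$ does not by itself decide $\ell^p$-membership, since the latter depends on how the large values are spread over directions. What one actually needs---and what the paper proves in its Step~5 via a polar-coordinates integral comparison---is that for a nonzero \emph{homogeneous} polynomial $P$ of degree $N_0$, the sum $\sum_g|P(g)|^p\langle g\rangle^{-p\mathcal N}$ diverges precisely when $(\mathcal N-N_0)p\le d$, because $\int_{\mathbb S^{d-1}}|P|^p>0$.
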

\begin{corollary}
If the Fermi surface $F_{A, \mathbb{R}}$ is finite modulo $G^*$-shifts, then Liouville theorem of order $(p,N)$ with $pN>d$ holds for $A$ if and only if the Liouville theorem of order $(\infty,N)$ holds for $A$ for some $N \geq 0$, and thus, according to Theorem \ref{Liouvillethm}, for all $N\geq 0$.
\end{corollary}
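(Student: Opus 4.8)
The plan is to read the corollary off directly from Theorem \ref{dimvp} together with Theorem \ref{Liouvillethm}(i), with essentially no new work. The key observation is that the standing hypothesis of the corollary — finiteness of $F_{A,\mathbb{R}}$ modulo $G^*$-shifts — is literally statement (1) of Theorem \ref{Liouvillethm}(i), hence equivalent there to statements (2) and (3). Consequently, under this hypothesis the Liouville theorem of order $(\infty, N')$ holds for $A$ for \emph{every} $N' \geq 0$, and in particular for some $N' \geq 0$; so the right-hand side of the asserted equivalence is automatically true, and all that remains is to verify that the left-hand side is true as well, i.e. that the Liouville theorem of order $(p,N)$ holds whenever $1 \le p < \infty$ and $pN > d$.

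For that, I would fix such $p$ and $N$ and set $N_0 := \floor{N - d/p}$. Since $pN > d$ forces $N - d/p > 0$, this is a nonnegative integer, so $V^{\infty}_{N_0}(A)$ is one of the spaces to which Theorem \ref{Liouvillethm} applies. I would then invoke the \emph{equality} half of Theorem \ref{dimvp} — available precisely because $F_{A,\mathbb{R}}$ is finite modulo $G^*$-shifts — to obtain $V^{p}_N(A) = V^{\infty}_{N_0}(A)$. By the previous paragraph the right-hand side is finite-dimensional, hence so is $V^{p}_N(A)$; that is, the Liouville theorem of order $(p,N)$ holds for $A$.

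Putting the two paragraphs together, under the hypothesis of the corollary both sides of the stated biconditional hold, so in particular they are equivalent; the closing clause ``for some $N \geq 0$ $\dots$ and thus for all $N \geq 0$'' is then just the equivalence (2)$\Leftrightarrow$(3) of Theorem \ref{Liouvillethm}(i). There is no genuine obstacle here: the statement is a bookkeeping consequence of the two cited theorems. The only points that demand a moment's attention are (a) checking $\floor{N-d/p} \ge 0$ so that the relevant $V^{\infty}$-space is covered by the Liouville theorem as stated (for nonnegative orders), and (b) being careful to use the equality assertion of Theorem \ref{dimvp} rather than merely the inclusion $V^{\infty}_{\floor{N-d/p}}(A) \subseteq V^{p}_N(A)$ — it is exactly the reverse inclusion, supplied by the Fermi-surface hypothesis, that lets one transfer finite-dimensionality from the $p=\infty$ scale down to the $p<\infty$ scale.
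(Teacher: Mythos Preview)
Your argument is correct and matches the paper's intent: the corollary is stated there without proof, as an immediate consequence of Theorem \ref{dimvp} and Theorem \ref{Liouvillethm}(i), and you have spelled out exactly that deduction. Your observation that the hypothesis already forces both sides of the biconditional to hold (so the equivalence is vacuous in content) is the right way to read it, and your care with the sign of $\floor{N-d/p}$ and with using the equality rather than the mere inclusion in Theorem \ref{dimvp} is appropriate.
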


The following theorem could be regarded as a version of the unique continuation property at infinity for the periodic elliptic operator $A$.
\begin{thm}
\label{UCinfty}
Assume that $F_{A,\mathbb{R}}$ is finite (modulo $G^*$-shifts). Then the space $V^p_{N}(A)$ is trivial if either one of the following conditions holds:
\begin{enumerate}[(a)]
\item
$p \neq \infty, pN \leq d$.
\item
$p=\infty, N<0$.
\end{enumerate}
\end{thm}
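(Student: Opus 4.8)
The plan is to reduce everything to the already-established Liouville theorem (Theorem \ref{Liouvillethm}) and the comparison of the spaces $V^p_N(A)$ under changes of $p$ and $N$. Since $F_{A,\mathbb{R}}$ is finite modulo $G^*$-shifts, Theorem \ref{Liouvillethm}(i) tells us the Liouville theorem of order $(\infty,N)$ holds for $A$ for \emph{all} $N\ge 0$, and part (ii) gives the Floquet decomposition \mref{floquetrep} of every $u\in V^\infty_N(A)$. The key observation is a \emph{sharp growth dichotomy} for a nonzero Floquet solution $u_{k,j}$ of order $j$ with real quasimomentum $k$: on the one hand $\|u_{k,j}\|_{L^2(gK)}\le C\langle g\rangle^{j}$ (this is the $L^2$-growth estimate recalled in Section \ref{S:Bolch_funct}); on the other hand, because the $G$-periodic coefficients $p_\alpha$ in the representation $u_{k,j}=e_k\big(\sum_{|\alpha|\le j}[x]^\alpha p_\alpha\big)$ cannot all vanish, one gets a matching \emph{lower} bound $\|u_{k,j}\|_{L^2(gK)}\ge c\langle g\rangle^{j}$ along a suitable sequence of $g$'s (the leading-order periodic coefficient is nonzero on a set of positive measure, so it contributes a term that is not swamped by the lower-order ones). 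Hence a nonzero Floquet solution of order $j$ lies in $V^p_M(X)$ if and only if $j<M+d/p$ when $p<\infty$ (the $\ell^p$-sum $\sum_g \langle g\rangle^{(j-M)p}$ converges iff $(j-M)p<-d$, i.e. $j<M-d/p$... let me restate: iff $(M-j)p>d$), and if and only if $j\le M$ when $p=\infty$.

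\textbf{Case (b): $p=\infty$, $N<0$.} Suppose $u\in V^\infty_N(A)$ is nonzero. Since $V^\infty_N(A)\subseteq V^\infty_0(A)$ and the Liouville theorem holds, \mref{floquetrep} applies with $N$ replaced by $0$, so $u=\sum_{k}u_{k,0}$ is a finite sum of \emph{Bloch} solutions with distinct real quasimomenta $k\in F_{A,\mathbb{R}}$. By the growth dichotomy a nonzero Bloch solution $u_{k,0}$ satisfies $\|u_{k,0}\|_{L^2(gK)}\ge c>0$ along some sequence, hence $\{\|u\|_{L^2(gK)}\langle g\rangle^{-N}\}_g$ cannot be bounded once $N<0$ — here one must check that distinct-quasimomentum Bloch solutions cannot cancel each other's leading behavior; this follows from the linear independence of the characters $\gamma_k$ (equivalently, a Vandermonde/orthogonality argument on the torus $\mathbb{T}^d$, averaging against $\overline{\gamma_k}$ to isolate a single summand). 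This forces $u=0$.

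\textbf{Case (a): $p\neq\infty$, $pN\le d$.} Again $u\in V^p_N(A)\subseteq V^p_{N'}(A)$ for any $N'\ge N$; choosing $N'\ge 0$ with $pN'$ still $\le d$ is generally not possible, so instead I argue directly. We want to show $V^p_N(A)=0$. First, note $V^p_N(A)\subseteq V^\infty_{N+d/p}(A)$ by a routine embedding ($\ell^p\subseteq\ell^\infty$ after the weight shift, using $\sum_g\langle g\rangle^{-d}<\infty$ — actually one gets $V^p_N(X)\subseteq V^\infty_{N}(X)$ directly since $\ell^p(G)\subseteq\ell^\infty(G)$, so $V^p_N(A)\subseteq V^\infty_N(A)\subseteq V^\infty_{\max(N,0)}(A)$ and the Liouville theorem applies), so every $u\in V^p_N(A)$ is a finite sum of Floquet solutions $\sum_{k,j}u_{k,j}$. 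Now apply the growth dichotomy in the $\ell^p$ setting: for $u$ to lie in $V^p_N(X)$, each nonzero component $u_{k,j}$ must individually lie in $V^p_N(X)$ (again isolate a single $(k,j)$ by averaging against $\overline{\gamma_k}$ and extracting the top-order polynomial part), which by the dichotomy requires $(N-j)p>d$, i.e. $j<N-d/p\le 0$. Since $j\ge 0$, no such component exists, so $u=0$.

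\textbf{Main obstacle.} The delicate point — and the only place real work is needed — is the \emph{lower} growth bound for a nonzero Floquet solution of order $j$, together with the non-cancellation of components with distinct quasimomenta (or with the same quasimomentum but distinct polynomial orders). Concretely: one must show that if $\sum_{k,j}u_{k,j}\in V^p_N(X)$ then each $u_{k,j}\in V^p_N(X)$. For distinct real quasimomenta this is an almost-periodicity / character-orthogonality argument on $G=\mathbb{Z}^d$; for the polynomial orders one peels off the top-degree term using the structure $u_{k,j}=e_k\sum_{|\alpha|\le j}[x]^\alpha p_\alpha$ and the fact that the leading periodic coefficients cannot all vanish (a unique-continuation-type input, or simply the definition of "order exactly $j$"). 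I expect this separation-of-components lemma, rather than the elementary $\ell^p$-convergence bookkeeping, to be where the care lies; it is essentially contained in the analysis behind Theorem \ref{Liouvillethm}(ii) in \cite{KP2}, so it can be invoked or adapted rather than reproved from scratch.
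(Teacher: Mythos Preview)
Your outline is essentially correct and runs on the same two engines as the paper's proof: a Dedekind/Vandermonde separation of the distinct quasimomenta, and a polynomial lower-growth bound showing that a nonzero Floquet solution of order $j$ cannot belong to $V^p_N(X)$ once $(N-j)p\le d$. The paper organizes these differently. For case (b) it also reduces to a sum of Bloch solutions, but instead of averaging on $G$ it works on the dual side: it views $g\mapsto u(g\cdot x)$ as the Fourier coefficients of a finite sum of Dirac masses on $\mathbb{T}^d$ and tests against shrinking cutoffs to isolate each mass. For case (a) the paper does \emph{not} attack the higher-order Floquet pieces directly; it first invokes Theorem~\ref{dimvp} (whose proof is the iterative Lemma~\ref{induction}) to squeeze $V^p_N(A)\subseteq V^p_{d/p+1/2}(A)=V^\infty_0(A)$, so that only order-zero Bloch solutions remain; the Dedekind lemma then separates the quasimomenta and the divergence of $\sum_g\langle g\rangle^{-pN}$ finishes it. Your route bypasses Theorem~\ref{dimvp} and handles all orders at once via the growth dichotomy, which is a legitimate and arguably more direct path to this particular theorem, at the cost of not getting \ref{dimvp} as a reusable byproduct.

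One genuine caution about your separation step. Your proposed mechanism ``average against $\overline{\gamma_k}$ to isolate the $k$-component'' works for Bloch functions but \emph{fails} for Floquet functions of order $\ge 1$: such a function does not transform under the shift $T_g$ simply by the scalar $\gamma_k(g)$, so Ces\`aro averaging against $\overline{\gamma_k}$ does not project onto $u_k$. The correct device (and what the paper actually does in Step~4 of Lemma~\ref{induction}) is to first pass to the \emph{leading homogeneous part} in $g$, where the shift action \emph{is} scalar at top order, and only then apply the Dedekind/Vandermonde lemma (the paper's Lemma~\ref{dedekind}) to disentangle the quasimomenta. After that, the cone/polar-coordinates lower bound you describe gives $(N_0-N)p<-d$ directly. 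So your flagged ``main obstacle'' is exactly right, but the order of operations matters: peel off the top degree first, then separate characters --- not the other way around.
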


In fact, a more general version of these results holds:
\begin{thm}
\label{general}
Let $p \in [1,\infty)$. Let also $\phi$ be a continuous, positive function defined on $\mathbb{R}^+$ such that
\beqn
N_{p,\phi}:=\sup\left\{N \in \bZ : \int\limits_{0}^{\infty}\phi(r)^{-p}\cdot \langle r \rangle^{pN+d-1}\di{r}<\infty\right\}<\infty.
\eeqn
We define $\cV^{p}_{\phi}(A)$ as the space of all solutions $u$ of $A=0$ satisfying the condition
$$\sum\limits_{g \in \bZ^d}\|u\|_{L^2(gK)}^p\cdot \phi(|g|)^{-p}<\infty$$
holds for some compact domain $K$ satisfying \mref{fundamental}.

If $F_{A,\mathbb{R}}$ is finite (modulo $G^*$-shifts), then one has
\begin{itemize}
\item
If $N_{p, \phi} \geq 0$, then $\displaystyle \cV^{p}_{\phi}(A)=V^{\infty}_{N_{p,\phi}}(A)$.
\item
If $N_{p, \phi}<0$, then $\displaystyle \cV^{p}_{\phi}(A)=\{0\}$.
\end{itemize}
\end{thm}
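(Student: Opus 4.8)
The plan is to reduce Theorem~\ref{general} to the already-established Theorem~\ref{dimvp} and Theorem~\ref{UCinfty} by a two-sided sandwiching of the weight $\phi$ between pure polynomial weights. First I would set $N:=N_{p,\phi}$ and unwind the definition of $N_{p,\phi}$: the integrability condition $\int_0^\infty \phi(r)^{-p}\langle r\rangle^{pN'+d-1}\,dr<\infty$ holds for $N'=N$ but fails for $N'=N+1$. Comparing this with the elementary fact that $\int_0^\infty \langle r\rangle^{-ps+pN'+d-1}\,dr<\infty$ iff $p(s-N')>d$, I expect to extract the following: on the one hand, the membership condition $\sum_g \|u\|_{L^2(gK)}^p\phi(|g|)^{-p}<\infty$ is implied by the stronger condition $\sum_g \|u\|_{L^2(gK)}^p\langle g\rangle^{-pN-d-\varepsilon}<\infty$ for suitable $\varepsilon>0$ — i.e.\ $V^p_{M}(A)\subseteq \cV^p_\phi(A)$ for an appropriate $M$ slightly below $N+d/p$; on the other hand, $\cV^p_\phi(A)\subseteq V^p_{M'}(A)$ for $M'$ slightly above $N+d/p$ chosen so that $\lfloor M'-d/p\rfloor = N$. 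The key technical point is that $\phi$ being merely continuous and positive does not by itself give any comparison with powers of $r$; this must be squeezed out purely from the two integral conditions (one finite, one infinite) together with monotonicity-free comparison arguments, possibly after replacing $\phi$ by $\tilde\phi(r):=\inf_{\rho\le r}\phi(\rho)$ or a similar regularization that does not change the relevant sums up to constants on the lattice $\mathbb Z^d$.

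Concretely, the steps in order: (1) Translate the lattice sum $\sum_{g\in\mathbb Z^d}\psi(|g|)^{-p}$ into the integral $\int_0^\infty \psi(r)^{-p} r^{d-1}\,dr$ up to two-sided constants, using that the number of $g\in\mathbb Z^d$ with $|g|$ in a unit interval around $r$ is comparable to $\langle r\rangle^{d-1}$; this is where $\lfloor\cdot\rfloor$ versus $[\cdot]$ bookkeeping enters, matching the ``strictly less than'' convention in the statement of $N_{p,\phi}$ and in $\floor{N-d/p}$. (2) From $N_{p,\phi}=N<\infty$ deduce the divergence $\int_0^\infty \phi(r)^{-p}\langle r\rangle^{p(N+1)+d-1}\,dr=\infty$, hence for every $\delta>0$, $\phi(r)^{-p}\langle r\rangle^{p(N+1)+d-1}$ cannot be eventually dominated by $\langle r\rangle^{-1-\delta}$; this yields $\cV^p_\phi(A)\subseteq V^\infty_{N}(A)$ when combined with Theorem~\ref{dimvp}'s characterization $V^\infty_{\floor{M'-d/p}}(A)=V^p_{M'}(A)$ applied with $M'$ slightly larger than $N+d/p$, provided one checks $\cV^p_\phi(A)\subseteq V^p_{M'}(A)$. (3) From the convergence at $N'=N$, conversely obtain $V^\infty_N(A)\subseteq \cV^p_\phi(A)$: any $u\in V^\infty_N(A)$ satisfies $\|u\|_{L^2(gK)}\le C\langle g\rangle^N$ by Theorem~\ref{Liouvillethm}(ii)/the Floquet-growth estimate, so $\sum_g \|u\|_{L^2(gK)}^p\phi(|g|)^{-p}\le C^p\sum_g \langle g\rangle^{pN}\phi(|g|)^{-p}\asymp C^p\int_0^\infty \phi(r)^{-p}\langle r\rangle^{pN+d-1}\,dr<\infty$. (4) Combining (2) and (3) gives $\cV^p_\phi(A)=V^\infty_N(A)$ when $N\ge 0$. (5) When $N<0$, the convergence at $N'=N<0$ forces, via the same comparison, that any $u\in\cV^p_\phi(A)$ lies in some $V^p_{M'}(A)$ with $pM'\le d$ (indeed $M'$ can be taken with $\floor{M'-d/p}=N<0$, i.e.\ $M'-d/p<0$), so $pM'<d$, and Theorem~\ref{UCinfty}(a) forces $u=0$; hence $\cV^p_\phi(A)=\{0\}$.

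The main obstacle I anticipate is step (2) together with the containment $\cV^p_\phi(A)\subseteq V^p_{M'}(A)$ for a suitable polynomial weight $M'$: unlike the inclusion in step (3), this direction is not automatic because $\phi$ could oscillate, being small on a sparse set of radii while the divergence of the $N'=N+1$ integral is caused by that sparse set. One must argue that if $u$ is a solution of $Au=0$ with $\sum_g\|u\|_{L^2(gK)}^p\phi(|g|)^{-p}<\infty$, then $u$ still has polynomial growth of the right order — this should follow from the \emph{a priori} rigidity that, under the hypothesis that $F_{A,\mathbb R}$ is finite mod $G^*$, \emph{every} solution with \emph{any} subexponential $\ell^p$-type growth is automatically a finite sum of Floquet solutions of bounded order (a qualitative unique-continuation-at-infinity input, cf.\ the discussion preceding Theorem~\ref{UCinfty}); once $u$ is known to be such a finite Floquet sum of some order $M''$, its growth is exactly polynomial, $\|u\|_{L^2(gK)}\asymp\langle g\rangle^{M''}$ along a suitable sequence, and then the finiteness of $\sum_g\langle g\rangle^{pM''}\phi(|g|)^{-p}$ together with the divergence of $\int\phi^{-p}\langle r\rangle^{p(N+1)+d-1}$ pins down $M''\le N$. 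So the real content is bootstrapping ``$\ell^p$ against a rough weight'' up to ``finite Floquet sum,'' after which everything is the elementary integral comparison of steps (1)--(5).
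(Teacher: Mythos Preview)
The paper gives no proof of Theorem~\ref{general}; it says only that ``the proofs of Theorems~\ref{dimvp} and~\ref{UCinfty} easily transfer'' and leaves it as an exercise. The intended route is thus to \emph{re-run} those arguments with $\phi(|g|)$ in place of $\langle g\rangle^{N}$: the heart is the $\phi$-analogue of Step~5 of Lemma~\ref{induction} --- once $u$ is known to be a Floquet sum of exact order $N_0$, isolate the leading homogeneous polynomial $P$ of degree $N_0$ (via Schauder estimates and the Dedekind lemma, Steps~2--4), factor $\sum_g |P(g)|^p\phi(|g|)^{-p}\asymp\bigl(\int_{\mathbb{S}^{d-1}}|P|^p\bigr)\cdot\int_0^\infty r^{pN_0+d-1}\phi(r)^{-p}\,dr$ in polar coordinates, and read off $N_0\le N_{p,\phi}$ from the definition. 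Your proposal is different in spirit: you try to sandwich $\cV^p_\phi(A)$ between two spaces $V^p_{M}(A)$ and quote Theorems~\ref{dimvp} and~\ref{UCinfty} as black boxes. You correctly see that the needed function-space inclusion $\cV^p_\phi(A)\subseteq V^p_{M'}(A)$ fails (no pointwise bound $\phi(r)\lesssim\langle r\rangle^{M'}$ follows from the integral hypotheses), and your fallback --- work with the Floquet order directly --- is precisely the ``transfer'' the paper has in mind. So after the detour you end up at the same place.

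The genuine gap is your bootstrapping step. You assert that ``every solution with any subexponential $\ell^p$-type growth is automatically a finite sum of Floquet solutions,'' but nothing in the paper supports this: Theorem~\ref{Liouvillethm} only covers $u\in V^\infty_M(A)$, and the claim is false as stated (for instance $\Re\cos\sqrt{x_1+ix_2}$ is harmonic on $\mathbb{R}^2$, grows like $e^{c\sqrt{|x|}}$, and is not a polynomial). What one actually needs to start the argument --- for the paper's route just as for yours --- is the inclusion $\cV^p_\phi(A)\subseteq V^\infty_M(A)$ for some finite $M$. In the model case $\phi(r)=\langle r\rangle^N$ this is the trivial $\ell^p\subseteq\ell^\infty$; for a general continuous positive $\phi$ with $N_{p,\phi}<\infty$, the hypothesis does not force $\phi$ to have pointwise polynomial growth, so $\|u\|_{L^2(gK)}\le C\phi(|g|)$ alone does not place $u$ in any $V^\infty_M$. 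Closing this requires either a mild extra hypothesis on $\phi$ (at-most-polynomial growth suffices and covers the motivating example) or a further argument combining the divergence of $\int\phi^{-p}\langle r\rangle^{p(N_{p,\phi}+1)+d-1}\,dr$ with the elliptic structure; you have not supplied either.
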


Note that if $\phi(r)=\langle r \rangle^{N}$, then $\cV^{p}_{\phi}(A)=V^{p}_N(A)$.

The proofs of Theorems \ref{dimvp} and \ref{UCinfty} (provided in section \ref{technical}) easily transfer to this general version. We thus skip the proof of Theorem \ref{general} (never used later on in this text), leaving this as an exercise for the reader.

\begin{remark}
It is worthwhile to note that results of this section did not require the assumption of discreteness of spectra of the operators $A(k)$. This is useful, in particular, when considering overdetermined problems.
\end{remark}

\section[Formulas for dimensions]{Explicit formulas for dimensions of spaces $V^{\infty}_N(A)$}

In order to describe explicit formulas for the dimensions of $V^{\infty}_N(A)$, we need to introduce some notions from \cite{KP2}.
Recall that, for each quasimomentum $k$, $A(k)$ belongs to the space $\mathcal{L}(H^m_k(X), L^2_k(X))$ of bounded linear operators acting from $H^m_k(X)$ to $L^2_k(X)$.
For a real number $s$, the spaces $H^s_k(X)$ are the fibers of the following analytic $G^*$-periodic Hilbert vector bundle over $\mathbb{C}^d$:
\beq
\mathcal{E}^s:=\bigcup_{k\in \mathbb{C}^d} H^s_k(X)=\bigcup_{k\in \mathbb{C}^d} H^s(E_k).
\eeq
Consider a quasimomentum $k_0$ in $F_{A,\mathbb{R}}$. We can locally trivialize\footnote{In fact, $\mathcal{E}^s$ is globally analytically trivializable (see e.g., \cite{Kbook, ZKKP}) although we do not need this fact here.} the vector bundle $\mathcal{E}^s$, so that in a neighborhood of $k_0$, $A(k)$ becomes an analytic family of bounded operators from $H^s_{k_0}$ to $L^2_{k_0}$ (see Subsection \ref{a(k)}).
Suppose that the spectra of operators $A(k)$ are discrete for any value of the quasimomentum $k$.

Assume now that zero is an eigenvalue of the operator $A(k_0)$ with algebraic multiplicity $r$. Let $\Upsilon$ be a contour in $\mathbb{C}$ separating zero from the rest of the spectrum of $A(k_0)$. According to the perturbation theory (see Proposition \ref{semicont}), we can pick a small neighborhood of $k_0$ such that the contour $\Upsilon$ does not intersect with $\sigma(A(k))$ for any $k$ in this neighborhood.
We denote by $\Pi(k)$ the $r$-dimensional Riesz spectral projector \cite[Ch. III, Theorem 6.17]{Ka} for the operator $A(k)$, associated with the contour $\Upsilon$.
Now one can pick an orthonormal basis $\{e_j\}_{1 \leq j \leq r}$ in the range of $\Pi(k_0)$ and define $e_{j}(k):=\Pi(k)e_{j}$. Then let us consider the $r \times r$ matrix $\lambda(k)$ of the operator $A(k)\Pi(k)$ in the basis $\{e_{j}(k)\}$, i.e.,
\beq
\lambda_{ij}(k)=\langle A(k)e_j(k), e_i(k)\rangle=\langle A(k)\Pi(k)e_j, e_i\rangle.
\eeq
\begin{remark}\indent
\begin{itemize}
\item An important special case is when $r=1$ near $k_0$. Then $\lambda(k)$ is just the band function that vanishes at $k_0$.
\item
It will be sometimes useful to note that our considerations in this part will not change if we multiply $\lambda(k)$ by an invertible matrix function analytic in a neighborhood of $k_0$.
\end{itemize}
\end{remark}
Now, using the Taylor expansion around $k_0$, we decompose $\lambda(k)$ into the series of homogeneous matrix valued polynomials:
\beq\label{Taylor}
\lambda(k)=\sum_{j \geq 0} \lambda_{j}(k-k_0),
\eeq
where each $\lambda_j$ is a $\mathbb{C}^{r \times r}$-valued homogeneous polynomial of degree $j$ in $d$ variables.

For each quasimomentum $k_0 \in F_{A,\mathbb{R}}$, let $\ell_0(k_0)$ be the order of \textbf{the first non-zero term} of the Taylor expansion (\ref{Taylor}) around $k_0$ of the matrix function $\lambda(k)$.

The next result of \cite{KP2} provides explicit formulas for dimensions of the spaces $V^{\infty}_N(A)$.

In order to avoid misunderstanding the formulas below, we adopt the following agreement:
\begin{defi}\label{D:neg_binom}
If in some formulas throughout this text one encounters a binomial coefficient $\binom{A}{B}$, where $A<B$ (in particular, when $A<0$) we define its value to be equal to zero.
\end{defi}

\begin{thm}\cite{KP2}
\label{LiouvilleDim}
Suppose that the real Fermi surface $F_{A,\mathbb{R}}$ is finite (modulo $G^*$-shifts) and the spectrum $\sigma(A(k))$ is discrete for any quasimomentum $k$. Then,
\begin{enumerate}[a.]
\item
For each integer $0 \leq N<\min\limits_{k \in F_{A,\mathbb{R}}}\ell_0(k)$, we have
\beq
\label{dimliouville}
\dim V^{\infty}_N(A)=\sum_{k \in F_{A, \mathbb{R}}} m_k \left[\binom{d+N}{d}-\binom{d+N-\ell_0(k)}{d}\right],
\eeq
where $m_k$ is the algebraic multiplicity of the zero eigenvalue of the operator $A(k)$.
\item
If for every $k \in F_{A,\mathbb{R}}$, $\det\lambda_{\ell_0(k)}$ is not identically equal to zero, formula \mref{dimliouville} holds for any $N \geq 0$.
\end{enumerate}
\end{thm}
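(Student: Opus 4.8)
The plan is to reduce the dimension count of $V^\infty_N(A)$ to a purely local computation at each quasimomentum $k_0 \in F_{A,\mathbb{R}}$, using the Floquet representation from Theorem \ref{Liouvillethm}(ii). By that theorem, every $u \in V^\infty_N(A)$ decomposes as a finite sum of Floquet solutions of order $\le N$ over the finitely many points $k_0 \in F_{A,\mathbb{R}}$ (modulo $G^*$), and these summands are uniquely determined by $u$ (the quasimomenta are distinct, so the corresponding Floquet components are linearly independent). Hence $\dim V^\infty_N(A) = \sum_{k_0 \in F_{A,\mathbb{R}}} \dim W_N(k_0)$, where $W_N(k_0)$ is the space of Floquet solutions of $Au=0$ of order $\le N$ with quasimomentum $k_0$. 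So the whole problem is: count, for a fixed $k_0$, the Floquet solutions of order $\le N$.

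For the local count I would use the Taylor data of $\lambda(k)$ introduced before the theorem. After localizing and trivializing $\mathcal{E}^s$ near $k_0$, solving $A(k)u = 0$ in the relevant finite-dimensional Riesz-range reduces (via the Riesz projector $\Pi(k)$ and the Schur-complement/reduction argument standard in this circle of ideas, cf. \cite{KP2}) to solving $\lambda(k)\xi(k) = 0$ for $\mathbb{C}^r$-valued functions; and a Floquet solution of order $\le N$ corresponds to a polynomial (in $k-k_0$) solution $\xi$ of degree $\le N$ of the equation $\lambda(k)\xi = O$ modulo terms of degree $> N$, i.e. to a solution in the quotient ring $\mathbb{C}[k-k_0]^r / \mathfrak{m}^{N+1}$ where $\mathfrak{m}$ is the maximal ideal. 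Writing $\lambda(k) = \lambda_{\ell_0}(k-k_0) + (\text{higher order})$ with $\ell_0 = \ell_0(k_0)$, and using the remark that $\lambda(k)$ may be multiplied by an analytic invertible matrix without changing anything, the key point is that the leading term $\lambda_{\ell_0}$ governs the count: the space of polynomial solutions of degree $\le N$ in $\mathbb{C}[\,\cdot\,]^r/\mathfrak{m}^{N+1}$ has dimension
\[
m_{k_0}\left[\binom{d+N}{d} - \binom{d+N-\ell_0}{d}\right],
\]
because for $N < \ell_0$ there is no obstruction (so the dimension is $m_{k_0}\binom{d+N}{d}$, the full space of degree-$\le N$ vector polynomials cut down by the rank, the subtracted binomial being zero by Definition \ref{D:neg_binom}), while for larger $N$ the leading symbol imposes exactly $\binom{d+N-\ell_0}{d}$ independent conditions per multiplicity. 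This yields part (a) directly, and it yields part (b) once one knows the leading term alone controls everything.

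The main obstacle is exactly this last point: showing that when $N \ge \ell_0(k_0)$ the higher-order Taylor terms $\lambda_j$, $j > \ell_0$, do not change the count — equivalently, that the natural map from degree-$\le N$ solutions of $\lambda(k)\xi = O \bmod \mathfrak{m}^{N+1}$ to degree-$\le N$ solutions of $\lambda_{\ell_0}(k-k_0)\xi = O \bmod \mathfrak{m}^{N+1}$ is a bijection. This is false in general (hence the counterexamples alluded to in the introduction and the appearance of mere inequalities elsewhere in the paper), which is why part (b) needs the hypothesis that $\det \lambda_{\ell_0(k)}$ is not identically zero. Under that hypothesis, $\lambda_{\ell_0}(k-k_0)$ is invertible as a matrix over the fraction field $\mathbb{C}(k-k_0)$, so $\lambda(k) = \lambda_{\ell_0}(k-k_0)\bigl(I + \lambda_{\ell_0}^{-1}(\text{higher order})\bigr)$, and the second factor, while only a matrix of rational functions, can be shown to act invertibly on the appropriate graded/filtered pieces — one argues degree by degree: a solution modulo $\mathfrak{m}^{N+1}$ of $\lambda_{\ell_0}\xi = O$ can be corrected order-by-order to a solution of $\lambda\xi = O \bmod \mathfrak{m}^{N+1}$ since at each step the obstruction lies in the image of $\lambda_{\ell_0}$ acting on higher-degree terms, which is everything because $\det\lambda_{\ell_0}\not\equiv 0$ forces the map $\xi \mapsto \lambda_{\ell_0}\xi$ to be injective on vector polynomials and hence, by counting in each graded piece adjusted for the degree shift by $\ell_0$, the cokernels match up so that the counts agree. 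I would organize the argument so that part (a) is the clean unobstructed case (no hypothesis on $\det\lambda_{\ell_0}$ needed precisely because the subtracted binomial vanishes), and part (b) is the perturbation-stability statement whose proof is this graded successive-approximation scheme; I expect the bookkeeping of which graded pieces survive to be the technically delicate part.
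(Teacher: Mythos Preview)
The paper does not prove this theorem; it is quoted verbatim from \cite{KP2} as a background result, so there is no ``paper's own proof'' to compare against. Your sketch follows the strategy of \cite{KP2}: split $V^\infty_N(A)$ over the finitely many $k_0\in F_{A,\mathbb{R}}$ via Theorem~\ref{Liouvillethm}(ii), then at each $k_0$ reduce (via the Riesz projector) to counting $\mathbb{C}^{m_{k_0}}$-valued polynomial jets $\xi$ of degree $\le N$ annihilated by $\lambda(k)$ modulo $\mathfrak m^{N+1}$.

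Two comments. First, the identification ``Floquet solutions of order $\le N$ $\leftrightarrow$ polynomial solutions of $\lambda(k)\xi=0$ in $J^N$'' is itself a lemma that needs proof (the Riesz reduction must be shown to lose nothing, and the $(I-\Pi)$-part must be shown to contribute no Floquet solutions); you use it as a black box. Second, your part~(b) argument is more convoluted than necessary. You do not need to factor $\lambda=\lambda_{\ell_0}(I+\cdots)$ over rational functions or build a bijection between the kernels of $\lambda$ and $\lambda_{\ell_0}$ by successive approximation. Simply expand $\lambda\xi\equiv 0\pmod{\mathfrak m^{N+1}}$ in homogeneous components: the degree-$(\ell_0+j)$ equation reads $\lambda_{\ell_0}\xi_j=-\sum_{i>0}\lambda_{\ell_0+i}\xi_{j-i}$ for $j=0,1,\dots,N-\ell_0$. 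Since $\det\lambda_{\ell_0}\not\equiv 0$ makes $\eta\mapsto\lambda_{\ell_0}\eta$ injective on vector polynomials of each fixed degree, induction on $j$ forces $\xi_0=\xi_1=\cdots=\xi_{N-\ell_0}=0$. Thus $\ker(\lambda|_{J^N})=(\mathfrak m^{N-\ell_0+1}J^N)^{m_{k_0}}$ exactly, and its dimension is $m_{k_0}\bigl[\binom{d+N}{d}-\binom{d+N-\ell_0}{d}\bigr]$ on the nose --- the two kernels are literally the same set, not merely in bijection. Part~(a) is the special case $N<\ell_0$, where every degree-$(\ell_0+j)$ equation with $j\ge 0$ already lies in $\mathfrak m^{N+1}$, so there are no constraints at all.
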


It is worthwhile to note that the positivity of $\ell_0(k)$ is equivalent to the fact that both algebraic and geometric multiplicities of the zero eigenvalue of the operator $A(k)$ are the same. Also, the non-vanishing of the determinant of $\lambda_{\ell_0(k)}$ implies that $\ell_0(k)>0$.
\section[Gromov-Shubin theory]{The Nadirashvili-Gromov-Shubin version of the Riemann-Roch theorem for elliptic operators on noncompact manifolds}
\label{GSRR}

It will be useful to follow closely the paper \cite{GSinv} by M.~Gromov and M.~Shubin, addressing its parts that are relevant for our considerations.

\subsection{Some notions and preliminaries}
\label{RRsetting}
Through this section, $P$ will denote a linear elliptic differential expression with smooth coefficients on a non-compact Riemannian manifold $\mathcal{X}$ (later on, $\mathcal{X}$ will be the space of an abelian co-compact covering). We denote by $P^*$ its transpose (also an elliptic differential operator), defined via the identity
\beqn
\langle Pu, v\rangle=\langle u, P^*v\rangle, \quad \forall u,v \in C^{\infty}_c(\mathcal{X}),
\eeqn
where $\langle \cdot, \cdot \rangle$ is the bilinear duality \mref{L2inner}.

We notice that both $P$ and $P^*$ can be applied as differential expressions to any smooth function on $\X$ and these operations keep the spaces $C^\infty(\X)$ and $C^\infty_c(\X)$ invariant.

We assume that $P$ and $P^*$ are defined as operators on some domains $\Dom{P}$ and $\Dom{P^*}$, such that
\begin{eqnarray}
\label{domainP}
C^{\infty}_c(\mathcal{X}) \subseteq \Dom{P} \subseteq C^{\infty}(\mathcal{X}),\\
C^{\infty}_c(\mathcal{X}) \subseteq \Dom{P^*} \subseteq C^{\infty}(\mathcal{X}).
\end{eqnarray}

\begin{defi}
We denote by $\Image{P}$ and $\Image{P^*}$ the ranges of $P$ and $P^*$ on their corresponding domains, i.e.
\beq
\Image{P}=P(\Dom{P}),\quad \Image{P^*}=P^*(\Dom{P^*}).
\eeq
As usual, $\Ker{P}$ and $\Ker{P^*}$ denote the spaces of solutions of the equations $Pu=0$, $P^*u=0$ in $\Dom{P}$ and $\Dom{P^*}$ respectively.
\end{defi}

We also need to define some auxiliary spaces\footnote{Most of the complications in definitions here and below come from non-compactness of the manifold.}.
Namely, assume that we can choose linear subspaces $\Domp{P}$ and $\Domp{P^*}$ of $C^\infty(\X)$ so that
\begin{enumerate}[($\mathcal{P}$1)]
\item
\begin{eqnarray}
C^{\infty}_c(\mathcal{X}) \subseteq \Domp{P} \subseteq C^{\infty}(\mathcal{X}),\\
C^{\infty}_c(\mathcal{X}) \subseteq \Domp{P^*} \subseteq C^{\infty}(\mathcal{X}),
\end{eqnarray}
and
\item
\beqn
\Image{P^*} \subseteq \Domp{P}, \quad \Image{P} \subseteq \Domp{P^*}.
\eeqn
\item
The bilinear pairing $\int_{\mathcal{X}} f(x)g(x) \di\mu_X$ (see (\ref{L2inner})) makes sense for functions from the relevant spaces, to define the pairings
\beqn
\langle \cdot, \cdot \rangle: \Domp{P^*} \times \Dom{P^*} \mapsto \mathbb{C}, \quad \langle \cdot, \cdot \rangle: \Dom{P} \times \Domp{P} \mapsto \mathbb{C},
\eeqn
so that
\item
The duality (``integration by parts formula'')
\beqn
\langle Pu, v \rangle=\langle u, P^*v \rangle, \quad \forall u \in \Dom{P}, \hspace{3pt} v \in \Dom{P^*}
\eeqn
holds.
\end{enumerate}

\begin{remark}
The notation $\Domp$ might confuse the reader, leading her to thinking that this is a different domain of the same  differential expression. It is rather an object \underline{\textbf{dual}} to the domain $\Dom$.
\end{remark}
We also need an appropriate notion of a polar (annihilator) to a subspace:
\begin{defi}
\label{annihilator}
For a subspace $L\subset\Dom{P}$, its \textbf{annihilator} $L^{\circ}$ is the subspace of $\Domp{P}$ consisting of all elements of  $\Domp{P}$ that are orthogonal to $L$ with respect to the pairing $\langle \cdot, \cdot \rangle$ :
\beqn
L^{\circ}=\{ u \in \Domp{P}\mid \langle v, u \rangle=0, \mbox{ for any } v \in L\}.
\eeqn
Analogously, $M^{\circ}$ is the annihilator in $\Domp{P^*}$ of a linear subspace $M \subset \Dom{P^*}$ with respect to $\langle \cdot, \cdot \rangle$.
\end{defi}

Following \cite{GSinv}, we now introduce an appropriate for our goals notion of Fredholm property.
\begin{defi}
\label{Fredholm}
The operator $P$, as above, is a \textbf{Fredholm operator on $\mathcal{X}$} if the following requirements are satisfied:
\begin{enumerate}[(i)]
\item
\beqn
\dim{\Ker{P}}<\infty, \dim{\Ker{P^*}}<\infty
\eeqn
and
\item
\beqn
\Image{P}=\left(\Ker{P^*}\right)^{\circ}.
\eeqn
\end{enumerate}
Then the \textbf{index} of $P$  is defined as
$$\Index{P}=\dim{\Ker{P}}-\codim{\Image{P}}=\dim{\Ker{P}}-\dim{\Ker{P^*}}.$$
\end{defi}

\subsection{Point divisors}
We will need to recall the rather technical notion of a \textbf{rigged divisor} from \cite{GSinv}. However, for reader's sake, we start with more familiar and easier to comprehend particular case of a \textbf{point divisor}, which appeared initially in Nadirashvili and Gromov-Shubin papers \cite{nadirashvili,GSadv}.

\begin{defi}\label{D:divisor_point}
A \textbf{point divisor} $\mu$ on $X$ consists of two finite disjoint subsets of $X$
\beq\label{E:Dpm_point}
D^+=\{x_1, \ldots, x_r\}, D^-=\{y_1, \ldots, y_s\}
\eeq
and two tuplets $0<p_1, \ldots, p_r$ and $q_1, \ldots, q_s<0$ of integers. The \textbf{support of the point divisor} $\mu$ is $D^+\bigcup D^-$. We will also write $$\mu:=x_1^{p_1}\ldots x_r^{p_r}\cdot y_1^{q_1}\ldots y_s^{q_s}.$$
\end{defi}

In other words, $\mu$ is an element of the free abelian group generated by points of $\mathcal{X}$.

In \cite{nadirashvili, GSadv}, such a divisor is used to allow solutions $u(x)$ of an elliptic equation $Pu=0$ of order $m$ on $n$-dimensional manifold $X$ to have poles up to certain orders at the points of $D^+$ and enforce zeros on $D^-$. Namely,
\begin{enumerate}[(i)]
\item
For any $1 \leq j \leq r$, there exists an open neighborhood $U_j$ of $x_j$ such that on $U_j \setminus \{x_j\}$, one has $u=u_s+u_r$, where $u_r \in C^{\infty}(U_j)$, $u_s \in C^{\infty}(U_j \setminus \{x_j\})$ and when $x \rightarrow x_j$,
\begin{equation*}
u_s(x)=o(|x-x_j|^{m-n-p_j}).
\end{equation*}
\item
For any $1 \leq j \leq s$, as $x \rightarrow y_j$, one has
\begin{equation*}
u(x)=O(|x-y_j|^{|q_j|}).
\end{equation*}
\end{enumerate}

\subsection{Rigged divisors}

The notion of a ``rigged'' divisor comes from the desire to allow for some infinite sets $D^\pm$, but at the same time to impose only finitely many conditions (``zeros'' and ``singularities'') on the solution.

So, let us take a deep breath and dive into it. First, let us define some distribution spaces:
\begin{defi}\label{D:distr_spaces}
For a closed set $C\subset\X$, we denote by $\mathcal{E}'_{C}(\mathcal{X})$ the space of distributions on $\mathcal{X}$, whose supports belong to $C$ (i.e., they are zero outside $C$).
\end{defi}

\begin{defi}
\label{rigged}
\indent
\begin{enumerate}
\item
A \textbf{rigged divisor associated with $P$} is a tuple $\mu=(D^+, L^+; D^-, L^-)$, where $D^{\pm}$ are \textit{compact nowhere dense disjoint subsets} in $\mathcal{X}$ and $L^{\pm}$ are \textit{finite-dimensional} vector spaces of distributions on $X$ supported in $D^{\pm}$ respectively, i.e.,
\beqn
L^+ \subset \mathcal{E}'_{D^+}(\mathcal{X}), \quad L^- \subset \mathcal{E}'_{D^-}(\mathcal{X}).
\eeqn
\item The \textbf{secondary spaces $\tilde{L}^{\pm}$} associated with $L^{\pm}$ are defined as follows:
\beqn
\tilde{L}^{+}=\{u \mid u \in \mathcal{E}'_{D^+}(\mathcal{X}), Pu \in L^+\}, \quad \tilde{L}^{-}=\{u \mid u \in \mathcal{E}'_{D^-}(\mathcal{X}), P^{*}u \in L^-\}.
\eeqn
\item Let $\ell^{\pm}=\dim L^{\pm}$ and $\tilde{\ell}^{\pm}=\dim \tilde{L}^{\pm}$. The \textbf{degree} of $\mu$ is defined as follows:
\beq
\deg_P{\mu}=(\ell^+-\tilde{\ell}^+)-(\ell^--\tilde{\ell}^-).
\eeq

\item The \textbf{inverse of $\mu$} is the rigged divisor $\mu^{-1}:=(D^-, L^-;D^+,L^+)$ associated with $P^*$.
\end{enumerate}
\end{defi}
\begin{remark} \indent
   \begin{itemize}
   \item Notice that the degree of the divisor involves the operator $P$, so it would have been more prudent to call it ``degree of the divisor with respect to the operator $P$,'' but we'll neglect this, hoping that no confusion will arise.
   \item Observe that, due to their ellipticity, $P$ and $P^*$ are injective on $\mathcal{E}'_{D^+}$ and $\mathcal{E}'_{D^-}$, correspondingly  \footnote{For example, if $u \in \mathcal{E}'_{D^+}$ and $Pu=0$ then $u$ is smooth due to elliptic regularity, but then $u=0$ everywhere since the complement of $D^+$ is dense.}. Thus,
       \beq\label{E:ellineq}
       \ell^{\pm} \geq \tilde{\ell}^{\pm}.
       \eeq
   \item The sum of the degrees of a divisor $\mu$ and of its inverse is zero.
   \end{itemize}
\end{remark}

Although we have claimed that point divisors are also rigged divisors, this is not immediately clear when comparing the definitions \ref{D:divisor_point} and \ref{rigged}. Namely, we have to assign the spaces $L_\pm$ to a point divisor and to check that the definitions are equivalent in this case. This was done\footnote{Which is not trivial.} in \cite{GSinv}, if one defines the spaces associated with a point divisor as follows:
$$
L^+=\left\{\sum\limits_{1 \leq j \leq r}\sum\limits_{|\alpha| \leq p_j-1}c^{\alpha}_j \delta^{\alpha}(\cdot-x_j)\mid c^{\alpha}_j \in \mathbb{C}\right\}$$
and
$$
L^-=\left\{\sum\limits_{1 \leq j \leq s}\sum\limits_{|\alpha| \leq |q_s|-1}c^{\alpha}_j \delta^{\alpha}(\cdot-y_j) \mid c^{\alpha}_j \in \mathbb{C}\right\},
$$
where $\delta$ and $\delta^{\alpha}$ denote the Dirac delta function and its derivative corresponding to the multi-index $\alpha$.

It was also shown in \cite{GSinv}) that the degree $\deg_{P}(\mu)$ in this case is
\begin{equation}\label{Analog}
\sum\limits_{1 \leq j \leq r}\left[\binom{p_j+n-1}{n}-\binom{p_j+n-1-m}{n}\right]-\sum\limits_{1 \leq j \leq s}\left[\binom{q_j+n-1}{n}-\binom{q_j+n-1-m}{n}\right].
\end{equation}
Here, as before, $n$ is the dimension of the manifold $X$ and $m$ is the order of the operator $P$.

\begin{remark}
One observes a clear similarity between the combinatorial expressions in (\ref{Analog}) and (\ref{dimliouville}). It was one of the reasons to try to combine Liouville and Riemann-Roch type results.
\end{remark}

\subsection{Nadirashvili-Gromov-Shubin theorem on noncompact manifolds}

To state (a version of) the Gromov-Shubin theorem, we now introduce the spaces of solutions of $P$ with allowed singularities on $D^+$ and vanishing conditions on $D^-$.
\begin{notation}
For a compact subset $K$ of $\mathcal{X}$ and $u\in C^{\infty}(\mathcal{X} \setminus K)$, we shall write that
$$
u\in\Dom_K{P},
$$
if there is a compact neighborhood $\hat{K}$ of $K$ and $\hat{u} \in \Dom{P}$ such that $u=\hat{u}$ outside $\hat{K}$.
\end{notation}
\begin{defi}\label{D:LmuP}
For an elliptic operator $P$ and a rigged divisor $$\mu=(D^+, L^+; D^-, L^-),$$ \textbf{the space $L(\mu, P)$} is defined as follows: $u\in L(\mu,P)$ iff $u \in \Dom_{D^+}{P}$ and there exists $\tilde{u} \in \mathcal{D}'(\mathcal{X})$, such that $\tilde{u}=u$ on $\mathcal{X} \setminus D^+$, $P\tilde{u} \in L^+$, and $(u, L^-)=0$.

Here $(u, L^-)=0$ means that $u$ is orthogonal to every element in $L^-$ with respect to the canonical bilinear duality.
\end{defi}
\begin{remark}One notices that distributions $\tilde{u}$ are regularization of $u \in C^{\infty}(\mathcal{X} \setminus D^+)$.\end{remark}

In other words, the space $L(\mu,P)$ consists of solutions of the equation $Pu=0$ with poles allowed and zeros enforced by the divisor $\mu$. It is worthwhile to notice that since the manifold is non-compact, the domain $\Dom P$ of the operator $P$ will have to involve some conditions at infinity. This observation will be used later to treat a ``pole'' at infinity, i.e. Liouville property.

Now we can state a variant of Nadirashvili-Gromov-Shubin's version of the Riemann-Roch theorem.
\begin{thm}
\label{RR}
Let $P$ be an elliptic operator such that \mref{domainP} and properties \textit{($\mathcal{P}$1)-($\mathcal{P}$4)} are satisfied. Let also $\mu$ be a rigged divisor associated with $P$. If $P$ is a Fredholm operator on $\mathcal{X}$, then the following Riemann-Roch inequality holds:
\beq
\label{RRineq}
\dim{L(\mu,P)}-\dim{L(\mu^{-1},P^*}) \geq \Index{P}+\deg_{P}(\mu).
\eeq
If both $P$ and $P^*$ are Fredholm on $\mathcal{X}$, \mref{RRineq} becomes the Riemann-Roch equality:
\beq
\label{RReq}
\dim{L(\mu,P)}-\dim{L(\mu^{-1},P^*})=\Index{P}+\deg_{P}(\mu).
\eeq
\end{thm}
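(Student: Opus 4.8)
The plan is to reduce the abstract Riemann--Roch statement to the definition of the Fredholm property by constructing a single auxiliary operator whose kernel and cokernel encode $L(\mu,P)$ and $L(\mu^{-1},P^*)$, and then comparing its index with that of $P$ via the finite-dimensional correction spaces $L^\pm,\tilde L^\pm$. First I would unwind the definitions: an element of $L(\mu,P)$ is a smooth solution of $Pu=0$ off $D^+$, admitting a distributional regularization $\tilde u$ with $P\tilde u\in L^+$ and with $u$ orthogonal to $L^-$. Since $P$ is injective on $\mathcal E'_{D^+}(\X)$ (the remark after Definition~\ref{rigged}), the regularization $\tilde u$, if it exists, is unique up to an element of $\tilde L^+$; this is the bookkeeping that makes $\tilde\ell^\pm$ appear. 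The first concrete step is therefore to set up the linear map $u\mapsto P\tilde u\in L^+$ and analyze its kernel (genuine solutions vanishing on $L^-$, i.e.\ essentially $\Ker P$ intersected with the orthogonality condition) and its image.

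The second step is to build the ``enlarged'' operator. Following Gromov--Shubin, I would consider $P$ acting on the enlarged domain consisting of distributions $\tilde u$ with $P\tilde u\in L^+$ (modulo $\tilde L^+$), landing in a quotient of $\Domp{P^*}$ by the finite-dimensional annihilator built from $L^-$; call it $\widehat P$. The point is that $\Ker\widehat P$ is naturally identified with $L(\mu,P)$ and $\mathrm{coker}\,\widehat P$ with $L(\mu^{-1},P^*)$, using property ($\mathcal P2$)--($\mathcal P4$) (the integration-by-parts duality) to identify the cokernel of $\widehat P$ with the kernel of the transposed enlarged operator $\widehat{P^*}$, which is exactly $L(\mu^{-1},P^*)$ by the symmetry of the construction under $\mu\leftrightarrow\mu^{-1}$, $P\leftrightarrow P^*$. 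Then $\Index\widehat P=\dim L(\mu,P)-\dim L(\mu^{-1},P^*)$.

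The third step is the index computation: $\widehat P$ differs from $P$ by enlarging the domain by a space of ``dimension'' $\ell^+-\tilde\ell^+$ (the distributions supported on $D^+$ that $P$ maps into $L^+$, counted modulo those already landing in $0$) and by shrinking the target by a space of ``dimension'' $\ell^--\tilde\ell^-$ coming from the $L^-$ orthogonality and its interaction with $\Ker P^*$. A careful diagram chase, using exactness of the relevant short sequences and the injectivity of $P,P^*$ on $\mathcal E'_{D^\pm}$, then gives $\Index\widehat P=\Index P+(\ell^+-\tilde\ell^+)-(\ell^--\tilde\ell^-)=\Index P+\deg_P(\mu)$, which is the equality \mref{RReq}. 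The inequality \mref{RRineq} is the one-sided version: when only $P$ (not necessarily $P^*$) is Fredholm, one still has $\Image P\subseteq(\Ker P^*)^\circ$ and the finite-dimensionality of $\Ker P$, so the cokernel identification degrades to an inequality $\dim\mathrm{coker}\,\widehat P\le\dim L(\mu^{-1},P^*)$ (some of the putative cokernel classes need not be represented by honest elements of $L^-$ or $\Domp{}$), and tracking inequalities rather than equalities through the same diagram yields \mref{RRineq}.

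The main obstacle I expect is the cokernel identification in step two: matching $\mathrm{coker}\,\widehat P$ with $L(\mu^{-1},P^*)$ requires showing that the closedness/surjectivity packaged in the Fredholm hypothesis $\Image P=(\Ker P^*)^\circ$ survives the finite-rank modification by $L^\pm$, and that the duality pairings of ($\mathcal P3$) actually pair the enlarged source of $\widehat{P^*}$ against the enlarged target of $\widehat P$ nondegenerately. This is where the non-compactness of $\X$ and the somewhat delicate $\Dom$/$\Domp$ formalism bite; the distributional regularizations $\tilde u$ supported near $D^+$ must be shown not to interfere with the conditions ``at infinity'' hidden in $\Dom P$, which is exactly the localization built into the $\Dom_{D^+}P$ notation. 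Once that compatibility is in place, the index arithmetic is a routine (if lengthy) exercise in linear algebra of exact sequences.
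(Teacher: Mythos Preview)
Your proposal is essentially correct and follows the same architecture as the Gromov--Shubin argument that the paper outlines (the paper itself does not give an independent proof of Theorem~\ref{RR}, but summarizes the ingredients from \cite{GSinv}). Your enlarged operator $\widehat P$ is exactly the paper's extension $\tilde P:\Gamma(\mathcal X,\mu,P)\to\tilde\Gamma_\mu(\mathcal X,P)$, with $\Ker\tilde P=L(\mu,P)$; the index identity you aim for in step three is the paper's equation \mref{RRequality}, obtained (as you say) by chasing short exact sequences built from $L^\pm$ and $\tilde L^\pm$.

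One point worth tightening: in step two you describe the goal as identifying $\operatorname{coker}\widehat P$ with $L(\mu^{-1},P^*)$. The paper's (i.e.\ Gromov--Shubin's) route is slightly more indirect and this matters for where the inequality comes from. They do not identify the cokernel directly; instead they set up non-degenerate pairings between $\Gamma(\mathcal X,\mu,P)$ and $\tilde\Gamma_{\mu^{-1}}(\mathcal X,P^*)$ (and the symmetric pair), prove that $(\Image\tilde P)^\circ=\Ker\tilde{P^*}$ but only $\Image\tilde P\subset(\Ker\tilde{P^*})^\circ$, and hence obtain $\dim\Ker\tilde{P^*}\le\codim\Image\tilde P$. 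Combined with the exact-sequence identity \mref{RRequality} this gives the inequality \mref{RRineq}; the equality comes by running the same argument for $(P^*,\mu^{-1})$ and sandwiching. So the ``degradation to an inequality'' you mention in the one-sided Fredholm case is not a failure of representability of cokernel classes, but rather the possible strict containment $\Image\tilde P\subsetneq(\Ker\tilde{P^*})^\circ$. Your anticipated obstacle---nondegeneracy of the pairings between the enlarged source/target spaces---is precisely what Gromov--Shubin verify to make this work.
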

\begin{remark}\indent\begin{enumerate}
\item Although the authors of \cite{GSinv} do not state their theorem in the exact form above, the Riemann-Roch inequality \mref{RRineq} follows from their proof.
\item If one considers the difference $\dim{L(\mu,P)}-\dim{L(\mu^{-1},P^*})$ as some ``index of $P$ in presence of the divisor $\mu$'' (say, denote it by $\Index_\mu(P)$), the Riemann-Roch equality (\ref{RReq}) becomes
   \beq
\label{RReq2}
\Index_\mu(P)=\Index{P}+\deg_{P}(\mu)
\eeq
and thus it says that introduction of the divisor changes the index of the operator by $\deg_{P}(\mu)$.

Analogously, the inequality (\ref{RRineq}) becomes

   \beq
\label{RRineq2}
\Index_\mu(P)\geq \Index{P}+\deg_{P}(\mu).
\eeq
\end{enumerate}
\end{remark}


It is useful for our future considerations to mention briefly some of the ingredients\footnote{The reader interested in the main results only, can skip to Corollary \ref{RReqmu}.} of the proof from \cite{GSinv}.
To start, we define some auxiliary spaces. Let, as before, $K$ be a nowhere dense compact set and we denote for a function $u \in C^{\infty}(X \setminus K)$ by $\tilde{u} \in \mathcal{D}'(\mathcal{X})$ its (non-uniquely defined) regularization. I.e., $u=\tilde{u}$ on $\mathcal{X}\setminus K$. Let
\beqn
\begin{split}
\Gamma(\mathcal{X},\mu,P):=&\{u \in C^{\infty}(X \setminus D^+)  \mid  u \in \Dom_{D^+}{P}, \hspace{2pt} \exists \tilde{u} \in \mathcal{D}'(\mathcal{X}) \mbox{ such that }\\ &\tilde{u}=u \mbox{ on } \mathcal{X} \setminus D^+,
P\tilde{u} \in L^+ + C^{\infty}(\mathcal{X}) \mbox{ and } \langle u, L^-\rangle=0\}.
\end{split}
\eeqn
The regularization $\tilde{u}$ above is not unique, so we define the space of all such regularizations:
\beqn
\begin{split}
\tilde{\Gamma}(\mathcal{X},\mu,P):=\{\tilde{u} \in \mathcal{D}'(\mathcal{X}) \mid \tilde{u}_{|\mathcal{X} \setminus D^+} \in \Gamma(\mathcal{X}, \mu,P),
P\tilde{u} \in L^+ + C^{\infty}(\mathcal{X})
\}.
\end{split}
\eeqn
It follows from the definition of the space $\Gamma(\mathcal{X},\mu,P)$ that for any function $u$ in this space, the function $Pu$ (where $P$
is applied as a differential expression) extends uniquely to a smooth function, which we call $\tilde{P}u$, on the whole $\mathcal{X}$. In the same manner, we can also define the corresponding extension $\tilde{P^*}$ as a linear map from $\Gamma(\mathcal{X}, \mu^{-1}, P^*)$ to $C^\infty (\mathcal{X})$.

Let us also introduce the spaces of functions ``with enforced zeros'':
\beqn
\Gamma_{\mu}(\mathcal{X},P)=\{u \in \Dom{P} \mid \langle u,L^-\rangle=0\}
\eeqn
and
\beqn
\tilde{\Gamma}_{\mu}(\mathcal{X},P)=\{f \in \Domp{P^*} \mid  \langle f,\tilde{L}^-\rangle=0\}.
\eeqn
An inspection of these definitions leads to the following conclusions:
\begin{proposition}\label{P:RRtilde}
\indent
\begin{enumerate}
\item $\tilde{P}$ is a linear map from $\Gamma(\mathcal{X}, \mu, P)$ to $\tilde{\Gamma}_{\mu}(\mathcal{X}, P)$.
\item $\tilde{P^*}$ is a linear map from $\Gamma(\mathcal{X}, \mu^{-1}, P^*)$ to $\tilde{\Gamma}_{\mu^{-1}}(\mathcal{X}, P^*)$.
\item The spaces of solutions of interest are the kernels of the operators above:
\beqn\label{E:kernels}
L(\mu, P) = \Ker (\tilde{P}),\quad L(\mu^{-1}, P^*)=\Ker(\tilde{P^*}).
\eeqn
\end{enumerate}
\end{proposition}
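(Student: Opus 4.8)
The plan is to read off all three statements directly from the definitions of $\Gamma(\mathcal{X},\mu,P)$, $\tilde\Gamma_\mu(\mathcal{X},P)$, and the extension $\tilde P$, using the structural hypotheses ($\mathcal{P}$1)--($\mathcal{P}$4) together with one elementary fact: a smooth function whose distributional support lies in the compact nowhere dense set $D^+$ (or $D^-$) must vanish identically, so $L^{\pm}\cap C^\infty(\mathcal{X})=\{0\}$. Because claim (2) is claim (1) after the formal interchange $P\leftrightarrow P^*$, $\mu\leftrightarrow\mu^{-1}$, $D^+\leftrightarrow D^-$, $L^+\leftrightarrow L^-$, I would only write out (1) and (3) in detail.

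For (1), fix $u\in\Gamma(\mathcal{X},\mu,P)$ with a regularization $\tilde u$, and decompose $P\tilde u=\ell+f$ with $\ell\in L^+$ and $f\in C^\infty(\mathcal{X})$. By the elementary fact this decomposition is unique, and since $\ell$ is supported in $D^+$ the smooth summand restricts to $Pu$ on $\mathcal{X}\setminus D^+$; that is, $f=\tilde Pu$. To see $\tilde Pu\in\Domp{P^*}$, use $u\in\Dom_{D^+}{P}$ to pick a compact neighborhood $\hat K\supseteq D^+$ and $\hat u\in\Dom{P}$ with $u=\hat u$ off $\hat K$: then $\tilde Pu$ and $P\hat u\in\Image{P}\subseteq\Domp{P^*}$ agree off $\hat K$, so their difference lies in $C^\infty_c(\mathcal{X})\subseteq\Domp{P^*}$, and linearity of $\Domp{P^*}$ yields $\tilde Pu\in\Domp{P^*}$. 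For the orthogonality $\langle\tilde Pu,\tilde L^-\rangle=0$, take $v\in\tilde L^-$, so $v\in\mathcal{E}'_{D^-}(\mathcal{X})$ with $P^*v\in L^-$: on a neighborhood of $D^-$ disjoint from $D^+$ we have $\tilde u=u\in C^\infty$ and $\tilde Pu=Pu$, while the singular part $\ell$ drops out because $\supp\ell\subseteq D^+$ misses $\supp v\subseteq D^-$; the transpose relation for the differential expression $P$ against the compactly supported distribution $v$ then gives $\langle\tilde Pu,v\rangle=\langle u,P^*v\rangle=0$, since $P^*v\in L^-$ and $u$ satisfies $\langle u,L^-\rangle=0$. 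Hence $\tilde Pu\in\tilde\Gamma_\mu(\mathcal{X},P)$.

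For (3), I would unwind the definitions. If $u\in\Ker\tilde P$, then $u\in\Gamma(\mathcal{X},\mu,P)$ and the smooth extension satisfies $\tilde Pu\equiv0$, i.e. $Pu=0$ on $\mathcal{X}\setminus D^+$; by the decomposition above this is the same as $P\tilde u=\ell\in L^+$, and together with $u\in\Dom_{D^+}{P}$ and $\langle u,L^-\rangle=0$ this is exactly Definition~\ref{D:LmuP} of $L(\mu,P)$. Conversely, if $u\in L(\mu,P)$ then $u\in C^\infty(\mathcal{X}\setminus D^+)$ is built into $u\in\Dom_{D^+}{P}$, and $P\tilde u\in L^+\subseteq L^++C^\infty(\mathcal{X})$ shows $u\in\Gamma(\mathcal{X},\mu,P)$, while uniqueness of the decomposition forces the smooth part $\tilde Pu$ to be $0$; hence $u\in\Ker\tilde P$. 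The identity $L(\mu^{-1},P^*)=\Ker\tilde{P^*}$ follows by the same symmetry.

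The only step calling for genuine care is the local integration-by-parts identity $\langle\tilde Pu,v\rangle=\langle u,P^*v\rangle$: one must check that the pairing makes sense (near $D^-$ both $\tilde Pu$ and $u$ are smooth and $v$, $P^*v$ are compactly supported distributions concentrated away from $D^+$), that the $L^+$-part of $P\tilde u$ genuinely drops out by the disjointness of $D^+$ and $D^-$, and that $\langle u,P^*v\rangle$ is the same bilinear pairing that appears in the defining condition $\langle u,L^-\rangle=0$. Everything else is routine bookkeeping with the definitions.
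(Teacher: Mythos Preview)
Your proposal is correct and is exactly the ``inspection of these definitions'' that the paper invokes in lieu of a written proof; the paper provides no argument beyond that phrase, so you have simply supplied the routine verification it leaves to the reader. Your handling of the only nontrivial point---showing $\langle\tilde Pu,\tilde L^-\rangle=0$ via the disjointness of $D^+$ and $D^-$ together with the transpose relation for compactly supported distributions---is the expected one.
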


Let us also introduce the duality
\beq
\label{duality1}
(\cdot, \cdot): \Gamma(\mathcal{X}, \mu, P) \times \tilde{\Gamma}_{\mu^{-1}}(\mathcal{X}, P^*) \rightarrow \mathbb{C}
\eeq
as follows:
\beqn
(u,f):=\langle \tilde{u},f\rangle, \quad u \in \Gamma(\mathcal{X}, \mu, P), f \in \tilde{\Gamma}_{\mu^{-1}}(\mathcal{X},P^*),
\eeqn
where $\tilde{u}$ is any element in the preimage of $\{u\}$ under the restriction map from $\tilde{\Gamma}(\mathcal{X},\mu,P)$ to $\Gamma(\mathcal{X}, \mu, P)$.
Similarly, we get the duality
\beq
\label{duality2}
(\cdot, \cdot): \tilde{\Gamma}_{\mu}(\mathcal{X}, P) \times \Gamma(\mathcal{X}, \mu^{-1}, P^*) \rightarrow \mathbb{C}
\eeq
These dualities are well-defined and non-degenerate \cite{GSinv}. Moreover, the following relation holds:
\beqn
(\tilde{P}u, v)=(u, \tilde{P^*}v), \quad \forall u\in \Gamma(\mathcal{X},\mu,P), v \in \Gamma(\mathcal{X}, \mu^{-1}, P^*).
\eeqn

by applying the additivity of Fredholm indices to some short exact sequences of the spaces introduced above (see \cite[Lemma 3.1 and Remark 3.2]{GSinv}), Gromov and Shubin then establish the following basic facts:
\begin{proposition}\cite[Lemma 3.1 -- Lemma 3.4]{GSinv})\indent\begin{enumerate}
\item \beq
\label{RRequality}
\dim{\Ker{\tilde{P}}}=\Index{P}+\deg_{P}(\mu)+\codim{\Image{\tilde{P^*}}},
\eeq
Note that the assumption that $P$ is Fredholm on $\mathcal{X}$ is important for (\ref{RRequality}) to hold true.
\item $(\Image{\tilde{P}})^{\circ}=\Ker{\tilde{P^*}}$,
\item $\Image{\tilde{P}} \subset (\Ker{\tilde{P^*}})^{\circ}$.
\item
\beq
\label{codim}
\dim{\Ker{\tilde{P^*}}}=\codim{(\Ker{\tilde{P^*}})^{\circ}} \leq \codim{\Image{\tilde{P}}}.
\eeq
\item The Riemann-Roch inequality \mref{RRineq} follows from \mref{RRequality} and \mref{codim}.
\item If $P^*$ is also Fredholm, one can apply \mref{RRineq} for $P^*$ and $\mu^{-1}$ instead of $P$ and $\mu$ to get
\beq
\label{RRinequality2}
\dim{\Ker{\tilde{P}}} \leq \Index{P}+\deg_{P}(\mu)+\dim{\Ker{\tilde{P^*}}}.
\eeq
Now, the Riemann-Roch equality \mref{RReq} follows from \mref{RRineq} and \mref{RRinequality2}.

In this case, as a byproduct of the proof of \mref{RReq} \cite[Theorem 2.12]{GSinv}, one also gets
$\Image{\tilde{P}}=(\Ker{\tilde{P^*}})^{\circ}$ and $\Image{\tilde{P}}=(\Ker{\tilde{P^*}})^{\circ}$.
\item
\end{enumerate}
Here $(\Image{\tilde{P}})^{\circ}$ and $(\Ker{\tilde{P^*}})^{\circ}$ are the annihilators of $\Image{\tilde{P}}$, $\Ker{\tilde{P^*}}$ with respect to the dualities \mref{duality1} and \mref{duality2}, respectively.
\end{proposition}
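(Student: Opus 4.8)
\emph{Approach.} The plan is to follow Gromov and Shubin: every item is extracted from a handful of short exact sequences of the auxiliary spaces introduced above, via additivity of dimension (Fredholm index), together with purely formal manipulations with the non-degenerate pairings \mref{duality1}, \mref{duality2} and the adjointness relation $(\tilde P u,v)=(u,\tilde{P^*}v)$. Throughout, Proposition \ref{P:RRtilde} is used to identify $L(\mu,P)=\Ker\tilde P$ and $L(\mu^{-1},P^*)=\Ker\tilde{P^*}$.

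\emph{Step 1: the counting identity \mref{RRequality}.} The first task is to compare $\tilde P\colon\Gamma(\mathcal X,\mu,P)\to\tilde\Gamma_\mu(\mathcal X,P)$ with the Fredholm operator $P\colon\Dom P\to\Image P$. Singularity extraction gives a linear map $\Gamma(\mathcal X,\mu,P)\to L^+/P\tilde L^+$ sending $u$ to the class of $P\tilde u$ modulo $C^\infty(\mathcal X)$; it is well defined modulo $P\tilde L^+$ because two admissible regularizations of $u$ differ by an element of $\tilde L^+$ and $C^\infty(\mathcal X)\cap\mathcal E'_{D^+}(\mathcal X)=\{0\}$, $D^+$ being nowhere dense. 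Its kernel is $\Gamma_\mu(\mathcal X,P)$, and it is surjective: given $\ell\in L^+$ one first solves $P\tilde u\equiv\ell$ mod $C^\infty$ near $D^+$ with an elliptic parametrix, then corrects by a smooth function supported near $D^-$ (possible since $D^+\cap D^-=\emptyset$ and $L^-$ is finite dimensional) to enforce $(u,L^-)=0$. This yields $0\to\Gamma_\mu(\mathcal X,P)\to\Gamma(\mathcal X,\mu,P)\to L^+/P\tilde L^+\to 0$, with the last space of dimension $\ell^+-\tilde\ell^+$, and an entirely parallel sequence holds on the target side with $\ell^-,\tilde\ell^-$. Splicing these with the operator $P$ itself and counting dimensions gives $\dim\Ker\tilde P=\Index P+\deg_P(\mu)+\codim\Image\tilde P$; the Fredholm hypothesis on $P$ enters here precisely as the identity $\Image P=(\Ker P^*)^\circ$, which (by non-degeneracy of the pairing) converts $\codim\Image P$ into $\dim\Ker P^*$, so that the count closes to an equality. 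The same bookkeeping shows $\dim\Ker\tilde P<\infty$ and, applied to $(P^*,\mu^{-1})$, $\dim\Ker\tilde{P^*}<\infty$.

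\emph{Steps 2--3: the annihilator identities and the Riemann--Roch inequality.} The equality $(\Image\tilde P)^\circ=\Ker\tilde{P^*}$ is formal: $v\in(\Image\tilde P)^\circ$ iff $(\tilde Pu,v)=0$ for all $u$ iff $(u,\tilde{P^*}v)=0$ for all $u$ (adjointness) iff $\tilde{P^*}v=0$ (non-degeneracy of \mref{duality1}); the inclusion $\Image\tilde P\subseteq(\Ker\tilde{P^*})^\circ$ follows since a subspace lies in the annihilator of its annihilator. For a finite-dimensional subspace $S$ of one side of a non-degenerate pairing, a basis of $S$ yields $\dim S$ linearly independent functionals on the other side, so $\codim S^\circ=\dim S$; with $S=\Ker\tilde{P^*}$ (finite dimensional by Step 1) this gives $\codim(\Ker\tilde{P^*})^\circ=\dim\Ker\tilde{P^*}$, and combining with $\Image\tilde P\subseteq(\Ker\tilde{P^*})^\circ$ gives \mref{codim}. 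The Riemann--Roch inequality \mref{RRineq} is then immediate: $\dim\Ker\tilde P-\dim\Ker\tilde{P^*}\ge\dim\Ker\tilde P-\codim\Image\tilde P=\Index P+\deg_P(\mu)$, using \mref{codim} then \mref{RRequality}.

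\emph{Step 4: the equality case, and the main difficulty.} When $P^*$ is also Fredholm on $\mathcal X$, applying the inequality just proved to the pair $(P^*,\mu^{-1})$ — and using $\Index P^*=-\Index P$ together with the fact that the degrees of a divisor and its inverse sum to zero, so $\deg_{P^*}(\mu^{-1})=-\deg_P(\mu)$ — produces the reverse inequality $\dim\Ker\tilde P-\dim\Ker\tilde{P^*}\le\Index P+\deg_P(\mu)$, hence \mref{RReq}; equality then forces $\codim\Image\tilde P=\codim(\Ker\tilde{P^*})^\circ$, and since $\Image\tilde P\subseteq(\Ker\tilde{P^*})^\circ$ with equal finite codimension, $\Image\tilde P=(\Ker\tilde{P^*})^\circ$ (and symmetrically $\Image\tilde{P^*}=(\Ker\tilde P)^\circ$). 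The part I expect to be genuinely delicate is Step 1 — the construction of the short exact sequences behind \mref{RRequality}: verifying that singularity extraction is well defined and surjective, keeping precise track of the secondary spaces $\tilde L^\pm$, and, because $\mathcal X$ is noncompact, reconciling the local parametrix constructions near $D^\pm$ with the conditions at infinity carried by $\Dom P$ and with the global Fredholm structure of $P$. The remaining steps are soft duality-theoretic arguments.
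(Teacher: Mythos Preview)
Your approach is correct and coincides with the paper's own treatment: the paper does not give an independent proof here but explicitly attributes the result to Gromov--Shubin, pointing to the additivity of Fredholm indices applied to short exact sequences of the auxiliary spaces $\Gamma_\mu$, $\Gamma(\mathcal X,\mu,P)$, $\tilde\Gamma_\mu$, which is precisely the skeleton you reconstruct in Step~1, followed by the soft duality arguments you give in Steps~2--4.

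One small point worth flagging: in your Step~1 you derive $\dim\Ker\tilde P=\Index P+\deg_P(\mu)+\codim\Image\tilde P$, with $\tilde P$ (not $\tilde{P^*}$) in the last term. This is the version that actually combines with \mref{codim} to yield the inequality in item~(5), and it is the identity one finds in \cite{GSinv}; the $\tilde{P^*}$ appearing in the displayed statement of item~(1) is a typographical slip in the paper. Your formulation is the correct one.
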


\begin{remark}
\label{RRfredholm}
If \mref{codim} becomes an equality, i.e.,
$$\dim{\Ker{\tilde{P^*}}}=\codim{\Image{\tilde{P}}},$$
one obtains the Riemann-Roch equality \mref{RReq} for the rigged divisor $\mu$ without assuming that $P^*$ is Fredholm on $\mathcal{X}$. Conversely, if \mref{RReq} holds, then $\Image{\tilde{P}}=(\Ker{\tilde{P^*}})^{\circ}$.
\end{remark}
As a result, we have the following useful corollary:
\begin{cor}
\label{RReqmu}
Let $P$ be Fredholm on $\mathcal{X}$, $\Image{P}=\Domp{P^*}$, and
$$\mu=(D^+, L^+; D^-, L^-)$$
be a rigged divisor on $\mathcal{X}$. Then the Riemann-Roch equality \mref{RReq} holds for $P$ and divisor $\mu$.

Moreover, the space $L(\mu^{-1}, P^*)$ is trivial, if the following additional condition is satisfied:
If $u$ is a smooth function in $\Dom{P}$ such that $\langle Pu, \tilde{L}^- \rangle=0$, then there exists a solution $v$ in $\Dom{P}$ of the equation $Pv=0$ satisfying $\langle u-v,L^- \rangle=0$.

In particular, this assumption holds automatically if
$D^-=\emptyset$.
\end{cor}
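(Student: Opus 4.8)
The plan is to deduce the Riemann-Roch equality from Remark \ref{RRfredholm} by showing that the hypothesis $\Image{P}=\Domp{P^*}$ forces the inequality \mref{codim} to be an equality, so that $\dim\Ker\tilde{P^*}=\codim\Image\tilde{P}$. First I would recall that, since $P$ is Fredholm on $\mathcal{X}$, Proposition (the one collecting \cite[Lemma 3.1 -- Lemma 3.4]{GSinv}) gives the identity \mref{RRequality}, namely $\dim\Ker\tilde{P}=\Index{P}+\deg_P(\mu)+\codim\Image\tilde{P^*}$, together with the inclusion $\Image\tilde{P}\subset(\Ker\tilde{P^*})^\circ$ and the bound $\dim\Ker\tilde{P^*}=\codim(\Ker\tilde{P^*})^\circ\le\codim\Image\tilde{P}$. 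Thus it suffices to prove $\codim\Image\tilde{P^*}=0$, i.e. $\Image\tilde{P^*}=\tilde{\Gamma}_{\mu^{-1}}(\mathcal{X},P^*)$; feeding this into \mref{RRequality} gives $\dim\Ker\tilde{P}=\Index{P}+\deg_P(\mu)$, and then comparing with the Riemann-Roch \emph{inequality} \mref{RRineq} (which is $\dim\Ker\tilde{P}-\dim\Ker\tilde{P^*}\ge\Index P+\deg_P(\mu)$) forces $\dim\Ker\tilde{P^*}\le 0$, hence $L(\mu^{-1},P^*)=\Ker\tilde{P^*}=\{0\}$, and a fortiori \mref{codim} is an equality and \mref{RReq} holds by Remark \ref{RRfredholm}.

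So the heart of the matter is the surjectivity statement $\Image\tilde{P^*}=\tilde{\Gamma}_{\mu^{-1}}(\mathcal{X},P^*)$ under the assumption $\Image P=\Domp{P^*}$. I would argue as follows. Take $f\in\tilde{\Gamma}_{\mu^{-1}}(\mathcal{X},P^*)$; by definition $f\in\Domp{P^*}$ and $\langle f,\tilde{L}^-\rangle=0$ (using the description in Proposition \ref{P:RRtilde} of the target space, with the roles of $\mu$ and $\mu^{-1}$, i.e. $\tilde{\Gamma}_{\mu^{-1}}(\mathcal{X},P^*)=\{f\in\Domp{P^*}\mid\langle f,\tilde{L}^+\rangle=0\}$ — I will need to be careful here about which secondary space appears). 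Since $\Image P=\Domp{P^*}$, write $f=Pu$ with $u\in\Dom P$. The condition $\langle Pu,\tilde{L}^-\rangle=0$ (the extra hypothesis in the statement, or the automatic vanishing coming from $f\in\tilde{\Gamma}_{\mu^{-1}}$) then needs to be upgraded to the statement that $u$ — possibly after subtracting a genuine solution $v\in\Ker P$, which does not change $Pu$ — can be taken to satisfy $\langle u,L^-\rangle=0$, i.e. $u\in\Gamma_\mu(\mathcal{X},P)\subset\Gamma(\mathcal{X},\mu,P)$ (viewing an element of $\Dom P$ as an element of the larger space with trivial singular part on $D^+$). Once $u$ lies in $\Gamma(\mathcal{X},\mu,P)$, the extension $\tilde{P}$ applied to it is just $Pu=f$ (the differential expression already extends smoothly, there being no actual pole), so $f\in\Image\tilde{P}$... wait — I must instead run this with $P$ and $P^*$ interchanged and $\mu$ replaced by $\mu^{-1}$, so that it is $\tilde{P^*}$ whose image I am computing; the mechanism is identical because $P^*$ is the transpose and $\mu^{-1}$ is a rigged divisor associated with $P^*$, and the hypothesis $\Image P=\Domp{P^*}$ is exactly what one needs on the $P^*$-side after the correction noted in the statement. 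The clean way to package this is: the extra displayed hypothesis in the Corollary says precisely that every $f\in\tilde{\Gamma}_{\mu^{-1}}(\mathcal{X},P^*)$ of the form $f=Pu$ with $\langle Pu,\tilde{L}^-\rangle=0$ admits a representative $u-v\in\Gamma_\mu(\mathcal{X},P)$, and together with $\Image P=\Domp{P^*}$ this gives $\tilde{\Gamma}_{\mu^{-1}}(\mathcal{X},P^*)\subseteq\Image\tilde{P^*}$; the reverse inclusion is Proposition \ref{P:RRtilde}(2). Hence $\Image\tilde{P^*}=\tilde{\Gamma}_{\mu^{-1}}(\mathcal{X},P^*)$, $\codim\Image\tilde{P^*}=0$, and the conclusions follow as above.

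Finally, for the last sentence of the Corollary: if $D^-=\emptyset$ then $L^-=\{0\}$, so $\tilde{L}^-=\{0\}$, the constraint $\langle u,L^-\rangle=0$ is vacuous, and one may simply take $v=0$; thus the extra hypothesis holds trivially, giving both \mref{RReq} and $L(\mu^{-1},P^*)=\{0\}$ automatically in that case. The step I expect to be the genuine obstacle is the bookkeeping one: making sure that the Gromov--Shubin spaces $\Gamma$, $\tilde{\Gamma}_\mu$, $\tilde{L}^\pm$ are matched to the correct operator and divisor ($P$ versus $P^*$, $\mu$ versus $\mu^{-1}$) when transferring the surjectivity of $P$ into surjectivity of $\tilde{P^*}$, and verifying that an element of $\Dom P$ with the vanishing condition really does sit inside $\Gamma(\mathcal{X},\mu,P)$ with $\tilde{P}$ reducing to the naive application of the differential expression; the functional-analytic content (additivity of codimensions, non-degeneracy of the dualities \mref{duality1}, \mref{duality2}) is already supplied by the quoted lemmas from \cite{GSinv}.
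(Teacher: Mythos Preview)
Your proposal has a genuine gap stemming from a confusion between $\tilde{P}$ and $\tilde{P^*}$. You set as your target the surjectivity of $\tilde{P^*}$, i.e.\ $\Image\tilde{P^*}=\tilde{\Gamma}_{\mu^{-1}}(\mathcal{X},P^*)$. But $\tilde{\Gamma}_{\mu^{-1}}(\mathcal{X},P^*)$ is a subspace of $\Domp P$ (with the constraint $\langle f,\tilde{L}^+\rangle=0$), and to hit such an $f$ by $\tilde{P^*}$ you would need to solve $P^*v=f$ with $v\in\Dom P^*$. The hypotheses say nothing about the range of $P^*$; they only give $\Image P=\Domp P^*$ and the ``additional condition'' concerns solving $Pu=f$ and adjusting $u$ by elements of $\Ker P$. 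When you write ``every $f\in\tilde{\Gamma}_{\mu^{-1}}(\mathcal{X},P^*)$ of the form $f=Pu$ \dots\ admits a representative $u-v\in\Gamma_\mu(\mathcal{X},P)$'', you have produced an element in the domain of $\tilde{P}$, so this places $f$ in $\Image\tilde{P}$, not $\Image\tilde{P^*}$; and anyway $f$ lives in the wrong space ($\Domp P$ rather than $\Domp P^*$) for this to make sense. The attempted interchange ``run this with $P$ and $P^*$ interchanged'' does not work precisely because the hypothesis is asymmetric.

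The paper proceeds by the mirror-image of what you attempted: it shows $\Image\tilde{P}=\tilde{\Gamma}_\mu(\mathcal{X},P)$. Given $f\in\tilde{\Gamma}_\mu(\mathcal{X},P)\subset\Domp P^*$ with $\langle f,\tilde{L}^-\rangle=0$, the hypothesis $\Image P=\Domp P^*$ yields $u\in\Dom P$ with $Pu=f$; the additional condition then furnishes $v\in\Ker P$ with $\langle u-v,L^-\rangle=0$, so $w:=u-v$ (restricted off $D^+$) lies in $\Gamma(\mathcal{X},\mu,P)$ and $\tilde{P}w=Pw=f$. Thus $\codim\Image\tilde{P}=0$, and the inequality \mref{codim} forces $\dim\Ker\tilde{P^*}=0$; Remark \ref{RRfredholm} then gives the Riemann-Roch equality. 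Your treatment of the case $D^-=\emptyset$ is correct.
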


We end this section by recalling an application of Theorem \ref{RR}, which will be used later.
\begin{example}
\label{mainex}
(\cite[Example 4.6]{GSinv} and \cite{nadirashvili}) \emph{
Consider $P=P^*=-\Delta$ on $\mathcal{X}=\mathbb{R}^d$, where $d\geq 3$ and
$$\Dom{P}=\Dom{P^*}=\{u \mid u \in C^{\infty}(\mathbb{R}^d), \Delta u \in C^{\infty}_c (\mathbb{R}^d) \hspace{3pt}\mbox{and} \hspace{3pt} \lim_{|x| \rightarrow \infty}u(x)=0\},$$
$$\Domp{P}=\Domp{P^*}=C^{\infty}_c (\mathbb{R}^d).$$
Then the operators $P$ and $P^*$ are Fredholm on $\mathbb{R}^d$, $\Ker{P}=\Ker{P^*}=\{0\}$, $\Image{P}=\Image{P^*}=C^{\infty}_c(\mathbb{R}^d)$, and thus $\Index{P}=0$ (see \cite[Example 4.2]{GSinv}).}

\emph{Let
$$D^+=\{y_1, \ldots, y_k\}, \quad D^-=\{z_1, \ldots, z_l\}.$$
with all the points $y_1, \ldots, y_k, z_1, \ldots, z_l$ pairwise distinct. Consider the following distributional spaces: $L^+$ is the vector space spanned by Dirac delta distributions $\delta(\cdot-y_j)$ supported at the points $y_j$ $(1 \leq j \leq k)$; $L^-$ is spanned by the first order derivatives $\displaystyle \frac{\partial}{\partial x_{\alpha}}\delta(\cdot-z_j)$ of Dirac delta distributions supported at $z_j$ $(1 \leq j \leq l, 1 \leq \alpha \leq d)$. \footnote{Note that the secondary spaces $\tilde{L}^{\pm}$ are trivial.}}

\emph{Consider now the rigged divisor $\mu:=(D^+, L^+; D^-, L^-)$. Then $\deg_{-\Delta}(\mu)=k-dl$.
Furthermore,
$$L(\mu, -\Delta)=\left\{u \mid u(x)=\sum_{j=1}^k \frac{a_j}{|x-y_j|^{d-2}}, a_j \in \mathbb{C}, \hspace{3pt} \mbox{and} \hspace{3pt} \nabla u(z_j)=0, j=1,\ldots, l.\right\},$$
and
$$L(\mu^{-1}, -\Delta)=\left\{v \mid v(x)=\sum_{j=1}^l \sum_{\alpha=1}^d b_{j, \alpha}\frac{\partial}{\partial x_{\alpha}}\left(|x-z_j|^{2-d}\right), b_{j, \alpha} \in \mathbb{C}, \hspace{3pt} \mbox{and} \hspace{3pt} u(y_j)=0, j=1,\ldots, k.\right\}.$$
In this case, the Nadirashvili-Gromov-Shubin Riemann-Roch-type formula (Theorem \ref{RR}) is}
\beq
\label{RRexample}
\dim L(\mu,-\Delta)-\dim L(\mu^{-1},-\Delta)=k-dl.
\eeq
\end{example}

\chapter{The main results}
\label{C:main}

In this chapter, we consider a periodic elliptic operator $A$ of order $m$ on an $n$-dimensional co-compact abelian covering
$$
X \mathop{\mapsto}_{\Z^d} M.
$$
Notice, again, that the rank $d$ of the deck group does not have to be related in any way to the dimension $n$ of the manifold.

To make a combination of Liouville and Riemann-Roch theorems meaningful, we assume that the Liouville property holds for the operator $A$ at the level $\lambda=0$, i.e. (see Theorem \ref{Liouvillethm}), the \textbf{real} Fermi surface of $A$ (see Definition \ref{blochfermi}) is finite (modulo $G^*$-shifts).

The approach we will follow consists in finding appropriate functional spaces that would incorporate the polynomial growth at infinity and that could be handled by the general techniques and results of Gromov and Shubin.

There are two significantly different possibilities: 1. The Fermi surface finite, but non-empty. 2. The Fermi surface is empty. We start with the more interesting first one.

\section{Non-empty Fermi surface}
Suppose that $F_{A,\mathbb{R}}=\{k_1, \ldots, k_{\ell}\}$ (modulo $G^*$-shifts), where $\ell \geq 1$.

\subsection{Assumptions}
We need to make the following assumption on the local behavior of the Bloch variety of the operator $A$ around each quasimomentum $k_j$ in the real Fermi surface:

\textit{Assumption $\mathcal{A}$}
\begin{enumerate}[({$\mathcal{A}$}1)]
\item
\emph{For any quasimomentum $k$, the spectrum of the operator $A(k)$ is discrete.}

Under this assumption, the following lemma can be deduced immediately from Proposition \ref{semicont} and perturbation theory (see e.g., \cite{RS4, Ka}):

\begin{lemma}
For each quasimomentum $k_r\in F_{A,\mathbb{R}}$, there is an open neighborhood $V_r$ of $k_r$ in $\mathbb{R}^d$ and a closed contour $\Upsilon_r\subset V_r$, such that
 \begin{enumerate}
 \item The neighborhoods $V_r$ are mutually disjoint;
 \item The contour $\Upsilon_r$ surrounds the eigenvalue $0$ and does not contain any other points of the spectrum $\sigma(A(k_r))$;
 \item The intersection $\sigma{(A(k))} \cap \Upsilon_r$ is empty for any $k \in V_r$.
 \end{enumerate}
 \end{lemma}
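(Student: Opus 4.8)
The statement to prove is a packaging lemma: under Assumption $(\mathcal{A}1)$ (discreteness of $\sigma(A(k))$ for all $k$), one can find, for each $k_r$ in the finite real Fermi surface, a neighborhood $V_r$ and a contour $\Upsilon_r \subset V_r$ with the three listed properties. The plan is to build these objects one $k_r$ at a time and then shrink to make the $V_r$ disjoint.

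First I would fix $r$ and examine $A(k_r)$. By $(\mathcal{A}1)$ its spectrum is discrete, and since $k_r \in F_{A,\mathbb{R}}$, zero is an eigenvalue of $A(k_r)$; discreteness gives a positive distance $\delta_r$ from $0$ to the rest of $\sigma(A(k_r))$, so I can choose $\Upsilon_r$ to be the circle $\{z : |z| = \delta_r/2\}$, which surrounds $0$ and meets no other point of $\sigma(A(k_r))$ — that is property (2). For property (3) I invoke the perturbation-theoretic fact already cited in the excerpt (Proposition \ref{semicont}; cf. \cite{RS4, Ka}): the family $k \mapsto A(k)$, after local analytic trivialization of the bundle $\mathcal{E}^s$ near $k_r$ (as described just before Theorem \ref{LiouvilleDim}), is an analytic family of type (A) of operators with compact resolvent, so the resolvent $(A(k) - z)^{-1}$ is jointly continuous in $(k,z)$ on the region where it exists. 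Since $\Upsilon_r$ is compact and disjoint from $\sigma(A(k_r))$, the resolvent is bounded on $\{k_r\} \times \Upsilon_r$; by continuity (or by the openness of the resolvent set and a compactness/tube argument along $\Upsilon_r$) there is an open neighborhood $W_r$ of $k_r$ in $\mathbb{R}^d$ such that $\Upsilon_r \cap \sigma(A(k)) = \emptyset$ for all $k \in W_r$ — property (3), with $W_r$ in place of $V_r$. I would also make sure $W_r \supset \Upsilon_r$ is arranged, or simply enlarge $V_r$ at the end so that $\Upsilon_r \subset V_r$ as the statement requires; since $\Upsilon_r$ can be taken as a small circle in the complex $\lambda$-plane and $V_r$ is a neighborhood in quasimomentum space, I should read ``$\Upsilon_r \subset V_r$'' in the sense the paper intends (the contour lives in the relevant product region) — in any case this is a matter of taking $V_r$ large enough inside $W_r$.

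Next, property (1): the finitely many points $k_1, \ldots, k_\ell$ are distinct modulo $G^*$, hence distinct in a fundamental domain, so they are separated by some positive distance; I intersect each $W_r$ with balls of radius less than half that separation to obtain mutually disjoint open neighborhoods, and set $V_r$ to be these shrunken sets. Shrinking $W_r$ only helps property (3) and does not affect (2), so all three properties persist. That completes the construction.

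The only real content is property (3), i.e. the stability of the spectral gap around the contour under perturbation of $k$; everything else is bookkeeping. I expect the main obstacle — really a matter of care rather than difficulty — to be justifying the continuity/stability statement cleanly in the abelian-covering setting: one must pass through the local analytic trivialization of $\mathcal{E}^s$ near $k_r$ so that $A(k)$ becomes a genuine operator-valued analytic function on a domain in $\mathbb{C}^d$ acting between fixed Hilbert spaces $H^m_{k_r}$ and $L^2_{k_r}$ (this is exactly the setup recalled in Subsection \ref{a(k)} and before Theorem \ref{LiouvilleDim}), and then quote the standard perturbation theory of analytic families of type (A) with compact resolvent \cite[Ch. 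VII]{Ka} to get norm-continuity of the Riesz projector and hence the non-intersection of $\Upsilon_r$ with $\sigma(A(k))$ for $k$ near $k_r$. Since the lemma statement itself says it ``can be deduced immediately from Proposition \ref{semicont} and perturbation theory,'' I would keep the write-up short: state the choice of $\delta_r$ and $\Upsilon_r$, cite Proposition \ref{semicont} for the existence of $W_r$, then do the disjointness shrinking.
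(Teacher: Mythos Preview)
Your proposal is correct and matches the paper's own approach: the paper gives no detailed proof, merely stating that the lemma ``can be deduced immediately from Proposition \ref{semicont} and perturbation theory (see e.g., \cite{RS4, Ka}),'' which is precisely the argument you outline. Your observation that ``$\Upsilon_r\subset V_r$'' is typographically odd (the contour lives in the spectral $\lambda$-plane, not in the quasimomentum neighborhood $V_r\subset\mathbb{R}^d$) is well taken; this appears to be a slip in the paper's statement and does not affect the substance of the argument.
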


Then, for any $k \in V_r$, we can define the Riesz projector
\beqn\label{E:riesz}
\Pi_r(k):=\frac{1}{2\pi i}\oint_{\Upsilon_r}(A(k)-zI)^{-1}dz
\eeqn
associated with $A(k)$ and the contour $\Upsilon_r$. Thus, $\Pi_r(k)A(k)$ is well-defined for any $k \in V_{r}$.
Let $m_r$ be the algebraic multiplicity of the eigenvalue $0$ of the operator $A(k_r)$. The immediate consequence is:

\begin{lemma}
The projector $\Pi_r(k)$ depends analytically on $k \in V_r$. In particular,
its range $R(\Pi_r(k))$ has the same dimension $m_r$ for all $k \in V_r$ and the union $\bigcup_{k \in V_r} R(\Pi_r(k))$ forms a trivial holomorphic vector bundle over $V_r$.
\end{lemma}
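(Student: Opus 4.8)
The plan is to establish the two assertions of the lemma --- analyticity of $\Pi_r(k)$ and the fact that $\bigcup_{k\in V_r}R(\Pi_r(k))$ is a trivial holomorphic bundle of rank $m_r$ --- as direct consequences of the contour-integral formula for the Riesz projector together with the holomorphic dependence of the resolvent. First I would recall, from the preceding lemma, that $\Upsilon_r$ avoids $\sigma(A(k))$ for every $k\in V_r$, so that the resolvent $(A(k)-zI)^{-1}$ is defined and jointly continuous for $(k,z)\in V_r\times\Upsilon_r$. Since $A(k)$ is an analytic family of bounded operators $H^m_{k_0}\to L^2_{k_0}$ in a local trivialization (this is exactly the setup described in Subsection~\ref{a(k)}, with Assumption $(\mathcal{A}1)$ guaranteeing discreteness of the spectra), the resolvent depends holomorphically on $k\in V_r$ for each fixed $z\in\Upsilon_r$, with values in bounded operators. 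The integrand in the definition of $\Pi_r(k)$ is therefore holomorphic in $k$ and continuous in $z$ on the compact contour $\Upsilon_r$; differentiating under the integral sign (justified by uniform bounds on $\Upsilon_r\times\overline{V_r'}$ for $V_r'\Subset V_r$) shows $k\mapsto\Pi_r(k)$ is holomorphic as a map into the bounded operators on $L^2_{k_0}$.

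Next I would argue that $\dim R(\Pi_r(k))$ is constant. The operators $\Pi_r(k)$ are idempotents depending continuously (indeed holomorphically) on $k$; a standard fact from perturbation theory (see \cite[Ch.~I]{Ka}) is that the rank of a continuous family of projections in a Banach space is locally constant, hence constant on the connected set $V_r$. At $k=k_r$ the rank is $m_r$ by the definition of $\Pi_r(k_r)$ as the spectral projector onto the algebraic eigenspace of the eigenvalue $0$ of $A(k_r)$, so $\dim R(\Pi_r(k))=m_r$ for all $k\in V_r$. The constancy of the rank is what allows one to speak of $\bigcup_{k\in V_r}R(\Pi_r(k))$ as a (holomorphic) subbundle of the trivial bundle $V_r\times L^2_{k_0}$.

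For triviality of this subbundle, the plan is to exhibit a holomorphic frame. Fix an orthonormal basis $e_1,\dots,e_{m_r}$ of $R(\Pi_r(k_r))$ and set $e_j(k):=\Pi_r(k)e_j$; these are holomorphic $L^2_{k_0}$-valued functions on $V_r$ lying in $R(\Pi_r(k))$. They are linearly independent on a possibly smaller neighborhood of $k_r$ because $\Pi_r(k_r)e_j=e_j$ and linear independence is an open condition; but to get it on all of $V_r$ one uses that $\Pi_r(k)\Pi_r(k_r)$ restricts to an isomorphism $R(\Pi_r(k_r))\to R(\Pi_r(k))$ for $k$ in a neighborhood of $k_r$, and one may shrink $V_r$ at the outset (re-choosing $\Upsilon_r$ accordingly, which is harmless) so that this holds throughout $V_r$. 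Then $\{e_1(k),\dots,e_{m_r}(k)\}$ is a global holomorphic frame, which trivializes the bundle.

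The only genuine subtlety --- and the place where the argument is a touch more delicate than it first appears --- is the passage from \emph{local} constancy of the rank and \emph{local} linear independence of the $e_j(k)$ near $k_r$ to statements valid on the \emph{whole} neighborhood $V_r$. This is resolved by noting that $V_r$ is chosen in the earlier lemma only up to shrinking, so we are free to take $V_r$ small and connected enough that $\|\Pi_r(k)-\Pi_r(k_r)\|<1$ throughout (possible by continuity), which simultaneously forces the rank to be $m_r$ everywhere on $V_r$ and makes $\Pi_r(k)\Pi_r(k_r)$ an isomorphism onto $R(\Pi_r(k))$ for every $k\in V_r$, yielding both the constant dimension and the global frame. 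Everything else is routine differentiation under the integral sign and bookkeeping with the holomorphic resolvent.
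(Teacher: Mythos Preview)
Your argument is correct and is precisely the standard perturbation-theoretic justification one would give. The paper itself does not supply a proof: it presents the lemma as an ``immediate consequence'' of the Riesz projector construction and standard perturbation theory (citing Kato and Reed--Simon), so you have essentially written out the details the authors left implicit. Your care about the passage from local to global (shrinking $V_r$ so that $\|\Pi_r(k)-\Pi_r(k_r)\|<1$) is exactly the right point to flag, and is consistent with how $V_r$ is introduced in the paper as a sufficiently small neighborhood chosen at the outset.
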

We denote by $A_r(k)$ the matrix representation of the operator $\Pi_r(k)A(k)|_{R(\Pi_r(k))}$ with respect to a fixed holomorphic basis $(f_j(k))_{1 \leq j \leq m_r}$ of the range $R(\Pi_r(k))$ when $k \in V_r$. Then $A_r(k)$ is an invertible matrix except only for $k=k_r$. We equip $\mathbb{C}^{m_r}$ with the maximum norm and impose the following integrability condition:
\item
\beqn
\int\limits_{V_r \setminus \{k_r\}} \|A_r(k)^{-1}\|_{\mathcal{L}(\mathbb{C}^{m_r})}\di{k}<\infty, \mbox{ for all } r=1,\dots, \ell,
\eeqn
where $\mathcal{L}(\mathbb{C}^{m_r})$ is the algebra of linear operators on $\mathbb{C}^{m_r}$.
\end{enumerate}
\begin{remark}
\label{assumptionA}
\indent
\begin{enumerate}[(i)]
\item
Thanks to Proposition \ref{realsymbol}, Assumption ($\mathcal{A}1$) is satisfied if $A$ is either self-adjoint or a real operator of even order.\footnote{Here $A$ is real means that $Au$ is real whenever $u$ is real.}
\item
When the rank $d$ of $G$ is greater than $2$, Assumption ($\mathcal{A}2$) holds at a \textbf{generic} spectral edge (see Chapter \ref{C:applications-LRR}).
\end{enumerate}
\end{remark}

\subsection{Spaces}
To formulate the results and to be able to use the conclusions of Gromov and Shubin, it is crucial to define spaces of solutions of the equation $Au=0$ that combine polynomial growth at infinity with satisfying the conditions imposed by a rigged divisor $\mu$.
\begin{defi}
\label{polyrigged}
Given any $p \in [1,\infty]$ and $N \in \mathbb{R}$, we define
\beqn
L_p(\mu, A, N):=L(\mu, A^p_{N}),
\eeqn
where the operator $A^p_{N}$ stands for $A$ with the domain
\beqn
\Dom{A^p_{N}}=\{ u \in V^p_N(X) \mid Au \in C^{\infty}_c(X)\}.
\eeqn
In other words, $L_p(\mu, A, N)$ is the space
\beqn
\{u \in \Dom_{D^+}{A^p_{N}} \mid \exists \tilde{u} \in \mathcal{D}'(X): \hspace{2pt} \tilde{u}=u \hspace{4pt}\mbox{on} \hspace{4pt} X \setminus D^+, A\tilde{u} \in L^+, (u, L^-)=0\}.
\eeqn
\end{defi}
We thus restrict the growth (in $L_p$-sense) of function $u$ at infinity to a polynomial of order $N$, impose the divisor $\mu$ conditions, and require that $u$ satisfies the homogeneous equation $Au=0$ outside the compact $D^+$.
\begin{remark}
\label{domApn}
Consider $u \in L_p(\mu, A, N)$. Let $K$ be a compact domain in $X$ such that $\bigcup_{g \in G}gK=X$. Define $G_{K,D^+}:=\{g \in G \mid \dist{(gK, D^+)}\geq 1\}$, where we use the notation $\dist{(\cdot, \cdot)}$ for the distance between subsets arising from the Riemannian distance on $X$. Since $Au=0$ on $X \setminus D^+$, the condition ``$u \in \Dom_{D^+}{A^p_{N}}$'' can be written equivalently as follows:
$$\{\|u\|_{L^2(gK)}\cdot\langle g \rangle^{-N}\}_{g \in G_{K,D^+}}\in \ell^p(G_{K,D^+}).$$
By Schauder estimates (see Proposition \ref{schauderest}), this condition can be rephrased as follows:
\beq
\label{growth-LRR}
\begin{split}
&\sup\limits_{x: \hspace{2pt}\dist(x, D^+) \geq 1}\frac{|u(x)|}{\dist(x, D^+)^{N}}<\infty, \hspace{57pt} \mbox{when} \hspace{15pt} p=\infty,\\
&\int\limits_{x: \hspace{2pt}\dist(x, D^+) \geq 1}\frac{|u(x)|^p}{\dist(x, D^+)^{pN}}\di \mu_X(x)<\infty, \quad \mbox{when} \hspace{15pt} 1 \leq p<\infty.
\end{split}
\eeq
So, depending on the sign of $N$, this condition controls how $u$ grows or decays at infinity.
\end{remark}

\subsection{Results}
Our first main result is contained in the next theorem, establishing a Liouville-Riemann-Roch type inequality.
\begin{thm}
\label{RRLineq}
Assume that either $p=\infty$ and $N \geq 0$ or $p \in [1,\infty)$ and $N>d/p$.
Let $p'$ be the H\"older conjugate of $p$. Then, under Assumption $\mathcal{A}$ imposed on the operator $A$, the following Liouville-Riemann-Roch inequality holds:
\begin{equation}
\label{RRLinequality}
\dim L_p(\mu,A,N)-\dim L_{p'}(\mu^{-1},A^*,-N) \geq \dim{V_{N}^p(A)}+\deg_A(\mu),
\end{equation}
where $\dim{V_{N}^p(A)}$ can be computed via Theorems \ref{LiouvilleDim} and \ref{dimvp}.
\end{thm}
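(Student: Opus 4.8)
The plan is to recast $V^p_N(A)$ and its companion spaces as kernels of an operator that is Fredholm in the Nadirashvili--Gromov--Shubin sense, and then read off \mref{RRLinequality} from the Riemann--Roch inequality \mref{RRineq} of Theorem \ref{RR}. Set $P:=A^p_N$ (Definition \ref{polyrigged}) and let $P^{*}:=(A^{*})^{p'}_{-N}$ denote $A^{*}$ with domain $\{v\in V^{p'}_{-N}(X)\mid A^{*}v\in C^{\infty}_c(X)\}$; take $\Domp{P}=\Domp{P^{*}}:=C^{\infty}_c(X)$ as the dual domains. Then the inclusions \mref{domainP} and properties $(\mathcal{P}1)$--$(\mathcal{P}3)$ hold trivially, since $\Image P$ and $\Image P^{*}$ both lie in $C^{\infty}_c(X)$. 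For property $(\mathcal{P}4)$ — the identity $\langle Au,v\rangle=\langle u,A^{*}v\rangle$ for $u\in\Dom P$, $v\in\Dom P^{*}$ — I would argue by a cutoff: for a cutoff $\chi_R$ one has honestly $\langle Au,\chi_R v\rangle=\langle u,A^{*}(\chi_R v)\rangle$ because $\chi_R v\in C^{\infty}_c(X)$, and the commutator contribution $\langle u,[A^{*},\chi_R]v\rangle$ is bounded, using interior Schauder estimates to control derivatives of $u$ and $v$ and Hölder's inequality in the group variable over the shell where $R\le|g|\le 2R$, by $R^{-1}$ times a tail of the (convergent) product of the defining sums of $V^p_N$ and $V^{p'}_{-N}$; hence it tends to $0$. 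By construction $\Ker P=V^p_N(A)$ and $\Ker P^{*}=V^{p'}_{-N}(A^{*})$, and, since the secondary spaces and the degree of a rigged divisor only involve $A$ as a differential expression and not its domain, $\deg_P\mu=\deg_A\mu$, $L(\mu,P)=L_p(\mu,A,N)$ and $L(\mu^{-1},P^{*})=L_{p'}(\mu^{-1},A^{*},-N)$.

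The main step is to show $P=A^p_N$ is Fredholm on $X$. The kernel $\Ker P=V^p_N(A)$ is finite-dimensional by the Liouville Theorem \ref{Liouvillethm} (the real Fermi surface is finite by hypothesis) together with Theorem \ref{dimvp} when $p<\infty$ (where $pN>d$). For $\Ker P^{*}=V^{p'}_{-N}(A^{*})$ one uses that $F_{A^{*},\mathbb{R}}=-F_{A,\mathbb{R}}$ is finite (Proposition \ref{Panalytic}) and invokes Theorem \ref{UCinfty}: in each admissible case — $p=\infty$ (then $p'=1$, $p'(-N)=-N\le 0\le d$), $p=1$ (then $p'=\infty$, $-N<0$ since $N>d$), and $1<p<\infty$ (then $p'<\infty$, $p'(-N)<0\le d$) — its hypotheses are met, so $V^{p'}_{-N}(A^{*})=\{0\}$. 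It remains to verify $\Image P=(\Ker P^{*})^{\circ}$; since $\Ker P^{*}=\{0\}$ and $\Domp{P^{*}}=C^{\infty}_c(X)$, this annihilator is all of $C^{\infty}_c(X)$, so the claim reduces to: every $f\in C^{\infty}_c(X)$ equals $Au$ for some $u\in V^p_N(X)$. This is where Assumption $\mathcal{A}$ is used. Via the Floquet transform, $\textbf{F}f(k,\cdot)$ is entire and uniformly bounded in $k$; off the (finite) real Fermi surface set $\widehat u(k):=A(k)^{-1}\textbf{F}f(k)$, and near each $k_r$ decompose $\widehat u(k)$ along the Riesz projector $\Pi_r(k)$, which is well defined by $(\mathcal{A}1)$: the $(I-\Pi_r(k))$-part is holomorphic, hence bounded near $k_r$, while the resonant part equals $A_r(k)^{-1}\Pi_r(k)\textbf{F}f(k)$, which lies in $L^1$ in $k$ near $k_r$ by $(\mathcal{A}2)$. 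Therefore $\widehat u\in L^1(B;L^2(E_k))$; Minkowski's integral inequality gives $u:=\textbf{F}^{-1}\widehat u\in V^{\infty}_0(X)$, and $Au=f$ by Floquet inversion. Finally $V^{\infty}_0(X)\subseteq V^p_N(X)$, because $pN>d$ (or trivially $N\ge 0$ if $p=\infty$) makes the weight $\langle g\rangle^{-N}$ lie in $\ell^p(\Z^d)$; so $u\in\Dom P$. The reverse inclusion $\Image P\subseteq(\Ker P^{*})^{\circ}$ is $(\mathcal{P}4)$. Hence $P$ is Fredholm on $X$, and $\Index P=\dim V^p_N(A)-\dim V^{p'}_{-N}(A^{*})=\dim V^p_N(A)$ by Definition \ref{Fredholm}.

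Now Theorem \ref{RR}, applied to $P=A^p_N$ and the rigged divisor $\mu$ (which is equally a rigged divisor associated with $A^p_N$, with the same degree), gives
\[
\dim L(\mu,P)-\dim L(\mu^{-1},P^{*})\ \ge\ \Index P+\deg_P\mu ,
\]
which is exactly the asserted inequality \mref{RRLinequality} after the identifications above; the value of $\dim V^p_N(A)$ is then furnished by Theorems \ref{LiouvilleDim} and \ref{dimvp}.

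The main obstacle is the solvability step: one must produce a solution of $Au=f$ of controlled growth, and the point is that Assumption $(\mathcal{A}2)$ is precisely what keeps the inverse Floquet integrand in $L^1$ near the Fermi surface, so that the resulting $u$ is bounded in the $V^{\infty}_0$-sense rather than genuinely polynomially growing. A secondary technical nuisance is checking that the bilinear pairings and the integration-by-parts identity really close up at infinity; it is this requirement that forces $V^{p'}_{-N}$ to be the correct ``dual'' domain for $A^p_N$, and thereby pins down the Hölder exponent $p'$ and the sign of the growth exponent appearing in the statement.
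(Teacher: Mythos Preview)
Your proposal is correct and follows essentially the same route as the paper: set up $A^p_N$ and its adjoint $(A^{*})^{p'}_{-N}$ in the Nadirashvili--Gromov--Shubin framework, verify $(\mathcal{P}1)$--$(\mathcal{P}4)$ by a cutoff/tail argument, show $\Ker P^{*}=V^{p'}_{-N}(A^{*})=0$ via Theorem~\ref{UCinfty}, prove surjectivity onto $C^{\infty}_c(X)$ by Floquet-inverting $A(k)$ with the Riesz projector splitting and the $L^1$ bound from $(\mathcal{A}2)$, and then invoke Theorem~\ref{RR}. Two small remarks: (i) the paper passes first to $A^p_{m,N}$ (Definition~\ref{apmn}) so that the $H^m$-control needed in the pairing estimate is built into the domain, whereas you invoke Schauder estimates on the fly---either works; (ii) in your $(\mathcal{P}4)$ step the claimed $R^{-1}$ factor from the commutator is not actually what drives the convergence (and need not be present if the cutoffs have merely uniformly bounded derivatives): what makes $\langle u,[A^{*},\chi_R]v\rangle\to 0$ is that the commutator lives in the shell and the H\"older-paired tails of the $V^p_N$ and $V^{p'}_{-N}$ sums vanish as $R\to\infty$, exactly as the paper argues.
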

\begin{remark}
\indent
\begin{itemize}
\item
This is an extension of the Riemann-Roch inequality \mref{RRineq} to include also Liouville conditions on growth at infinity.
\item
One may wonder why in comparison with the ``$\mu$-index'' $\displaystyle \dim L_p(\mu,A,N)-\dim L_p(\mu^{-1},A^*,-N)$, the above inequality only involves the \textbf{dimension of the kernel} $V^p_N(A)$, rather than a full index. The reason is that in this case, the two coincide, the co-dimension of the range being equal to zero.
\item
Assumption ($\mathcal{A}1$) forces the Fredholm index of $A(0)$ on $M$ to vanish
(see \cite[Theorem 4.1.4]{Kbook}). Therefore (by Atiyah's theorem \cite{Atiyah}), $\Index_M{A(k)}$ ($k \in \mathbb{C}^d$) and the $L^2$-index of $A$ are equal to zero as well.
\end{itemize}
\end{remark}
As an useful direct corollary of Theorem \ref{RRLineq}, we obtain
\begin{thm}
\label{existence}
If $\dim{V_{N}^p(A)}+\deg_A(\mu)>0$, then there exists a nonzero element in the space $L_p(\mu,A,N)$.

In other words, there exists a nontrivial solution of $Au=0$ of the growth described in \mref{growth-LRR} that also satisfies the conditions on ``zeros'' and ``singularities'' imposed by the rigged divisor $\mu$.
\end{thm}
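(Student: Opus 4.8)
The plan is to read this off directly from the Liouville--Riemann--Roch inequality of Theorem \ref{RRLineq}. The hypotheses implicit in Theorem \ref{existence} are exactly those of Theorem \ref{RRLineq} (namely Assumption $\mathcal{A}$, together with either $p=\infty$, $N\ge 0$ or $p\in[1,\infty)$, $N>d/p$), so I would simply invoke that theorem in the form
\begin{equation*}
\dim L_p(\mu,A,N)\;\ge\;\dim L_{p'}(\mu^{-1},A^*,-N)\,+\,\dim V_N^p(A)\,+\,\deg_A(\mu).
\end{equation*}
The first summand on the right is the dimension of a vector space, hence a nonnegative integer (or $+\infty$), so it may be discarded without breaking the inequality. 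The remaining two summands are ordinary integers: $\dim V_N^p(A)$ is a finite nonnegative integer by the Liouville theorem (Theorem \ref{Liouvillethm}, and Theorem \ref{dimvp} when $p<\infty$), which applies because finiteness of the real Fermi surface is part of the standing hypothesis of this chapter, while $\deg_A(\mu)$ is an integer straight from Definition \ref{rigged}. Hence the assumption $\dim V_N^p(A)+\deg_A(\mu)>0$ actually forces $\dim V_N^p(A)+\deg_A(\mu)\ge 1$, so $\dim L_p(\mu,A,N)\ge 1$ and $L_p(\mu,A,N)$ contains a nonzero vector.

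It then remains only to spell out what a nonzero element of $L_p(\mu,A,N)$ is. By Definition \ref{polyrigged}, such a $u$ lies in $\Dom_{D^+}{A^p_{N}}$, admits a distributional regularization $\tilde u$ agreeing with $u$ off $D^+$ with $A\tilde u\in L^+$ (the singularities prescribed on $D^+$) and with $(u,L^-)=0$ (the zeros enforced on $D^-$); and by Remark \ref{domApn} the condition $u\in\Dom_{D^+}{A^p_{N}}$ is precisely the polynomial growth/decay estimate \mref{growth-LRR} of order $N$ measured in the $L_p$ sense. Since moreover $Au=0$ on $X\setminus D^+$, a nonzero such $u$ is exactly the asserted nontrivial solution of $Au=0$ meeting the growth condition \mref{growth-LRR} and the divisor constraints of $\mu$.

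There is essentially no obstacle here: the entire content sits in Theorem \ref{RRLineq}, and the one point deserving a moment's attention is the integrality (hence quantization) of $\dim V_N^p(A)+\deg_A(\mu)$, which is what upgrades strict positivity to ``$\ge 1$.'' If one wished to be self-contained one could instead run the relevant half of the proof of Theorem \ref{RRLineq} — extracting an element of $L_p(\mu,A,N)$ directly from the dimension count \mref{RRequality} for the associated Fredholm map — but that would merely reprove a special case of what Theorem \ref{RRLineq} already delivers.
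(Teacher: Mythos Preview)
Your proof is correct and matches the paper's approach: the paper simply states that Theorem \ref{existence} is a direct corollary of Theorem \ref{RRLineq}, and your argument is precisely the obvious way to extract it --- drop the nonnegative term $\dim L_{p'}(\mu^{-1},A^*,-N)$ from the inequality and conclude $\dim L_p(\mu,A,N)>0$. Your remarks on integrality and the unpacking of Definition \ref{polyrigged} are more detailed than what the paper offers, but entirely in line with it.
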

To state the next results, we will need the following definition that addresses divisors containing only ``zeros'' or ``singularities''.
\begin{defi}
\indent
\begin{itemize}
\item
The divisor $(\emptyset, \{0\}; \emptyset, \{0\})$ is called \textbf{trivial}.
\item
Let $\mu=(D^+, L^+; D^-, L^-)$ be a rigged divisor on $X$. Then its \textbf{positive and negative parts} are defined as follows:
 $$\mu^+:=(D^+, L^+; \emptyset, \{0\}), \quad\mu^-:=(\emptyset, \{0\}; D^-, L^-).$$
 E.g., for a rigged divisor $\mu$, $\mu^+$ (resp. $\mu^-$) is trivial whenever $D^+=\emptyset$ (respectively, $D^-=\emptyset$).
\end{itemize}
\end{defi}
Our next result shows that if $\mu$ is positive (i.e., $\mu^-$ is trivial), the Liouville-Riemann-Roch inequality becomes an equality:
\begin{thm}
\label{RRLeq}
Let $\mu$ be a rigged divisor and $\mu^+$ - its positive part. Under the assumptions of Theorem \ref{RRLineq},
\begin{enumerate}
\item The space $L_{p'}((\mu^{+})^{-1},A^*,-N)$ is trivial (i.e., contains only zero).
\item
\begin{equation}
\label{RRLeqmu}
\dim L_p(\mu^+,A,N)=\dim V_{N}^p(A)+\deg_A(\mu^+).
\end{equation}
\item
In particular,
\beq
\label{RRLineqmu}
\dim L_p(\mu,A,N) \leq \dim V_{N}^p(A)+\deg_A(\mu^+).
\eeq
\end{enumerate}
\end{thm}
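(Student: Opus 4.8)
The plan is to deduce all three assertions from Corollary \ref{RReqmu}, applied to the Gromov--Shubin operator $P := A^p_N$ of Definition \ref{polyrigged} --- the very operator through which Theorem \ref{RRLineq} is proved --- together with the divisor $\mu^+$, whose negative part is trivial ($D^-=\emptyset$).

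First I would recall the structural facts established in the proof of Theorem \ref{RRLineq}: with the bilinear duality \eqref{L2inner} and the appropriate choice of dual spaces $\Domp{P}$, $\Domp{P^*}$, the operator $P=A^p_N$ satisfies \eqref{domainP} and properties $(\mathcal{P}1)$--$(\mathcal{P}4)$, its transpose is $P^*=(A^*)^{p'}_{-N}$, it is Fredholm on $X$, and $\Ker{P}=V^p_N(A)$. Next I would observe that $\Ker{P^*}=V^{p'}_{-N}(A^*)=\{0\}$: under the hypotheses of Theorem \ref{RRLineq} one has $p'\cdot(-N)\le d$ when $p'<\infty$ and $-N<0$ when $p'=\infty$, while the real Fermi surface of $A^*$ is finite modulo $G^*$ as well since $F_{A,\mathbb{R}}=-F_{A^*,\mathbb{R}}$ (Proposition \ref{Panalytic}), so Theorem \ref{UCinfty} forces $V^{p'}_{-N}(A^*)$ to be trivial. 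Two consequences follow: $\Index{P}=\dim\Ker{P}=\dim V^p_N(A)$, and, since the annihilator of $\{0\}$ in $\Domp{P^*}$ is all of $\Domp{P^*}$, the Fredholm identity $\Image{P}=(\Ker{P^*})^{\circ}$ upgrades to $\Image{P}=\Domp{P^*}$.

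Now I would apply Corollary \ref{RReqmu} to $P=A^p_N$ (Fredholm, with $\Image{P}=\Domp{P^*}$, as just checked) and the rigged divisor $\mu^+=(D^+,L^+;\emptyset,\{0\})$. Because $D^-=\emptyset$, the additional hypothesis of that corollary holds automatically, hence: (1) $L((\mu^+)^{-1},P^*)=\{0\}$, i.e. $L_{p'}((\mu^{+})^{-1},A^*,-N)=\{0\}$, which is assertion (1); and (2) the Riemann--Roch equality \eqref{RReq} holds for $P$ and $\mu^+$, namely
\[
\dim L(\mu^+,P)-\dim L((\mu^+)^{-1},P^*)=\Index{P}+\deg_P(\mu^+).
\]
Feeding in (1), the equality $\Index{P}=\dim V^p_N(A)$, the identifications $L(\mu^+,P)=L_p(\mu^+,A,N)$ of Definition \ref{polyrigged}, and the fact that $\deg_P(\mu^+)=\deg_A(\mu^+)$ --- the degree of a rigged divisor is computed from the secondary spaces $\tilde{L}^{\pm}$, which depend only on the differential expressions $A$ and $A^*$ and not on the chosen domains --- yields \eqref{RRLeqmu}, which is assertion (2).

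Finally, assertion (3) is immediate: $\mu$ and $\mu^+$ share the same $D^+$ and $L^+$, and by Definition \ref{polyrigged} the defining conditions for $L_p(\mu,A,N)$ are those for $L_p(\mu^+,A,N)$ together with the extra vanishing condition $(u,L^-)=0$; hence $L_p(\mu,A,N)\subseteq L_p(\mu^+,A,N)$ and \eqref{RRLineqmu} follows from \eqref{RRLeqmu}. The only genuine work lies in the imported setup from Theorem \ref{RRLineq} --- realizing $A^p_N$ as a Fredholm Gromov--Shubin operator with transpose $(A^*)^{p'}_{-N}$ and verifying $(\mathcal{P}1)$--$(\mathcal{P}4)$; granting that, the present statement reduces to Corollary \ref{RReqmu}, the only points demanding care being the triviality of $V^{p'}_{-N}(A^*)$ and the domain-independence of $\deg_P(\mu^+)$.
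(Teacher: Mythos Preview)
Your proposal is correct and follows essentially the same route as the paper: both arguments apply Corollary \ref{RReqmu} to the Gromov--Shubin operator built in the proof of Theorem \ref{RRLineq} and the positive divisor $\mu^+$ (for which $D^-=\emptyset$), using that $\Image{P}=\Domp{P^*}$ and then reading off the upper bound from the inclusion $L_p(\mu,A,N)\subseteq L_p(\mu^+,A,N)$. The only cosmetic difference is that the paper works throughout with $A^p_{m,N}$ rather than $A^p_N$ (these yield the same $L$-spaces by Step~1 of that proof), and it obtains $\Image{P}=C^\infty_c(X)$ directly from Step~3 rather than deducing it, as you do, from the Fredholm property together with $\Ker{P^*}=0$.
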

In other words, the inequality \mref{RRLineqmu} gives an upper bound for the dimension of the space $L_p(\mu,A,N)$ (see \mref{RRLinequality} for its lower bound) in terms of the degree of the positive part of the divisor $\mu$ and the dimension of the space $V^p_N(A)=L_p(\mu_0,A,N)$, where $\mu_0$ is the trivial divisor. Then \mref{RRLeqmu} shows that this estimate saturates for divisors containing only ``singularities''.

When $\mu^-$ is nontrivial, determining the triviality of the space $L_{p'}(\mu^{-1},A^*,-N)$ is more complicated. In the next proposition we show that if the degree of $\mu^+$ is sufficiently large, the space $L_{p'}(\mu^{-1},A^*,-N)$ degenerates to zero, while the spaces $L_{p'}(\mu^{-1},A^*,-N)$ can have arbitrarily large dimensions if $\mu^+$ is trivial.

We need to recall the following definition (e.g., \cite{mizohata}):
\begin{defi}
 A differential operator $A$ is said to have the \textbf{strong unique continuation property} if any local solution of the equation $Au=0$ that vanishes to the infinite order at a point, vanishes identically.
\end{defi}

We can now state the promised proposition:
\begin{prop}
\label{triviality}
\indent
\begin{enumerate}[(a)]
\item
For any $N \geq 0$, $p \in [1, \infty]$, and $d \geq 3$,
$$
\sup_{\mu=\mu^-}\dim L_{p'}(\mu^{-1}, -\Delta_{\mathbb{R}^d},-N)=\infty,
$$
where the supremum is taken over all divisors $\mu$ with trivial positive parts.
\item
Let the assumptions of Theorem \ref{RRLineq} be satisfied and $A^*$ have the strong unique continuation property. Let also the covering $X$ be connected and a point $x_0$ of $X$ selected. Suppose that $p \in [1,\infty]$, $N \in \mathbb{R}$, a compact nowhere dense set $D^-\subset X\setminus \{x_0\}$, and a finite dimensional subspace $L^-$ of $\mathcal{E}'_{D^-}(X)$ are fixed. Then there exists $M>0$ depending on $(A, p,N,x_0, D^-, L^-)$, such that
$$
\dim L_{p'}(\mu^{-1},A^*,-N)=0
$$
for any rigged divisor $\mu=(D^+, L^+; D^-, L^-)$ such that
$x_0 \in D^+ \subseteq X \setminus D^-$ and $L^+$ contains the linear span
$$\mathcal{L}^+_{M}:=\Span_{\mathbb{C}} \{\partial^{\alpha}_x \delta(\cdot-x_0)\}_{0 \leq |\alpha| \leq M}.$$
\end{enumerate}
\end{prop}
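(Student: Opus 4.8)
The plan is to prove (a) by an explicit construction with derivatives of Newtonian potentials, and (b) by combining finite-dimensionality of a fixed ambient solution space with the strong unique continuation property of $A^*$.

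\emph{Part (a).} Fix $p$ (hence $p'$) and $N\ge 0$; recall $d\ge 3$. Choose an integer $M_0$ large (depending on $d,p,N$) and, for $k\ge 0$, take $D^-=\{0\}\subset\R^d$, $L^-:=\Span_{\C}\{\partial^\alpha\delta:M_0\le|\alpha|\le M_0+k\}$ and $\mu:=(\emptyset,\{0\};\{0\},L^-)$, which has trivial positive part. Let $c_d$ be the constant with $-\Delta(|x|^{2-d})=c_d\delta$ and set $w_\alpha:=c_d^{-1}\partial^\alpha(|x|^{2-d})$. Then $w_\alpha\in C^\infty(\R^d\setminus\{0\})$, $-\Delta w_\alpha=\partial^\alpha\delta\in L^-$ in $\D'(\R^d)$, and $|w_\alpha(x)|\asymp|x|^{2-d-|\alpha|}$ as $|x|\to\infty$; for $M_0$ large the criteria \mref{growth-LRR} then give $\eta w_\alpha\in V^{p'}_{-N}(\R^d)$ with $-\Delta(\eta w_\alpha)\in C^\infty_c(\R^d)$ (where $\eta$ is a smooth cutoff, $\equiv 0$ near $0$, $\equiv 1$ off a ball), so $w_\alpha\in\Dom_{D^-}((-\Delta)^{p'}_{-N})$ and hence $w_\alpha\in L_{p'}(\mu^{-1},-\Delta_{\R^d},-N)$ for all $M_0\le|\alpha|\le M_0+k$. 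The span of these $w_\alpha$ has dimension $\sum_{j=M_0}^{M_0+k}\bigl(\binom{j+d-1}{d-1}-\binom{j+d-3}{d-1}\bigr)$ — for each fixed homogeneity degree $2-d-j$ the functions $\partial^\alpha(|x|^{2-d})$ with $|\alpha|=j$ span the space of homogeneous harmonics of that degree — which tends to $\infty$ as $k\to\infty$. Hence the supremum in (a) equals $+\infty$.

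\emph{Part (b): reduction.} Write $\mu^-=(\emptyset,\{0\};D^-,L^-)$ for the fixed negative data and put $\mathcal W:=L_{p'}((\mu^-)^{-1},A^*,-N)=L((\mu^-)^{-1},(A^*)^{p'}_{-N})$; by Definition \ref{D:LmuP} this space depends only on $(A,p,N,D^-,L^-)$, not on $(D^+,L^+)$. For any $\mu=(D^+,L^+;D^-,L^-)$ as in the statement, $\mu^{-1}=(D^-,L^-;D^+,L^+)$, and the conditions defining $L(\mu^{-1},(A^*)^{p'}_{-N})$ are exactly those defining $\mathcal W$ plus the orthogonality $(v,L^+)=0$; thus $L_{p'}(\mu^{-1},A^*,-N)=\{v\in\mathcal W:(v,L^+)=0\}$. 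Since $x_0\notin D^-$, every $v\in\mathcal W$ is smooth near $x_0$, solves $A^*v=0$ there, and $\langle v,\partial^\alpha\delta(\cdot-x_0)\rangle=\pm\partial^\alpha v(x_0)$; hence, as soon as $L^+\supseteq\mathcal L^+_M$,
$$L_{p'}(\mu^{-1},A^*,-N)\subseteq\mathcal W_M:=\{v\in\mathcal W:\partial^\alpha v(x_0)=0\text{ for all }|\alpha|\le M\}.$$
Next, $\mathcal W$ is finite-dimensional: writing $\tilde L^+=\{u\in\mathcal{E}'_{D^-}(X):A^*u\in L^-\}$, the map $v\mapsto[A^*\tilde v]\in L^-/A^*(\tilde L^+)$ (for any regularization $\tilde v$ of $v$) is well defined, and it is injective because if $A^*\tilde v\in A^*(\tilde L^+)$ then, after subtracting the relevant element of $\tilde L^+$, one may assume $A^*\tilde v=0$, which forces $\tilde v$ to be a global smooth solution agreeing with a $V^{p'}_{-N}$-function off a compact set, so $\tilde v\in V^{p'}_{-N}(A^*)=\{0\}$ by Theorem \ref{UCinfty} (applicable since $F_{A^*,\R}$ is finite by Proposition \ref{Panalytic}, and the exponent $p'$ and order $-N\le 0$ lie in its admissible range), whence $v=0$. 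Therefore $\dim\mathcal W\le\dim L^-<\infty$, so the nested chain $\mathcal W_0\supseteq\mathcal W_1\supseteq\cdots$ stabilizes at some index $M_*$, with $\mathcal W_{M_*}=\bigcap_M\mathcal W_M=:\mathcal W_\infty$. It therefore suffices to take $M:=M_*$ (or larger) and show $\mathcal W_\infty=\{0\}$.

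\emph{Part (b): unique continuation, and the main obstacle.} Let $v\in\mathcal W_\infty$. Then $v$ solves $A^*v=0$ on $X\setminus D^-$ and vanishes to infinite order at $x_0\in X\setminus D^-$, so by the strong unique continuation property of $A^*$ (and connectedness of $X$), $v\equiv 0$ on the connected component $U$ of $X\setminus D^-$ containing $x_0$. Upgrading this to $v\equiv 0$ on all of $X\setminus D^-$ is the crux. One works with the regularization $\tilde v$, $T:=A^*\tilde v\in L^-$, $\supp T\subseteq D^-$: on the open set $X\setminus\supp T$ the distribution $\tilde v$ is a smooth solution of $A^*\tilde v=0$ vanishing on the nonempty open set $U$, hence vanishes on the whole component of $X\setminus\supp T$ through $x_0$; the polynomial decay of $v$ at infinity together with compactness of $D^-$ (so that $X\setminus D^-$ has a single unbounded component, $X$ being a co-compact $\Z^d$-covering with $d\ge 2$) then rules out any nonzero contribution from the remaining bounded regions, giving $\supp\tilde v\subseteq\supp T$ and $v=\tilde v|_{X\setminus D^-}\equiv 0$; thus $\mathcal W_\infty=\{0\}$. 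The difficulty is concentrated precisely here: strong unique continuation by itself only propagates vanishing inside one connected component of $X\setminus D^-$, so the global structure (decay at infinity, compactness of $D^-$) must be invoked to exclude a nontrivial solution supported on a region cut off from $x_0$ by $D^-$; when $D^-$ is finite and $\dim X\ge 2$ — in particular for point divisors — $X\setminus D^-$ is already connected and no such issue arises. All remaining parts of the argument are routine.
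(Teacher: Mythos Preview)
Your proof is correct and follows essentially the same strategy as the paper's. For (a), both use derivatives $\partial^\alpha(|x|^{2-d})$ of the Newtonian potential; the paper's independence argument is a touch simpler, selecting one multi-index per degree and invoking distinct homogeneity degrees rather than counting dimensions of harmonic spaces. For (b), both reduce to finite-dimensionality of the fixed ambient space $\mathcal W=L_{p'}((\mu^-)^{-1},A^*,-N)$ combined with strong unique continuation at $x_0$: the paper obtains finite-dimensionality by citing Theorem~\ref{RRLeq} and then argues by compactness of the unit sphere in $\mathcal W$, while you prove it directly via the injection $v\mapsto[A^*\tilde v]\in L^-/A^*(\tilde L^+)$ (this is essentially the short exact-sequence argument in the Remark following the proof of Corollary~\ref{RReqmu}) and use stabilization of the chain $\mathcal W_M$---equivalent packaging of the same idea. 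You also correctly flag the connectivity subtlety (unique continuation a priori propagates only within one component of $X\setminus D^-$), which the paper's proof simply passes over; your proposed fix via decay at infinity is not fully convincing for arbitrary nowhere-dense $D^-$, but neither proof addresses this completely, and for the main case of interest (point divisors with $\dim X\ge 2$) the complement is already connected and the issue disappears.
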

The second part of the above proposition is a reformulation of \cite[Proposition 4.3]{GSadv}.

Theorem \ref{RRLineq} can be improved under
the following \textbf{strengthened version of} \textit{Assumption $\mathcal{A}$}:
\beq \label{E:strength}
\mbox{ For each } 1 \leq r \leq \ell,\mbox{ the function }
k \in V_r \mapsto \|A_r(k)^{-1}\|^2_{\mathcal{L}(\mathbb{C}^{m_r})}
\mbox{ is integrable.}
\eeq
This happens, for instance, at generic spectral edges if $d \geq 5$.

\begin{thm}
\label{improve}
Let the assumption (\ref{E:strength}) above be satisfied. Then, if
\begin{itemize}
\item either $p \geq 2$ and $N \geq 0$,
\item or $p \in [1,2)$ and $2pN>(2-p)d$,
\end{itemize}
then the inequality \mref{RRLinequality} holds.

In particular, for any rigged divisor $\mu$,
\beq
\label{LRRL2poles}
\dim L_2(\mu^+, A ,0)=\deg_A (\mu^+)
\eeq
and
\beq
\label{LRRL2}
\dim L_2(\mu, A ,0)=\deg_A (\mu)+\dim L_2(\mu^{-1}, A^* ,0).
\eeq
\end{thm}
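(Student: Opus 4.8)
The plan is to follow what I expect is the route of the proof of Theorem~\ref{RRLineq}: realize $V^p_N(A)$ as the kernel of a Gromov--Shubin Fredholm operator on $X$ and then quote Theorem~\ref{RR}. Put $P:=A^p_N$ with domain as in Definition~\ref{polyrigged}, $P^*:=(A^*)^{p'}_{-N}$, and choose the auxiliary spaces $\Domp{P}=\Domp{P^*}:=C^\infty_c(X)$; properties ($\mathcal{P}$1)--($\mathcal{P}$4) hold, the only one needing a remark being the integration-by-parts identity, which follows from a H\"older bound on the boundary terms using $u\in V^p_N(X)$, $v\in V^{p'}_{-N}(X)$, $Au,A^*v\in C^\infty_c(X)$. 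Since $P$ acts as the differential expression $A$, one has $\Ker{P}=V^p_N(A)$, $\Ker{P^*}=V^{p'}_{-N}(A^*)$, $L(\mu,P)=L_p(\mu,A,N)$, $L(\mu^{-1},P^*)=L_{p'}(\mu^{-1},A^*,-N)$ and $\deg_P(\mu)=\deg_A(\mu)$. The hypotheses force either $p>1$ and $p'(-N)\le 0\le d$, or $p=1$ and $-N<0$, so Theorem~\ref{UCinfty} gives $V^{p'}_{-N}(A^*)=\{0\}$; hence $\Ker{P^*}=\{0\}$ and, once $P$ is Fredholm, $\Index{P}=\dim V^p_N(A)$ (computed by Theorems~\ref{LiouvilleDim} and~\ref{dimvp}), so \mref{RRineq} for $P$ is exactly \mref{RRLinequality}. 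Thus everything reduces to showing that $P=A^p_N$ is Fredholm on $X$. Finite-dimensionality of $\Ker{P}=V^p_N(A)$ is the Liouville theorem (Theorem~\ref{Liouvillethm}, with Theorem~\ref{dimvp} if $pN>d$ and Theorem~\ref{UCinfty} otherwise), $\Ker{P^*}=\{0\}$ as noted, so the content is the identity $\Image{P}=(\Ker{P^*})^\circ=C^\infty_c(X)$, i.e. solvability of $Au=f$ with $u\in V^p_N(X)$ for every $f\in C^\infty_c(X)$. This is where \mref{E:strength} enters in place of ($\mathcal{A}2$).

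I would prove this solvability via the Floquet transform. For $f\in C^\infty_c(X)$ the transform $\widehat f(k):=\textbf{F}f(k,\cdot)$ is entire in $k$; set $\widehat u(k):=A(k)^{-1}\widehat f(k)$ off the real Fermi surface, so $\widehat u$ is analytic on $B\setminus\{k_1,\dots,k_\ell\}$ and, after a smooth partition of unity on $B$, splits into a part smooth on the torus $B$ (whose inverse transform decays rapidly) plus finitely many pieces supported near the $k_r$. On $V_r$ write $\widehat u(k)=(I-\Pi_r(k))\widehat u(k)+\Pi_r(k)\widehat u(k)$: the first term is analytic on all of $V_r$, and the second, expressed in the holomorphic frame $(f_j(k))$ of $R(\Pi_r(k))$, equals $A_r(k)^{-1}$ applied to the bounded analytic vector of components of $\Pi_r(k)\widehat f(k)$, so its $L^2(V_r)$-norm is at most $\sup_{V_r}\|\widehat f\|$ times $\big(\int_{V_r}\|A_r(k)^{-1}\|^2_{\mathcal{L}(\mathbb{C}^{m_r})}\di k\big)^{1/2}<\infty$ by \mref{E:strength}. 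Hence $\widehat u\in L^2(B;L^2(E_k))$, and the uniform (in $k\in B$) elliptic estimate for $A(k)$ on $M$ upgrades this to $\widehat u\in L^2(B;H^m(E_k))$, so that $u:=\textbf{F}^{-1}\widehat u\in H^m(X)$ and $Au=f$; by elliptic regularity $u$ is smooth, hence $u\in V^2_0(X)$. When $p\ge 2$, $N\ge 0$ this already finishes the job, since $V^2_0(X)\subseteq V^p_N(X)$. When $1\le p<2$ and $2pN>(2-p)d$ one needs the sharper conclusion $u\in V^p_N(X)$: since the only singularities of $\widehat u$ sit at the $k_r$ and are controlled in $L^2$ by \mref{E:strength} while the remainder of $\widehat u$ is smooth on $B$, a dyadic decomposition of each $V_r$ in the distance to $k_r$ together with H\"older's inequality yields a polynomial decay rate for $\{\|u\|_{L^2(gK)}\}_{g\in G}$ for which $\{\|u\|_{L^2(gK)}\langle g\rangle^{-N}\}_{g\in G}\in\ell^p(G)$ exactly when $2pN>(2-p)d$ --- this is where the threshold in the statement appears. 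In both cases $P$ is Fredholm and \mref{RRLinequality} follows.

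Finally I would read off \mref{LRRL2poles} and \mref{LRRL2} from the case $p=2$, $N=0$ (so $p'=2$), where $V^2_0(A)=V^2_0(A^*)=\{0\}$ by Theorem~\ref{UCinfty}. Applying \mref{RRLinequality} to $(\mu,A)$ gives $\dim L_2(\mu,A,0)-\dim L_2(\mu^{-1},A^*,0)\ge\deg_A(\mu)$; applying it to the divisor $\mu^{-1}$ and the operator $A^*$ --- using $(\mu^{-1})^{-1}=\mu$ and $\deg_{A^*}(\mu^{-1})=-\deg_A(\mu)$ --- gives the reverse inequality, so the two together give \mref{LRRL2}. For \mref{LRRL2poles}, the inverse divisor $(\mu^+)^{-1}$ has empty positive part, so $L_2((\mu^+)^{-1},A^*,0)$ is a subspace of $V^2_0(A^*)=\{0\}$; then \mref{RRLinequality} for $\mu^+$ reads $\dim L_2(\mu^+,A,0)\ge\deg_A(\mu^+)$, while the same inequality for $(\mu^+)^{-1}$ and $A^*$ gives $\dim L_2(\mu^+,A,0)\le\deg_A(\mu^+)$.

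The hard part will be the decay estimate when $1\le p<2$: extracting from ``$\|A_r(\cdot)^{-1}\|\in L^2(V_r)$ with an isolated singularity at $k_r$'' the precise polynomial decay of $\textbf{F}^{-1}\widehat u$ that makes $2pN>(2-p)d$ the sharp admissible range (for $p\ge 2$ everything is monotone once the $L^2$ estimate under \mref{E:strength} is secured). A lesser nuisance is the bookkeeping of the auxiliary Gromov--Shubin spaces and confirming that property ($\mathcal{P}$4) is not spoiled by the polynomial weights carried by $\Dom{A^p_N}$ and $\Dom{(A^*)^{p'}_{-N}}$.
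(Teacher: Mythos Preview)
Your approach is essentially the paper's: reduce to Gromov--Shubin Fredholmness of $A^p_N$, and prove surjectivity onto $C^\infty_c(X)$ via the Floquet transform, using \mref{E:strength} to get $\widehat u\in L^2(\mathbb{T}^d,\mathcal{E}^0)$ and hence $u\in L^2(X)=V^2_0(X)$. The derivation of \mref{LRRL2} and \mref{LRRL2poles} is also in the same spirit (the paper phrases \mref{LRRL2} as ``both $A^2_0$ and $(A^*)^2_0$ are Fredholm, so the Riemann--Roch \emph{equality} of Theorem~\ref{RR} applies,'' but your two-sided inequality argument is equivalent).

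The one place you go astray is the $1\le p<2$ case, which you flag as ``the hard part'' and propose to handle by a dyadic decomposition near each $k_r$ to extract polynomial decay of $\|u\|_{L^2(gK)}$. No such refinement is needed: once $u\in V^2_0(X)$, the inclusion $V^2_0(X)\subseteq V^p_N(X)$ is a one-line H\"older estimate on sequences. With $1/p=1/2+1/q$, i.e.\ $q=2p/(2-p)$,
\[
\bigl\|\{\|u\|_{L^2(gK)}\langle g\rangle^{-N}\}_g\bigr\|_{\ell^p}
\le \bigl\|\{\|u\|_{L^2(gK)}\}_g\bigr\|_{\ell^2}\cdot\bigl\|\{\langle g\rangle^{-N}\}_g\bigr\|_{\ell^q},
\]
and $\langle g\rangle^{-N}\in\ell^q(\mathbb{Z}^d)$ iff $Nq>d$, i.e.\ $2pN>(2-p)d$. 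This is exactly how the threshold arises, and it is the paper's argument verbatim. Your dyadic route might also work, but it is unnecessary machinery for what is a trivial embedding.
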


\begin{remark}
When $\mu$ is trivial, the equality \mref{LRRL2poles} means the absence of non-zero $L^2$- solutions (bound states). Thus, generically, spectral edges are not eigenvalues. In this case, the condition on integrability of $\|A_r(k)^{-1}\|^2_{\mathcal{L}(\mathbb{C}^{m_r})}$ is not required.
\end{remark}
The following $L^2$-solvability result (or Fredholm alternative) follows from \mref{LRRL2}.
\begin{prop}
\label{solvability}
Let $D$ be a non-empty compact nowhere dense subset of $X$ and a finite dimensional subspace $L \subset \mathcal{E}\cprime_{D}(X)$ be given.
Define the finite dimensional subspace $\tilde{L}=\{v \in \mathcal{E}\cprime_D(X) \mid A^*v \in L\}$. Consider any $f \in C^{\infty}_c(X)$ satisfying $(f, \tilde{L})=0$.
Under the assumptions of Theorem \ref{improve} for the periodic operator $A$ of order $m$,
the following two statements are equivalent:
\begin{enumerate}[(i)]
\item
$f$ is orthogonal to each element in the space $L_2(\mu, A^*,0)$, where $\mu$ is the rigged divisor $(D, L;0,\emptyset)$.
\item
The inhomogeneous equation $Au=f$ has a (unique) solution $u$ in $H^m(X)$ such that $(u, L)=0$.
\end{enumerate}
\end{prop}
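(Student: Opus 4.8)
The strategy is to unwind the definitions so that the asserted equivalence becomes the abstract Fredholm-solvability statement packaged in the Riemann-Roch machinery, specifically in equation \mref{LRRL2} of Theorem \ref{improve} and the adjointness relations in Proposition \ref{P:RRtilde} and the subsequent Gromov-Shubin proposition. First I would set up the relevant rigged divisor: take $\mu=(D,L;\emptyset,\{0\})$, which is positive, so that $\mu^{-1}=(\emptyset,\{0\};D,L)$ is associated with $A^*$, and note $\tilde L^- = \tilde L = \{v\in\mathcal{E}'_D(X)\mid A^*v\in L\}$ in the notation of Definition \ref{rigged}. The space $L_2(\mu, A^*, 0)$ appearing in statement (i) is, by Definition \ref{polyrigged}, $L(\mu, (A^*)^2_0)$, i.e. the kernel of the extended operator $\widetilde{A^*}$ acting from $\Gamma(X,\mu,A^*_0)$ to $\tilde\Gamma_\mu(X, A^*_0)$ (here I am using $A^*$ with the $L^2$-domain $V^2_0(X)=H^m(X)$ modulo compact corrections). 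The point $D^-=\emptyset$ means the ``zero'' conditions are vacuous on that side, so $L_2(\mu,A^*,0)$ consists precisely of solutions of $A^*v=0$ (in the $H^m$-growth sense outside $D$) with poles/singularities on $D$ controlled by $L$.

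Next I would identify statement (ii) with the surjectivity half of the Fredholm property for the pair $(\widetilde{A^*}, \widetilde{A})$. The hypothesis $(f,\tilde L)=0$ together with $f\in C^\infty_c(X)$ is exactly the condition that $f$ lies in the target space $\tilde\Gamma_{\mu^{-1}}(X, A)$ (functions in the appropriate dual domain annihilating $\tilde L^-$); this is where the definition of $\Gamma_\mu$ and $\tilde\Gamma_\mu$ from the Gromov-Shubin setup is invoked. The equation $Au=f$ with $(u,L)=0$ and $u\in H^m(X)$ says precisely that $f\in\Image(\widetilde{A})$ with $\widetilde A$ restricted to $\Gamma_{\mu^{-1}}$-type data — i.e. $f$ is in the range of $A$ acting on $\Gamma_{\mu^{-1}}(X,A)$. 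Uniqueness of such $u$ follows because any two solutions differ by an element of $V^2_0(A)=L_2(\mu_0,A,0)$ which, by \mref{LRRL2poles} with trivial divisor (the $L^2$-solutions/bound states vanish generically, and this needs no extra integrability), is $\{0\}$.

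The heart of the argument is then the duality $\Image(\widetilde{A}) = (\Ker(\widetilde{A^*}))^{\circ}$ with respect to the pairing \mref{duality1}--\mref{duality2}. This is the ``byproduct'' equality recorded at the end of the Gromov-Shubin proposition (and in Remark \ref{RRfredholm}): once the Riemann-Roch equality \mref{RReq}, equivalently \mref{LRRL2} here, holds for the divisor $\mu^{-1}$ and operator $A$ — which is guaranteed by Theorem \ref{improve} under its hypotheses on $p,N$ applied at $p=2$, $N=0$ — the inclusion $\Image(\widetilde A)\subseteq(\Ker\widetilde{A^*})^\circ$ is promoted to an equality. Unravelling this equality of subspaces gives exactly: $f$ is orthogonal to $\Ker(\widetilde{A^*}) = L_2(\mu,A^*,0)$ \emph{if and only if} $f\in\Image(\widetilde A)$, i.e. $Au=f$ is solvable with the prescribed constraint. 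I would close by checking the elliptic-regularity point that a distributional solution $\tilde u$ with $A\tilde u=f\in C^\infty_c$ and the correct growth is genuinely in $H^m(X)$ (Schauder/elliptic estimates as in Remark \ref{domApn} and Proposition \ref{schauderest}), and that the constraint ``$(u,L)=0$'' matches the $L^-$-orthogonality in Definition \ref{D:LmuP} for $\mu^{-1}$.

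The main obstacle I anticipate is bookkeeping rather than conceptual: one must be careful that the ``$-$'' side of $\mu$ in statement (i) (namely the singular data $L$ on $D$) and the ``$-$'' side of $\mu^{-1}$ used in statement (ii) (namely the zero-enforcing conditions $(u,L)=0$) are correctly swapped under inversion, and that the $L_2$, $N=0$ specialization makes all the growth-at-infinity domains collapse to plain $H^m(X)$ up to compactly supported corrections. One also has to confirm that the hypotheses of Theorem \ref{improve} are genuinely met at the endpoint $p=2$, $N=0$ (they are: $p\ge 2$ and $N\ge 0$), so that \mref{LRRL2} — and hence the range-annihilator identity — is available without the stronger integrability assumption \mref{E:strength} in the trivial-divisor instance, exactly as flagged in the remark following Theorem \ref{improve}.
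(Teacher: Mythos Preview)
Your approach is essentially the same as the paper's: both set up the extension operators $\widetilde{A^2_{m,0}}$ and $\widetilde{(A^2_{m,0})^*}$ for the divisor pair $(\mu^{-1},\mu)$, invoke the Riemann--Roch equality \mref{LRRL2} together with Remark~\ref{RRfredholm} to obtain $\Image\widetilde{A^2_{m,0}}=(\Ker\widetilde{(A^2_{m,0})^*})^{\circ}=L_2(\mu,A^*,0)^{\circ}$, and then unwind $\Gamma(X,\mu^{-1},A^2_{m,0})=\{v\in H^m(X)\mid Av\in C^\infty_c(X),\ \langle v,L\rangle=0\}$ to read off the equivalence. One small caution: your final paragraph suggests the range--annihilator identity might be available without the strengthened integrability assumption~\mref{E:strength}, but Proposition~\ref{solvability} is stated under the full hypotheses of Theorem~\ref{improve}, so \mref{E:strength} is in force for \mref{LRRL2}; only the \emph{uniqueness} part (vanishing of $V^2_0(A)$) comes free from Theorem~\ref{UCinfty}.
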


We describe now examples that show that when $\mu$ is negative (i.e., $\mu^+$ is trivial), the Liouville-Riemann-Roch \underline{equality} might fail miserably (while still holding for some negative divisors).
\begin{prop}
\label{rrlexample1}
Consider the Laplacian $-\Delta$ on $\mathbb{R}^d$, $d \geq 3$.
For any $N \geq 0$ and positive integer $\ell$, there exists a rigged divisor $\mu=\mu^-$, such that
\begin{equation}
\label{RRLstrict}
\dim L_{\infty}(\mu,-\Delta,N)-\dim L_{1}(\mu^{-1},-\Delta,-N)\geq \ell+\dim{V_{N}^{\infty}(-\Delta)}+\deg_{-\Delta}(\mu).
\end{equation}
In other words, the difference between left and right hand sides of the Liouville-Riemann-Roch inequality can be made arbitrarily large.
\end{prop}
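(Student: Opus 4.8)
The plan is, for each $\ell$, to produce an explicit negative divisor $\mu=\mu^-$ on $\mathbb{R}^d$ for which the gap between the two sides of the Liouville--Riemann--Roch inequality \mref{RRLinequality} is at least $\ell$. First I would make both sides of \mref{RRLstrict} completely explicit for $A=-\Delta$ on $\mathbb{R}^d$. Here the real Fermi surface is $F_{-\Delta,\mathbb{R}}=\{0\}$ with $m_0=1$ and $\ell_0(0)=2$, so by Theorem \ref{LiouvilleDim} (its part b applies, since the leading term $|k|^2$ of the band function has non-vanishing determinant) $V^\infty_N(-\Delta)$ is the space of harmonic polynomials of degree $\le N$, of dimension $\binom{d+N}{d}-\binom{d+N-2}{d}$. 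Because $D^+=\emptyset$, no poles are allowed, so $L_\infty(\mu,-\Delta,N)=\{u\in V^\infty_N(-\Delta)\mid (u,L^-)=0\}$ is finite-dimensional and at most $\dim V^\infty_N(-\Delta)$. By Example \ref{mainex}, both $-\Delta$ and its transpose are Fredholm of index $0$ on $\mathbb{R}^d$ with the decay-at-infinity domain, so Theorem \ref{RR} applied to $\mu^{-1}$, together with the vanishing of $L(\mu,-\Delta)$ (harmonic plus decay forces $0$), gives $\dim L(\mu^{-1},-\Delta)=\ell^--\tilde\ell^-=-\deg_{-\Delta}(\mu)$; and $L_1(\mu^{-1},-\Delta,-N)$ is the subspace of $L(\mu^{-1},-\Delta)$ cut out by the stronger $L^1$-decay of order $-N$ (Remark \ref{domApn}). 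Hence \mref{RRLstrict} is equivalent to
\beqn
(\ell^--\tilde\ell^-)-\bigl(\dim V^\infty_N(-\Delta)-\dim L_\infty(\mu,-\Delta,N)\bigr)-\dim L_1(\mu^{-1},-\Delta,-N)\ \ge\ \ell .
\eeqn

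Next I would choose the divisor so that the first subtracted term vanishes and $\ell^--\tilde\ell^-$ is huge. Take $D^-$ a finite set $\{z_1,\dots,z_r\}$ of distinct points and $L^-$ spanned by Dirac masses and their partial derivatives at the $z_j$, arranged so that $L^-$ annihilates all of $V^\infty_N(-\Delta)$ — for instance using only derivatives $\partial^\alpha\delta_{z_j}$ with $|\alpha|>N$ — which forces $L_\infty(\mu,-\Delta,N)=V^\infty_N(-\Delta)$, while $\ell^--\tilde\ell^-$ can be made as large as desired by increasing $r$ and/or the maximal derivative order. The secondary space $\tilde L^-=\{w\in\mathcal{E}'_{D^-}\mid -\Delta w\in L^-\}$ is read off on the Fourier side (a distribution supported at $z_j$ corresponds to a polynomial, $-\Delta$ to multiplication by $|\xi|^2$, which is injective), so $\ell^--\tilde\ell^-$ is an explicit combinatorial quantity.

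The heart of the matter is then $\dim L_1(\mu^{-1},-\Delta,-N)$. Its members are the solutions of $-\Delta v=0$ off $D^-$ that are finite linear combinations of derivatives of the Newtonian potentials $c_d|x-z_j|^{2-d}$ and satisfy $\int_{\{\dist(x,D^-)\ge 1\}}|v(x)|\,\dist(x,D^-)^{N}\,dx<\infty$. Since $D^-$ is compact, each such $v$ is harmonic off a large ball and has a convergent multipole expansion there, so this decay condition is equivalent to the vanishing of the (finitely many) multipole moments of $v$ of order $\le N+2$. Analysing how each $\partial^\alpha(|x-z_j|^{2-d})$ feeds these moments — using that its leading, order-$|\alpha|$, contribution equals $\partial^\alpha(|x|^{2-d})$, independently of $z_j$ — and choosing the configuration $\{z_j\}$ and the derivative pattern in $L^-$ accordingly, I would bound $\dim L_1(\mu^{-1},-\Delta,-N)$, verify the displayed inequality, and then let the data of the divisor grow to exceed any prescribed $\ell$; the classical Riemann--Roch input here is that of Nadirashvili and Gromov--Shubin \cite{nadirashvili,GSadv}.

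The step I expect to be the real obstacle is exactly this last one: one must certify that the gap is genuinely \emph{unbounded} rather than saturating, i.e. that the dimension lost in passing from $L(\mu^{-1},-\Delta)$ to $L_1(\mu^{-1},-\Delta,-N)$ — equivalently, the number of effective multipole-vanishing conditions — stays uniformly bounded while $\ell^--\tilde\ell^-$ is driven to $+\infty$ and, at the same time, $L_\infty(\mu,-\Delta,N)$ is kept as large as possible. Designing a single divisor that meets all three requirements simultaneously is the delicate potential-theoretic and combinatorial point; the remainder is bookkeeping with Theorems \ref{LiouvilleDim}, \ref{RR} and Example \ref{mainex}.
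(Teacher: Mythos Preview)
Your reformulation of \mref{RRLstrict} as the requirement that $(\ell^--\tilde\ell^-)-\bigl(\dim V^\infty_N(-\Delta)-\dim L_\infty(\mu,-\Delta,N)\bigr)-\dim L_1(\mu^{-1},-\Delta,-N)\ge\ell$ is correct, and you are right that the only nontrivial piece is controlling $\dim L_1(\mu^{-1},-\Delta,-N)$. However, your final paragraph reverses the inequality: you write that the ``dimension lost in passing from $L(\mu^{-1},-\Delta)$ to $L_1(\mu^{-1},-\Delta,-N)$ \ldots\ stays uniformly bounded,'' but that lost dimension equals $(\ell^--\tilde\ell^-)-\dim L_1$, which is precisely the quantity you need to drive to $+\infty$. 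What must stay bounded (in fact vanish) is $\dim L_1$ itself, not the loss.

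More importantly, the paper avoids the multipole bookkeeping you propose. It works first at $N=0$ and takes the divisor of Example~\ref{mainex} verbatim with $k=0$: $D^-=\{z_1,\dots,z_\ell\}$ and $L^-$ spanned only by the \emph{first}-order distributions $\partial_{x_\alpha}\delta(\cdot-z_j)$, so $\deg_{-\Delta}(\mu)=-d\ell$. Since $V^\infty_0(-\Delta)=\mathbb{C}$ and constants have zero gradient, $\dim L_\infty(\mu,-\Delta,0)=1=\dim V^\infty_0(-\Delta)$ automatically; no derivatives of order $>N$ are needed. On the other side, $L(\mu^{-1},-\Delta)$ is spanned by the $d\ell$ dipole potentials $v_{j\alpha}(x)=\partial_{x_\alpha}|x-z_j|^{2-d}$, and the entire analysis of $L_1(\mu^{-1},-\Delta,0)$ collapses to one explicit divergence,
\[
\sum_{g\in\mathbb{Z}^d}\|v_{j\alpha}\|_{L^2(g+[0,1)^d)}\ \gtrsim\ \sum_{g\in\mathbb{Z}^d}\frac{|g_\alpha|}{|g|^{d}}\ \ge\ \sum_{g_\alpha\ne0}|g|^{-d}=\infty,
\]
so each $v_{j\alpha}\notin L_1(\mu^{-1},-\Delta,0)$ and $\dim L_1(\mu^{-1},-\Delta,0)\le d\ell-d\ell=0$, giving gap $d\ell\ge\ell$. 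General $N\ge0$ is then handled not by raising the derivative order in $L^-$, but by the monotonicities $L_1(\mu^{-1},-\Delta,-N)\subseteq L_1(\mu^{-1},-\Delta,0)$ and $L_\infty(\mu,-\Delta,N)\supseteq L_\infty(\mu,-\Delta,0)$ together with taking $\ell$ large relative to $\dim V^\infty_N(-\Delta)$. Your route via derivatives of order $>N$ and multipole cancellations could be pushed through, but it manufactures exactly the ``delicate potential-theoretic and combinatorial point'' that the paper's choice of divisor sidesteps entirely.
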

On the other hand, one can also achieve an equality for a negative divisor $\mu=\mu^-$. Indeed, let $A=A(D)$ be an elliptic constant-coefficient homogeneous differential operator of order $m$ on $\mathbb{R}^d$ that satisfies Assumption $\mathcal{A}$. Consider two non-negative integers $M_0 \geq M_1$, fix a point $x_0$ in $X$, and define $D^-:=\{x_0\}$ and the following finite dimensional vector subspace of $\mathcal{E}'_{D^-}(\mathbb{R}^d)$:
$$L^-:=\Span_{\mathbb{C}} \{\partial^{\alpha}\delta(\cdot-x_0)\}_{M_1 \leq |\alpha| \leq M_0}.$$
Let $\mu$ be a rigged divisor on $\mathbb{R}^d$ of the form $(D^+, L^+; D^-, L^-)$, where $D^+$ is a nowhere dense compact subset of $\mathbb{R}^d\setminus \{x_0\}$ and $L^+$ is a finite dimensional subspace of $\mathcal{E}'_{D^+}(\mathbb{R}^d)$.\footnote{A particular case is when $M_1=0$ and $\mu^+$ is trivial, i.e., $\mu=\mu^-$ becomes the point divisor $x_0^{-(M_0+1)}$ (see Definition \ref{D:divisor_point}).}
\begin{prop}
\label{rrlexample2}
Assume that one of the following two conditions holds:
\begin{itemize}
\item either
$1 \leq p<\infty$, $N>d/p+M_0$,
\item or
$p=\infty$, $N \geq M_0$.
\end{itemize}
Then $\dim L_{p'}(\mu^{-1},A^*,-N)=0$ and the Liouville-Riemann-Roch equality holds:
\begin{equation*}
\dim L_{p}(\mu,A,N)=\dim{V_{N}^{p}(A)}+\deg_{A}(\mu).
\end{equation*}
\end{prop}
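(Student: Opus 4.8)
The plan is to bootstrap from Theorem~\ref{RRLeq}, which already gives the equality for the positive part $\mu^{+}$, by accounting for exactly the finitely many extra conditions that $\mu^{-}$ imposes, and then to read off the triviality of $L_{p'}(\mu^{-1},A^{*},-N)$ from the Liouville--Riemann--Roch inequality. Write $L^{-}=\Span_{\mathbb C}\{\partial^{\alpha}\delta(\cdot-x_{0})\}_{M_{1}\le|\alpha|\le M_{0}}$. Any $u\in L_{p}(\mu^{+},A,N)$ solves $Au=0$ off $D^{+}$, and since $x_{0}\notin D^{+}$ it is smooth near $x_{0}$; thus $(u,\partial^{\alpha}\delta(\cdot-x_{0}))=(-1)^{|\alpha|}\partial^{\alpha}u(x_{0})$, and $L_{p}(\mu,A,N)$ is precisely the common kernel, inside the finite-dimensional space $L_{p}(\mu^{+},A,N)$, of the $\ell^{-}$ functionals $u\mapsto\partial^{\alpha}u(x_{0})$, $M_{1}\le|\alpha|\le M_{0}$. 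By rank--nullity and Theorem~\ref{RRLeq}(2),
$$\dim L_{p}(\mu,A,N)=\dim L_{p}(\mu^{+},A,N)-R=\dim V_{N}^{p}(A)+\deg_{A}(\mu^{+})-R,$$
where $R$ is the rank of those functionals; since $\deg_{A}(\mu)=\deg_{A}(\mu^{+})-(\ell^{-}-\tilde{\ell}^{-})$, the asserted equality is equivalent to the rank identity $R=\ell^{-}-\tilde{\ell}^{-}$.

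The upper bound $R\le\ell^{-}-\tilde{\ell}^{-}$ is the Gromov--Shubin duality: $A^{*}$ is injective on $\mathcal E'_{\{x_{0}\}}(X)$ by ellipticity, so $A^{*}(\tilde L^{-})$ is a $\tilde{\ell}^{-}$-dimensional subspace of $L^{-}$, and for $u\in L_{p}(\mu^{+},A,N)$ with regularization $\tilde u$ (so $A\tilde u\in L^{+}\subset\mathcal E'_{D^{+}}$) and $v\in\tilde L^{-}$, integration by parts gives $(u,A^{*}v)=\langle A\tilde u,v\rangle=0$ because $D^{+}\cap\{x_{0}\}=\emptyset$; hence a $\tilde{\ell}^{-}$-dimensional space of the functionals vanishes identically. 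For the matching lower bound I would use explicit polynomial solutions. Since $A$ is homogeneous elliptic of order $m$ on $\mathbb R^{d}$, its real Fermi surface at $0$ is $\{0\}$, $\dim\Ker A(0)=1$, $\ell_{0}(0)=m$, and $\det\lambda_{m}$ is the nonvanishing symbol, so Theorem~\ref{LiouvilleDim}(b) gives $\dim V_{j}^{\infty}(A)=\binom{d+j}{d}-\binom{d+j-m}{d}$ for all $j\ge0$, and $V_{j}^{\infty}(A)$ is here the space of polynomial solutions of $A(D)u=0$ of degree $\le j$ (Theorem~\ref{Liouvillethm}(ii)). The hypothesis on $(p,N)$ --- $N\ge M_{0}$ when $p=\infty$, and $N>d/p+M_{0}$ (whence $pN>d$ and $\floor{N-d/p}\ge M_{0}$) when $p<\infty$ --- places $V_{M_{0}}^{\infty}(A)\subseteq V_{N}^{p}(A)\subseteq L_{p}(\mu^{+},A,N)$ via Theorem~\ref{dimvp}. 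On $V_{M_{0}}^{\infty}(A)$ the functionals $u\mapsto\partial^{\alpha}u(x_{0})$, $M_{1}\le|\alpha|\le M_{0}$, have kernel $V_{M_{1}-1}^{\infty}(A)$ (a polynomial of degree $\le M_{0}$ whose derivatives of order $\ge M_{1}$ all vanish at $x_{0}$ has degree $\le M_{1}-1$), hence rank $\dim V_{M_{0}}^{\infty}(A)-\dim V_{M_{1}-1}^{\infty}(A)$; and a monomial count gives $\ell^{-}=\binom{d+M_{0}}{d}-\binom{d+M_{1}-1}{d}$ and $\tilde{\ell}^{-}=\binom{d+M_{0}-m}{d}-\binom{d+M_{1}-1-m}{d}$ (Definition~\ref{D:neg_binom}), so $\ell^{-}-\tilde{\ell}^{-}=\dim V_{M_{0}}^{\infty}(A)-\dim V_{M_{1}-1}^{\infty}(A)\le R$. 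The two bounds force $R=\ell^{-}-\tilde{\ell}^{-}$, hence the Liouville--Riemann--Roch equality.

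Finally, $\dim L_{p'}(\mu^{-1},A^{*},-N)=0$ comes for free: the Liouville--Riemann--Roch inequality of Theorem~\ref{RRLineq} reads $\dim L_{p}(\mu,A,N)-\dim L_{p'}(\mu^{-1},A^{*},-N)\ge\dim V_{N}^{p}(A)+\deg_{A}(\mu)$, and the first term on the left has just been shown to equal the right-hand side, so $\dim L_{p'}(\mu^{-1},A^{*},-N)\le0$. (Alternatively, one can argue directly that any $u$ in $L_{p'}(\mu^{-1},A^{*},-N)$ is a combination of derivatives $\partial^{\alpha}E$, $M_{1}\le|\alpha|\le M_{0}$, of a fundamental solution $E$ of $A^{*}$ based at $x_{0}$ plus a globally decaying solution, the latter vanishing by Theorem~\ref{UCinfty} and the former failing the decay that $V_{-N}^{p'}$ requires exactly because $N>d/p+M_{0}$; the inequality argument is shorter.)

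The step I expect to be the real obstacle is the lower bound on $R$ in the second paragraph: it is there that the quantitative hypothesis tying $N$, $p$ and $M_{0}$ together is used --- to guarantee that $L_{p}(\mu^{+},A,N)$ contains enough globally defined polynomial solutions to separate the jets at $x_{0}$ up to order $M_{0}$ --- and it requires both the explicit identification of $V_{j}^{\infty}(A)$ with spaces of $A$-harmonic polynomials and the binomial bookkeeping equating $\ell^{-}-\tilde{\ell}^{-}$ with $\dim V_{M_{0}}^{\infty}(A)-\dim V_{M_{1}-1}^{\infty}(A)$.
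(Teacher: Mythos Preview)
Your proof is correct and takes a genuinely different route from the paper. The paper applies Corollary~\ref{RReqmu} directly: it verifies the extra hypothesis there by taking, for any $u\in\Dom A^{p}_{m,N}$ with $\langle Au,\tilde L^{-}\rangle=0$, the explicit partial Taylor polynomial $v(x)=\sum_{M_{1}\le|\alpha|\le M_{0}}\frac{\partial^{\alpha}u(0)}{\alpha!}x^{\alpha}$, and checks by a short symbol computation (using the homogeneity of $A$) that $Av=0$; the hypothesis on $(p,N)$ is used only to ensure $v\in\Dom A^{p}_{m,N}$. This yields both the Riemann--Roch equality and $L_{p'}(\mu^{-1},A^{*},-N)=0$ in one stroke. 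Your argument instead bootstraps from the already-proved positive-divisor equality (Theorem~\ref{RRLeq}) and reduces everything to a rank computation for the evaluation functionals $u\mapsto\partial^{\alpha}u(x_{0})$ on $L_{p}(\mu^{+},A,N)$: the upper bound $R\le\ell^{-}-\tilde\ell^{-}$ is the duality $(u,A^{*}v)=0$, and the matching lower bound comes from restricting to polynomial solutions $V^{\infty}_{M_{0}}(A)$, whose dimensions you compute combinatorially. What the paper's approach buys is economy---it uses the abstract machinery of Corollary~\ref{RReqmu} once and avoids the bookkeeping of $\ell^{-},\tilde\ell^{-}$ and $\dim V^{\infty}_{j}(A)$; what your approach buys is that it makes transparent exactly which polynomial solutions are ``used up'' by the zero conditions, and that triviality of $L_{p'}(\mu^{-1},A^{*},-N)$ is then an immediate consequence of the inequality~\mref{RRLinequality} rather than a separate conclusion.
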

\begin{cor}
\label{uppersemicont}
Suppose that $\{A_z\}_{z}$ and $\{\mu_z\}_z$ are families of such periodic elliptic operators $A_z$ satisfying the same assumption of Theorem \ref{improve} and of rigged divisors $\mu_z$ that depend continuously on a parameter $z$. Then the functions $z \mapsto \dim L_2(\mu_z, A_z ,0)$ and $z \mapsto \dim L_2(\mu_z^{-1}, A^*_z ,0)$ are upper-semicontinuous.
\end{cor}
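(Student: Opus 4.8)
The plan is to realise each of the two numbers as the dimension of the kernel of a Fredholm operator and then to quote the standard fact that $\dim\Ker$ is upper semicontinuous along a norm-continuous family of Fredholm operators. By Proposition~\ref{P:RRtilde}, $L_2(\mu_z,A_z,0)=\Ker\widetilde{A_z}$, where $\widetilde{A_z}$ is the extension (in the sense of Section~\ref{GSRR}) of $A_z$ regarded as the linear map $\Gamma(X,\mu_z,(A_z)^2_0)\to\widetilde{\Gamma}_{\mu_z}(X,(A_z)^2_0)$, with $(A_z)^2_0$ the operator of Definition~\ref{polyrigged} for $p=2$, $N=0$; likewise $L_2(\mu_z^{-1},A_z^*,0)=\Ker\widetilde{A_z^*}$. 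Since the hypotheses of Theorem~\ref{improve} guarantee that $(A_z)^2_0$ and its transpose are Fredholm on $X$, relations \eqref{RRequality}--\eqref{codim} show that $\widetilde{A_z}$ is Fredholm, with $\Index\widetilde{A_z}=\Index\big((A_z)^2_0\big)+\deg_{A_z}(\mu_z)=\deg_{A_z}(\mu_z)$, the last equality because the $L^2$-index of $A_z$ vanishes (remark after Theorem~\ref{RRLineq}); this index identity is exactly \eqref{LRRL2}.

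The heart of the argument is to arrange the operators $\widetilde{A_z}$ into a norm-continuous family of Fredholm maps between \emph{fixed} Banach spaces, the difficulty being that a priori both source and target depend on $z$ through $A_z$ and through $\mu_z=(D^+_z,L^+_z;D^-_z,L^-_z)$. I would proceed in two steps. First, reading ``continuous dependence of $\mu_z$'' as saying that the compact supports $D^\pm_z$ move by an ambient isotopy of $X$ while the finite-dimensional distribution spaces $L^\pm_z$ vary continuously, I fix a ball containing every $D^\pm_z$ and choose compactly supported ambient isotopies $\phi_z$ of $X$, depending continuously on $z$ with $\phi_{z_0}=\mathrm{id}$, carrying $D^\pm_{z_0}$ onto $D^\pm_z$, together with continuous families of linear isomorphisms transporting $L^\pm_z$ back to $L^\pm_{z_0}$; pulling back along $\phi_z$ reduces to a fixed divisor support and fixed distribution spaces, at the cost of replacing $A_z$ by a norm-continuous family of uniformly elliptic operators, each coinciding outside a fixed compact set with a periodic operator of finite real Fermi surface. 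Second, one completes the Gromov--Shubin spaces in the natural norms ($L^2$-type on the $\widetilde{\Gamma}$-side, the corresponding graph norm on the domain side), in which the transported $\widetilde{A_z}$ form a norm-continuous family; the uniform (on compact parameter sets) a priori estimates needed for the Fredholm property are those underlying Theorem~\ref{improve}, and they survive the compactly supported perturbation just introduced, which does not affect the conditions at infinity encoded by finiteness of the Fermi surface.

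Granting this, the conclusion follows from the perturbation lemma for Fredholm operators: if $T_z$ is norm-continuous and $T_{z_0}$ is Fredholm, split the source as $\Ker T_{z_0}\oplus E_1$ and the target as $\Image T_{z_0}\oplus F_1$; then $T_{z_0}|_{E_1}$ is an isomorphism onto $\Image T_{z_0}$, hence $T_z|_{E_1}$ stays injective for $z$ near $z_0$, so $\dim\Ker T_z\le\dim\Ker T_{z_0}$. Applying this to $\widetilde{A_z}$ gives upper semicontinuity of $z\mapsto\dim L_2(\mu_z,A_z,0)$, and applying it to $\widetilde{A_z^*}$ gives it for $z\mapsto\dim L_2(\mu_z^{-1},A_z^*,0)$; the same lemma shows $z\mapsto\Index\widetilde{A_z}=\deg_{A_z}(\mu_z)$ is locally constant, so by \eqref{LRRL2} the two dimension functions differ by a locally constant integer, a useful consistency check.

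The main obstacle is the middle paragraph: verifying, with the correct topologies and with estimates uniform in $z$, that the Gromov--Shubin operators attached to the varying data $(A_z,\mu_z)$ genuinely form a norm-continuous Fredholm family between fixed Banach spaces. The remainder is either imported from Theorem~\ref{improve} and Section~\ref{GSRR} or is the standard Fredholm perturbation argument.
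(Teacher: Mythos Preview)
Your approach is essentially the paper's: realise $L_2(\mu_z,A_z,0)$ and $L_2(\mu_z^{-1},A_z^*,0)$ as kernels of the Gromov--Shubin extension operators $\widetilde{(A_z)^2_{m,0}}$ and $\widetilde{(A_z^*)^2_{m,0}}$, observe these are Fredholm (by the proof of Theorem~\ref{improve}), and invoke upper semicontinuity of kernel dimensions along continuous Fredholm families. The only difference is in how the ``middle paragraph'' is handled: the paper does not carry out the isotopy-and-completion argument you sketch, but instead cites Wang~\cite{WangGS} (specifically \cite[Theorems~1 and~3]{WangGS}, following the scheme of \cite[Theorem~2]{WangGS}), which establishes precisely this stability for the Gromov--Shubin index theorem under continuous variation of operator and divisor. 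You correctly identified that step as the main obstacle; the paper simply outsources it rather than redoing it.
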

\section{Empty Fermi surface}

For a periodic elliptic operator $A$, the claim of emptiness of the (real) Fermi surface (as always in this text, at the $\lambda=0$ level) is equivalent to its spectrum not containing zero (see Theorem \ref{specA}):
$$
0\notin \sigma(A).
$$
Interpreting the Fermi surface emptiness condition this way allows us to apply it even for non-periodic operators, the opportunity that we will use in this section.

Let us look at the periodic case first. Then the Liouville theorem becomes trivial, because the emptiness of the Fermi surface and thus zero being not in the spectrum implies that there is no non-zero polynomially growing solution (see \cite[Theorem 4.3]{KP2}, which is an analog of the Schnol' theorem, see e.g., \cite{CFKS, Glazman, Schnol})\footnote{The Schnol' type theorems claim that under appropriate conditions presence of a non-trivial solution of sub-exponential growth implies presence of spectrum. The example of hyperbolic plane shows that this is not always true, but a correct re-formulation of the sub-exponential growth condition \cite[Theorem 3.2.2 for the quantum graph case and Section 3.8]{BerKment} fixes this issue.}. As we show below, one can obtain now a Liouville-Riemann-Roch type result by combining the Riemann-Roch and the Schnol theorems. In fact, this will be done in the much more general case of $C^{\infty}$-bounded uniformly elliptic operators (not necessarily periodic) on a co-compact (not necessarily abelian) Riemannian covering of sub-exponential growth. We will use here the results of \cite{weakbloch} showing that a more general and stronger statement than the Schnol theorem holds for any $C^{\infty}$-bounded uniformly elliptic operator on a manifold of bounded geometry and sub-exponential growth.

We begin with some definitions from \cite{weakbloch} first \footnote{One could find details about analysis on manifolds of bounded geometry in e.g., \cite{Eich, Shubin_spectral, weakbloch}.}.
Let $\mathcal{X}$ be a co-compact connected Riemannian covering and $\mathcal{M}$ be its base. The deck group $G$ is a countable, finitely generated, and discrete (not necessarily abelian). Let $d_{\mathcal{X}}(\cdot, \cdot)$ be the $G$-invariant Riemannian distance on $\mathcal{X}$.
Due to the compactness of $\mathcal{M}$, there exists $r_{inj}>0$ (injectivity radius) such that
for every $r \in (0, r_{inj})$ and every $x \in \mathcal{X}$, the exponential geodesic mapping $\exp_{x}: T\mathcal{X}_x \rightarrow \mathcal{X}$ is a diffeomorphism of the Euclidean ball $B(0,r)$ centered at $0$ with radius $r$ in the tangent space $T\mathcal{X}_x$ onto the geodesic ball $B_{\mathcal{X}}(x,r)$ centered at $x$ with the same radius $r$ in $\mathcal{X}$. Taking $r_0 \in (0, r_{inj})$, the geodesic balls $B_{\mathcal{X}}(x,r)$, where $0<r \leq r_0$, are called \textbf{canonical charts} with $x$-coordinate in the charts.
\begin{defi}\cite{Shubin_spectral, weakbloch}
\label{subexp}
\begin{enumerate}[(i)]
\item
A differential operator $P$ of order $m$ on $\mathcal{X}$ is \textbf{$C^{\infty}$-bounded} if in every canonical chart, $P$ can be represent as $\sum_{|\alpha| \leq m}a_{\alpha}(x)\partial^{\alpha}_x$, where the coefficients $a_{\alpha}(x)$ are smooth and for any multi-index $\beta$, $|\partial^{\beta}_x a_{\alpha}(x)|\leq C_{\alpha\beta}$, where the constants $C_{\alpha\beta}$ are independent of the chosen canonical coordinates.
\item
A differential operator $P$ of order $m$ on $\mathcal{X}$ is \textbf{uniformly elliptic}\footnote{Clearly, any $G$-periodic elliptic differential operator with smooth coefficients on $\mathcal{X}$ is $C^{\infty}$-bounded uniformly elliptic.} if
\begin{equation*}
\label{uniform_ell}
|P_0^{-1}(x,\xi)|\leq C|\xi|^{-m}, \quad (x,\xi) \in T^*\mathcal{X}, \xi \neq 0.
\end{equation*}
Here $T^*\mathcal{X}$ is the cotangent bundle of $\mathcal{X}$, $P_0(x,\xi)$ is the principal symbol of the operator $P$, and $|\xi|$ is the length of the covector $(x,\xi)$ with respect to the metric on $T^*\mathcal{X}$ induced by the Riemannian metric on $\mathcal{X}$.
\item
$\mathcal{X}$ is of \textbf{subexponential growth} if the volumes of balls of radius $r$ grow subexponetially as $r \rightarrow \infty$, i.e., for any $\epsilon>0$ and $r>0$,
$$\sup_{x \in \mathcal{X}}\vol{B(x,r)}=O(\exp{(\epsilon r)}).$$
Here $\vol(\cdot)$ is the Riemannian volume on $\mathcal{X}$.
\item
Let $x_0$ be a fixed point in $\mathcal{X}$. A differential operator $P$ on $\mathcal{X}$ satisfies \textbf{Strong Schnol Property} (SSP) if the following statement is true:
If there exists a non-zero solution $u$ of the equation $Pu=\lambda u$ such that for any $\epsilon>0$
$$u(x)=O(\exp{(\epsilon d_{\mathcal{X}}(x,x_0))})$$
then $\lambda$ is in the spectrum of $P$.
\end{enumerate}
\end{defi}

We now turn to a brief discussion of growth of groups (see e.g., \cite{NY,Gri1,Gro_poly}).
Let us pick a finite, symmetric generating set $S$ of $G$. The \textbf{word metric} associated to $S$ is denoted by $d_S: G \times G \rightarrow \mathbb{R}$, i.e., for every pair $(g_1, g_2)$ of two group elements in $G$, $d_S(g_1, g_2)$ is the length of the shortest representation in $S$ of $g_1^{-1}g_2$ as a product of generators from $S$. Let $e$ be the identity element of $G$. The volume function of $G$ associated to $S$ is the function $\vol_{G,S}: \mathbb{N} \rightarrow \mathbb{N}$ defined by assigning to every $n \in \mathbb{N}$ the cardinality of the open ball $B_{G,S}(e,n)$ centered at $e$ with radius $n$ in the metric space $(G,d_S)$. Although the values of this volume function depend on the choice of the generating set $S$, its asymptotic growth type is independent of it. The group $G$ is said to be of \textbf{subexponential growth} if
$$\lim_{n \rightarrow \infty}\frac{\ln{\vol_{G,S}(n)}}{n}=0.$$
It is known that the deck group $G$ is of subexponential growth if and only if the covering $\mathcal{X}$ is so (see e.g., \cite[Proposition 2.1]{SaloffCoste}).
Virtually nilpotent groups clearly have polynomial growth, and the celebrated Gromov's theorem \cite{Gro_poly} shows that they are the only ones. Thus, any virtually nilpotent co-compact Riemannian covering $\mathcal{X}$ is of subexponential growth. Groups with intermediate growth, which were constructed by Grigorchuk \cite{Gri1}, provide other non-trivial examples of Riemannian coverings with subexponential growth.
\begin{thm}
\cite[Theorem 4.2]{weakbloch}
\label{SSP}
If $\mathcal{X}$ is of subexponential growth, then any $C^{\infty}$-bounded uniformly elliptic operator on $\mathcal{X}$ satisfies (SSP).
\end{thm}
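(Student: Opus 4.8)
The statement is precisely \cite[Theorem 4.2]{weakbloch}, so I only sketch the strategy, which is the classical Schnol cutoff argument carried out in the bounded-geometry framework of Definition \ref{subexp}. Replacing $P$ by $P-\lambda$ (still $C^{\infty}$-bounded and uniformly elliptic, and with unchanged spectrum up to the shift), we may assume $\lambda=0$ and are handed a nonzero $u$ with $Pu=0$ and $u(x)=O(\exp(\epsilon\, d_{\mathcal{X}}(x,x_0)))$ for every $\epsilon>0$; elliptic regularity gives $u\in C^{\infty}(\mathcal{X})$. The goal is to build a singular (Weyl-type) sequence for $P$ acting in $L^2(\mathcal{X})$ with domain $H^m(\mathcal{X})$, i.e. functions $w_j\in C^{\infty}_c(\mathcal{X})\subset H^m(\mathcal{X})$ with $\|w_j\|_{L^2(\mathcal{X})}=1$ and $\|Pw_j\|_{L^2(\mathcal{X})}\to 0$; this forces $P$ not to be bounded below, hence $0\in\sigma(P)$, and no self-adjointness of $P$ is required.

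First I would fix, for each large $R$, a cutoff $\chi_R\in C^{\infty}_c(\mathcal{X})$ with $0\le\chi_R\le 1$, $\chi_R\equiv 1$ on $B_{\mathcal{X}}(x_0,R)$, $\supp\chi_R\subseteq B_{\mathcal{X}}(x_0,R+L)$ for a fixed $L$, and $|\nabla^{k}\chi_R|\le C_k$ for all $k\le m$ with $C_k$ independent of $R$; such functions exist because $\mathcal{X}$ has bounded geometry (smooth the distance function $d_{\mathcal{X}}(\cdot,x_0)$ at unit scale, or telescope a uniformly locally finite family of bumps subordinate to the canonical charts). Set $v_R:=\chi_R u$. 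Since $Pu=0$, one has $Pv_R=[P,\chi_R]u$, and $[P,\chi_R]$ is a differential operator of order $\le m-1$ whose coefficients are supported in the shell $\mathcal{A}_R:=B_{\mathcal{X}}(x_0,R+L)\setminus B_{\mathcal{X}}(x_0,R)$ and are bounded uniformly in $R$ (using that $P$ is $C^{\infty}$-bounded and the $\chi_R$ have uniform derivative bounds). Thus $\|Pv_R\|_{L^2(\mathcal{X})}\le C\sum_{k\le m-1}\|\nabla^{k}u\|_{L^2(\mathcal{A}_R)}$, and by the uniform interior $L^2$ elliptic estimate on a manifold of bounded geometry (with constants independent of the base point, and using $Pu=0$) this is at most $C\,\|u\|_{L^2(\mathcal{A}'_R)}$, where $\mathcal{A}'_R:=B_{\mathcal{X}}(x_0,R+L+1)\setminus B_{\mathcal{X}}(x_0,R-1)$ is a slightly enlarged shell. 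On the other hand $\|v_R\|_{L^2(\mathcal{X})}\ge\phi(R):=\|u\|_{L^2(B_{\mathcal{X}}(x_0,R))}$, which is nondecreasing in $R$ and positive once $R$ is large enough.

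Next I would bring in the growth hypotheses. The pointwise bound on $u$ together with subexponential volume growth of $\mathcal{X}$ gives $\phi(R)^2\le C_\epsilon e^{2\epsilon R}\,\vol B_{\mathcal{X}}(x_0,R)\le C'_\epsilon e^{3\epsilon R}$ for large $R$, so $\psi:=\phi^{2}$ is nondecreasing with $R^{-1}\ln\psi(R)\to 0$. A standard elementary lemma then produces a sequence $R_j\to\infty$ along which $\psi(R_j+L+1)/\psi(R_j-1)\to 1$: otherwise $\psi(R+L+1)\ge(1+\delta)\psi(R)$ for all large $R$ and some $\delta>0$, which iterates to exponential growth of $\psi$, contradicting $R^{-1}\ln\psi(R)\to 0$. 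Since $\|u\|_{L^2(\mathcal{A}'_R)}^2\le\psi(R+L+1)-\psi(R-1)$ and $\|v_R\|_{L^2(\mathcal{X})}^2\ge\psi(R-1)$, we obtain $\|Pv_{R_j}\|_{L^2(\mathcal{X})}/\|v_{R_j}\|_{L^2(\mathcal{X})}\to 0$; normalizing $w_j:=v_{R_j}/\|v_{R_j}\|_{L^2(\mathcal{X})}$ yields the desired singular sequence, so $0\in\sigma(P)$, i.e. $\lambda\in\sigma(P)$ after undoing the shift.

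The two genuinely technical ingredients, both standard in the bounded-geometry literature used in \cite{weakbloch} (see also \cite{Shubin_spectral}), are (i) the construction of the cutoffs $\chi_R$ with derivative bounds uniform in $R$, which rests on bounded geometry of $\mathcal{X}$, and (ii) the uniform interior elliptic estimate with constants independent of the center of the chart, which rests on $P$ being $C^{\infty}$-bounded and uniformly elliptic. The role of the subexponential-growth assumption is exactly to convert the pointwise control $u(x)=O(\exp(\epsilon\, d_{\mathcal{X}}(x,x_0)))$ into the $L^2$-on-balls bound $R^{-1}\ln\|u\|_{L^2(B_{\mathcal{X}}(x_0,R))}\to 0$; without it the shell-ratio step collapses, and I expect this to be the conceptual crux of the argument.
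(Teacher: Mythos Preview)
The paper does not prove this theorem; it is stated with a citation to \cite[Theorem~4.2]{weakbloch} and used as a black box, so there is no proof in the paper to compare against. Your sketch is the standard Schnol cutoff argument adapted to manifolds of bounded geometry, which is indeed the method of \cite{weakbloch}; the outline is correct and the two technical ingredients you flag (uniform cutoffs and uniform interior elliptic estimates) are exactly the ones supplied by the bounded-geometry/$C^\infty$-bounded framework of \cite{Shubin_spectral,weakbloch}.
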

\begin{remark}
A Schnol type theorem can be established without the subexponential growth condition, if the growth of a generalized eigenfunction is controlled in an integral (over an expanding ball), rather than point-wise sense. See \cite[Theorem 3.2.2 for the quantum graph case and Section 3.8]{BerKment}.
\end{remark}

Similarly, we also say that a positive function $\varphi: G \rightarrow \mathbb{R}^+$ is of \textbf{subexponential growth} if
$$\lim_{|g| \rightarrow \infty}\frac{\ln{\varphi(g)}}{|g|}=0,$$
where the \textbf{word length} of $g\in G$ is defined as $|g|:=d_{S}(e,g)$.
Again, this concept does not depend on the choice of the finite generating set $S$ (see \cite[Theorem 1.3.12]{NY}).
\begin{defi}
\label{subexp}
Let $\varphi$ be a positive function defined on the deck group $G$ such that both $\varphi$ and its inverse $\varphi^{-1}$ are of subexponential growth. Let us denote by $\mathcal{S}(G)$ the set of all such $\varphi$ on $G$. Then for any $p \in [1,\infty]$ and $\varphi \in \mathcal{S}(G)$, we define:
$$\cV^{p}_{\varphi}(\mathcal{X})=\{u \in C^{\infty}(\mathcal{X}) \mid \{\|u\|_{L^2(g\mathcal{F})}\varphi^{-1}(g)\}_{g \in G} \in \ell^p(G)\},$$
where $\mathcal{F}$ is a fundamental domain for $\mathcal{M}$ in $\mathcal{X}$.

Also, let $P^p_{\varphi}$ be the operator $P$ with the domain $\{u \in \cV^p_{\varphi}(\mathcal{X}) \mid Pu \in C^{\infty}_c(\mathcal{X})\}$. We denote by $L_p(\mu,P,\varphi)$ the space $L(\mu, P^p_{\varphi})$, where $\mu$ is a rigged divisor on $\mathcal{X}$. In a similar manner, we also define the space $L_p(\mu, P^*, \varphi)$, where $P^*$ is the transpose of $P$.
In particular, if $G=\bZ^d$ and $\varphi(g)=\langle g\rangle^N$, $\cV^p_{\varphi}(\mathcal{X})$ is the space $V^p_{N}(\mathcal{X})$ introduced in Definition \ref{polyspace}, while $L_p(\mu,P,\varphi)$ coincides with the space $L_p(\mu,P,N)$ appearing in Definition \ref{polyrigged}.
\end{defi}
We can now state our result.
\begin{thm}
\label{RRLbloch}
Consider any Riemannian co-compact covering $\mathcal{X}$ of subexponential growth with a discrete deck group $G$. Let $P$ be a $C^{\infty}$-bounded uniformly elliptic differential operator $P$ of order $m$ on $\mathcal{X}$ such that $0 \notin \sigma(P)$. Let us denote by $\varphi_0$ the constant function $1$ defined on $G$. Then the following statements are true:
\begin{enumerate}[(a)]
\item
For each rigged divisor $\mu$ on $\mathcal{X}$,  $L_{p}(\mu,P,\varphi)=L_{\infty}(\mu, P, \varphi_0)$, where $p \in [1,\infty]$ and $\varphi \in \mathcal{S}(G)$.  Thus, all the spaces $L_{p}(\mu,P,\varphi)$ are the same.

\item
$\dim{L_{\infty}(\mu,P,\varphi_0)}=\deg_{P}(\mu)+\dim{L_{\infty}(\mu^{-1},P^*,\varphi_0)}$.

\item
If $\mu$ is a positive divisor, i.e. $\mu=(D^+,L^+;\emptyset,0)$, then $\dim{L_{\infty}(\mu,P,\varphi_0)}=\deg_{P}(\mu)$.
\end{enumerate}
\end{thm}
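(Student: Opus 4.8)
The plan is to realize each of the three claims as an instance of the Nadirashvili-Gromov-Shubin machinery (Theorem \ref{RR} and Corollary \ref{RReqmu}) applied to the operator $P$ with the right choice of domain, using the Strong Schnol Property (Theorem \ref{SSP}) to kill the Liouville directions. The central observation is that since $0\notin\sigma(P)$, the operator $P:H^m(\mathcal{X})\to L^2(\mathcal{X})$ is boundedly invertible; this should force the relevant kernels and cokernels to be trivial, so that ``adding a divisor'' changes the index in the clean way predicted by $\deg_P(\mu)$.

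First I would set up the Gromov-Shubin framework. Take $\Dom{P}=\{u\in C^\infty(\mathcal{X})\mid Pu\in C^\infty_c(\mathcal{X}),\ u\in\cV^p_\varphi(\mathcal{X})\text{ for some }p,\varphi\}$ — more precisely I would first fix $p=\infty$, $\varphi=\varphi_0$ and set $\Dom{P}=\{u\in C^\infty(\mathcal{X})\mid Pu\in C^\infty_c(\mathcal{X}),\ u\in\cV^\infty_{\varphi_0}(\mathcal{X})\}$, i.e. bounded smooth functions with compactly supported image, and $\Domp{P}=C^\infty_c(\mathcal{X})$; symmetrically for $P^*$. One checks properties ($\mathcal{P}$1)--($\mathcal{P}$4) exactly as in Example \ref{mainex}: the pairing is the $L^2$ pairing, and integration by parts is valid because one factor is always compactly supported. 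The key point is then to verify that $P$ (and $P^*$) is \emph{Fredholm on $\mathcal{X}$} in the sense of Definition \ref{Fredholm}, with $\Index P=0$, and in fact that $\Image P=\Domp{P^*}=C^\infty_c(\mathcal{X})$: given $f\in C^\infty_c(\mathcal{X})$, since $0\notin\sigma(P)$ there is a unique $u\in H^m(\mathcal{X})$ with $Pu=f$; elliptic regularity gives $u\in C^\infty(\mathcal{X})$, and the resolvent bound plus interior Schauder/Sobolev estimates give $u$ bounded (indeed decaying), so $u\in\Dom{P}$ and $Pu=f$. That $\Ker P=\{0\}$ is likewise immediate from invertibility. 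Hence $\Index P=0$ and the hypotheses of Corollary \ref{RReqmu} are met, yielding for every rigged divisor $\mu$ the Riemann-Roch equality
\beqn
\dim L(\mu,P)-\dim L(\mu^{-1},P^*)=\deg_P(\mu),
\eeqn
and triviality of $L(\mu^{-1},P^*)$ when $D^-=\emptyset$. With this choice of domain $L(\mu,P)=L_\infty(\mu,P,\varphi_0)$, so this is precisely parts (b) and (c).

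For part (a) — that enlarging the growth from bounded ($\varphi_0$) to any subexponential weight $\varphi$ and any $p$ adds no new solutions — I would argue that any $u\in L_p(\mu,P,\varphi)$ agrees outside the compact set $D^+$ with a solution of $Pu=0$ of subexponential growth. Indeed $\langle g\rangle$-type weights $\varphi\in\mathcal{S}(G)$ are subexponential in the word metric, which by the \v{S}varc--Milnor comparison is comparable to $d_{\mathcal{X}}(\cdot,x_0)$, and an $\ell^p$ condition against a subexponential weight still forces subexponential pointwise growth after the usual Schauder upgrade (as in Remark/Definition \ref{domApn}); so $u$ is a subexponentially growing generalized solution of $Pu=0$ on $\mathcal{X}\setminus D^+$. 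Since $0\notin\sigma(P)$, Theorem \ref{SSP} (the Strong Schnol Property, valid because $\mathcal{X}$ has subexponential growth and $P$ is $C^\infty$-bounded uniformly elliptic) forbids any nonzero subexponentially growing solution globally; away from the compact $D^+$ one can cut off and correct by $P^{-1}$ of a compactly supported error to reduce to the global statement, concluding that $u$ actually lies in $\cV^\infty_{\varphi_0}$, i.e. $L_p(\mu,P,\varphi)\subseteq L_\infty(\mu,P,\varphi_0)$; the reverse inclusion is trivial since $\varphi_0$ is the smallest weight. Once (a) is in hand, (b) and (c) follow for all $p$ and $\varphi$ from the $\varphi_0$ case already established.

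The main obstacle I anticipate is the careful bookkeeping around the ``pole at infinity'' interacting with the compact support $D^+$ of the divisor: one must make sure that ``$u$ has subexponential growth and $Pu=0$ on $\mathcal{X}\setminus D^+$, plus the divisor conditions'' genuinely reduces, via an explicit compactly-supported correction built from the resolvent $P^{-1}$, to the global Schnol statement — this is where the non-compactness and the need for $P^{-1}$ to preserve decay (bounded geometry, uniform ellipticity) do real work. The remaining verifications — properties ($\mathcal{P}$1)--($\mathcal{P}$4), elliptic regularity, and that the resolvent maps $C^\infty_c$ into bounded (decaying) smooth functions — are routine given the hypotheses and the cited results, and I would treat them briefly.
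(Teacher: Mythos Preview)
Your overall strategy for (b) and (c) --- set up the Gromov--Shubin framework with a suitable domain and invoke Theorem~\ref{RR}/Corollary~\ref{RReqmu} --- is the right one, and matches the paper. But there are two genuine gaps.

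\textbf{First, your verification of ($\mathcal{P}$4) is incorrect.} You take $\Dom P$ and $\Dom P^*$ both to consist of \emph{bounded} functions (the $p=\infty$, $\varphi=\varphi_0$ choice on both sides). The claim ``integration by parts is valid because one factor is always compactly supported'' justifies that the pairings $\langle Pu,v\rangle$ and $\langle u,P^*v\rangle$ are \emph{well-defined} (that is ($\mathcal{P}$3)), but not that they are \emph{equal}. With $u,v$ merely bounded, the standard cutoff argument produces commutator terms supported on annuli whose volume grows with~$r$, so they need not vanish in the limit. The paper avoids this by always pairing $P^{p_1}_{m,\varphi_1}$ with its H\"older dual $(P^*)^{p_1'}_{m,\varphi_1^{-1}}$, for which the estimates of Step~2 in the proof of Theorem~\ref{RRLineq} go through; it then handles the symmetric case $p_1=p_2=\infty$, $\varphi_1=\varphi_2=\varphi_0$ (which is ``Case~2'' of Lemma~\ref{RRblochschnol}) by a sandwich/dimension argument reducing to the dual-pair case.

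\textbf{Second, in (a) the ``reverse inclusion is trivial since $\varphi_0$ is the smallest weight'' is false.} Functions $\varphi\in\mathcal{S}(G)$ may \emph{decay} (only $\varphi$ and $\varphi^{-1}$ are required to be subexponential), and for $p<\infty$ the inclusion $\cV^\infty_{\varphi_0}\subseteq\cV^p_{\varphi}$ fails even when $\varphi=\varphi_0$. So $L_\infty(\mu,P,\varphi_0)\subseteq L_p(\mu,P,\varphi)$ is not automatic.

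Both gaps are repairable by the same observation, which you almost make: the Green's function of $P$ decays \emph{exponentially} off the diagonal (\cite[Theorem~2.3]{Shubin_spectral}), so $P^{-1}$ maps $C^\infty_c(\mathcal{X})$ into exponentially decaying smooth functions. Combined with SSP, this shows that every element $\hat u$ of your $\Dom P$ (bounded, $P\hat u\in C^\infty_c$) must equal $P^{-1}(P\hat u)$ and hence decays exponentially; likewise for $\Dom P^*$. With exponential decay on both sides, ($\mathcal{P}$4) follows, and your cutoff argument for (a) upgrades from ``$u$ bounded'' to ``$u$ decays exponentially away from $D^+$'', which gives membership in \emph{every} $\cV^p_\varphi$ with $\varphi\in\mathcal{S}(G)$ --- both inclusions at once. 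The paper instead proves the general Riemann--Roch identity of Lemma~\ref{RRblochschnol} for arbitrary compatible pairs $(p_i,\varphi_i)$ and deduces (a) by comparing dimensions along chains of inclusions; your route, once patched, is more direct but relies on the same exponential Green's function estimate.
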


Now let $D \subset\mathcal{X}$ be a compact nowhere dense subset, $L$ be a finite dimensional subspace of $\mathcal{E}\cprime_{D}(\mathcal{X})$, and $\mu:=(D, L;0,\emptyset)$ be a positive divisor.

We define the space $$\tilde{L}:=\{u \in \mathcal{E}\cprime_D(\mathcal{X}) \mid P^*u \in L\}.$$

The following analog of Corollary \ref{solvability} holds:
\begin{cor}
\label{subexp-solvability}
Let the assumptions of Theorem \ref{RRLbloch} hold and a function $f \in C^{\infty}_c(X)$ be given such that $(f, \tilde{L})=0$. The following statements are equivalent:
\begin{enumerate}[(i)]
\item
$f$ is orthogonal to the vector space $L_{\infty}(\mu, P^*,\varphi_0)$.
\item
There exists a unique solution $u$ of the inhomogeneous equation $Pu=f$ such that $(u, L)=0$ and $u \in \mathcal{V}^{p}_{\varphi}(\mathcal{X})$ for some $p \in [1,\infty]$ and $\varphi \in \mathcal{S}(G)$.
\item
The equation $Pu=f$ admits a unique solution $u$ which has subexponential decay and satisfies $(u, L)=0$.
\item
The equation $Pu=f$ admits a unique solution $u$ which has exponential decay and satisfies $(u, L)=0$.
\end{enumerate}
\end{cor}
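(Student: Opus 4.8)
The plan is to obtain Corollary~\ref{subexp-solvability} from Theorem~\ref{RRLbloch} in essentially the way Proposition~\ref{solvability} is obtained from \mref{LRRL2}, and then to sharpen the resulting growth statements using $0\notin\sigma(P)$. First I would repackage the ``enforced zero'' data as a rigged divisor: set $\nu:=(\emptyset,\{0\};D,L)$, a rigged divisor for $P$, so that $\nu^{-1}=\mu=(D,L;0,\emptyset)$ for $P^{*}$ and $L_{\infty}(\mu,P^{*},\varphi_0)=L(\nu^{-1},P^{*})=\Ker(\tilde{P^{*}})$ with $\tilde{P^{*}}$ the Gromov--Shubin extension of Proposition~\ref{P:RRtilde}. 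Since $\nu$ has trivial positive part, no regularizations intervene: $\Gamma(\mathcal{X},\nu,P)=\Gamma_{\nu}(\mathcal{X},P)=\{u\in\Dom P:(u,L)=0\}$, $\tilde P=P$ on it, and $\Dom P=\{u\in\mathcal{V}^{\infty}_{\varphi_0}(\mathcal{X}):Pu\in C^{\infty}_{c}(\mathcal{X})\}$. Because $0\notin\sigma(P)$ forces $\Ker P=\Ker P^{*}=\{0\}$ (Theorem~\ref{SSP}) and $\Image P=(\Ker P^{*})^{\circ}$, the operator $P$ is Fredholm on $\mathcal{X}$ with $\Index P=0$; hence Theorem~\ref{RRLbloch}(b) applied to $\nu$ is exactly the Riemann--Roch equality \mref{RReq}, and Remark~\ref{RRfredholm} delivers the Fredholm alternative $\Image(\tilde P)=(\Ker\tilde{P^{*}})^{\circ}$ relative to the duality \mref{duality2}.

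Second, I would feed $f$ into the alternative. Thanks to the compatibility hypothesis $(f,\tilde L)=0$, the function $f\in C^{\infty}_{c}(\mathcal{X})$ lies in the codomain $\tilde\Gamma_{\nu}(\mathcal{X},P)$ of $\tilde P$, and on $C^{\infty}_{c}(\mathcal{X})\times L_{\infty}(\mu,P^{*},\varphi_0)$ the duality \mref{duality2} reduces to the ordinary pairing $\langle f,v\rangle$. So $f\in\Image(\tilde P)$ iff $f$ annihilates $\Ker\tilde{P^{*}}=L_{\infty}(\mu,P^{*},\varphi_0)$, which is exactly condition~(i); and $f\in\Image(\tilde P)$ says precisely that there is a (smooth, by elliptic regularity) $u\in\mathcal{V}^{\infty}_{\varphi_0}(\mathcal{X})$ with $Pu=f$ and $(u,L)=0$. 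Thus (i) is equivalent to the existence of such a $\mathcal{V}^{\infty}_{\varphi_0}$-solution.

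Third, I would run the reduction that matches this with (ii)--(iv) and yields uniqueness and exponential decay. Since $0\notin\sigma(P)$, $P\colon H^{m}(\mathcal{X})\to L^{2}(\mathcal{X})$ is invertible; put $u_{0}:=P^{-1}f$, which is smooth and, by a Combes--Thomas / Agmon estimate (whose proof uses the $C^{\infty}$-boundedness of $P$ and the bounded geometry of $\mathcal{X}$ to control the conjugated operator $e^{\varepsilon\rho}Pe^{-\varepsilon\rho}$, $\rho=d_{\mathcal{X}}(\cdot,\supp f)$), decays exponentially. If $u$ solves $Pu=f$ with $(u,L)=0$ and $u\in\mathcal{V}^{p}_{\varphi}(\mathcal{X})$ for some $p\in[1,\infty]$, $\varphi\in\mathcal{S}(G)$ (note $\mathcal{V}^{p}_{\varphi}$-growth is subexponential), then $w:=u-u_{0}$ solves $Pw=0$ and has subexponential growth, so Theorem~\ref{SSP} and $0\notin\sigma(P)$ force $w=0$; hence $u=u_{0}$. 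Therefore the existence of a subexponentially growing solution of $Pu=f$ with $(u,L)=0$ --- in particular, one that is in some $\mathcal{V}^{p}_{\varphi}$, or has subexponential decay, or has exponential decay --- is equivalent to the single condition ``$u_{0}$ satisfies $(u_{0},L)=0$'', in which case $u_{0}$ is the unique such solution and decays exponentially. This simultaneously identifies (ii), (iii), (iv) with ``$(u_{0},L)=0$''; and since $u_{0}\in\mathcal{V}^{\infty}_{\varphi_0}(\mathcal{X})$, the first two steps identify (i) with the same condition. Hence (i) $\Leftrightarrow$ (ii) $\Leftrightarrow$ (iii) $\Leftrightarrow$ (iv).

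The main obstacle is the bookkeeping in the first step: one must check that the Gromov--Shubin apparatus of Section~\ref{GSRR} --- properties ($\mathcal{P}$1)--($\mathcal{P}$4), Fredholmity of $P$ on $\mathcal{X}$, the choice of the dual domain $\Domp{P^{*}}$, and the reduction of the abstract duality \mref{duality2} to $\langle\cdot,\cdot\rangle$ on $C^{\infty}_{c}\times L_{\infty}(\mu,P^{*},\varphi_0)$ --- is actually available in the subexponential-growth framework underlying Theorem~\ref{RRLbloch}; since that framework is built in the proof of Theorem~\ref{RRLbloch}, this is routine. The only genuinely analytic ingredient is the Combes--Thomas exponential-decay estimate on a manifold of bounded geometry with a spectral gap, which is standard but should be quoted carefully. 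Incidentally, the Fredholm-alternative machinery can be bypassed: for $\ell\in L$ set $v_{\ell}:=(P^{*})^{-1}\ell$, which exists on the Sobolev scale since $0\notin\sigma(P^{*})$, is smooth off $D$, and (Combes--Thomas) decays exponentially off $D$, so $v_{\ell}\in L_{\infty}(\mu,P^{*},\varphi_0)$; then (i) forces $0=\langle f,v_{\ell}\rangle=\langle Pu_{0},v_{\ell}\rangle=\langle u_{0},\ell\rangle$ for every $\ell\in L$, i.e.\ $(u_{0},L)=0$, so $u_{0}$ is already the desired solution.
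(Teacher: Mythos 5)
Your proposal is correct and follows essentially the same route as the paper: the equivalence of (i)--(iii) via the Gromov--Shubin Fredholm alternative coming from Theorem \ref{RRLbloch} and Remark \ref{RRfredholm} (exactly as in Corollary \ref{solvability}), uniqueness from $0\notin\sigma(P)$ and (SSP), and the upgrade from subexponential to exponential decay of $u=P^{-1}f$ via the off-diagonal decay of the Green's function. The ``Combes--Thomas/Agmon estimate'' you invoke is precisely the paper's estimate \mref{expdecay} quoted from Shubin, so no new analytic input is needed.
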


\begin{remark}
\indent
\begin{enumerate}[(i)]
\item
Comparing to the Riemann-Roch formula \mref{RReq}, the Fredholm index of $P$ does not appear in the formula in Theorem \ref{RRLbloch} (b) since $P$ is invertible in this case.
\item
When $\mu$ is trivial, Theorem \ref{RRLbloch} (c) becomes Theorem \ref{SSP} and our Corollary \ref{subexp-solvability} is an analog of \cite[Theorem 4.2.1]{Kbook} for the co-compact Riemannian coverings of subexponential growth.
\end{enumerate}
\end{remark}

\chapter{Proofs of the main results}
\label{C:proofs-LRR}
First, we introduce some notions.

\begin{defi} We will often use the notation $A \lesssim B$ to indicate that the quantity $A$ is less or equal than the quantity $B$ up to some multiplicative constant factor, which does not affect the arguments.
\end{defi}

\begin{defi}
\label{apmn}
For each $s, N \in \mathbb{R}$ and $p \in [1,\infty]$, we denote by $V^{p}_{s,N}(X)$ the vector space consisting of all function $u \in C^{\infty}(X)$ such that for some (and thus any) compact subset $K$ of $X$ satisfying \mref{fundamental}, the sequence
$\{\|u\|_{H^s(gK)}\langle g \rangle^{-N}\}_{g \in G}$ belongs to $\ell^p(G)$. For a periodic elliptic operator $A$ we put
$$V^{p}_{s,N}(A):=V^{p}_{s,N}(X) \cap \Ker{A}.$$
Let also $A^p_{s,N}$ be the elliptic operator $A$ with the domain
$$\Dom{A^p_{s,N}}=\{u \in V^{p}_{s,N}(X) \mid Au \in C^{\infty}_c(X)\}.$$
\end{defi}
When $s=0$, this reduces to the notions of $V^{p}_{N}(X)$ and $V^{p}_{N}(A)$ introduced in Definition \ref{polyspace}, and of the operator $A^p_N$ and its domain $\Dom{A^p_N}$ in Definition \ref{polyrigged}.

\section{Proof of Theorem 3.6}
So, let $A$ be a periodic elliptic differential operator of order $m$ on $X$ and a pair $(p, N)$ satisfy the assumption of the theorem.

Let $\cF$ be the closure of a fundamental domain for $G$-action on $X$. We also pick a compact neighborhood $\hat{\cF}$ in $X$ of $\cF$, so the conclusion of Proposition \ref{schauderest} applies.

Our proof will be done in several steps.

\textbf{Step 1.} We claim that given $p \in [1,\infty], N \in \mathbb{R}$, and any rigged divisor $\mu=(D^+,L^+;D^-,L^-)$, one has
\beq
\label{redefine}
L(\mu,A^p_{m,N})=L(\mu,A^p_N)=L_p(\mu,A,N).
\eeq
Indeed, it suffices to show that $L(\mu,A^p_N) \subseteq L(\mu,A^p_{m,N})$.

Consider $u \in L(\mu,A^p_{N})$. Due to Remark \ref{domApn}, this implies that
\beq
\label{domainApn}
\{\|u\|_{L^2(g\hat{\cF})}\cdot \langle g \rangle^{-N}\}_{G_{\hat{\cF},D^+}}\in \ell^p(G_{\hat{\cF},D^+}),
\eeq
where $G_{\hat{\cF}, D^+}=\{g \in G \mid \dist{(g\hat{\cF},D^+)} \geq 1\}$.
Let $\mathcal{O}:=X \setminus D^+$ then $Au=0$ on $\mathcal{O}$ and moreover, the set $G^{\mathcal{O}}=\{g \in G \mid g\hat{\cF} \cap D^+=\emptyset\}$ contains $G_{\hat{\cF}, D^+}$. Due to the Schauder estimate of Proposition \ref{schauderest}, for any $g \in G_{\hat{\cF}, D^+}$, one has
\beq
\label{schauderApn}
\|u\|_{H^m(g\cF)} \lesssim \|u\|_{L^2(g\hat{\cF})}.
\eeq

By \mref{domainApn} and \mref{schauderApn},
\beq
\{\|u\|_{H^m(g\cF)}\cdot \langle g \rangle^{-N}\}_{G_{\cF,D^+}}\in \ell^p(G_{\cF,D^+}).
\eeq
Using Remark \ref{domApn} again, this shows that $u \in L(\mu, A^p_{m,N})$, which in turn proves \mref{redefine}.

\begin{defi}
We denote by $(A^p_{m,N})^*$ the elliptic operator $A^*$ with the domain
\beqn
\Dom{(A^p_{m,N})^*}=\{v \in V^{p'}_{m, -N}(X) \mid A^*v \in C^{\infty}_c(X)\},
\eeqn
where $1/p+1/p'=1$. In other words, $(A^p_{m,N})^*=(A^*)^{p'}_{m,-N}$.

We also define
$$\Domp{A^p_{m,N}}=\Domp{(A^p_{m,N})^*}:=C^{\infty}_c(X).$$
\end{defi}
Clearly, $C^{\infty}_c(X) \subseteq \Dom{A^p_{m,N}}$ and $\Dom{(A^p_{m,N})^*} \subseteq C^{\infty}(X)$ (see \mref{domainP}).

This step shows that instead of dealing with $A^p_N$, it suffices to work with $A^p_{m,N}$ and its ``adjoint'' $(A^p_{m,N})^*$ (in the sense of Subsection \ref{RRsetting}), which are easier to deal with.

In the next steps, we will apply Theorem \ref{RR} to the operators $A^p_{m,N}$ and $(A^p_{m,N})^*$.

\textbf{Step 2.}
In order to apply Theorem \ref{RR}, we need to check properties $(\mathcal{P}1)-(\mathcal{P}4)$.
The first three, $(\mathcal{P}1)-(\mathcal{P}3)$ hold by definition. To show that $(\mathcal{P}4)$ also holds, let us consider $u \in \Dom{A^p_{m,N}}$ and $v \in \Dom{(A^p_{m,N})^*}$.

Note that since the operator $A$ is $G$-periodic,
\beq
\|Au\|_{L^2(g\cF)} \lesssim \|u\|_{H^m(g\cF)}
\eeq
and
\beq
\|A^*v\|_{L^2(g\cF)} \lesssim \|v\|_{H^m(g\cF)}\eeq
for any $g \in G$.
Now, by H\"older's inequality, we have

\beq
\label{est1}
\begin{split}
\left|\sum_{g \in G}\langle Au, v \rangle_{L^2(g\cF)}\right|
\leq& \sum_{g \in G}\left|\langle Au, v \rangle_{L^2(g\cF)}\right|
\leq\sum_{g \in G}\|Au\|_{L^2(g\cF)}\cdot \|v\|_{L^2(g\cF)}
\\
\leq&\|\{\|Au\|_{L^2(g\cF)}\langle g\rangle^{-N}\}_{g \in G}\|_{\ell^p(G)}\cdot \|\{\|v\|_{L^2(g\cF)}\langle g\rangle^{N}\}_{g \in G}\|_{\ell^{p'}(G)}\\
\lesssim&\|\{\|u\|_{H^m(g\cF)}\langle g\rangle^{-N}\}_{g \in G}\|_{\ell^p(G)}\cdot \|\{\|v\|_{H^m(g\cF)}\langle g\rangle^{N}\}_{g \in G}\|_{\ell^{p'}(G)}
\\
<&\infty.
\end{split}
\eeq
Similarly,
$$\left|\sum_{g \in G}\langle u, A^*v \rangle_{L^2(g\cF)}\right|\leq
\|\{\|u\|_{H^m(g\cF)}\langle g\rangle^{-N}\}_{g \in G}\|_{\ell^p(G)}\cdot \|\{\|v\|_{H^m(g\cF)}\langle g\rangle^{N}\}_{g \in G}\|_{\ell^{p'}(G)}
<\infty.$$
Hence, both $\langle A^p_{m,N}u, v \rangle$ and $\langle u, (A^p_{m,N})^*v \rangle$ are well-defined.

Our goal is to show that these two quantities are equal. To do this, for each $r \in \mathbb{N}$,
we define
$$G_r:=\{g \in G \mid |g| \geq r\},$$
and by $\cF_r$ the union of all shifts of $\cF$ by deck group elements whose word lengths do not exceed $r$, i.e.,
$$\cF_r:=\bigcup_{g \notin G_{r+1}}g\cF=\bigcup_{|g| \leq r}g\cF.$$
Obviously, $\cF_r \Subset \cF_{r+1}$ for any $r \in \mathbb{N}$ and the union of these subsets $\cF_r$ is the whole covering $X$.
Let $\phi_r \in C^{\infty}_c(X)$ be a cut-off function such that $\phi_n=1$ on $\cF_r$ and $\supp{\phi_r} \Subset \cF_{r+1}$.
Furthermore, all derivatives of $\phi_r$ are uniformly bounded with respect to $r$. In particular, the following estimates hold for any smooth function $w$ on $X$ and any $g\in G$:
\beq
\label{uniform}
\|\phi_r w\|_{H^m(g\cF)} \lesssim \|w\|_{H^m(g\cF)}, \quad \|(1-\phi_r) w\|_{H^m(g\cF)} \lesssim \|w\|_{H^m(g\cF)}.
\eeq
Let $u_r:=\phi_r u$ and $v_r:=\phi_r v$. Since $u_r$ and $v_r$ are compactly supported smooth functions on $X$, $\langle Au_r, v_r \rangle=\langle u_r, A^*v_r \rangle$. Therefore, it is enough to show that
\beq
\label{conv1}
\langle Au_r, v_r \rangle \rightarrow \langle Au,v \rangle \quad \mbox{and} \quad \langle u_r, A^*v_r \rangle \rightarrow \langle u, A^*v \rangle,
\eeq
as $r \rightarrow \infty$. By symmetry, we only need to show the first part of \mref{conv1}.
We use the triangle inequality to reduce \mref{conv1} to checking that
\beq
\label{conv2}
\lim_{r \rightarrow \infty}\langle A(u-u_r), v \rangle=\lim_{r \rightarrow \infty}\langle Au_r, (v-v_r) \rangle=0.
\eeq
We repeat the argument of \mref{est1} for the pairs of functions $((1-\phi_r )u, v)$ and $(\phi_r u, (1-\phi_r)v)$, and then use \mref{uniform} to derive
\beq
\label{conv3}
\begin{split}
&|\langle Au_r, (v-v_r)\rangle|+|\langle A(u-u_r), v\rangle|\\
\leq &\sum_{g \in G}\left|\langle A(\phi_r u),(1-\phi_r)v\rangle_{L^2(g\cF)}\right|+\left|\langle A((1-\phi_r)u),v\rangle_{L^2(g\cF)}\right|\\
=&\sum_{|g| \geq r+1}\left|\langle A(\phi_r u),(1-\phi_r)v\rangle_{L^2(g\cF)}\right|
+\left|\langle A((1-\phi_r)u),v\rangle_{L^2(g\cF)}\right|
\\
\lesssim &\|\{\|\phi_r u\|_{H^m(g\cF)}\langle g \rangle^{-N}\}_{g \in G_{r+1}}\|_{\ell^p(G_{r+1})}
\cdot  \|\{\|(1-\phi_r)v\|_{H^m(g\cF)}\langle g \rangle^{N}\}_{g \in G_{r+1}}\|_{\ell^{p'}(G_{r+1})}\\
&+\|\{\|(1-\phi_r)u\|_{H^m(g\cF)}\langle g \rangle^{-N}\}_{g \in G_{r+1}}\|_{\ell^p(G_{r+1})}
\cdot  \|\{\|v\|_{H^m(g\cF)}\langle g \rangle^{N}\}_{g \in G_{r+1}}\|_{\ell^{p'}(G_{r+1})}\\
\lesssim &\|\{\|u\|_{H^m(g\cF)}\langle g \rangle^{-N}\}_{g \in G_{r+1}}\|_{\ell^p(G_{r+1})}
\cdot  \|\{\|v\|_{H^m(g\cF)}\langle g \rangle^{N}\}_{g \in G_{r+1}}\|_{\ell^{p'}(G_{r+1})}.
\end{split}
\eeq
Since $u \in V^p_{m,N}(X)$ and $v \in V^{p'}_{m,-N}(X)$, it follows that  as $r \rightarrow \infty$, either
$$\|\{\|u\|_{H^m(g\cF)}\langle g \rangle^{-N}\}_{g \in G_{r+1}}\|_{\ell^p(G_{r+1})}$$
or
$$\|\{\|v\|_{H^m(g\cF)}\langle g \rangle^{N}\}_{g \in G_{r+1}}\|_{\ell^{p'}(G_{r+1})}$$
converges to zero (depending on either $p$ or $p'$ is finite), while the other one stays bounded.
Thus, we have
$$\lim_{r \rightarrow \infty}\|\{\|u\|_{H^m(g\cF)}\langle g \rangle^{-N}\}_{g \in G_{r+1}}\|_{\ell^p(G_{r+1})}
\cdot  \|\{\|v\|_{H^m(g\cF)}\langle g \rangle^{N}\}_{g \in G_{r+1}}\|_{\ell^{p'}(G_{r+1})}=0.
$$
This fact and \mref{conv3} imply \mref{conv2}. Hence, the property $(\mathcal{P}4)$ holds for $A^p_{m,N}$ and $(A^p_{m,N})^*$.

\textbf{Step 3.}
Clearly,
\beq
\Ker{A^p_{m,N}}=\{u \in \Dom A^p_{m,N} \mid Au=0\}=V^p_{m,N}(A)=V^p_{N}(A).
\eeq
The latter equality is due to Schauder estimates (see \mref{schauderApn} in Step 1).
Also,
\beq
\Ker (A^p_{m,N})^*=V^{p'}_{m,-N}(A^*)=V^{p'}_{-N}(A^*)=0
\eeq
according to Theorem \ref{UCinfty}. Hence, the kernels of $A^p_{m,N}$ and $(A^p_{m,N})^*$ are finite dimensional.

To prove that $A^p_{m,N}$ is Fredholm on $X$, we only need to show that
\beq
\Image A^p_{m,N}=C^{\infty}_c(X)=\left(\Ker{(A^p_{m,N})^*}\right)^{\circ}.
\eeq
 Given any $f \in C^{\infty}_c(X)$, we want to find a solution $u$ of the equation $Au=f$ such that $u \in V^p_{N}(X)$. If such a solution $u$ is found, then automatically $u$ is in $V^p_{m,N}(X)$ by the same argument as in Step 1 and the fact that $Au=0$ on the complement of the compact support of $f$.
Thus, $f$ must belong to the range of $A^p_{m,N}$ and the proof is then finished. So our remaining task is to find such a solution $u$. This can be done as follows: First,
we pick a cut-off function $\eta_r$ such that $\eta=1$ around $k_{r}$ and $\supp \eta_{r} \Subset V_{r}$, where $V_r$ is the neighborhood of $k_r$ appearing in \textit{Assumption} $\mathcal{A}$. Define
\beq
\eta:=\sum_{r=1}^{\ell} \eta_r
\eeq
and notice that the Floquet transform $\textbf{F}f$ is smooth in $(k,x)$ since $f \in C^{\infty}_c(X)$.
We decompose $\textbf{F}f=\eta\textbf{F}f+(1-\eta)\textbf{F}f$.
Since the operator $A(k)$ is invertible when $k \notin F_{A, \mathbb{R}}$, the operator function
\beq
\widehat{u_0}(k):=A(k)^{-1}((1-\eta(k))\textbf{F}f(k))
\eeq
is well-defined and smooth in $(k,x)$.
By Theorem \ref{pwfloquet}, the function $u_0:=\textbf{F}^{-1}\widehat{u_0}$ has rapid decay.
We recall that when $k \in V_r$, the Riesz projection $\Pi_r(k)$ is defined in \textit{Assumption} $\mathcal{A}$. Clearly,
\beq
0 \notin \sigma(A(k)_{|R(1-\Pi_r(k))}),
\eeq
where we use the notation $R(T)$ for the range of an operator $T$. Now the operator function
\beq
\widehat{v_r}(k):=\eta_r(k)(A(k)_{|R(1-\Pi_r(k))})^{-1}(1-\Pi_r(k))\textbf{F}f(k)
\eeq
is also smooth and thus the function $v_r:=\textbf{F}^{-1}\widehat{v_r}$ has rapid decay by Theorem \ref{pwfloquet}.
In particular, $u_0, v_r (1 \leq r \leq \ell)$ are in the space $V^{\infty}_0(X)$.

Let us fix $1 \leq r \leq \ell$. For any $k \in V_r\setminus \{k_r\}$, due to $(\mathcal{A}4)$, we can define the operator function \beq
\widehat{w_r}(k):=\eta_r(k)(A(k)_{|R(\Pi_r(k))})^{-1}\Pi_r(k)\textbf{F}f(k),
\eeq
which is in the range of $\Pi_r(k)$.
By expanding $\Pi_r(k)\textbf{F}f(k)$ in terms of the basis $(f_j(k))_{1 \leq j \leq m_r}$, one sees that
$$\|\widehat{w_r}(k)\|_{L^2_k(X)} \lesssim \max\limits_{1 \leq j \leq m_r}\|A_r(k)^{-1}f_j(k)\|_{L^2_k(X)}\cdot \|\textbf{F}f(k)\|_{L^2_k(X)}.$$
From this and the integrability condition in $(\mathcal{A}4)$, we obtain
\beqn
\begin{split}
\int\limits_{\mathbb{T}^d}\|\widehat{w_r}(k)\|_{L^2_k(X)}\di{k} &\lesssim \int\limits_{V_r\setminus \{k_r\}}\|\textbf{F}f(k)\|_{L^2_k(X)}\cdot\|(A_r(k)^{-1}f_j(k))_{1 \leq j \leq m_r}\|_{\ell^{\infty}(\mathbb{C}^{m_r})}\di{k}\\
&\lesssim \sup\limits_{k \in V_r}\|\textbf{F}f(k)\|_{L^2_k(X)}\cdot\int\limits_{V_r\setminus \{k_r\}}\|A_r(k)^{-1}\|_{\mathcal{L}(\mathbb{C}^{m_r})}\di{k}<\infty.
\end{split}
\eeqn
Hence, $\widehat{w_r} \in L^1(\mathbb{T}^d, \mathcal{E}^0)$.

Summing up, the function $\widehat{u}:=\widehat{u}_0+\sum\limits_{1 \leq r \leq \ell}(\widehat{v}_r+\widehat{w}_r)$ belongs to $L^1(\mathbb{T}^d, \mathcal{E}^0)$, and moreover, it satisfies the equation
\beq
\begin{split}
A(k)\widehat{u}(k)&=A(k)\widehat{u_0}(k)+\sum\limits_{1 \leq r \leq \ell}A(k)(\widehat{v_r}(k)+\widehat{w_r}(k))\\
&=(1-\eta(k))\textbf{F}f(k)+\sum\limits_{1 \leq r \leq \ell}\eta_r(k)\textbf{F}f(k)=\textbf{F}f(k).
\end{split}
\eeq
From the above equality, $\widehat{u}(k,x)$ is smooth in $x$ for each quasimomentum $k$.
We define $u:=\textbf{F}^{-1}\widehat{u}$ by using the formula \mref{flinv}. According to Lemma \ref{flrl}, $u \in L^2_{loc}(X)$.
For any $\phi \in C^{\infty}_c(X)$, we can use Fubini's theorem to get
\beqn
\begin{split}
&\langle u, A^*\phi\rangle_{L^2(X)}=\int\limits_{X}\textbf{F}^{-1}\widehat{u}(k,x)\cdot A^*\phi(x)\di\mu_X(x)
=\frac{1}{(2\pi)^d}\int\limits_{\mathbb{T}^d}\int\limits_{X}\widehat{u}(k,x)\cdot A^*\phi(x)\di\mu_X(x)\di{k}\\
&=\frac{1}{(2\pi)^d}\int\limits_{\mathbb{T}^d}\int\limits_{X}A\widehat{u}(k,x)\cdot\phi(x)\di\mu_X(x)\di{k}=\frac{1}{(2\pi)^d}\int_{\mathbb{T}^d}\int_{X}A(k)\widehat{u}(k,x)\cdot\phi(x)\di\mu_X(x)\di{k}\\
&=\frac{1}{(2\pi)^d}\int\limits_{\mathbb{T}^d}\int\limits_{X}\textbf{F}f(k,x)\cdot\phi(x)\di\mu_X(x)\di{k}=\langle f, \phi \rangle_{L^2(X)}.
\end{split}
\eeqn
Hence, $u$ is a weak solution of the inhomogeneous equation $Au=f$ on $X$.
Elliptic regularity then implies that $u$ is a classical solution and therefore, $u \in V^{\infty}_0(X)$ due to Lemma \ref{flrl} again. If either $N \geq 0$ when $p=\infty$ or $N>d/p$ when $p \in [1,\infty)$, we always have $V^{\infty}_0(X) \subseteq V^{p}_{N}(X)$. Thus, this shows that $A^p_{m,N}$ is a Fredholm operator on $X$.

\textbf{Step 4}.
Due to considerations in Step 2 and Step 3, the operator $A^p_{m,N}$ satisfies the assumption of Theorem \ref{RR}. Then the Liouville-Riemann-Roch inequality \mref{RRLinequality} follows immediately from \mref{redefine} and Theorem \ref{RR}.
\qed

\begin{remark}
\label{rrl2d-example}
Assumption $(\mathcal{A}2)$ is needed to guarantee the validity of the Liouville-Riemann-Roch inequality \mref{RRLineq} (at least when $p=\infty$ and $N=0$). Indeed, consider $-\Delta$ in $\mathbb{R}^2$. Let $\mu$ be the point divisor $(\{0\}, L, \emptyset, 0)$, where $L=\mathbb{C}\delta_0$.
It is not difficult to see that the space $L_{\infty}(\mu, -\Delta, 0)$ contains only constant functions, since the standard fundamental solution $u_0(x)=-\frac{1}{2\pi}\ln{|x|}$ is not bounded at infinity. Clearly, $L_1(\mu^{-1}, -\Delta,0)$ is trivial. Hence, we have:
\beq
\dim L_{\infty}(\mu, -\Delta, 0)=1<2=\deg_{-\Delta}(\mu)+\dim V^{\infty}_0(-\Delta)+\dim L_1(\mu^{-1}, -\Delta,0).
\eeq
\end{remark}

%

\section{Proof of Theorem \ref{RRLeq}}
According to the Step 3 of the proof of Theorem \ref{RRLineq}, the operator $A^p_{m,N}$ is Fredholm on $X$ and
\beq
\Image{A^p_{m,N}}=C^{\infty}_c(X)=\Domp{(A^{p}_{m,N})^*}.
\eeq
Now we can apply Corollary \ref{RReqmu} to finish the proof of the equality \mref{RRLeqmu}. The upper bound estimate \mref{RRLineqmu} follows from \mref{RRLeqmu} and the trivial inclusion $L_p(\mu, A, N) \subseteq L_p(\mu^+, A, N)$.
\qed
\subsection{Proof of Proposition \ref{triviality}}

\begin{enumerate}[a.]
\item
It suffices to prove the statement for the case $p=\infty$. If $r \geq 0$, we define a point divisor $\mu_r:=(\emptyset,0;\{0\}, L_r^-)$, where
\beq
L_r^-:=\left\{\sum_{|\alpha| \leq r}c_{\alpha}\partial^{\alpha}\delta(\cdot-0) \mid c_{\alpha} \in \mathbb{C}\right\}.
\eeq
Let us consider the function $v_{\alpha}(x):=\partial^{\alpha}(|x|^{2-d})$ for each multi-index $\alpha$ such that $\displaystyle |\alpha|>N+2$.
It is clear that $|v_{\alpha}(x)| \lesssim |x|^{-|\alpha|-d+2}$ for $x \neq 0$. Therefore,
\beq
\sum\limits_{g \in \bZ^d}\|v_{\alpha}\|_{L^2([0,1)^d+g)}\cdot \langle g \rangle^{N} \lesssim \sum\limits_{g \in \bZ^d}\langle g \rangle^{-|\alpha|-d+2+N}<\infty.
\eeq
Since $|x|^{2-d}$ is a fundamental solution of $-\Delta$ on $\mathbb{R}^d$ (up to some multiplicative constant), $v_{\alpha}$ belongs to the space $L_{1}(\mu_r^{-1}, -\Delta_{|\mathbb{R}^d}, -N)$ provided that $N+2<|\alpha| \leq r$. Now let us pick multi-indices $\alpha_1, \ldots, \alpha_{r-N-2}$ such that $|\alpha_j|=N+2+j$ for any $1 \leq j \leq r-N-2$.
By homogeneity, the functions $v_{\alpha_j}$ are linearly independent as smooth functions on $\mathbb{R}^d \setminus \{0\}$.
By letting $r \rightarrow \infty$, this proves the statement a.

\item
We define $\mu_0:=(\emptyset, 0; D^-, L^-)$. Now suppose the contrary, that for any $M>0$, the space $L_{p'}(\mu_M^{-1}, A^*,-N)$ is non-trivial for some rigged divisor $\mu_M=(D^+, L^+_{M}; D^-, L^-)$ such that $\mathcal{L}^+_{M}\subseteq L^+_{M}$. Note that $L_{p'}(\mu_M^{-1}, A^*,-N)$ is a subspace of $L_{p'}(\mu_0^{-1}, A^*, -N)$.
It follows from Proposition \ref{RRLeq} that $L_{p'}(\mu_0^{-1}, A^*, -N)$ is a finite dimensional vector space and thus, we equip it with any norm $\|\cdot\|$.
Thus, there is a sequence $\{u_M\}_{M \in \mathbb{N}}$ in $L_{p'}(\mu_0^{-1}, A^*, -N)$ such that $\|u_M\|=1$ and $(u_M, L^+_M)=0$. In particular, $(u_M, \mathcal{L}^+_M)=0$ and therefore, $\partial^{\alpha}u_M(x_0)=0$ for any $0 \leq |\alpha| \leq M$. By passing to a subsequence if necessary, there exists $v \in L_{p'}(\mu_0^{-1}, A^*, -N)$ for which $\lim\limits_{M \rightarrow \infty}\|u_M-v\|=0$. It is clear that $\|w\|_{C^{M}(K)}\lesssim \|w\|$ for any $w$ in $ L_{p'}(\mu_0^{-1}, A^*, -N)$, $M \geq 0$, and compact subset $K \Subset X \setminus D^-$. Hence, for any multi-index $\alpha$, $\partial^{\alpha}v(x_0)=\lim\limits_{M \rightarrow \infty}\partial^{\alpha}u_M(x_0)=0$. As a local smooth solution of $A^*$, $v$ must vanish on $X \setminus D^-$ due to the strong unique continuation property of $A^*$. Consequently, $v=0$ as an element in $L_{p'}(\mu_0^{-1}, A^*, -N)$ and this gives us a contradiction with $\|v\|=1$. This completes our proof.
\end{enumerate}
\qed

\begin{remark}
\indent
\begin{enumerate}[(i)]
\item
There are large classes of elliptic operators with the strong unique continuation property, e.g. elliptic operators of second-order with smooth coefficients and elliptic operators with real analytic coefficients.
\item
Note that the finiteness of the real Fermi surface $F_{A,\mathbb{R}}$ would imply the weak unique continuation property of $A^*$, i.e., $A^*$ does not have any non-zero compactly supported solution (see e.g., \cite{Kbook}).
We do not know whether the first statement of Proposition \ref{triviality} still holds without the strong unique continuation property requirement for $A^*$.
\end{enumerate}
\end{remark}

\section{Proof of Theorem 3.13}
The proof is similar to the one of Theorem \ref{RRLineq}, except for Step 3, where it needs a minor modification. We keep the same considerations and notions as in Step 3.

The goal here is to prove the solvability in $V^p_N(X)$ of the equation $Au=f$, where $f \in C^{\infty}_c(X)$. Under the assumption (\ref{E:strength}), the functions $\widehat{w}_r$ $(1 \leq r \leq \ell)$ defined in Step 3 belong to $L^2(\mathbb{T}^d, \mathcal{E}^0)$. Thus, $\widehat{u} \in L^2(\mathbb{T}^d, \mathcal{E}^0)$ and then by Theorem \ref{pwfloquet}, $u$ is in $L^2(X)$ and $Au=f$. This means that $u \in V^{2}_0(X)$.

If $p \geq 2, N \geq 0$, the inclusion $V^{2}_0(X) \subseteq V^2_N(X)$ is obvious, while if $p \in [1,2), N>(1/p-1/2)d$, one can use H\"older's inequality to obtain the inclusion $V^{2}_0(X) \subset V^p_N(X)$.

This completes the proof of the first statement. In particular, when $p=2, N=0$, both operators $A^2_0$ and $(A^2_0)^*=(A^*)^2_0$ are Fredholm. Therefore, we obtain the equality \mref{LRRL2}, since $\dim V^2_0(A)=\dim V^2_0(A^*)=0$ according to Theorem \ref{UCinfty} (a).
\qed

\begin{remark}
\indent
\begin{enumerate}[(a)]
\item
The integrability of $\|A_r(k)^{-1}\|^2_{\mathcal{L}(\mathbb{C}^{m_r})}$ is important for validity of Theorem \ref{improve}. Indeed, let us consider $A=-\Delta$ on $\mathbb{R}^d$ $(d<5)$ and the point divisor $\mu$ representing a simple pole at $0$. Then the fundamental solution $c_d |x|^{2-d}$ does not belong to $L_2(\mu, -\Delta, 0)$ and thus, $\dim L_2(\mu,-\Delta,0)=0<1=\deg_{-\Delta}(\mu)$. Therefore, the
equalities \mref{LRRL2poles} and \mref{LRRL2} do not hold in this case.
\item
Under the assumption of Theorem \ref{improve}, the Liouville-Riemann-Roch inequality \mref{RRLinequality} holds for \textbf{any $N \geq 0$} if and only if $p \geq 2$. Indeed, suppose that $d \geq 5$ and $p<2$, then $(2-d)p \geq -d$ and therefore,
\beq
\int_{|x| \geq 1}|x|^{(2-d)p}\di x=\infty.
\eeq
This implies that $L_p(\mu, -\Delta,0)=\{0\}$, where $\mu$ is the same point divisor mentioned in the previous remark. So \mref{RRLinequality} fails, since,
\beq
\dim L_p(\mu,-\Delta,0)=0<1=\deg_{-\Delta}(\mu).
\eeq
\end{enumerate}
\end{remark}
\subsection{Proof of Proposition \ref{solvability}}
We evoke the operators $A^2_{m,0}$ and $(A^2_{m,0})^*$ and their corresponding domains from the proof of Theorem \ref{RRLineq}.
Now we recall from our discussion in Subsection \ref{GSRR} the notations of the operators
\beq
\widetilde{A^{2}_{m,0}}: \Gamma(X,\mu^{-1},A^{2}_{m,0}) \rightarrow \tilde{\Gamma}_{\mu^{-1}}(X,A^{2}_{m,0})
\eeq
and
\beq
\widetilde{(A^2_{m,0})^*}: \Gamma(X,\mu,(A^{2}_{m,0})^*) \rightarrow \tilde{\Gamma}_{\mu}(X,(A^{2}_{m,0})^*),
\eeq
which are extensions of $A^{2}_{m,0}$ and $(A^{2}_{m,0})^*$ with respect to the divisors $\mu^{-1}$ and $\mu$, correspondingly.
From \mref{LRRL2} and Remark \ref{RRfredholm}, we obtain the duality
\beq
L_2(\mu, A^*,0)^{\circ}=(\Ker \widetilde{(A^2_{m,0})^*)}^{\circ}=\Image \widetilde{A^{2}_{m,0}}.
\eeq
To put it differently, $f$ is orthogonal to $\tilde{L}$ and $L_2(\mu, A^*,0)$ if and only if $f=Au$ for some $u$ in the space
\beq
\Gamma(X,\mu^{-1},A^{2}_{m,0})=\{v \in H^m(X) \mid Av \in C^{\infty}_c(X), \langle v, L \rangle=0\}.
\eeq
This proves the equivalence of (i) and (ii).
\qed

\subsection{Proof of Proposition \ref{rrlexample1}}
For $\ell \in \mathbb{N}$, let us choose $\ell$ distinct points $z_1, \ldots, z_{\ell}$ in $\mathbb{R}^d$ and define $\mu_{\ell}:=(\emptyset,0; D^-, L^-)$, where $D^-=\{z_1, \ldots, z_{\ell}\}$ and
\beq
L^-=\left\{\sum\limits_{1 \leq j \leq \ell}\sum\limits_{1 \leq \alpha \leq d}c_{j\alpha}\frac{\partial}{\partial x_{\alpha}}\delta(x-z_j) \mid c_{j\alpha} \in \mathbb{C}\right\}.
\eeq
In terms of the notations in Example \ref{mainex}, $k=0, l=\ell$.

Let us recall now the spaces $L(\mu, -\Delta)$ and $L(\mu^{-1},-\Delta)$ from Example \ref{mainex}.
By definition, $L_{\infty}(\mu, -\Delta,0)=\mathbb{C}.$
Hence,
\beq
\label{exineq1}
\dim L_{\infty}(\mu, -\Delta,0)=1=\dim L(\mu, -\Delta)+\dim V^{\infty}_0(-\Delta).
\eeq
On the other hand, if $v \in L_{1}(\mu^{-1}, -\Delta,0)$, then
\beq
\lim\limits_{R \rightarrow \infty}\sum\limits_{|g| \geq R}\|v\|_{L^2([0,1)^d+g)}=0.
\eeq
Hence, $\|v\|_{L^2([0,1)^d+g)} \rightarrow 0$ as $|g| \rightarrow \infty$. Using elliptic regularity, this is equivalent to $\lim\limits_{|x| \rightarrow \infty}v(x)=0$.
Thus, $L_{1}(\mu^{-1}, -\Delta,0)$ is a subspace of $L(\mu^{-1}, -\Delta)$.
Define
\beq
v_{j\alpha}(x):=\frac{\partial}{\partial x_{\alpha}}|x-z_j|^{2-d}.
\eeq
Then $v_{j\alpha} \in L(\mu^{-1}, -\Delta)$ (see Example \ref{mainex})\footnote{In physics, the functions $v_{j\alpha}$ in $L(\mu^{-1},-\Delta)$ are dipoles potentials of dipoles located at the equilibrium positions $z_j$.}. We now claim that $v_{j\alpha} \notin L_1(\mu^{-1}, -\Delta,0)$ for any $1 \leq j \leq \ell$ and $1 \leq \alpha \leq d$. Suppose this is not true:
\beq
v_{j\alpha}(x)=(2-d)\frac{(x_{\alpha}-(z_j)_{\alpha})}{|x-z_j|^d} \in L_1(\mu^{-1}, -\Delta,0).
\eeq
This implies that for some $R>0$, we have
$$V_{\alpha,R}:=\sum\limits_{g \in \mathbb{Z}^d,|g| \geq R}\left(\int_{[0,1)^d+g}\frac{|x_{\alpha}-(z_j)_{\alpha}|^2}{|x-z_j|^{2d}}\di{x}\right)^{1/2}<\infty.$$
But this leads to a contradiction, since
\beq
V_{\alpha,R} \gtrsim \sum\limits_{g \in \mathbb{Z}^d, |g| \geq R}\frac{|g_{\alpha}|}{|g|^{d}} \geq \sum\limits_{g \in \mathbb{Z}^d, g_{\alpha} \neq 0, |g| \geq R}\frac{1}{|g|^{d}}=\infty.
\eeq

From this and linear independence of functions $v_{j\alpha}$ as smooth functions on $\mathbb{R}^d \setminus D^-$, it follows that
\beq
\label{exineq2}
\dim L_1(\mu^{-1}, -\Delta, 0)\leq \dim L(\mu^{-1}, -\Delta)-d\ell.
\eeq
From \mref{RRexample}, \mref{exineq1} and \mref{exineq2}, we get
\beq
\label{exineq3}
\begin{split}
&d\ell+\dim L_1(\mu^{-1}, -\Delta, 0)+\deg_{-\Delta}(\mu)+\dim V^{\infty}_0(-\Delta)
\\&\leq \dim L(\mu^{-1}, -\Delta)+\deg_{-\Delta}(\mu)+\dim V^{\infty}_0(-\Delta)
=\dim L_{\infty}(\mu, -\Delta, 0),
\end{split}
\eeq
which yields the inequality \mref{RRLstrict}.
\qed

\begin{remark}
\indent
\begin{enumerate}[(i)]
\item
Note that the examples from Proposition \ref{rrlexample1} also show that the Liouville-Riemann-Roch inequality can be strict in some other cases as well.

\textbf{Case 1:}
$p=\infty$ and $N \geq 0$.

If $(d-1)\ell +1\geq \dim V^{\infty}_N(-\Delta)$, one obtains from \mref{exineq3} that
\beq
\begin{split}
&\dim L_1(\mu^{-1}, -\Delta, -N)+\deg_{-\Delta}(\mu)+\dim V^{\infty}_N(-\Delta)+\ell\\
\leq &\dim L_1(\mu^{-1}, -\Delta, 0)+\deg_{-\Delta}(\mu)+\dim V^{\infty}_0(-\Delta)+d\ell\leq \dim L_{\infty}(\mu, -\Delta, N).
\end{split}
\eeq

\textbf{Case 2:}
$1 \leq p<\infty$ and $N>d/p$.

Note that $\dim L_p(\mu,-\Delta,N) \geq 1$, since this space contains constant solutions.
Each function $v_{j\alpha}$ does not belong to the space $L_{p'}(\mu^{-1},-\Delta,-N)$. In fact, for $R>2|z_j|$ large enough and $p>1$, we get
\beq
\sum\limits_{g \in \mathbb{Z}^d, |g|>R}\|v_{j\alpha}\|_{L^2([0,1)^d+g)}^{p'}\cdot \langle g \rangle^{p'N} \gtrsim \sum\limits_{\min\limits_{1 \leq l \leq d}g_l>R}\langle g \rangle^{p'(N-d)}
=\infty.
\eeq
The case when $p=1$ and $N>d-1$ can be treated similarly.

Now, as in the proof of \mref{exineq3}, we obtain
the inequality
\beq
\begin{split}
&\dim L_{p'}(\mu^{-1}, -\Delta, -N)+\deg_{-\Delta}(\mu)+\dim V^{p}_N(-\Delta)+\ell\\
\leq &\dim L(\mu^{-1}, -\Delta)-d\ell+\deg_{-\Delta}(\mu)+\dim V^p_N(-\Delta)+\ell\leq \dim L_{p}(\mu, -\Delta, N),
\end{split}
\eeq
provided that $(d-1)\ell+1 \geq \dim V^{p}_N(-\Delta)$.

\item
One can also modify our example in Proposition \ref{rrlexample1} to obtain examples of \mref{RRLstrict} in the case of point divisors. For instance, we could take the point divisor $\mu=(\emptyset,0;D^-,L^-)$, where $D^-=\{z_1,\ldots,z_{\ell}\}$ and
\beq
L^-=\Span_{\mathbb{C}}\{\partial^{\alpha}\delta(x-z_j)\}_{1 \leq j \leq \ell, 0 \leq |\alpha| \leq 1}.
\eeq
Similarly,
\beq
L_{\infty}(\mu,-\Delta,0)=L_1(\mu^{-1},-\Delta,0)=\{0\}.
\eeq
Moreover, $\deg_{-\Delta}(\mu)=-\ell(d+1)$. Hence,
\beq
\dim L_{\infty}(\mu,-\Delta,0)=(\ell(d+1)-1)+\deg_{-\Delta}(\mu)+\dim V^{\infty}_0(-\Delta)+\dim L_1(\mu^{-1},-\Delta,0).
\eeq

The method can also be easily adapted to providing examples of the inequality \mref{RRLstrict} when both the positive parts $\mu^+$ and negative parts $\mu^-$ of the rigged divisors $\mu$ are non-trivial.
\end{enumerate}
\end{remark}

\subsection{Proof of Proposition \ref{rrlexample2}}
We can assume w.l.o.g that $x_0=0$. Let us now fix a pair $(p,N)$ as in the assumption of the statement. We recall the notations of the operator $A^p_{m,N}$ and its corresponding domain $\Dom{A^p_{m,N}}$ from Definition \ref{apmn}.

In order to prove the statement of the proposition, we will apply Corollary \ref{RReqmu} to the operator $P:=A^p_{m,N}$.

From our assumption on the operator $A$ and the pair $(p,N)$ and from the conclusion of Step 3 of the proof of Theorem \ref{RRLineq}, we only need to show the following claim: If $u$ is a smooth function on $\mathbb{R}^d$ such that $|u(x)| \lesssim \langle x \rangle^{N}$ and $\langle Au, \tilde{L}^- \rangle=0$, then there is a polynomial $v$ of degree $M_0$ satisfying $Av=0$ and $\langle u-v, L^- \rangle=0$. Indeed, if this claim holds true, $v$ will belong to the space $\Dom{A^p_{m,N}}$ due to our condition on $p$ and $N$. This will fulfill all the necessary assumptions of Corollary \ref{RReqmu} in order to apply it.

To prove the claim, we first introduce the following polynomial:
\beq
v(x):=\sum\limits_{M_1 \leq |\alpha| \leq M_0}\frac{\partial^{\alpha}u(0)}{\alpha!}x^{\alpha}.
\eeq
Hence, $\langle v-u,g \rangle=0$ if $g=\partial^{\alpha}\delta(\cdot-0)$ and $M_1 \leq |\alpha| \leq M_0$.
Let $a(\xi)$ be the symbol of the constant-coefficient differential operator $A(x,D)$, i.e.,
\beq
A=A(x,D)=\sum\limits_{|\alpha|=m}\frac{1}{\alpha!}\partial^{\alpha}_{\xi}a(0)D^{\alpha}.
\eeq
Define $\tilde{M}_j:=\max\{0, M_j-m\}$ for $j \in \{0,1\}$.
A straightforward calculation gives:
\beq
\begin{split}
A(x,D)v(x)&=i^{m}\sum\limits_{|\alpha|=m}\sum\limits_{|\beta|=\tilde{M}_1}^{\tilde{M}_0}\frac{1}{\alpha!}\frac{1}{\beta!}\partial^{\alpha}_{\xi}a(0)\partial^{\alpha+\beta}_x u(0)\cdot x^{\beta}\\
&=\sum\limits_{|\beta|=\tilde{M}_1}^{\tilde{M}_0}\frac{1}{\beta!}\left(\sum\limits_{|\alpha|=m}\frac{1}{\alpha!}\partial^{\alpha}_{\xi}a(0)\cdot D^{\alpha}(\partial^{\beta}_{x}u)(0)\right)\cdot x^{\beta}\\
&=\sum\limits_{|\beta|=\tilde{M}_1}^{\tilde{M}_0}\frac{1}{\beta!}A\partial^{\beta}u(0) \cdot x^{\beta}.
\end{split}
\eeq
Because $\partial^{\beta}\delta(\cdot-0) \in \tilde{L}^-$ when $\tilde{M}_1 \leq |\beta|\leq \tilde{M}_0$, we obtain $A\partial^{\beta}u(0)=\partial^{\beta}Au(0)=0$ for such multi-indices $\beta$. Now we conclude that $Av=0$, which proves our claim.
\qed

\begin{remark}
\indent
\begin{enumerate}[(i)]
\item
If $d>m$ in Proposition \ref{rrlexample2}, then any elliptic real-constant-coefficient homogeneous differential operator $A$ of order $m$ on $\mathbb{R}^d$ satisfies Assumption $\mathcal{A}$. Notice that $m$ must be even. Since $F_{A, \mathbb{R}}=\{0\}$, it is not hard to see from Theorem \ref{LiouvilleDim} that
\beq
\dim V^p_N(A)=\dim V^p_N((-\Delta)^{m/2}).
\eeq
 In particular, if $\mu$ is the point divisor $x_0^{-(M_0+1)}$, the Liouville-Riemann-Roch formula becomes
\beq
\dim L_p(\mu, A,N)=\left\{
  \begin{array}{@{}ll@{}}
    h_{d,[N]}^{(m)}-h_{d,M_0}^{(m)} & \text{if}\ p=\infty. \\
    h_{d,\floor{N-d/p}}^{(m)}-h_{d,M_0}^{(m)} & \text{otherwise,}
  \end{array}\right.
\eeq
though this also has an elementary proof.

Here for $A, B, C \in \mathbb{N}$, we denote by $h_{A,B}^{(C)}$ the quantity $\binom{A+B}{A}-\binom{A+B-C}{A}$, where we adopt the agreement in Definition \ref{D:neg_binom}.
\item
As a special case of Proposition \ref{rrlexample2}, the Liouville-Riemann-Roch equality for the Laplacian operator could occur when $\mu^-$ is non-trivial (compare with Theorem \ref{RRLeq}). As we have seen, the corresponding spaces $L_{p'}(\mu^{-1}, -\Delta, -N)$ in this situation are trivial.

It is worth mentioning that it is possible to obtain the Liouville-Riemann-Roch equality in certain cases when the dimensions of the spaces $L_{p'}(\mu^{-1}, -\Delta, -N)$ can be arbitrarily large. For instance, let $p=\infty$ and $r\geq N+3$, we define
\beq
\mu:=(\emptyset,0; D^-, L^-)\mbox{ with }D^-=\{0\}
\eeq
and
\beq
L^-=\Span_{\mathbb{C}}\{\partial^{\alpha}\delta(\cdot-0)\}_{|\alpha|=r }.
\eeq
Then clearly $L_{\infty}(\mu, -\Delta,N)=V^{\infty}_N(-\Delta)$. From the proof of the second part of Proposition \ref{triviality},
\beq
\begin{split}
L_1(\mu^{-1},-\Delta,-N)&=\Span_{\mathbb{C}}\{\partial^{\alpha}(|x|^{2-d})\}_{|\alpha|=r}\\
&=\{u \in C^{\infty}(\mathbb{R}^d \setminus \{0\}) \mid -\Delta u \in L^-, \lim_{|x| \rightarrow \infty}|u(x)|=0\}.
\end{split}
\eeq
By Theorem \ref{RR}, it is easy to see that the dimension of this space is equal to the degree of the divisor $\mu^{-1}$ (see also Example \ref{mainex}). Thus,
\beq
\dim L_{\infty}(\mu,-\Delta,N)=\dim L_1(\mu^{-1}, -\Delta,-N)+\deg_{-\Delta}(\mu)+\dim V^{\infty}_N(-\Delta)
\eeq
and as $ r \rightarrow \infty$,
\beq
\dim L_1(\mu^{-1}, -\Delta,-N)=\binom{d+r-1}{d-1}-\binom{d+r-3}{d-1}\rightarrow \infty.
\eeq
\end{enumerate}
\end{remark}

\subsection{Proof of Corollary \ref{uppersemicont}}
In a similar manner to the proof of Corollary \ref{solvability}, for the rigged divisor $\mu_z$ the corresponding extension operator
\beq
\widetilde{(A_z)^{2}_{m,0}}: \Gamma(X,\mu_z,(A_z)^{2}_{m,0}) \rightarrow \tilde{\Gamma}_{\mu_z}(X,(A_z)^{2}_{m,0})
\eeq
is Fredholm. As in the proof of \cite[Theorem 2]{WangGS}, we can deduce the upper-semicontinuity of $\dim \Ker\widetilde{(A_z)^{2}_{m,0}}$ by using \cite[Theorem 1]{WangGS} and \cite[Theorem 3]{WangGS}. Since $\Ker\widetilde{(A_z)^{2}_{m,0}}=L_2(\mu_z, A_z,0)$, this finishes our proof. The upper-semicontinuity of $z \mapsto \dim L_2(\mu_z^{-1}, A_z^*,0)$ is proved similarly.
\qed
\section{Proof of Theorem 3.23}
The key of the proof is the following statement:
\begin{lemma}
\label{RRblochschnol}
Let us consider $p_1, p_2 \in [1, \infty]$ and two positive functions $\varphi_1$ and $\varphi_2$ in $\mathcal{S}(G)$ such that we assume either one of the following two conditions:
\begin{itemize}
\item
$p_1^{-1}+p_2^{-1} \geq 1$ and $\varphi_1 \varphi_2$ is bounded on $G$.
\item
$p_1^{-1}+p_2^{-1} \leq 1$ and $\varphi_1^{-1}\varphi_2^{-1}$ is bounded on $G$.
\end{itemize}
Then the Riemann-Roch formula holds:
\beq
\dim L_{p_1}(\mu, P, \varphi_1)= \deg_{P}(\mu)+\dim L_{p_2}(\mu^{-1}, P^*, \varphi_2),
\eeq
where $\mu$ is any rigged divisor on $\mathcal{X}$.
\end{lemma}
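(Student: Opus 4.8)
The goal is to apply the Nadirashvili-Gromov-Shubin Riemann-Roch equality (Theorem \ref{RR}) to the operator $P^{p_1}_\varphi$ with domain $\{u\in\mathcal{V}^{p_1}_{\varphi_1}(\mathcal X)\mid Pu\in C^\infty_c(\mathcal X)\}$, exactly as in the proof of Theorem \ref{RRLineq}, but now over a general subexponential-growth covering instead of $\bZ^d$. The only new ingredients needed are: (i) a Schauder/bounded-geometry reduction allowing one to replace the $L^2$-condition by an $H^m$-condition on fundamental-domain translates; (ii) verification of the Gromov-Shubin axioms $(\mathcal P 1)$--$(\mathcal P 4)$, where the crucial point is the ``integration by parts'' duality $(\mathcal P 4)$ for the pair $P^{p_1}_{\varphi_1}$ and its transpose $(P^*)^{p_2}_{\varphi_2}$; and (iii) checking that $P^{p_1}_{\varphi_1}$ is Fredholm on $\mathcal X$ with zero index, using $0\notin\sigma(P)$ together with Theorem \ref{SSP} (the Strong Schnol Property) to rule out nonzero elements in the kernels.

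First I would set up the functional-analytic skeleton. Since $\mathcal X$ has bounded geometry, the Schauder estimate $\|u\|_{H^m(g\mathcal F)}\lesssim\|u\|_{L^2(g\hat{\mathcal F})}$ holds uniformly in $g$ wherever $Pu=0$, so as in Step 1 of the proof of Theorem \ref{RRLineq} one has $L(\mu,P^{p_1}_{\varphi_1})=L_{p_1}(\mu,P,\varphi_1)$ and the analogous identity for $P^*$. Next, axioms $(\mathcal P1)$--$(\mathcal P3)$ hold by definition once one takes $\Domp{P^{p_1}_{\varphi_1}}=\Domp{(P^*)^{p_2}_{\varphi_2}}=C^\infty_c(\mathcal X)$. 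For $(\mathcal P4)$ one runs the telescoping cut-off argument of Step 2 of Theorem \ref{RRLineq}: write $u_r=\phi_r u$, $v_r=\phi_r v$ with $\phi_r$ supported on an exhausting sequence of finite unions of fundamental domains, use $\langle Pu_r,v_r\rangle=\langle u_r,P^*v_r\rangle$, and pass to the limit. The tail estimate is controlled by
\[
\sum_{|g|\ge r}\|u\|_{L^2(g\mathcal F)}\|v\|_{L^2(g\mathcal F)}
\le\Bigl\|\{\|u\|_{L^2(g\mathcal F)}\varphi_1^{-1}(g)\}\Bigr\|_{\ell^{p_1}}
\Bigl\|\{\|v\|_{L^2(g\mathcal F)}\varphi_2^{-1}(g)\}\Bigr\|_{\ell^{p_2}}
\cdot\sup_{|g|\ge r}\bigl(\varphi_1(g)\varphi_2(g)\bigr),
\]
which is finite and tends to zero precisely under the first hypothesis ($p_1^{-1}+p_2^{-1}\ge1$ and $\varphi_1\varphi_2$ bounded) by Hölder; the second hypothesis is handled by the symmetric application (putting $\varphi_1^{-1}\varphi_2^{-1}$ bounded and using the conjugate pairing), so one of the two $\ell^{p_i}$-factors shrinks while the other stays bounded.

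The Fredholmness step is where the genuinely new content lies, and I expect it to be the main obstacle. One must show $\Ker P^{p_1}_{\varphi_1}=0$, $\Ker (P^*)^{p_2}_{\varphi_2}=0$, and $\Image P^{p_1}_{\varphi_1}=C^\infty_c(\mathcal X)=(\Ker(P^*)^{p_2}_{\varphi_2})^\circ$. Triviality of the kernels follows from Theorem \ref{SSP}: an element of $\Ker P^{p_1}_{\varphi_1}$ lies in $\mathcal{V}^{p_1}_{\varphi_1}(\mathcal X)$ with $\varphi_1\in\mathcal S(G)$, hence has subexponential growth (combining $\ell^{p_1}$-summability with subexponential growth of $\varphi_1$ and of the volume function of $G$, via $\|u\|_{L^\infty(g\mathcal F)}\lesssim\|u\|_{L^2(g\hat{\mathcal F})}$ by Schauder), so by (SSP) it would force $0\in\sigma(P)$, contradicting the hypothesis; the same applies to $P^*$ since $0\notin\sigma(P)$ iff $0\notin\sigma(P^*)$. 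Surjectivity onto $C^\infty_c(\mathcal X)$ is cleaner than in the periodic case: since $P$ is $C^\infty$-bounded uniformly elliptic on a manifold of bounded geometry and $0\notin\sigma(P)$, $P$ is boundedly invertible on $L^2(\mathcal X)$, so for $f\in C^\infty_c(\mathcal X)$ the solution $u=P^{-1}f\in L^2(\mathcal X)$ is smooth by elliptic regularity and, again by (SSP)-type decay arguments (or directly from resolvent bounds on bounded geometry manifolds), decays subexponentially, hence lies in $\mathcal{V}^{p_1}_{\varphi_1}(\mathcal X)$ for every $p_1$ and every $\varphi_1\in\mathcal S(G)$; this simultaneously proves the ``all spaces coincide'' claim (a) of Theorem \ref{RRLbloch}. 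Thus $\Index P^{p_1}_{\varphi_1}=0$, and since $\Domp{(P^*)^{p_2}_{\varphi_2}}=C^\infty_c(\mathcal X)$ one can invoke Corollary \ref{RReqmu} to get the Riemann-Roch \emph{equality}
\[
\dim L_{p_1}(\mu,P,\varphi_1)-\dim L_{p_2}(\mu^{-1},P^*,\varphi_2)
=\Index P^{p_1}_{\varphi_1}+\deg_P(\mu)=\deg_P(\mu),
\]
which is the assertion. The one technical caveat to check carefully is that $(P^*)^{p_2}_{\varphi_2}$ is legitimately the ``dual operator'' to $P^{p_1}_{\varphi_1}$ in the sense of Subsection \ref{RRsetting} under the stated pairing hypotheses — i.e. that the pairings in $(\mathcal P3)$ make sense — which is exactly what the Hölder computation above secures.
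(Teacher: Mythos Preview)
Your treatment of Case 1 ($p_1^{-1}+p_2^{-1}\ge 1$, $\varphi_1\varphi_2$ bounded) is essentially the paper's argument. The genuine gap is your handling of Case 2. You write that ``the second hypothesis is handled by the symmetric application (putting $\varphi_1^{-1}\varphi_2^{-1}$ bounded and using the conjugate pairing)'', but this does not work: under the Case 2 hypotheses the bilinear pairing $\langle u,v\rangle$ for $u\in\mathcal V^{p_1}_{\varphi_1}(\mathcal X)$ and $v\in\mathcal V^{p_2}_{\varphi_2}(\mathcal X)$ need not even be defined. Take the basic instance $p_1=p_2=\infty$, $\varphi_1=\varphi_2=\varphi_0\equiv 1$; then $\varphi_1^{-1}\varphi_2^{-1}$ is trivially bounded, yet $u$ and $v$ are merely bounded on fundamental-domain translates, so $\sum_g\|u\|_{L^2(g\mathcal F)}\|v\|_{L^2(g\mathcal F)}$ is generally infinite. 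Thus axioms $(\mathcal P3)$--$(\mathcal P4)$ fail for the pair $\bigl(P^{p_1}_{\varphi_1},(P^*)^{p_2}_{\varphi_2}\bigr)$, and you cannot invoke Theorem \ref{RR} directly.

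The paper circumvents this by deducing Case 2 from Case 1 via a squeeze. Under the Case 2 hypotheses one has the inclusions
\[
L_{p_2'}(\mu,P,\varphi_2^{-1})\subseteq L_{p_1}(\mu,P,\varphi_1),\qquad
L_{p_1'}(\mu^{-1},P^*,\varphi_1^{-1})\subseteq L_{p_2}(\mu^{-1},P^*,\varphi_2),
\]
and the pairs $(p_1,\varphi_1;\,p_1',\varphi_1^{-1})$ and $(p_2',\varphi_2^{-1};\,p_2,\varphi_2)$ each satisfy the Case 1 hypotheses. Chaining the two Case 1 equalities with these inclusions forces all inequalities to be equalities, yielding the formula in Case 2 without ever needing $(\mathcal P4)$ for the original pair.

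A secondary point: for surjectivity you assert that $u=P^{-1}f$ ``decays subexponentially, hence lies in $\mathcal V^{p_1}_{\varphi_1}(\mathcal X)$ for every $p_1$ and every $\varphi_1\in\mathcal S(G)$''. Subexponential decay alone is not enough to guarantee $\ell^{p_1}$-summability against an arbitrary subexponential weight on a group of subexponential (possibly intermediate) growth. What is actually needed, and what the paper uses, is the \emph{exponential} off-diagonal decay of the Green's kernel from \cite[Theorem 2.3]{Shubin_spectral}, which gives $\|u\|_{H^m(g\mathcal F)}\lesssim e^{-c|g|}$ and hence beats any $\varphi_1\in\mathcal S(G)$.
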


\textbf{Proof of Lemma \ref{RRblochschnol}.}
We follow the strategy of the proof of Theorem \ref{RRLineq}.
As in Definition \ref{apmn}, for each $s \in \mathbb{R}$, $\varphi \in \mathcal{S}(G)$ and $p \in [1,\infty]$, let us introduce the following space
\beq
\mathcal{V}^{p}_{s, \varphi}(\mathcal{X}):=\{u \in C^{\infty}(\mathcal{X}) \mid \{\|u\|_{H^s(g\cF)}\cdot \varphi(g)\}_{g \in G} \in \ell^p(G)\}.
\eeq
and we denote by $P^{p}_{m, \varphi}$ the operator $P$ with the domain
\beq
\Dom P^{p}_{m, \varphi}:=\{u \in \mathcal{V}^{p}_{m, \varphi}(\mathcal{X}) \mid Pu \in C^{\infty}_c(\mathcal{X})\}.
\eeq
For the elliptic differential operator $P^*$, we also use the corresponding notations $(P^*)^{p}_{m, \varphi}$ and $\Dom (P^*)^{p}_{m, \varphi}$.

Now let us fix a pair of two real numbers $(p_1, p_2)$ and a pair of two functions $(\varphi_1, \varphi_2)$ satisfying the conditions of Lemma \ref{RRblochschnol}.
From now on, we will consider the operator $P^{p_1}_{m,\varphi_1}$ and its ``adjoint''  $(P^{p_1}_{m, \varphi_1})^*:=(P^*)^{p_2}_{m, \varphi_2}$.
As before, we define $\Domp P^{p_1}_{m, \varphi_1}=\Domp (P^*)^{p_2}_{m, \varphi_2}=C^{\infty}_c(\mathcal{X})$.

Our goal is to verify the assumptions of Theorem \ref{RR} for the operator $P^{p_1}_{m,\varphi_1}$ and its adjoint $(P^*)^{p_2}_{m,\varphi_2}$. The proof also goes through four steps as in Theorem \ref{RRLineq}. We consider two cases.

\textbf{Case 1.}
$p_1^{-1}+p_2^{-1} \geq 1, \varphi_1 \varphi_2 \lesssim 1$.

The proof of Step 1 stays exactly the same as before (see Remark \ref{schauderuniform}).

For Step 2, the first three properties $(\mathcal{P}1)-(\mathcal{P}3)$ are immediate. For the property $(\mathcal{P}4)$, we want to show that whenever $u \in \Dom P^{p_1}_{m, \varphi_1}$ and $v \in \Dom (P^*)^{p_2}_{m, \varphi_2}$, then
\beq
\label{P4schnol}
\langle Pu, v\rangle=\langle u, P^*v\rangle.
\eeq
Because $P$ and $P^*$ are $C^{\infty}$-bounded, we can repeat the approximation procedure and obtain similar estimates from the proof of Theorem \ref{RRLineq} for showing the identity \mref{P4schnol} whenever $u \in \Dom P^{p_1}_{m, \varphi_1}$ and $v \in \Dom (P^*)^{p_1'}_{m, \varphi_1^{-1}}$.
On the other hand, $\mathcal{V}^{p_2}_{m, \varphi_2}(\mathcal{X}) \subseteq \mathcal{V}^{p_1'}_{m, \varphi_1^{-1}}(\mathcal{X})$ and hence, $\Dom (P^*)^{p_2}_{m, \varphi_2} \subseteq \Dom (P^*)^{p_1'}_{m, \varphi_1^{-1}}$. With this inclusion, it is enough to conclude the property $(\mathcal{P}4)$ in this case, which finishes Step 2.

For Step 3, first, it is clear that the kernels of $P^{p_1}_{m,\varphi_1}$ and $(P^*)^{p_2}_{m,\varphi_2}$ are both trivial since $P$ and $P^*$ satisfy (SSP) (see Theorem \ref{SSP}).
So the rest is to verify the Fredholm property of both operators $P^{p_1}_{m,\varphi_1}$ and $(P^*)^{p_2}_{m,\varphi_2}$, i.e., to prove that
\beq
\label{imageschnol}
\Image P^{p_1}_{m,\varphi_1}=\Image (P^*)^{p_2}_{m,\varphi_2}=C^{\infty}_c(\mathcal{X}).
\eeq
Let us prove \mref{imageschnol} for the operator $P^{p_1}_{m,\varphi_1}$ since the other identity is proved similarly.
We denote by $G_P(x,y)$ the Green's function of $P$ at the level $\lambda=0$, i.e., $G_p(x,y)$ is the Schwartz kernel of the resolvent operator $P^{-1}$.
It is known that $G_P(x,y)\in C^{\infty}(\mathcal{X} \times \mathcal{X} \setminus \Delta)$, where $\Delta=\{(x,x) \mid x \in \mathcal{X}\}$. Moreover, all of its derivatives have exponential decay off the diagonal (see \cite[Theorem 2.2]{Shubin_spectral}).
However, it is more convenient for us to use its $L^2$-norm version, i.e., \cite[Theorem 2.3]{Shubin_spectral}: there exists $\varepsilon>0$ such that for every $\delta>0$ and every multi-indices $\alpha, \beta$, one can find a constant $C_{\alpha\beta\delta}>0$ such that
\beq
\label{expdecay}
\int\limits_{x: d_{\mathcal{X}}(x,y) \geq \delta}|\partial^{\alpha}_x\partial^{\beta}_y G_P(x,y)|^2 \exp{(\varepsilon d_{\mathcal{X}}(x,y))}\di\mu_{\mathcal{X}}(x) \leq C_{\alpha \beta \delta}.
\eeq
Here the derivatives $\partial^{\alpha}_x, \partial^{\beta}_y$ are taken with respect to canonical coordinates and the constants $C_{\alpha \beta \delta}$ do not depend on the choice of such canonical coordinates. Note that these estimates \mref{expdecay} still work in the case of exponential growth of the volume of the balls on $\mathcal{X}$.
Let $f \in C^{\infty}_c(\mathcal{X})$ and $K$ be its compact support in $\mathcal{X}$.
We introduce
\beq
u(x):=P^{-1}f(x)=\int_{\mathcal{X}}G_P(x,y)f(y)\di\mu_{\mathcal{X}}(y),
\eeq
where $\mu_{\mathcal{X}}$ is the Riemannian measure on $\mathcal{X}$.
Thus $u \in L^2(\mathcal{X})$, since $P^{-1}$ is a bounded operator on $L^2(\mathcal{X})$.
It is clear that $u$ is a weak solution of the equation $Pu=f$, and hence, by regularity, $u$ is a smooth solution.
We only need to prove that $u \in \mathcal{V}^{1}_{m, \varphi_1}(\mathcal{X})\subseteq \mathcal{V}^{p_1}_{m, \varphi_1}(\mathcal{X})$.
Let us consider any $g$ in $G_{\bar{\cF}, K}:=\{g \in G \mid \dist{(g\bar{\cF},K)}>1\}$. Since $\mathcal{X}$ is quasi-isometric to the metric space $(G,d_S)$ via the orbit action by the \v{S}varc-Milnor lemma, it is not hard to see that there are constants $C_1, C_2>0$ such that for every $g \in G_{\bar{\cF}, K}$ and every $(x,y) \in g\cF \times K$, one has
\beq
\label{quasi-iso}
2C_1|g|-C_2 \leq d_{\mathcal{X}}(x,y) \leq (2C_1)^{-1}|g|+C_2.
\eeq
Taking $\delta=1$, we can find $\varepsilon>0$ so that the decay estimates \mref{expdecay} are satisfied. Now using H\"older's inequality, \mref{expdecay}, and \mref{quasi-iso}, we derive
\beq
\label{expdecay2}
\begin{split}
&\|u\|_{H^m(g\cF)} \lesssim \sup_{g \in G_{\bar{\cF}, K}}\max_{|\alpha| \leq m}\left(\int_{g\cF}\left|\int_{K} |\partial^{\alpha}_x G_P(x,y)|\cdot |f(y)|\di\mu_{\mathcal{X}}(y)\right|^2\di{\mu_{\mathcal{X}}(x)}\right)^{1/2}\\
& \lesssim \|f\|_{L^2(\mathcal{X})}\cdot \max_{|\alpha| \leq m}\left(\int_{g\cF}\int_{K} |\partial^{\alpha}_x G_P(x,y)|^2\di\mu_{\mathcal{X}}(y)\di{\mu_{\mathcal{X}}(x)}\right)^{1/2}\\
& \lesssim \|f\|_{L^2(\mathcal{X})}\cdot  \exp{(-2C_1\varepsilon |g|})\max_{|\alpha| \leq m} \sup_{y \in K}\left(\int_{g\cF} |\partial^{\alpha}_x G_P(x,y)|^2 \exp{(\varepsilon d_{\mathcal{X}}(x,y))}\di{\mu_{\mathcal{X}}(x)})\right)^{1/2}\\
&\lesssim \|f\|_{L^2(\mathcal{X})}\cdot \exp{(-2C_1\varepsilon |g|})\lesssim \|f\|_{L^2(\mathcal{X})}\cdot \varphi_1(g) \cdot \exp{(-C_1\varepsilon |g|}).
\end{split}
\eeq
Note that the above estimates hold up to multiplicative constants, which are uniform with respect to $g \in G_{\bar{\cF}, K}$.
Therefore, $u$ belongs to $\mathcal{V}^{p_1}_{m,\varphi_1}(\mathcal{X})$, which proves \mref{imageschnol}. In particular, the Fredholm indices of the operators $P^{p_1}_{m, \varphi_1}$ and $(P^*)^{p_2}_{m, \varphi_2}$ vanish. Now we are able to apply Theorem \ref{RR} to finish the proof of Lemma \ref{RRblochschnol} in this case.
\hspace{3pt}

\textbf{Case 2.}
$p_1^{-1}+p_2^{-1} \leq 1, \varphi_1^{-1} \varphi_2^{-1} \lesssim 1$. Consider a rigged divisor $\mu$ on $\mathcal{X}$.
By assumptions, $L_{p_2'}(\mu, P, \varphi_2^{-1}) \subseteq L_{p_1}(\mu, P, \varphi_1)$ and
\beq
L_{p_1'}(\mu^{-1}, P^*, \varphi_1^{-1}) \subseteq L_{p_2}(\mu^{-1}, P^*, \varphi_2).
\eeq
From these inclusions and Case 1, we get
\beq
\label{RRcase2}
\begin{split}
\dim L_{p_1}(\mu, P, \varphi_1)&=\deg_P(\mu)+\dim L_{p_1'}(\mu^{-1}, P^*, \varphi_1^{-1}) \leq \deg_P(\mu)+L_{p_2}(\mu^{-1}, P^*, \varphi_2)\\
&=\dim L_{p_2'}(\mu, P, \varphi_2^{-1}) \leq \dim L_{p_1}(\mu, P, \varphi_1).
\end{split}
\eeq
Since all of the above inequalities must become equalities, this yields the corresponding Riemann-Roch formula in this case. \textbf{This finishes the proof of Lemma  \ref{RRblochschnol}
}

We now use this Lemma to prove all of the required statements.

First, one can get the identity in the second statement of Theorem \ref{RRLbloch} by taking $p_1=p_2=\infty$ and $\varphi_1=\varphi_2=\varphi_0$ in Lemma \ref{RRblochschnol}. Also, due to Theorem \ref{SSP}, there is no non-zero solution of $P^*$ with subexponential growth. This implies that if $\mu^{-1}=(\emptyset, 0; D^+, L^+)$, the space $L_{\infty}(\mu^{-1}, P^*, \varphi_0)$ is trivial. Thus, the third statement follows immediately from the second statement. For the first statement, let us consider $p \in [1,\infty]$ and a function $\varphi \in \mathcal{S}(G)$.
Now from Lemma \ref{RRblochschnol} and the second statement, one gets:
\begin{itemize}
\item
If $\varphi$ is bounded,
\beq
\label{RRblocheq1}
\dim L_{1}(\mu, P, \varphi)= \deg_{P}(\mu)+\dim L_{\infty}(\mu^{-1}, P^*, \varphi_0)=\dim L_{\infty}(\mu, P, \varphi_0).
\eeq
\item
If $\varphi^{-1}$ is bounded and $1 \leq p \leq \infty$,
\beq
\label{RRblocheq2}
\dim L_{p}(\mu, P, \varphi)= \deg_{P}(\mu)+\dim L_{\infty}(\mu^{-1}, P^*, \varphi_0)=\dim L_{\infty}(\mu, P, \varphi_0).
\eeq
\end{itemize}
We consider three cases.
\begin{enumerate}[\mbox{Case} 1.]
\item
If $\varphi$ is bounded, the two spaces $L_{1}(\mu, P, \varphi)$ and $L_{\infty}(\mu, P, \varphi_0)$ are the same since their dimensions are equal to each other by \mref{RRblocheq1}. Moreover,
\beq
L_{1}(\mu, P, \varphi) \subseteq L_{p}(\mu, P, \varphi) \subseteq L_{p}(\mu, P, \varphi_0) \subseteq L_{\infty}(\mu, P, \varphi_0).
\eeq
This means that all these spaces are the same.
\item
 If $\varphi^{-1}$ is bounded, $L_{\infty}(\mu, P, \varphi_0) \subseteq L_{\infty}(\mu, P, \varphi)$. Using \mref{RRblocheq2} with $p=\infty$, we have
 $L_{\infty}(\mu, P, \varphi_0)=L_{\infty}(\mu, P, \varphi)$.
 Moreover, \mref{RRblocheq2} also yields that all the spaces $L_{p}(\mu, P, \varphi)$, where $1 \leq p \leq \infty$, must have the same dimension and therefore, they are the same space, which coincides with $L_{\infty}(\mu, P, \varphi_0)$.
\item
If neither $\varphi$ nor $\varphi^{-1}$ is bounded, we can consider the function $\phi:=\varphi+\varphi^{-1}$. Clearly, $\phi$ is in $\mathcal{S}(G)$ and
$\phi \geq 2$.
Then according to Case 1 and Case 2,
\beq
L_p(\mu, P, \phi)=L_{\infty}(\mu, P, \varphi_0)=L_p(\mu, P, \phi^{-1}).
\eeq
Also,
\beq
L_p(\mu, P, \phi^{-1}) \subseteq L_p(\mu, P, \varphi) \subseteq L_p(\mu, P, \phi),
\eeq
since $\phi^{-1} \leq \varphi \leq \phi$.
This means that $L_p(\mu, P, \varphi)=L_{\infty}(\mu, P, \varphi_0)$.
\end{enumerate}
\qed

We can now prove the Corollary \ref{subexp-solvability}.

As in the proof of Corollary \ref{solvability}, the equivalence of the first three statements is an easy consequence of Theorem \ref{RRLbloch} and Remark \ref{RRfredholm}. It is obvious that (iv) implies (iii). To see the converse, one can repeat the argument in the proof of Lemma \ref{RRblochschnol} to show that the solution $u=P^{-1}f$ has exponential decay due to \mref{expdecay}. By the unique solvability of the equation $Pu=f$ in $L^2(\mathcal{X})$, (iii) implies (iv).
\qed

\chapter[Examples]{Specific examples of Liouville-Riemann-Roch theorems}
\label{C:applications-LRR}

In this Chapter, we look at some examples of applications of the results of Chapter \ref{C:main}.

\section{Self-adjoint operators}
Let $A$ is a bounded from below self-adjoint periodic elliptic operator of order $m$ on an abelian co-compact covering $X$. We start with a brief reminder of some notions from Chapter \ref{C:preliminaries-LRR}.

For any real quasimomentum $k$, the operator $A(k)$ is self-adjoint, and its spectrum is discrete, consisting of real eigenvalues of finite multiplicities, which can be listed in non-decreasing order as
\begin{equation}
\label{eigenv}
\lambda_{1}(k) \leq \lambda_{2}(k) \leq \ldots \nearrow \infty.
\end{equation}
For each $j \in \mathbb{N}$, the function $k \mapsto \lambda_{j}(k)$ is called the \textbf{$j^{th}$ band function}. It is known (see e.g., \cite{Wilcox}) that band functions $\lambda_j$ are continuous, $G^*$-periodic and piecewise analytic in $k$. It is more convenient to consider the band functions as functions on the torus $\mathbb{T}^d$.

The range $I_j$ of the $j^{th}$ band function is called the \textbf{$j^{th}$-band}.
The bands $I_j$ can touch or overlap, but sometimes they may leave an open gap.
According to Theorem \ref{specA},
\beq
\sigma(A)=\bigcup\limits_{j}I_j.
\eeq
Therefore, the spectrum of a self-adjoint periodic elliptic operator $A$ has a \textbf{band-gap structure}. An endpoint of a spectral gap is called a \textbf{gap edge} (or a \textbf{spectral edge}).
\begin{figure}[H]
\begin{center}
\begin{tikzpicture}[xscale=1,yscale=1]
\draw[thick, ->] (0,0) -- (8,0) node[below] {$k$};
\draw[thick, ->] (0,0) -- (0,6.5) node[left] {$\lambda$};
\draw[ultra thick, red] (0, 0.6) -- (0, 2.5);
\draw[ultra thick, red] (0, 4) -- (0, 6);
\draw[thick, ->] (-1.65, 3) node[left] {$\sigma(A)$} -- (-0.1, 5);
\draw[thick, ->] (-1.65, 3) -- (-0.1, 1.5);
\draw[dashed, thick] (0, 6) -- (8, 6);
\draw[dashed, thick] (0, 4) -- (8, 4);
\draw[dashed, thick] (0, 4.85) -- (2, 4.85);
\draw[very thick] (0, 4) .. controls (3,5.25) .. (8, 6) node[right] {$\lambda_3$};
\draw[very thick] (0,6) .. controls (3,4) and (5,4) .. (8,4.05) node[right] {$\lambda_2$};
\draw (0.25, 4.1) node[above] {$I_2$};
\draw (0.25, 5) node[above] {$I_3$};
\draw[dashed, thick] (0,2.5) -- (4, 2.5);
\draw[thick, decorate,decoration={brace,amplitude=6pt},xshift=0pt,yshift=0pt] (-0.07,2.55) -- (-0.07,3.95);
\draw (-0.7,3.5) node [below]{$\text{Gap}$};
\draw[very thick] (0,0.6) to[out=30,in=180] (4,2.48);
\draw[very thick] (4,2.48) to[out=0,in=160] (8,1) node[right] {$\lambda_1$};
\draw (0.25, 1.4) node[above] {$I_1$};
\draw[fill, blue] (0,2.5) circle (2pt);
\draw[fill, blue] (0,4) circle (2pt);
\draw[thick, ->] (1.5, 3.2) node[right] {$\text{Gap Edges}$} -- (0.05, 3.9);
\draw[thick, ->] (1.5, 3.2) -- (0.06, 2.6);
\end{tikzpicture}
\caption{An example of $\sigma(A)$.}
\end{center}
\end{figure}
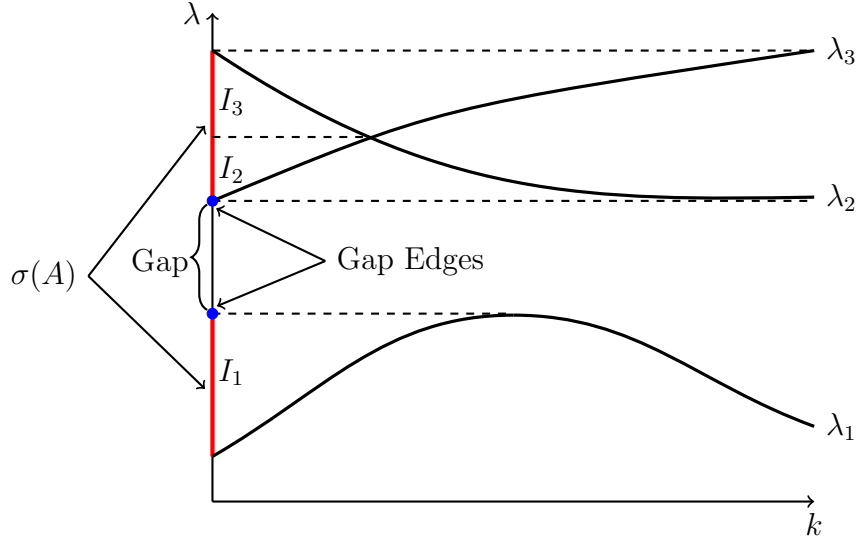
To apply the results from Section \ref{C:main}, we will reformulate \textit{Assumption $\mathcal{A}$} from Section \ref{C:main}. For the relevant notations the reader is referred to Chapter \ref{C:preliminaries-LRR}.

\textit{Assumption $\mathcal{A}\cprime$}

\textit{Suppose that the Fermi surface $F_{A,\mathbb{R}}$ is finite and consists of the points $\{k_1, \cdots, k_{\ell}\}$ (modulo $G^*$-shifts). Let
\beq
\{\lambda_{r,j}\}_{j=\overline{1,m_{r}}}
\eeq
be the set of dispersion branches that are equal to $0$ at the quasimomentum $k_{r}$ $(1 \leq r \leq \ell)$.
There exists a family of \textit{pairwise disjoint} neighborhoods $V_{r}$ of $k_{r}$ such that the function
\beq
k \in V_{r} \mapsto \max_{1 \leq j \leq m_{r}}|\lambda_{r,j}(k)|^{-1}
\eeq
is $L^1$-integrable.}\footnote{Note that for each $k \in V_{r} \setminus \{k_{r}\}$, $\lambda_{r,j}(k) \neq 0$ since $V_{r} \cap F_{A, \mathbb{R}}=\{k_{r}\}$.}

Clearly, in the self-adjoint case the \textit{Assumption $\mathcal{A}$}  and \textit{Assumption $\mathcal{A}\cprime$} are equivalent.

\begin{notation}
Given a natural number $N$,
\begin{itemize}
\label{binom}
\item
we denote by $h_{d,N}$ the dimension of the space of all harmonic polynomials of order at most $N$ in $d$-variables, i.e.,
\beq
h_{d,N}:=\dim V^{\infty}_N(-\Delta_{\mathbb{R}^d})=\binom{d+N}{d}-\binom{d+N-2}{d};
\eeq
\item
we also denote by $c_{d,N}$ the dimension of the space of all homogeneous polynomials of degree $N$ in $d$ variables, i.e.,
\beq
c_{d,N}:=\binom{d+N}{d}-\binom{d+N-1}{d}=\binom{d+N-1}{N}.
\eeq
\end{itemize}
\end{notation}

\subsection[Non-degenerate edges]{Periodic operators with non-degenerate spectral edges}

Let $\lambda$ be an energy level that coincides with one of the gap edges in the spectrum of $A$.
By shifting the spectrum, we can assume that $\lambda=0$.

As it is discussed in Chapter \ref{C:preliminaries-LRR}, one can expect that the Fermi surfaces of $A$ at the spectral edges normally are finite, and hence Liouville type results are applicable in these situations.
We make the following assumption.

\textit{Assumption $\mathcal{B}$.}
There exists a band function $\lambda_j(k)$ such that for each quasimomentum $k_r$ in the real Fermi surface $F_{A,\mathbb{R}}$, one has
\begin{enumerate}[($\mathcal{B}$1)]
\item
$0$ is a simple eigenvalue of the operator $A(k_r)$.
\item
The Hessian matrix $\Hess{\lambda_j}(k_0)$ is non-degenerate.
\end{enumerate}

As it has been mentioned before, it is commonly believed in mathematics and physics literature (see e.g, \cite{Ksurvey}) that generically (with respect to the coefficients and other free parameters of the operator), extrema of band functions for second order operators of mathematical physics are isolated, attained by a single band and have non-degenerate Hessians, and thus satisfy the above assumption.

Suppose now that the free abelian rank $d$ of the deck group $G$ is greater than $2$. The non-degeneracy assumption ($\mathcal{B}2$) implies the integrability of function $|\lambda_j(k)|^{-1}$ over a small neighborhood of $F_{A, \mathbb{R}}$.
Hence, \textit{Assumption $\mathcal{A}$} follows from \textit{Assumption $\mathcal{B}$}.

Due to Theorem \ref{LiouvilleDim}, the dimension of the space $V^{\infty}_N(A)$ is equal to $\ell h_{d,[N]}$ (see Notation \ref{binom}).
Applying the results in Section \ref{C:main}, we obtain the following results for a `generic' self-adjoint second-order periodic elliptic operator $A$:
\begin{thm}
\label{generic2nd}
Suppose $d \geq 3$ and $N \in \mathbb{R}$. Let $\mu=(D^+, L^+; D^-, L^-)$ be a rigged divisor on $X$ and, as before, $\mu^+:=(D^+, L^+; \emptyset, 0)$.
\label{generic}
\begin{enumerate}[a.]
\item  \label{generic1}
If $N \geq 0$, then
$$\ell h_{d,[N]}+\deg_{A}(\mu)+\dim L_1(\mu^{-1}, A^*, -N) \leq \dim L_{\infty}(\mu, A, N) \leq \ell h_{d,[N]}+\deg_{A}(\mu^+),$$
and
\beq
\dim L_{\infty}(\mu^+, A, N)=\ell h_{d,[N]}+\deg_{A}(\mu^+).
\eeq
\item \label{generic2}
If $p \in [1, \infty)$ and $N>d/p$, then
$$\ell h_{d, \floor{N-d/p}}+\deg_{A}(\mu)+\dim L_{p'}(\mu^{-1}, A^*, -N) \leq \dim L_{p}(\mu, A, N) \leq \ell h_{d,\floor{N-d/p}}+\deg_{A}(\mu^+),$$
and
\beq
\dim L_{p}(\mu^+, A, N)=\ell h_{d,\floor{N-d/p}} +\deg_{A}(\mu^+).
\eeq
\item  \label{generic4}
For $d \geq 5$, the inequality
\beq
\deg_{A}(\mu)+\dim L_{p'}(\mu^{-1}, A^*, -N) \leq \dim L_{p}(\mu, A, N) \leq \deg_{A}(\mu^+)
\eeq
holds, assuming one of the following two conditions:
\begin{itemize}
\item
$p\in [1,2)$, $N \in (d(2-p)/(2p), d/p]$.
\item
$p \in [2,\infty)$, $N \in [0,d/p]$.
\end{itemize}
\end{enumerate}
\end{thm}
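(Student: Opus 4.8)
The plan is to verify that under Assumption $\mathcal{B}$ (with the extra dimensional hypotheses) the operator $A$ falls into the scope of the general Liouville--Riemann--Roch machinery of Section \ref{C:main}, and then to simply read off the three parts from Theorems \ref{RRLineq}, \ref{RRLeq}, \ref{improve} together with the explicit dimension count from Theorem \ref{LiouvilleDim}. First I would record the reductions already indicated in the excerpt: since $A$ is self-adjoint, Assumption $(\mathcal{A}1)$ holds automatically (Remark \ref{assumptionA}(i) via Proposition \ref{realsymbol}); since $0$ is a simple eigenvalue of each $A(k_r)$ by $(\mathcal{B}1)$, the matrix $A_r(k)$ is the scalar band function $\lambda_j(k)$ near $k_r$, so $\|A_r(k)^{-1}\|_{\mathcal{L}(\mathbb{C}^{m_r})}=|\lambda_j(k)|^{-1}$ and the $(\mathcal{A}2)$-integrability becomes precisely local $L^1$-integrability of $|\lambda_j(k)|^{-1}$ near each $k_r$. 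By $(\mathcal{B}2)$ the band function has a non-degenerate critical point at $k_r$, hence $\lambda_j(k)\asymp |k-k_r|^2$ near $k_r$; then $\int_{|k-k_r|\le\varepsilon}|k-k_r|^{-2}\,dk<\infty$ exactly when $d\ge 3$, which is where the hypothesis $d\ge 3$ in parts a.\ and b.\ is used. Similarly, $\|A_r(k)^{-1}\|^2\asymp|k-k_r|^{-4}$, and $\int_{|k-k_r|\le\varepsilon}|k-k_r|^{-4}\,dk<\infty$ exactly when $d\ge 5$, which is the strengthened condition \mref{E:strength} needed for part c.

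Next I would identify $\dim V^p_N(A)$. Since $0$ is a simple eigenvalue at each of the $\ell$ points $k_r$ and $\ell_0(k_r)=2$ by non-degeneracy of the Hessian, Theorem \ref{LiouvilleDim} a.\ gives, for $0\le N<2$, $\dim V^\infty_N(A)=\sum_{r=1}^\ell[\binom{d+N}{d}-\binom{d+N-2}{d}]=\ell h_{d,N}$; and because $\det\lambda_{\ell_0(k_r)}$ is the (non-vanishing, by non-degeneracy) determinant of the Hessian quadratic form, part b.\ of Theorem \ref{LiouvilleDim} extends this to all $N\ge 0$, so $\dim V^\infty_N(A)=\ell h_{d,[N]}$. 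For $p<\infty$ with $pN>d$, Theorem \ref{dimvp} gives $V^p_N(A)=V^\infty_{\floor{N-d/p}}(A)$, hence $\dim V^p_N(A)=\ell h_{d,\floor{N-d/p}}$; for the range $pN\le d$ relevant to part c., Theorem \ref{UCinfty} gives $\dim V^p_N(A)=0$. Substituting these into the conclusions of Theorem \ref{RRLineq} (lower bound), Theorem \ref{RRLeq} (the equality for $\mu^+$ and the upper bound $\dim L_p(\mu,A,N)\le \dim V^p_N(A)+\deg_A(\mu^+)$, using triviality of $L_{p'}((\mu^+)^{-1},A^*,-N)$), and Theorem \ref{improve} (which handles $p\ge 2,\ N\ge 0$ and $p\in[1,2),\ 2pN>(2-p)d$ under \mref{E:strength}) produces parts a., b., c.\ verbatim; one only needs to double-check that the stated ranges of $(p,N)$ match the hypotheses of those theorems (e.g.\ $N>d/p$ in part b.\ is exactly the hypothesis of Theorem \ref{RRLineq} for finite $p$, and $N\in(d(2-p)/(2p),d/p]$ rewrites as $2pN>(2-p)d$ with $pN\le d$, matching Theorem \ref{improve} plus the vanishing of $V^p_N(A)$).

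The only genuinely substantive step is the asymptotic estimate $\lambda_j(k)\asymp |k-k_r|^2$ near $k_r$ and the resulting integrability dichotomies in $d$; everything else is bookkeeping, combining results already proved. I expect the main (if modest) obstacle to be making the passage from ``non-degenerate Hessian'' to the two-sided bound on $\lambda_j$ rigorous despite $\lambda_j$ being only piecewise-analytic: near an isolated non-degenerate extremum attained by a single band, $\lambda_j$ is actually analytic (simple eigenvalue), so the Morse lemma or a direct Taylor expansion with the invertible Hessian gives $c|k-k_r|^2\le |\lambda_j(k)|\le C|k-k_r|^2$ on a small neighborhood, and then integrability in polar coordinates reduces to convergence of $\int_0^\varepsilon \rho^{-2}\rho^{d-1}\,d\rho$ (finite iff $d\ge 3$) and $\int_0^\varepsilon \rho^{-4}\rho^{d-1}\,d\rho$ (finite iff $d\ge 5$). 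Once this is in place, the theorem follows by direct citation of the Section \ref{C:main} results applied to $A$.
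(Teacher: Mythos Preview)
Your proposal is correct and follows essentially the same approach as the paper, which proves this theorem only implicitly via the discussion preceding the statement (verifying that Assumption~$\mathcal{B}$ with $d\ge 3$ implies Assumption~$\mathcal{A}$, computing $\dim V^\infty_N(A)=\ell h_{d,[N]}$ from Theorem~\ref{LiouvilleDim}, and then invoking the results of Chapter~\ref{C:main}). Your write-up is in fact more detailed than the paper's; the one point worth making explicit is that for part~c.\ the upper bound and the $\mu^+$-equality are not literally stated in Theorem~\ref{improve}, but follow because the proof of Theorem~\ref{improve} establishes $\operatorname{Im}A^p_{m,N}=C^\infty_c(X)$ under \mref{E:strength}, so Corollary~\ref{RReqmu} (and hence the analogue of Theorem~\ref{RRLeq}) applies in that regime as well.
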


\begin{example}
\end{example}
\begin{enumerate}
\item
Let $A=A^*=-\Delta+V$ be a periodic Schr\"odinger operator with real-valued electric potential $V$ on a co-compact abelian cover $X$. The domain of $A$ is the Sobolev space $H^2(X)$, and thus, $A$ is self-adjoint. Without loss of generality, we can suppose that $0$ is the bottom of its spectrum. It is well-known \cite{KS} that \textit{Assumption $\mathcal{B}$} holds in this situation. Hence, all the conclusions of Theorem \ref{generic2nd} hold with $\ell=1$, since $F_{A, \mathbb{R}}=\{0\}$ (modulo $2\pi\bZ^d$-shifts).
\item
In \cite{BS_2001, BS_2003}, a deep analysis of the dispersion curves at the bottom of the spectrum was developed for a wide class of periodic elliptic operators of second-order on $\mathbb{R}^d$. Namely, let $\Gamma$ be a lattice in $\mathbb{R}^d$ and $\Gamma^*$ be its dual lattice, then these operators admit the following \textit{regular factorization}:
$$A=\overline{f(x)}b(D)^*g(x)b(D)f(x),$$
where $b(D)=\sum\limits_{j=1}^d -i\partial_{x_j}b_j: L^2(\mathbb{R}^d, \mathbb{C}^n) \rightarrow L^2(\mathbb{R}^d, \mathbb{C}^m)$ is a linear homogeneous differential operator whose coefficients $b_j$ are constant $m \times n$-matrices of rank $n$ (here $m \geq n$),  $f$ is a $\Gamma$-periodic and invertible $n \times n$ matrix function such that $f$ and $f^{-1}$ are in $L^{\infty}(\mathbb{R}^d)$, and $g$ is a $\Gamma$-periodic and positive definite $m \times m$-matrix function such that for some constants $0<c_0 \leq c_1$, $c_0 \textbf{1}_{m \times m} \leq g(x) \leq c_1 \textbf{1}_{m \times m},  x \in \mathbb{R}^d$. The existence of this factorization implies that the first band function attains a simple and nondegenerate minimum with value $0$ at \textit{only} the quasimomentum $k=0$ (modulo $\Gamma^*$-shifts).

This covers the previous example since, it was noted from \cite{BS_2001} that the periodic Schr\"odinger operator $D(g(x)D)+V(x)$ with a periodic metric $g(x)$ and a periodic potential $V(x)$ can be reformulated properly to admit a regular factorization.
\item
Consider a self-adjoint periodic magnetic Schr\"odinger operator in $\mathbb{R}^n$ ($n>2$)
$$H=(-i\nabla+A(x))^2+V(x),$$
where $A(x)$ and $V(x)$ are real-valued periodic magnetic and electric potentials, respectively.
Using a gauge transformation, we can always assume w.l.o.g. the following normalized condition on $A$:
$$\int\limits_{\mathbb{T}^n}A(x)\di{x}=0.$$
Note that the transpose of $H$ is the magnetic Schr\"odinger operator
$$H^*=(-i\nabla-A(x))^2+V(x).$$
From the discussion of \cite{KP2} (see also \cite[Theorem 3.1.7]{Kbook}), the lowest band function of $H$ has a unique nondegenerate extremum at a single quasimomentum $k_0$ (modulo $G^*$-shifts) if the magnetic potential $A$ is small enough, e.g., $\|A\|_{L^r(\mathbb{T}^n)}\ll 1$ or some $r>n$. Thus, we obtain the same conclusion as the case without magnetic potential. It is crucial that one has to assume the smallness of the magnetic potential since there are examples \cite{Shterenberg} showing that the bottom of the spectrum can be degenerate if the magnetic potential is large enough.
\end{enumerate}
We end this part by providing an illustration of Corollary \ref{uppersemicont} for families of periodic elliptic operators with non-degenerate spectral edges.
\begin{cor}
Let $A_0$ be a periodic elliptic operator on a co-compact abelian covering $X$ with the deck group $G=\mathbb{Z}^d$, where $d \geq 5$, such that its real Fermi surface $F_{A_0,\mathbb{R}}$ consists of finitely many simple and non-degenerate minima of the $j^{th}$-band function of $A_0$ ($j \geq 1$).
Let $B$ be a symmetric and periodic differential operator on $X$ such that $B$ is $A_0$-bounded. We consider the perturbation $A_z=A_0+zB$, $z \in \mathbb{R}$.
By standard perturbation theory, there exists a continuous function $\lambda(z,k)$ defined for small $z$ and all quasimomenta $k$ such that $k \mapsto \lambda(z,k)$ is the $j^{th}$ band function of $A_z$ and $\lambda_j(z,k)$ is analytic in $z$. Let $\lambda_z$ be the minimum value of the band function $\lambda(z,k)$ of the perturbation $A_z$.
Then for any rigged divisor $\mu$, there exists $\varepsilon>0$ such that
\begin{equation*}
\dim L_2(\mu, A_z-\lambda_z, 0) \leq \dim L_2(\mu, A_0, 0),
\end{equation*}
for any $z$ satisfying $|z|<\varepsilon$.
\end{cor}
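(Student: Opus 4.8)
The plan is to deduce this from Corollary~\ref{uppersemicont}, applied to the shifted family $\widetilde{A}_z:=A_z-\lambda_z$ together with the constant family of divisors $\mu_z\equiv\mu$. Since, by hypothesis, $F_{A_0,\mathbb{R}}$ consists of minima (at the level $\lambda=0$) of the $j$-th band function $\lambda(0,\cdot)$, the value $0$ is attained as $\lambda_0=\min_{k\in\mathbb{T}^d}\lambda(0,k)$, so $\widetilde A_0=A_0$ and $\dim L_2(\mu,\widetilde A_0,0)=\dim L_2(\mu,A_0,0)$. Thus the asserted inequality is exactly upper-semicontinuity of $z\mapsto\dim L_2(\mu,\widetilde A_z,0)$ at $z=0$, and it suffices to check that the families $\{\widetilde A_z\}_z$ and $\{\mu_z\}_z\equiv\{\mu\}$ fulfil the hypotheses of Corollary~\ref{uppersemicont} for $z$ near $0$.

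First I would settle the continuity in $z$. The map $z\mapsto A_z=A_0+zB$ is affine, and since $B$ is symmetric, periodic and $A_0$-bounded, $A_z$ is a symmetric periodic elliptic operator of order $m$ for $|z|$ small; moreover $\lambda_z=\min_{k\in\mathbb{T}^d}\lambda(z,k)$ depends continuously on $z$ because $\lambda(z,k)$ is jointly continuous near $\{0\}\times\mathbb{T}^d$ and $\mathbb{T}^d$ is compact. Hence $z\mapsto\widetilde A_z=A_z-\lambda_z$ is a continuous family of symmetric periodic elliptic operators, and $\mu_z\equiv\mu$ is trivially a continuous family of rigged divisors.

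The main work is to verify that each $\widetilde A_z$ (for $|z|$ small) satisfies the strengthened Assumption $\mathcal{A}$, i.e. condition \eqref{E:strength}, which is what Theorem~\ref{improve} and hence Corollary~\ref{uppersemicont} require. For $z=0$ this holds by hypothesis together with $d\ge 5$ (see the discussion after \eqref{E:strength} and Theorem~\ref{generic2nd}). For small $z$ it should follow from the persistence of the ``generic spectral edge'' structure under perturbation: using compactness of $\mathbb{T}^d$ and simplicity of the eigenvalue $0$ of $A_0(k)$ at the finitely many points of $F_{A_0,\mathbb{R}}$, I would first fix $\delta>0$ and finitely many disjoint neighbourhoods of those points outside of which the relevant bands stay at distance $\ge\delta$ from $0$, while inside each neighbourhood $\lambda_j(0,\cdot)$ has a single non-degenerate critical point (a minimum) and $\sigma(A_0(k))\setminus\{\lambda_j(0,k)\}$ stays at distance $\ge\delta$ from $0$. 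Then, since $\sigma(A_z(k))$ is continuous in $(z,k)$ and $\lambda(z,k)$ is analytic in $z$, for $|z|$ small one gets: $\lambda_z$ is separated by $\ge\delta/2$ from $\sigma(A_z(k))\setminus\{\lambda(z,k)\}$ uniformly in $k$; by the implicit function theorem applied to $\nabla_k\lambda(z,k)=0$ with non-degenerate Hessian, the minima of $\lambda(z,\cdot)$ are finitely many, simple and non-degenerate; and $\lambda(z,k)>\lambda_z+c$ for $k$ outside the neighbourhoods. Consequently $F_{\widetilde A_z,\mathbb{R}}$ consists of finitely many simple non-degenerate minima $k_r(z)$ of $\lambda(z,\cdot)-\lambda_z$; each multiplicity $m_r$ equals $1$, so the reduced matrix $A_r(k)$ is the scalar $\lambda(z,k)-\lambda_z$ up to an invertible analytic factor, and near $k_r(z)$ one has $|\lambda(z,k)-\lambda_z|^{-2}\lesssim|k-k_r(z)|^{-4}$, which is locally integrable in $\mathbb{R}^d$ precisely because $d\ge 5$. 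Hence \eqref{E:strength} holds for $\widetilde A_z$.

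Once these hypotheses are in place for $z$ in a neighbourhood of $0$, Corollary~\ref{uppersemicont} gives that $z\mapsto\dim L_2(\mu,\widetilde A_z,0)$ is upper-semicontinuous, so there is $\varepsilon>0$ with $\dim L_2(\mu,\widetilde A_z,0)\le\dim L_2(\mu,\widetilde A_0,0)=\dim L_2(\mu,A_0,0)$ for $|z|<\varepsilon$; since $\widetilde A_z=A_z-\lambda_z$, this is the claim. The hard part is the uniform persistence statement in the third paragraph — controlling simultaneously, for small $z$, the location and non-degeneracy of the minima of the perturbed $j$-th band and the uniform spectral gap that isolates it from the rest of $\sigma(A_z(k))$ — but this is a routine application of analytic perturbation theory and compactness of $\mathbb{T}^d$.
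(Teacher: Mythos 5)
Your proposal is correct and follows exactly the route the paper intends: the corollary is stated there as a direct illustration of Corollary \ref{uppersemicont}, with the verification that the shifted family $A_z-\lambda_z$ continues to satisfy the strengthened Assumption $\mathcal{A}$ (finitely many simple non-degenerate minima, plus $d\ge 5$ for the square-integrability of $|\lambda(z,k)-\lambda_z|^{-1}$) left implicit. Your third paragraph supplies precisely that perturbation-theoretic verification, and it is sound (the only minor imprecision is that after perturbation the values at the various local minima may split, so $F_{\widetilde A_z,\mathbb{R}}$ is the subset of the $k_r(z)$ attaining the global minimum — which does not affect the argument).
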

The corresponding statement for non-degenerate maxima also holds.
\subsection{Periodic operators with Dirac points}
An important situation in solid state physics and material sciences is when two branches of the dispersion relation touch, forming a conical junction point, which is called a Dirac (or sometimes  ``diabolic'') point. Two-dimensional massless Dirac operators or $2D$-Schr\"odinger operators with honeycomb-symmetric potentials are typical models of periodic operators with conical structures \cite{ComBerk, FefWei, Gru, KPcarbon}. Presence of such a point is the reason of the miraculous properties of graphene and some other carbon allotropes.

Let us consider a self-adjoint periodic elliptic operator $A$ such that there are two branches $\lambda_+$ and $\lambda_-$ of the dispersion relation of $A$ that meet at $\lambda=0, k=k_r$,
forming a Dirac cone. Equivalently, we can assume that locally around each quasimomentum $k_r$ in the Fermi surface $F_{A, \mathbb{R}}$, for some $c_{r} \neq 0$, one has
\begin{equation*}
\begin{split}
\lambda_+(k)&=c_{r} |k-k_r|\cdot (1+O(|k-k_r|),\\
\lambda_-(k)&=-c_{r} |k-k_r|\cdot (1+O(|k-k_r|).
\end{split}
\end{equation*}
It is immediate to see that the functions $|\lambda_+|^{-1}$ and $|\lambda_-|^{-1}$ are integrable over a small neighborhood of $F_{A, \mathbb{R}}$ provided that $d>1$. Hence, we conclude:
\begin{thm}
Suppose $d \geq 2$.
Assume that the Fermi surface $F_{A, \mathbb{R}}$ consists of $\ell$ Dirac conical points. Then, in Notations \ref{binom}, as in Theorem \ref{generic}, we have
\label{generic}
\begin{enumerate}[a.]
\item
If $N \geq 0$, then
\beq
2\ell c_{d,[N]}+\deg_{A}(\mu)+\dim L_1(\mu^{-1}, A^*, -N) \leq \dim L_{\infty}(\mu, A, N) \leq 2\ell c_{d,[N]}+\deg_{A}(\mu^+)
\eeq
and
\beq
\dim L_{\infty}(\mu^+, A, N)=2\ell c_{d,[N]}+\deg_{A}(\mu^+).
\eeq
\item
If $p \in [1, \infty)$ and $N>d/p$, then
\beq
\begin{split}
2\ell c_{d, \floor{N-d/p}}+\deg_{A}(\mu)+\dim L_{p'}(\mu^{-1}, A^*, -N) \leq \dim L_{p}(\mu, A, N)\\ \leq 2\ell c_{d,\floor{N-d/p}}+\deg_{A}(\mu^+)
\end{split}
\eeq
and
\beq
\dim L_{p}(\mu^+, A, N)=2\ell c_{d,\floor{N-d/p}}+\deg_{A}(\mu^+).
\eeq
\item
If $d \geq 3$ and a pair $(p,N)$ satisfy the condition c.of Theorem \ref{generic}, the conclusion of Theorem \ref{generic} \mref{generic4} also holds.
\end{enumerate}
\end{thm}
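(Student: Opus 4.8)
The plan is to run exactly the same machinery used for the non-degenerate edge case (Theorem~\ref{generic2nd}), the only genuinely new ingredient being the local Liouville count $\dim V^p_N(A)$ at a conical point. I would organize the argument in three steps: (i) check that the Dirac cone hypothesis implies Assumption~$\mathcal{A}$, and its strengthening \mref{E:strength} when $d\ge 3$; (ii) compute $\dim V^p_N(A)$ via Theorems~\ref{LiouvilleDim} and~\ref{dimvp}; (iii) feed these values into Theorems~\ref{RRLineq}, \ref{RRLeq} and~\ref{improve}, just as in the proof of Theorem~\ref{generic2nd}.

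For step~(i): at each conical point $k_r$ the eigenvalue $0$ of $A(k_r)$ has algebraic multiplicity $m_r=2$ (the two touching branches), so the reduced matrix $A_r(k)$ from Assumption~$\mathcal{A}$ is $2\times 2$, and its two eigenvalues are $\lambda_\pm(k)=\pm c_r|k-k_r|\,(1+O(|k-k_r|))$. Hence $\|A_r(k)^{-1}\|_{\mathcal{L}(\mathbb{C}^2)}\asymp |k-k_r|^{-1}$ on $V_r\setminus\{k_r\}$, which is integrable over $V_r$ precisely when $d>1$; this gives $(\mathcal{A}2)$ for $d\ge 2$, while $(\mathcal{A}1)$ is automatic in the self-adjoint setting (Remark~\ref{assumptionA}). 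Likewise $\|A_r(k)^{-1}\|^2\asymp |k-k_r|^{-2}$ is integrable over $V_r$ precisely when $d>2$, which yields the strengthened condition \mref{E:strength} for $d\ge 3$.

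For step~(ii): write $\xi=k-k_r$ and expand the analytic $2\times 2$ matrix $\lambda(k)=\sum_{j\ge 1}\lambda_j(\xi)$ around $k_r$. Since its eigenvalues are $\pm c_r|\xi|+O(|\xi|^2)$, the linear term $\lambda_1$ cannot vanish, so $\ell_0(k_r)=1$; moreover, for a $2\times2$ family $\det\lambda(k)$ has vanishing degree-$0$ and degree-$1$ parts, its degree-$2$ part is exactly $\det\lambda_1(\xi)$, and it also equals the leading term of the product of the branches $\lambda_+\lambda_-=-c_r^2|\xi|^2+O(|\xi|^3)$. Thus $\det\lambda_{\ell_0(k_r)}=\det\lambda_1=-c_r^2|\xi|^2\not\equiv 0$ at every $k_r$, so Theorem~\ref{LiouvilleDim}(b) applies for all $N$ and gives
\[
\dim V^{\infty}_N(A)=\sum_{r=1}^{\ell}m_r\left[\binom{d+[N]}{d}-\binom{d+[N]-1}{d}\right]=2\ell\,c_{d,[N]},
\]
using $m_r=2$, the identity $\binom{d+M}{d}-\binom{d+M-1}{d}=\binom{d+M-1}{M}=c_{d,M}$, and the Lemma $V^{\infty}_N=V^{\infty}_{[N]}$. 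For $1\le p<\infty$ with $N>d/p$, Theorem~\ref{dimvp} then yields $\dim V^p_N(A)=\dim V^{\infty}_{\floor{N-d/p}}(A)=2\ell\,c_{d,\floor{N-d/p}}$; and for pairs with $pN\le d$ (which is the case throughout condition~c) Theorem~\ref{UCinfty} gives $\dim V^p_N(A)=0$.

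For step~(iii): in the ranges of parts~a and~b the hypotheses of Theorems~\ref{RRLineq} and~\ref{RRLeq} hold by step~(i), so the two-sided estimate and the equality for $\mu^+$ follow verbatim from the proof of Theorem~\ref{generic2nd} upon inserting the values of $\dim V^p_N(A)$ from step~(ii). For part~c one invokes Theorem~\ref{improve}, whose assumption \mref{E:strength} holds for $d\ge 3$ by step~(i); since $\dim V^p_N(A)=0$ in that range, the inequality \mref{RRLinequality} becomes the asserted lower bound, while the argument of the proof of Theorem~\ref{RRLeq} (via Corollary~\ref{RReqmu}, using that $A^p_{m,N}$ is Fredholm with range $C^{\infty}_c(X)$, as established inside the proof of Theorem~\ref{improve}) gives $\dim L_p(\mu^+,A,N)=\deg_A(\mu^+)$ and hence the upper bound $\dim L_p(\mu,A,N)\le\deg_A(\mu^+)$. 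The main obstacle I anticipate is the bookkeeping of step~(ii): one must be careful that the conical behaviour of the \emph{branches} genuinely forces $\ell_0(k_r)=1$ together with a non-degenerate leading determinant, so that the ``for all $N$'' clause of Theorem~\ref{LiouvilleDim}(b) is available rather than only the narrow range $N<\ell_0$, and that the $O(|\xi|^2)$ corrections in the cone do not spoil the integrability comparisons underlying $(\mathcal{A}2)$ and \mref{E:strength}.
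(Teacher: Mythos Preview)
Your proposal is correct and follows the same route as the paper. The paper's own argument is extremely terse---it merely observes, just before stating the theorem, that $|\lambda_\pm|^{-1}$ is integrable near each $k_r$ when $d>1$ (and implicitly that $|\lambda_\pm|^{-2}$ is integrable when $d>2$), and then declares the theorem as a consequence; your steps~(i)--(iii) spell out exactly the verification the paper leaves to the reader, including the computation $\ell_0(k_r)=1$, $\det\lambda_1\not\equiv 0$, $m_r=2$ needed to invoke Theorem~\ref{LiouvilleDim}(b) and obtain $\dim V^\infty_N(A)=2\ell\,c_{d,[N]}$.
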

\begin{example}
\end{example}
We consider here Schr\"odinger operators with honeycomb lattice potentials in $\mathbb{R}^2$.

Let us start with recalling briefly some notions from \cite{FefWei,ComBerk}.
The triangular lattice $\Lambda_h=\mathbb{Z}\textbf{v}_1 \oplus \mathbb{Z}\textbf{v}_2$ is spanned by the basis vectors:
\beq
\textbf{v}_1=a\left(\frac{\sqrt{3}}{2}, \frac{1}{2}\right)^t, \quad \textbf{v}_2=a\left(\frac{\sqrt{3}}{2}, -\frac{1}{2}\right)^t (a>0).
\eeq
The dual lattice is
\beq
\Lambda^*_h=\mathbb{Z}\textbf{k}_1 \oplus \mathbb{Z}\textbf{k}_2,
\eeq
where
\beq
\textbf{k}_1=\frac{4\pi}{a\sqrt{3}}\left(\frac{1}{2}, \frac{\sqrt{3}}{2}\right)^t, \quad \textbf{k}_2=\frac{4\pi}{a\sqrt{3}}\left(\frac{1}{2}, -\frac{\sqrt{3}}{2}\right)^t.
\eeq
We define
\beq
\textbf{K}=\frac{1}{3}(\textbf{k}_1-\textbf{k}_2), \quad \textbf{K}\cprime=-\textbf{K}.
\eeq
The Brillouin zone $\mathcal{B}_h$, a fundamental domain of the quotient $\mathbb{R}^2/\Lambda^*_h$, can be chosen as a hexagon in $\mathbb{R}^2$ such that the six vertices of this hexagon fall into two groups:
\begin{enumerate}
\item
$\textbf{K}$ type-vertices: $\textbf{K}$, $\textbf{K}+\textbf{k}_2$, $\textbf{K}-\textbf{k}_1$.
\item
$\textbf{K}\cprime$ type-vertices: $\textbf{K}\cprime$, $\textbf{K}\cprime-\textbf{k}_2$, $\textbf{K}\cprime+\textbf{k}_1$.
\end{enumerate}
Note that these groups of vertices are invariant under the clockwise rotation $\mathcal{R}$ by $2\pi/3$.

A honeycomb lattice potential $V \in C^{\infty}(\mathbb{R}^2)$ is real, $\Lambda_h$-periodic, and there exists a point $x_0 \in \mathbb{R}^2$ such that $V$ is inversion symmetric (i.e., even) and $\mathcal{R}$-invariant with respect to $x_0$ (see \cite[Remark 2.4]{FefWei} and \cite{ComBerk} for constructions of honeycomb lattice potentials). Now assume that $V$ is a honeycomb lattice potential and consider the Schr\"odinger operator $H^{\varepsilon}=-\Delta+\varepsilon V$ ($\varepsilon \in \mathbb{R}$). One of the main results of \cite{FefWei} is that except possibly for $\varepsilon$ in a countable and closed set $\tilde{\mathcal{C}}$, the dispersion relation of $H^{\varepsilon}$ has conical singularities at each vertex of $\mathcal{B}_h$. Assume that $\lambda_j^{\varepsilon}$, $j\in \mathbb{N}$, are the band functions of the operator $H^{\varepsilon}$ for each $\varepsilon \in \mathbb{R}$. Then according to \cite[Theorem 5.1]{FefWei}, when $\varepsilon \notin \tilde{\mathcal{C}}$, there exists some $j \in \mathbb{N}$ such that the Fermi surface $F_{H^{\varepsilon}, \lambda_j^{\varepsilon}(\textbf{K})}$ of the operator $H^{\varepsilon}$ at the level $\lambda_j^{\varepsilon}(\textbf{K})$ contains (at least) two Dirac points located at the quasimomenta $\textbf{K}$ and $\textbf{K}\cprime$ (modulo shifts by vectors in the dual lattice $\Lambda^*_h$). Now our next corollary is a direct consequence of \cite[Theorem 5.1]{FefWei} and our previous discussion:
\begin{cor}
Let $\mu$ be a rigged divisor on $\mathbb{R}^2$ and $V$ be a honeycomb lattice potential such that
\beq
V_{1,1}:=\int\limits_{\mathbb{R}^2/\Lambda_h}e^{-i(\textbf{k}_1+\textbf{k}_2)\cdot x}V(x)\di x\neq 0.
\eeq
Then for $\varepsilon \notin \tilde{\mathcal{C}}$, there exists $j \in \mathbb{N}$ such that the following inequalities hold:
\begin{enumerate}
\item if
$p=\infty, N \geq 0$:
\beq
\dim L_{\infty}(\mu, H^{\varepsilon}-\lambda_j^{\varepsilon}(\textbf{K}), N)\geq 4([N]+1)+\deg_{H^{\varepsilon}-\lambda_j^{\varepsilon}(\textbf{K})}(\mu)+\dim L_1(\mu^{-1}, H^{\varepsilon}-\lambda_j^{\varepsilon}(\textbf{K}), -N) ,
\eeq
\beq
\dim L_{\infty}(\mu^+, H^{\varepsilon}-\lambda_j^{\varepsilon}(\textbf{K}), N) \geq 4([N]+1)+\deg_{H^{\varepsilon}-\lambda_j^{\varepsilon}(\textbf{K})}(\mu^+).
\eeq
\item If
$1 \leq p<\infty, N>2/p$:
\beq
\dim L_{p}(\mu, H^{\varepsilon}-\lambda_j^{\varepsilon}(\textbf{K}), N)\geq 4(\floor{N-2/p}+1)+\deg_{H^{\varepsilon}-\lambda_j^{\varepsilon}(\textbf{K})}(\mu)+\dim L_{p'}(\mu^{-1}, H^{\varepsilon}-\lambda_j^{\varepsilon}(\textbf{K}), -N) ,
\eeq
\beq
\dim L_{p}(\mu^+, H^{\varepsilon}-\lambda_j^{\varepsilon}(\textbf{K}), N) \geq 4(\floor{N-2/p}+1)+\deg_{H^{\varepsilon}-\lambda_j^{\varepsilon}(\textbf{K})}(\mu^+).
\eeq
\end{enumerate}
Moreover, there exists $\varepsilon_0>0$ such that for all $\varepsilon \in (-\varepsilon_0, \varepsilon_0) \setminus \{0\}$, we have
\begin{itemize}
\item If
$\varepsilon V_{1,1}>0$, the above $j$ can be chosen as $j=1$.
\item If
$\varepsilon V_{1,1}<0$, the above $j$ can be chosen as $j=2$.
\end{itemize}
\end{cor}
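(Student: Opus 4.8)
The final statement is a corollary about Schrödinger operators $H^\varepsilon = -\Delta + \varepsilon V$ with honeycomb lattice potentials $V$ on $\mathbb{R}^2$, asserting Liouville–Riemann–Roch inequalities with the explicit constant $4([N]+1)$ (or $4(\floor{N-2/p}+1)$), plus a statement about which band index $j$ works for small $\varepsilon$ of a given sign.

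\medskip

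The plan is to combine the Fefferman–Weinstein analysis of conical degeneracies for $H^\varepsilon$ with Theorem~\ref{generic} (the Dirac-point version of our main inequalities). First I would recall from \cite[Theorem 5.1]{FefWei} that, for $\varepsilon$ outside the exceptional countable closed set $\tilde{\mathcal{C}}$, there is a band index $j$ such that at the energy level $\lambda_j^\varepsilon(\textbf{K})$ the dispersion relation of $H^\varepsilon$ has a conical (Dirac) singularity at the two quasimomenta $\textbf{K}$ and $\textbf{K}\cprime$. Since $\textbf{K}\cprime = -\textbf{K}$ and these are the only vertices in their $\Lambda_h^*$-orbit classes inside the Brillouin zone, the real Fermi surface $F_{H^\varepsilon - \lambda_j^\varepsilon(\textbf{K}),\mathbb{R}}$ consists of exactly $\ell = 2$ Dirac conical points (here one should check that no other band touches this level — this also follows from the Fefferman–Weinstein description, since the conical structure is built from a two-dimensional eigenspace and the other bands are locally separated). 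Then Assumption $\mathcal{A}$ holds for $d = 2$ by the integrability of $|\lambda_\pm|^{-1}$ near a conical point noted in the text, and Theorem~\ref{generic} (the Dirac-point theorem) applies verbatim with $\ell = 2$. In two dimensions $c_{2,[N]} = \binom{2+[N]-1}{[N]} = [N]+1$, so $2\ell c_{d,[N]} = 4([N]+1)$ and $2\ell c_{d,\floor{N-2/p}} = 4(\floor{N-2/p}+1)$, giving exactly the asserted inequalities in cases (1) and (2).

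\medskip

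For the last part — choosing $j = 1$ when $\varepsilon V_{1,1} > 0$ and $j = 2$ when $\varepsilon V_{1,1} < 0$ for $\varepsilon$ in a punctured neighborhood $(-\varepsilon_0,\varepsilon_0)\setminus\{0\}$ — I would invoke the perturbative part of the Fefferman–Weinstein theory. For small $\varepsilon$ the degenerate pair of bands meeting at $\textbf{K}$ near $\lambda = 0$ separates, and the sign of the relevant Fourier coefficient $V_{1,1}$ (which enters the effective $2\times 2$ Dirac Hamiltonian as its off-diagonal "mass"/splitting parameter) determines the ordering of the two bands at $\textbf{K}$: when $\varepsilon V_{1,1} > 0$ the lower band function $\lambda_1^\varepsilon$ attains the conical value, and when $\varepsilon V_{1,1} < 0$ it is $\lambda_2^\varepsilon$. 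This is precisely the content of the quantitative version of \cite[Theorem 5.1]{FefWei} under the hypothesis $V_{1,1} \neq 0$; I would cite it and note that $\varepsilon_0$ is the radius furnished there, then substitute $j = 1$ or $j = 2$ into the already-established inequalities.

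\medskip

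The main obstacle will be verifying that the real Fermi surface at the level $\lambda_j^\varepsilon(\textbf{K})$ is \emph{exactly} the two Dirac points and contains no extraneous flat pieces or additional touching bands, so that $\ell = 2$ and Assumption $\mathcal{A}$ genuinely hold; this requires reading off the global (not just local) structure from \cite{FefWei} and possibly using the honeycomb symmetry to rule out other components. Everything else is bookkeeping: translating the conical degeneracy into integrability of $|\lambda_\pm|^{-1}$ over a neighborhood (done in the text for $d > 1$), quoting Theorem~\ref{generic}, and evaluating the binomial coefficient $c_{2,k} = k+1$.
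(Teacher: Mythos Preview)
Your proposal is correct and follows essentially the same route as the paper: the corollary is stated there as ``a direct consequence of \cite[Theorem 5.1]{FefWei} and our previous discussion,'' meaning exactly the combination of the Fefferman--Weinstein Dirac-point result with the Dirac-point Liouville--Riemann--Roch theorem at $\ell=2$, together with the computation $c_{2,k}=k+1$. The obstacle you flag---that the real Fermi surface at level $\lambda_j^\varepsilon(\textbf{K})$ be \emph{exactly} $\{\textbf{K},\textbf{K}'\}$ rather than merely containing them---is in fact glossed over in the paper (it writes ``contains (at least) two Dirac points''), and the corollary is accordingly stated only as inequalities, so any additional Fermi points would only strengthen the bounds provided Assumption~$\mathcal{A}$ still holds.
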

\begin{remark}
Other results on existence of Dirac points in the dispersion relations of Schr\"odinger operators with periodic potentials on honeycomb lattices are established for instance in \cite{KPcarbon} for quantum graph models of graphene and carbon nanotubes materials and in \cite{ComBerk} for many interesting models including both discrete, quantum graph, and continuous ones.
\end{remark}

\section[Non-self-adjoint operators]{Non-self-adjoint second order elliptic operators}\label{S:nonsa}
We now consider a class of possibly non-self-adjoint second-order elliptic operators arising in probability theory. Let $A$ be a $G$-periodic linear elliptic operator of second-order acting on functions $u$ in $C^{\infty}(X)$ such that in local coordinate system $(U; x_1, \ldots, x_n)$, the operator $A$ can be represented as
\beq
\label{realop}
A=-\sum\limits_{1 \leq i,j \leq n}a_{ij}(x)\partial_{x_i}\partial_{x_j}+\sum_{1 \leq j \leq n}b_j(x)\partial_{x_j}+c(x),
\eeq
where the coefficients $a_{ij}, b_j, c$ are \textit{real}, smooth, and $G$-periodic. The matrix $a(x):=(a_{ij}(x))_{1 \leq i,j \leq n}$ is positive definite.
We notice that the coefficient $c(x)$ of zeroth-order of $A$ is globally defined on $X$, since it is the image of the constant function $1$ via $A$.
\begin{defi}\cite{Agmonperiodic, LinPinchover, Pinsky}
\label{principaleival}
\begin{enumerate}[a.]
\item
A function $u$ on $X$ is called a \textbf{$G$-multiplicative} with exponent $\xi \in \mathbb{R}^d$, if it satisfies
\beq
u(g\cdot x)=e^{\xi \cdot g}u(x), \quad \forall x \in X, g \in G.
\eeq
In other words, $u$ is a Bloch function with quasimomentum $i\xi$.
\item
The \textit{generalized principal eigenvalue} of $A$ is defined by
\beqn
\Lambda_A:=\sup\{\lambda \in \mathbb{R} \mid (A-\lambda)u=0 \hspace{4pt} \mbox{ has a positive solution $u$}\}.
\eeqn
\end{enumerate}
The principal eigenvalue is a generalized version of the bottom of the spectrum in the self-adjoint case (see e.g., \cite{Agmondecay}).
\end{defi}
 Let $A^*$ be the formal adjoint operator to $A$. The generalized principal eigenvalues of $A^*$ and $A$ are equal, i.e., $\Lambda_A=\Lambda_{A^*}$.
For an operator $A$ of the type \mref{realop} and any $\xi \in \mathbb{R}^d$, it is known \cite{Agmonperiodic, Kbook, LinPinchover, Pinsky}  that there exists a unique real number $\Lambda_A(\xi)$ such that the equation $(A-\Lambda_A(\xi))u=0$ has a \textit{positive} $G$-multiplicative solution $u$. We list some known properties of this important function $\Lambda_A(\xi)$. The reader can find proofs in \cite{Agmonperiodic, LinPinchover, Pinsky} (see also \cite[Lemma 5.7]{KP2}).
\begin{prop}
\label{lambda}\indent
\begin{enumerate}[a.]
\item
$\Lambda_A=\max\limits_{\xi \in \mathbb{R}^d} \Lambda_A(\xi)=\Lambda_A(\xi_0)$ for a unique $\xi_0$ in $\mathbb{R}^d$.
\item
The function $\Lambda_A(\xi)$ is strictly concave, real analytic, bounded from above, and its gradient $\nabla \Lambda_A(\xi)$ vanishes at only its unique maximum point $\xi = \xi_0$. The Hessian of the function $\Lambda_A(\xi)$ is non-degenerate at all points.
\item
$\Lambda_A(\xi)$ is the principal eigenvalue with multiplicity one of the operator $A(i\xi)$.
\item
$\Lambda_A \geq 0$ if and only if $A$ admits a positive periodic (super-) solution, which is also equivalent to the existence of a positive $G$-multiplicative solution $u$ to the equation $Au=0$.
\item
$\Lambda_A=0$ if and only if there is exactly one normalized positive solution $u$ to the equation $Au=0$.
\end{enumerate}
\end{prop}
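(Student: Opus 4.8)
The plan is to realize $\Lambda_A(\xi)$ as the principal eigenvalue of a twisted operator on the compact base $M$ and to deduce all the analytic properties from there. For $\xi\in\R^d$, conjugation by $e^{\xi\cdot x}$ carries a $G$-multiplicative function of exponent $\xi$ to a $G$-periodic one and turns $A$ into the operator $A(i\xi)$ on $M$; its coefficients are real, smooth, and depend polynomially (of degree two) on $\xi$, and a direct computation shows that its zeroth-order coefficient equals $c(x)-a(x)\xi\cdot\xi+(\text{a term linear in }\xi)$. Applying the Krein--Rutman theorem (in the Berestycki--Nirenberg--Varadhan form for possibly non-self-adjoint elliptic operators on a compact manifold) produces, for each $\xi$, a real, algebraically simple eigenvalue $\Lambda_A(\xi)$ of $A(i\xi)$ with an everywhere positive eigenfunction $u_\xi$, characterized by
\[
\Lambda_A(\xi)=\sup\{\lambda\in\R:\ \exists\,w>0\ \text{on }M\ \text{with}\ (A(i\xi)-\lambda)w\ge 0\}.
\]
Undoing the conjugation gives the positive $G$-multiplicative solution whose existence and uniqueness are asserted just before the proposition, and this is exactly part (c). Analyticity of $\xi\mapsto\Lambda_A(\xi)$ is then Kato's analytic perturbation theory applied to the polynomial family $A(i\xi)$ and its isolated simple eigenvalue.

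Next I would prove concavity by the ground-state substitution. Given $\xi_0,\xi_1\in\R^d$ and $t\in(0,1)$, let $u_0,u_1>0$ solve $(A-\Lambda_A(\xi_i))u_i=0$ and set $w:=u_0^{1-t}u_1^{t}$, a positive $G$-multiplicative function of exponent $\xi_t:=(1-t)\xi_0+t\xi_1$. Writing $w=\exp((1-t)\log u_0+t\log u_1)$ and expanding $Aw$ by the chain rule, the second-order term produces the convex combination of the corresponding terms for $u_0$ and $u_1$ plus a remainder that is pointwise $\le 0$ by the Cauchy--Schwarz inequality for the positive-definite matrix $a(x)$. Hence $(A-\lambda_t)w\ge 0$ with $\lambda_t:=(1-t)\Lambda_A(\xi_0)+t\Lambda_A(\xi_1)$, and the supersolution characterization above (on $M$, after conjugating by $e^{\xi_t\cdot x}$) gives $\Lambda_A(\xi_t)\ge\lambda_t$; so $\Lambda_A$ is concave. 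Equality in the pointwise estimate forces $\nabla\log u_0\equiv\nabla\log u_1$, i.e. $u_0/u_1$ constant, which (being $G$-multiplicative of exponent $\xi_0-\xi_1$) requires $\xi_0=\xi_1$, so the concavity is strict. To obtain non-degeneracy of $\Hess\Lambda_A(\xi)$ at \emph{every} point I would differentiate the eigenvalue equation twice, solve the first-order corrector equations using simplicity of the eigenvalue, and identify $-\tfrac12\Hess\Lambda_A(\xi)$ with an effective-diffusion matrix defined by a coercive quadratic minimization, hence positive definite.

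Boundedness from above and the maximizer come from the explicit $-a(x)\xi\cdot\xi$ term: since $a$ is uniformly positive definite on the compact $M$, this drives the zeroth-order coefficient of $A(i\xi)$ to $-\infty$ uniformly as $|\xi|\to\infty$, so $\Lambda_A(\xi)\to-\infty$ by monotonicity of the principal eigenvalue; with strict concavity and continuity, $\Lambda_A(\xi)$ is bounded above and attains its maximum at a single point $\xi_0$, which is its only critical point. This settles (b) and the first equality of (a). For the second equality $\Lambda_A=\Lambda_A(\xi_0)$: the inequality $\Lambda_A\ge\Lambda_A(\xi)$ for all $\xi$ is clear because a positive $G$-multiplicative solution is a positive solution on $X$; the reverse $\Lambda_A\le\Lambda_A(\xi_0)$ is Agmon's theorem on positive solutions of periodic operators --- one shows that a positive solution of $(A-\lambda)u=0$ on $X$ has, after the log-transform, a periodic gradient whose average is an exponent $\xi$ with $\Lambda_A(\xi)\ge\lambda$, whence $\lambda\le\Lambda_A(\xi_0)$. \textbf{This last step is the main obstacle}: it is where the global geometry of the covering enters and where one must invoke (or reprove via Harnack chains and compactness) the Lin--Pinchover/Agmon criticality machinery.

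Finally, (d) and (e) follow from (a)--(b) together with standard criticality theory. By (a)--(b), $\Lambda_A\ge0$ iff $\Lambda_A(\xi_0)\ge0$, and since $\Lambda_A(\xi)$ is continuous with $\Lambda_A(\xi)\to-\infty$, this holds iff $\Lambda_A(\xi)=0$ for some $\xi$, i.e. iff $Au=0$ has a positive $G$-multiplicative solution; the equivalence with a positive periodic super-solution is the matching statement in \cite{Agmonperiodic,LinPinchover,Pinsky}, which I would quote. For (e), $\Lambda_A=0$ is the borderline (``critical'') case, in which the positive solution of $Au=0$ is unique up to a positive scalar by the Agmon--Allegretto--Piepenbrink characterization of critical operators; conversely, if $\Lambda_A>0$ the level set $\{\xi:\Lambda_A(\xi)=0\}$ contains more than one point, producing non-proportional positive $G$-multiplicative solutions of $Au=0$, so uniqueness forces $\Lambda_A=0$. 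Imposing a normalization pins down the unique representative.
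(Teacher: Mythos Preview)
The paper does not actually prove this proposition: it is stated as a collection of known facts, with the sentence ``The reader can find proofs in \cite{Agmonperiodic, LinPinchover, Pinsky} (see also \cite[Lemma 5.7]{KP2})'' immediately preceding it. So there is no ``paper's own proof'' to compare against --- the authors treat this as background material imported wholesale from the literature.

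Your sketch is essentially the standard route taken in those references and is correct in outline. The Krein--Rutman identification of $\Lambda_A(\xi)$ as the simple principal eigenvalue of $A(i\xi)$, analyticity via Kato, strict concavity via the supersolution $u_0^{1-t}u_1^t$, coercivity from the $-a(x)\xi\cdot\xi$ term, and the identification of $-\tfrac12\Hess\Lambda_A$ with an effective-diffusion (homogenized) matrix are precisely the arguments one finds in Pinsky and Lin--Pinchover. You correctly flag the one genuinely nontrivial global input --- that any positive solution on $X$ forces $\lambda\le\Lambda_A(\xi_0)$ --- as the Agmon step, and this is exactly where the cited works do the real work (via Martin boundary/Harnack arguments). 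For (d) and (e) your reduction to criticality theory is also the standard line. In short, what you have written is a faithful condensation of the proofs in the cited sources, which is more than the paper itself supplies.
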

We are interested in studying Liouville-Riemann-Roch type results for such operators $A$ satisfying $\Lambda_A(0) \geq 0$, which implies that $A$ has a positive solution.
\begin{example}
\end{example}
\begin{enumerate}
\item
Any operator $A$ as in (\ref{realop}) without zeroth-order term satisfies $\Lambda_A=\Lambda_A(0)=0$.
\item
If the zeroth-order coefficient $c(x)$ of the operator $A$ is nonnegative on $X$, $\Lambda_A(0)$ is also nonnegative.\footnote{In general, the converse of this statement is not true: e.g., consider $A^*$ in this case then the zeroth-order coefficient of the transpose $A^*$ is not necessarily nonnegative while $\Lambda_{A^*}(0)=\Lambda_A(0) \geq 0$.}
Indeed, let $u$ be a positive and periodic solution to the equation $Au=\Lambda_A(0)u$. If $\Lambda_A(0)<0$, it follows from the equation that $u$ is a positive and periodic subsolution, namely $Au < 0$ on $X$. By the strong maximum principle, $u$ must be constant. This means that $0 \leq cu=Au<0$, which is a contradiction!
\end{enumerate}
Before stating the main result of this subsection, let us provide a key lemma.
\begin{lemma}
\label{dispersionnsa}\indent
\begin{enumerate}[a.]
\item
If $\Lambda_A(0)>0$, then $F_{A,\mathbb{R}}=\emptyset$.
\item
If $\Lambda_A(0)=0$, then $F_{A,\mathbb{R}}=\{0\}$ (modulo $G^*$-shifts). In this case, there exists an open strip $V$ in $\mathbb{C}^d$ containing the imaginary axis $i\mathbb{R}^d$ such that for any $k \in V$, there is exactly one (isolated and nondegenerate eigenvalue) point $\lambda(k)$ in $\sigma(A(k))$ that is close to $0$. The dispersion function $\lambda(k)$ is analytic in $V$ and $\lambda(ik)=\Lambda_A(k)$ if $k \in \mathbb{R}^d$. Moreover,
\begin{itemize}
\item
When $\Lambda_A>0$, $k=0$ is a non-critical point of the dispersion $\lambda(k)$ in $V \cap \mathbb{R}^d$ as well as of the function $\Lambda_A(\cdot)$ in $\mathbb{R}^d$.
\item
When $\Lambda_A=0$, $k=0$ is a non-degenerate extremum of the dispersion $\lambda(k)$ in $V \cap \mathbb{R}^d$ as well as of the function $\Lambda_A(\cdot)$ in $\mathbb{R}^d$.
\end{itemize}
\end{enumerate}
\end{lemma}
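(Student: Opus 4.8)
The plan is to reduce, via the ground-state transform, to an operator whose zeroth-order coefficient is the constant $\Lambda_A(0)$, and then to combine a sliding comparison argument based on Kato's inequality with analytic perturbation theory for the simple eigenvalue $\Lambda_A(\xi)$ of $A(i\xi)$. Let $\psi>0$ be the $G$-periodic solution of $A\psi=\Lambda_A(0)\psi$ from Proposition~\ref{lambda} and set $\tilde Au:=\psi^{-1}A(\psi u)$; this is again an operator of the form \mref{realop} with the same positive-definite matrix $a(x)$ and with zeroth-order coefficient $\tilde A1=\psi^{-1}A\psi\equiv\Lambda_A(0)$, and since multiplication by $\psi$ intertwines $A(k)$ with $\tilde A(k)$ on $L^2(E_k)$, it leaves $F_{A,\mathbb R}$, $\Lambda_A(\xi)$, and the dispersion branch unchanged. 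So assume $A=L+\Lambda_A(0)$ with $L$ of the form \mref{realop} having no zeroth-order term (so $L1=0$). If $k_0\in F_{A,\mathbb R}$, discreteness of $\sigma(A(k_0))$ gives a nonzero $\gamma_{k_0}$-automorphic $u$ with $Au=0$; since $k_0$ is real, $w:=|u|$ is $G$-periodic and $w\not\equiv0$, and Kato's inequality for $L$ gives $Lw\le\operatorname{Re}\big(\overline{\sign u}\,Lu\big)=-\Lambda_A(0)w$, i.e.\ $Aw\le0$.

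For part a, assume $\Lambda_A(0)>0$ and, for contradiction, that such a $k_0$ exists. Put $t_\ast:=\sup\{t\ge0:\ tw\le1\ \text{on }X\}$; by $G$-periodicity $t_\ast=1/\max_Xw\in(0,\infty)$ and $v:=1-t_\ast w\ge0$ vanishes at some $x_\ast$ where $w$ is maximal. Then $Av=\Lambda_A(0)-t_\ast Aw\ge\Lambda_A(0)>0$ everywhere, so in particular $Av(x_\ast)>0$. But $x_\ast$ is an interior minimum of the non-negative function $v$, so $\nabla v(x_\ast)=0$ and $\Hess v(x_\ast)\ge0$; pairing the Hessian against $a(x_\ast)>0$ and using $v(x_\ast)=0$ gives $Av(x_\ast)=-\sum_{i,j}a_{ij}\partial_i\partial_j v(x_\ast)+\Lambda_A(0)v(x_\ast)\le0$, a contradiction. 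Hence $F_{A,\mathbb R}=\emptyset$.

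For part b we have $\Lambda_A(0)=0$, so $A=L$ and $A1=0$; thus $0\in\sigma(A(0))$ and $0\in F_{A,\mathbb R}$. For the reverse inclusion, take $k_0\in F_{A,\mathbb R}$ with $u,w$ as above and $Aw\le0$. The same sliding argument produces $v=1-t_\ast w\ge0$ with $v(x_\ast)=0$ and now $Av=-t_\ast Aw\ge0$ everywhere; since $A$ has no zeroth-order term, the strong maximum principle forces $v\equiv0$, i.e.\ $w$ is a positive constant, so $u$ never vanishes. Writing $u=w\,e^{i\theta}$ with $w$ constant and expanding $Au=0$ into real and imaginary parts (the coefficients $a_{ij},b_j$ being real), the real part is exactly $w\sum_{i,j}a_{ij}\,\partial_i\theta\,\partial_j\theta=0$; positive-definiteness of $a(x)$ forces $\nabla\theta\equiv0$, so, $X$ being connected, $e^{i\theta}$ is constant and $u$ is a constant. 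Then $\gamma_{k_0}\equiv1$, i.e.\ $k_0\equiv0$ modulo $G^{*}$, so $F_{A,\mathbb R}=\{0\}$.

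Finally, the dispersion branch. By Proposition~\ref{lambda} each $\Lambda_A(\xi)$ is a simple, hence isolated, eigenvalue of $A(i\xi)$, so analytic perturbation theory provides near every $i\xi$ a unique eigenvalue of $A(k)$ close to $\Lambda_A(\xi)$, analytic in $k\in\mathbb C^d$ and staying simple; by uniqueness these germs glue into an analytic function $\lambda(k)$ on an open set $V\subset\mathbb C^d$ containing $i\mathbb R^d$ with $\lambda(i\xi)=\Lambda_A(\xi)$, and shrinking $V$ we may assume $\lambda(k)$ is the only point of $\sigma(A(k))$ in a fixed small disc about $0$ and is simple there. Differentiating $\lambda(i\xi)=\Lambda_A(\xi)$ at $\xi=0$ gives $\nabla_k\lambda(0)=-i\,\nabla_\xi\Lambda_A(0)$ and $\Hess_k\lambda(0)=-\Hess_\xi\Lambda_A(0)$. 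If $\Lambda_A>0=\Lambda_A(0)$, then $0$ is not the unique maximizer of $\Lambda_A(\cdot)$, so $\nabla_\xi\Lambda_A(0)\ne0$ and $k=0$ is a non-critical point of $\lambda$ (and of $\Lambda_A$); if $\Lambda_A=\Lambda_A(0)=0$, then $0$ is that maximizer, so $\nabla_\xi\Lambda_A(0)=0$ while $\Hess_\xi\Lambda_A(0)$ is negative-definite and non-degenerate, whence $k=0$ is a non-degenerate extremum of $\lambda$ and of $\Lambda_A$. The step I expect to require the most care is precisely the passage from Kato's inequality and the touching point to the contradiction in part a (and to $w\equiv\mathrm{const}$ in part b): the reduction to a vanishing (resp.\ constant) zeroth-order coefficient is what makes the unfavorable lower-order terms drop out at the interior touching point, so that the classical, sign-correct strong maximum principle applies.
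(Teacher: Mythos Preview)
Your argument is correct. The paper does not actually prove this lemma: it simply records that the statements ``are direct consequences of \cite[Lemma 5.8]{KP2}, Kato--Rellich theorem, and Proposition~\ref{lambda}.'' You have, in effect, unpacked that citation. The ground-state transform reducing to $A=L+\Lambda_A(0)$ with $L1=0$, followed by Kato's inequality for the real-coefficient second-order operator $L$ and a sliding/touching argument, is precisely the mechanism behind the cited result in \cite{KP2}; your treatment of the dispersion branch via Kato--Rellich and the chain rule $\nabla_k\lambda(0)=-i\nabla_\xi\Lambda_A(0)$, $\Hess_k\lambda(0)=-\Hess_\xi\Lambda_A(0)$ is exactly what the paper intends when it invokes Proposition~\ref{lambda} together with analytic perturbation theory.

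Two small points worth tightening in a final write-up. First, in part~b you invoke the strong maximum principle to get $v\equiv 0$, but $w=|u|$ need not be $C^2$ at zeros of $u$, so $Av\ge0$ is only known on $\Omega:=\{u\neq0\}$; the clean fix is to apply the strong maximum principle on the connected component $\Omega_0\ni x_\ast$ of $\Omega$, obtain $w\equiv 1/t_\ast$ there, and then note that continuity forces $\overline{\Omega_0}\subset\{w>0\}=\Omega$, so $\Omega_0$ is clopen and (by connectedness of $X$, implicit throughout Section~\ref{S:nonsa}) equals $X$. Second, $\theta$ is only locally defined, but $\nabla\theta=\operatorname{Im}(u^{-1}\nabla u)$ is global, so $\nabla\theta\equiv0$ still yields $u/|u|$ locally constant, hence constant on connected $X$. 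With these remarks your proof is complete and self-contained, whereas the paper defers the substance to \cite{KP2}.
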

The statements of this lemma are direct consequences of \cite[Lemma 5.8]{KP2}, Kato-Rellich theorem (see e.g., \cite[Theorem XII.8]{RS4}), and Proposition \ref{lambda}.

\begin{thm}
\label{LRRnonsa}
Let $A$ be a periodic elliptic operator of second-order with real and smooth coefficients on $X$ such that $\Lambda_A(0) \geq 0$. Let $\mu$ be a rigged divisor on $X$ and $\mu^+$ be its positive part. Then
\begin{enumerate}[a.]
\item
If $\Lambda_A(0)>0$,
\beq
\dim L_{\infty}(\mu^+,A,\varphi)=\deg_{A}(\mu^+)
\eeq
and
\beq
\dim{L_{\infty}(\mu,A,\varphi)}=\deg_{A}(\mu)+\dim{L_{\infty}(\mu^{-1},A^*,\varphi^{-1})}
\eeq
for any function $\varphi \in \mathcal{S}(G)$ (see Definition \ref{subexp}).
\item
If $\Lambda_A>\Lambda_A(0)=0$ and $d \geq 2$, then
\begin{itemize}
\item
For any $N \geq 0$,
\beq
\dim L_{\infty}(\mu^+, A, N)=c_{d,[N]}+\deg_{A}(\mu^+)
\eeq
and
\beq
c_{d,[N]}+\deg_{A}(\mu)+\dim L_1(\mu^{-1}, A^*, -N) \leq \dim L_{\infty}(\mu, A, N) \leq  c_{d,[N]}+\deg_{A}(\mu^+).
\eeq
\item
For any $p \in [1, \infty)$, $N>d/p$,
\beq
\dim L_{p}(\mu^+, A, N)=c_{d,\floor{N-d/p}}+\deg_{A}(\mu^+)
\eeq
and
\beq
c_{d, \floor{N-d/p}}+\deg_{A}(\mu)+\dim L_{p'}(\mu^{-1}, A^*, -N) \leq \dim L_{p}(\mu, A, N) \leq c_{d,\floor{N-d/p}}+\deg_{A}(\mu^+).
\eeq
\item
For $d \geq 3$ and a pair $(p,N)$ satisfying the condition in Theorem \ref{generic} c.,
\beq
\deg_{A}(\mu)+\dim L_{p'}(\mu^{-1}, A^*, -N)\leq \dim L_p(\mu, A, N) \leq \deg_A(\mu^+).
\eeq
\end{itemize}
\item
If $\Lambda_A=\Lambda_A(0)=0$ and $d \geq 3$, then
\begin{itemize}
\item
For any $N \geq 0$,
\beq
\dim L_{\infty}(\mu^+, A, N)=h_{d,[N]}+\deg_{A}(\mu^+)
\eeq
and
\beq
h_{d,[N]}+\deg_{A}(\mu)+\dim L_1(\mu^{-1}, A^*, -N) \leq \dim L_{\infty}(\mu, A, N) \leq  h_{d,[N]}+\deg_{A}(\mu^+).
\eeq
\item
For any $p \in [1, \infty)$, $N>d/p$,
\beq
\dim L_{p}(\mu^+, A, N)=h_{d,\floor{N-d/p}}+\deg_{A}(\mu^+)
\eeq
and
\beq
h_{d,\floor{N-d/p}}+\deg_{A}(\mu)+\dim L_{p'}(\mu^{-1}, A^*, -N) \leq \dim L_{p}(\mu, A, N) \leq h_{d,\floor{N-d/p}}+\deg_{A}(\mu^+).
\eeq
\item
For $d \geq 5$ and a pair $(p,N)$ satisfying the condition c. of Theorem \ref{generic},
\beq
\deg_{A}(\mu)+\dim L_{p'}(\mu^{-1}, A^*, -N)\leq \dim L_p(\mu, A, N) \leq \deg_A(\mu^+).
\eeq
\end{itemize}
\end{enumerate}
\end{thm}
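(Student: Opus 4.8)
The plan is to dispatch the three regimes one at a time, in each case reading off the structure of the real Fermi surface and the local shape of the relevant dispersion branch from Lemma \ref{dispersionnsa} and then appealing to the general results of Chapter \ref{C:main}. Two facts hold throughout. First, $A$ is a $G$-periodic elliptic operator with smooth coefficients, hence $C^{\infty}$-bounded and uniformly elliptic, $G=\bZ^d$ has polynomial (in particular subexponential) growth, and, being real of even order $2$, $A$ and $A^*$ satisfy Assumption $(\mathcal{A}1)$ by Remark \ref{assumptionA} (i). Second, $F_{A^*,\mathbb{R}}=-F_{A,\mathbb{R}}$ by Proposition \ref{Panalytic} (d), so every structural statement about $F_{A,\mathbb{R}}$ transfers to $A^*$.

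\emph{Case (a): $\Lambda_A(0)>0$.} By Lemma \ref{dispersionnsa} (a) we have $F_{A,\mathbb{R}}=\emptyset$, hence $0\notin\sigma(A)$ by Theorem \ref{specA}, and likewise $0\notin\sigma(A^*)$; moreover $A$ and $A^*$ meet the hypotheses of Theorem \ref{RRLbloch}. Part (a) of that theorem gives $L_\infty(\mu,A,\varphi)=L_\infty(\mu,A,\varphi_0)$ for every $\varphi\in\mathcal{S}(G)$ and, applied to $A^*$ with the divisor $\mu^{-1}$ and the weight $\varphi^{-1}\in\mathcal{S}(G)$, gives $L_\infty(\mu^{-1},A^*,\varphi^{-1})=L_\infty(\mu^{-1},A^*,\varphi_0)$; combining this with parts (b) and (c) produces exactly the two displayed identities of case (a).

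\emph{Cases (b) and (c): $\Lambda_A(0)=0$.} By Lemma \ref{dispersionnsa} (b), $F_{A,\mathbb{R}}=\{0\}$ modulo $G^*$-shifts, with $0$ a simple, isolated, nondegenerate eigenvalue of $A(0)$; so in the notation of Chapter \ref{C:main} there is $\ell=1$ point in the Fermi surface, the algebraic multiplicity is $m_0=1$, and near $k=0$ there is a single analytic dispersion branch $\lambda(k)$ with $\lambda(ik)=\Lambda_A(k)$. Then $A_1(k)$ is the scalar $\lambda(k)$ up to an analytic invertible factor, so $\|A_1(k)^{-1}\|_{\mathcal{L}(\mathbb{C})}\asymp|\lambda(k)|^{-1}$ near $0$. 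If $\Lambda_A>0$ (case (b)), $k=0$ is a non-critical point of $\lambda$, so $|\lambda(k)|^{-1}\asymp|k|^{-1}$, integrable on a neighbourhood in $\R^d$ precisely for $d\ge 2$; if $\Lambda_A=0$ (case (c)), $k=0$ is a non-degenerate extremum, so $|\lambda(k)|^{-1}\asymp|k|^{-2}$, integrable for $d\ge 3$. Hence Assumption $\mathcal{A}$ holds and Theorems \ref{RRLineq} and \ref{RRLeq} apply. Since $\det\lambda_{\ell_0(0)}$ is the nonzero linear form $\nabla\lambda(0)\cdot k$ in case (b) and the non-degenerate Hessian form in case (c), Theorem \ref{LiouvilleDim} (b) gives, for all $N\ge 0$, $\dim V^\infty_N(A)=\binom{d+N}{d}-\binom{d+N-1}{d}=c_{d,[N]}$ in case (b) and $\dim V^\infty_N(A)=\binom{d+N}{d}-\binom{d+N-2}{d}=h_{d,[N]}$ in case (c), while Theorem \ref{dimvp} gives, for $1\le p<\infty$ and $N>d/p$, $\dim V^p_N(A)=c_{d,\floor{N-d/p}}$ (resp.\ $h_{d,\floor{N-d/p}}$). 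Substituting these values into the Liouville-Riemann-Roch inequality \mref{RRLinequality} and into the positive-divisor equality \mref{RRLeqmu} and the upper bound \mref{RRLineqmu} yields the first two bullets of cases (b) and (c). For the third bullet one needs the strengthened hypothesis \mref{E:strength}, i.e.\ integrability of $|\lambda(k)|^{-2}$ near $0$, which holds for $d\ge 3$ in case (b) (where $|\lambda(k)|^{-2}\asymp|k|^{-2}$) and for $d\ge 5$ in case (c) (where $|\lambda(k)|^{-2}\asymp|k|^{-4}$); Theorem \ref{improve} --- whose proof also re-establishes the Fredholmness of $A^p_{m,N}$, hence Corollary \ref{RReqmu} and the equality \mref{RRLeqmu} and bound \mref{RRLineqmu}, on the wider $(p,N)$-range --- then supplies \mref{RRLinequality}, and as every admissible $(p,N)$ there satisfies $pN\le d$, Theorem \ref{UCinfty} (a) forces $\dim V^p_N(A)=0$, collapsing the chain to the stated form.

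The step I expect to demand the most care is bookkeeping rather than a new idea: one must verify that $A_1(k)$, assembled from a Riesz projector and a holomorphic frame, really coincides with the scalar band function $\lambda(k)$ up to a nonvanishing analytic factor, so that the integrability conditions $(\mathcal{A}2)$ and \mref{E:strength} are equivalent to those for $|\lambda(k)|^{-1}$ and $|\lambda(k)|^{-2}$; and that the identification $\lambda(ik)=\Lambda_A(k)$ from Lemma \ref{dispersionnsa}, together with Proposition \ref{lambda}, truly transports the non-criticality of $\Lambda_A$ at $\xi=0$ (case (b)) and the non-degeneracy of its Hessian there (case (c)) to the corresponding statements for $\lambda$ at $k=0$, so that $\ell_0(0)=1$, resp.\ $2$, with $\det\lambda_{\ell_0(0)}\not\equiv 0$. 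Both are routine given Lemma \ref{dispersionnsa} and the remark after the definition of the matrix $\lambda(k)$ (multiplication by an analytic invertible matrix is harmless), but they are where the non-self-adjoint input has to be matched to the spectral-edge framework of Chapter \ref{C:main}.
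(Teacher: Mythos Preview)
Your proof is correct and follows essentially the same route as the paper's own proof, which simply cites Lemma \ref{dispersionnsa}, Theorem \ref{RRLbloch}, Theorem \ref{LiouvilleDim}, Theorem \ref{dimvp}, Theorem \ref{RRLineq}, Proposition \ref{RRLeq}, and Theorem \ref{improve} and declares the result immediate. You have spelled out the verifications the paper leaves implicit: the integrability thresholds $d\ge 2,3,3,5$ for $|\lambda(k)|^{-1}$ and $|\lambda(k)|^{-2}$ according to whether $k=0$ is non-critical or a non-degenerate extremum, the identification $\ell_0(0)=1$ or $2$ with the resulting values $c_{d,[N]}$ or $h_{d,[N]}$ via Theorem \ref{LiouvilleDim}, and the observation that on the $(p,N)$-range of Theorem \ref{generic} c.\ one has $pN\le d$ so $V^p_N(A)=0$.
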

\begin{proof}
To compute the dimensions of the spaces $V^p_N(A)$, we use Lemma \ref{dispersionnsa} to apply Theorem \ref{LiouvilleDim} and Theorem \ref{dimvp}. Then the statements of Theorem \ref{LRRnonsa} follows immediately from Theorem \ref{RRLbloch}, Lemma \ref{dispersionnsa}, Theorem \ref{RRLineq}, Proposition \ref{RRLeq}, and Theorem \ref{improve}.
\end{proof}

\chapter[Auxiliary statements and proofs]{Auxiliary statements and proofs of technical lemmas}
\label{C:auxiliary}

\section[Floquet functions]{Properties of Floquet functions on abelian coverings}\label{S:auxfloquet}
We recall briefly another construction of Floquet functions on $X$.
As we discussed in Section 2, it suffices to define the Bloch function $e_k(x)$ with quasimomentum $k$ and the powers $[x]^j$ on $X$, where $j \in \bZ_+^d$.
\begin{defi}
\label{addfunc}
A smooth mapping $h$ from $X$ to $\mathbb{R}^d$ is called an \textit{additive function} if the following condition holds:
\beq
\label{additive}
h(g\cdot x)=h(x)+g,
\eeq
where $(x,g) \in X \times \bZ^d$.
\end{defi}
There are various ways of constructing such additive functions (see e.g., \cite{KP2, LinPinchover}). We will fix such an additive function $h$ and write is as a tuple of scalar functions: $h=(h_1, \ldots, h_d)$.
Let $j=(j_1, \ldots, j_d) \in \bZ_+^d$ be a multi-index. We define
\beq
[x]^j:=h(x)^j=\prod_{m=1}^d h_m(x)^{j_m},
\eeq
and
\beq
e_k(x):=\exp{(ik\cdot h(x))}.
\eeq
Clearly, $e_k(g\cdot x)=e^{ik\cdot g}e_k(x)$. Then a Floquet function $u$ of order $N$ with quasimomentum $k$ is of the form
$$u(x)=e_k(x)\sum_{|j| \leq N}p_j(x)[x]^j,$$
where $p_j$ is smooth and periodic. Observe that the notion of Floquet functions is independent of the choice of $h$. Namely, $u$ is also a Floquet function with the same order and quasimomentum with respect to another additive function $\tilde{h}$. Indeed, the difference $w:=h-\tilde{h}$ between two additive functions $h$ and $\tilde{h}$ is a periodic function. Hence, one can rewrite
\beq
\begin{split}
u(x)&=e^{ik\cdot \tilde{h}(x)}\sum_{|j| \leq N}e^{ik\cdot w(x)}p_j(x)\prod_{m=1}^d (\tilde{h}_m(x)+w_m(x))^{j_m}\\
&=e^{ik\cdot \tilde{h}(x)}\sum_{|j| \leq N}e^{ik\cdot w(x)}p_j(x)\sum_{j\cprime \leq j}\binom{j}{j\cprime}w(x)^{j-j\cprime}\tilde{h}(x)^{j\cprime}\\
&=e^{ik\cdot \tilde{h}(x)}\sum_{|j| \leq N}\tilde{p}_j(x)\tilde{h}(x)^{j},
\end{split}
\eeq
where $\displaystyle \tilde{p}_j(x):=\sum_{j \leq j \cprime}\binom{j\cprime}{j}e^{ik\cdot w(x)}p_{j\cprime}(x)w(x)^{j\cprime-j}$ is periodic.
The following simple lemma is needed later.
\begin{lemma}
\label{powers}
Let $K$ be a compact neighborhood in $X$. Then for any multi-index $j \in \bZ_+^d$, there exists some constant $C$ such that for any $x \in K$ and $g \in \bZ^d$, one has
$$\left|[g\cdot x]^j-g^j \right|\leq C\langle g \rangle^{|j|-1}.$$
\end{lemma}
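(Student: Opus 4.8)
The plan is to reduce the estimate to a purely combinatorial fact about how the additive function $h$ behaves under the deck action, using the defining property \mref{additive}. First I would fix the compact neighborhood $K$ and observe that the components $h_m$ are smooth, hence bounded on $K$: set $R:=\max_{1\le m\le d}\sup_{x\in K}|h_m(x)|$. The key identity is that for $x\in K$ and $g\in\bZ^d$ we have $h_m(g\cdot x)=h_m(x)+g_m$, so writing $t_m:=h_m(x)$ with $|t_m|\le R$, the quantity $[g\cdot x]^j$ is exactly $\prod_{m=1}^d (g_m+t_m)^{j_m}$, while $g^j=\prod_{m=1}^d g_m^{j_m}$. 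So the lemma becomes the elementary claim: if $|t_m|\le R$ for all $m$, then
\[
\left|\prod_{m=1}^d (g_m+t_m)^{j_m}-\prod_{m=1}^d g_m^{j_m}\right|\le C\langle g\rangle^{|j|-1},
\]
with $C$ depending only on $R$, $j$, and $d$.

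Next I would prove this elementary claim by expanding each factor $(g_m+t_m)^{j_m}$ via the binomial theorem, $(g_m+t_m)^{j_m}=\sum_{0\le i_m\le j_m}\binom{j_m}{i_m}g_m^{i_m}t_m^{j_m-i_m}$, and multiplying out. The full product $\prod_m g_m^{j_m}$ is the single term where $i_m=j_m$ for every $m$; every other term in the expansion has $\sum_m i_m\le |j|-1$, i.e. it is a monomial in the $g_m$'s of total degree at most $|j|-1$, with coefficient bounded in absolute value by a constant depending only on $R$ and $j$ (namely $\prod_m\binom{j_m}{i_m}R^{j_m-i_m}$, since $|t_m|\le R$). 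Each such monomial $\prod_m g_m^{i_m}$ with $\sum_m i_m\le |j|-1$ is bounded by $|g|_\infty^{|j|-1}\le |g|^{|j|-1}\le\langle g\rangle^{|j|-1}$ when $|g|\ge 1$, and is trivially bounded by a constant when $g=0$ (where one should note $\langle g\rangle\ge 1$ always, so the estimate absorbs the $g=0$ case by enlarging $C$). Summing the finitely many terms (their number depends only on $j$ and $d$) gives the bound with an appropriate constant $C=C(R,j,d)=C(K,j)$.

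I do not expect a serious obstacle here; the statement is genuinely routine once one unwinds the definition of $[x]^j$ and the additivity \mref{additive}. The only points requiring a modicum of care are: (i) handling the case $g=0$ (or more generally small $|g|$), which is why the statement is phrased with $\langle g\rangle$ rather than $|g|$ — one just checks $\langle g\rangle\ge 1$; and (ii) making sure the constant is uniform over $x\in K$, which follows from compactness of $K$ and continuity of $h$. If one wanted a slicker packaging, one could invoke that $h_m$ is bounded on $K$ and write $[g\cdot x]^j-g^j$ as a telescoping sum over the $d$ coordinates, replacing $g_m^{j_m}$ by $(g_m+t_m)^{j_m}$ one coordinate at a time and using $|a^{j_m}-b^{j_m}|\le j_m\max(|a|,|b|)^{j_m-1}|a-b|$ with $|a-b|=|t_m|\le R$; but the binomial expansion is the most transparent route and I would present that.
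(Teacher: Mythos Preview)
Your proposal is correct and follows exactly the paper's approach: the paper's proof simply writes $\left|[g\cdot x]^j-g^j \right|=\left|\prod_{m=1}^d (h_m(x)+g_m)^{j_m}-\prod_{m=1}^d g_m^{j_m}\right|\leq C\langle g \rangle^{|j|-1}$ with $C$ depending on $\|h\|_{L^{\infty}(K)}$, and you have supplied the routine binomial-expansion details that the paper leaves implicit.
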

\begin{proof}
\beqn
\left|[g\cdot x]^j-g^j \right|=\left|\prod_{m=1}^d (h_m(x)+g_m)^{j_m}-\prod_{m=1}^d g_m^{j_m}\right|
\leq C\langle g \rangle^{|j|-1},
\eeqn
for some $C>0$ depending on $\|h\|_{L^{\infty}(K)}$.
\end{proof}
\section[properties of $A(k)$]{Basic properties of the family $\{A(k)\}_{k \in \mathbb{C}^d}$}
\label{a(k)}
We discuss another (equivalent) model of the operator family $A(k)$, which is sometimes useful to refer\footnote{Compare with the discussion of Floquet multipliers in \cite{Ksurvey} and discussions of vector bundles $E_k$ in Chapter \ref{C:preliminaries-LRR}.}. In this part, we abuse notations and identify elements in $L^2(M)$ with their periodic extensions that belong to $L^2_{loc}(X)$.

Consider now an additive function $h$ on the abelian covering $X$ (see Definition \ref{addfunc}).
Let $\mathcal{U}_k$ be the mapping that multiplies a function $f(x)$ in $L^2_k(X)$ by $e^{-ik\cdot h(x)}$. Thus, $\mathcal{U}_k$ is an invertible bounded linear operator in $\mathcal{L}(L^2_k(X), L^2(M))$ and its inverse is given by the multiplication by $e^{ik\cdot h(x)}$. Note that the operator norms of $\mathcal{U}_k$ and $\mathcal{U}_k^{-1}$ are bounded by
\beq
e^{|\Im{k}|\cdot \|h\|_{L^{\infty}(\overline{\mathcal{F}})}},
\eeq
where $\mathcal{F}$ is a fundamental domain.

Consider the following elliptic operator:
\beq
\widehat{A}(k):=\mathcal{U}_k A(k) \mathcal{U}_k^{-1}.
\eeq
The operator $\widehat{A}(k)$, with the Sobolev space $H^m(M)$ as the domain, is a closed and unbounded operator in $L^2(M)$.
For each complex quasimomentum $k$, the two linear operators $\widehat{A}(k)$ and $A(k)$ are similar and thus, their spectra are identical. In terms of spectral information, it is no need to distinguish $A(k)$ and its equivalent model $\widehat{A}(k)$. One of the benefits of working with the later model is that $\widehat{A}(k)$ acts on the $k$-independent domain of periodic functions on $X$, while the differential expression becomes a polynomial in $k$. Moreover, the operator $A(k)$ now acts on sections of the appropriate linear bundle $E_k$ (see Chapter \ref{C:preliminaries-LRR}).

The following proposition gives a simple sufficient condition on the principal symbol of the operator $A$ so that the spectra of $A(k)$ are discrete. More general criteria on the discreteness of spectra can be found, for instance, in \cite{Agmon}.
\begin{prop}
\label{realsymbol}
If $A$ has real principal symbol, then for each $k \in \mathbb{C}^d$, then $A(k)$, as an unbounded operator on $L^2(E_k)$, has discrete spectrum, i.e., its spectrum consists of isolated  eigenvalues with finite (algebraic) multiplicities.
\end{prop}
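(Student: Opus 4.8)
The plan is to reduce the statement to a standard criterion: an unbounded closed operator with compact resolvent (equivalently, with one nonempty resolvent point and compactly embedded domain) has discrete spectrum. The operator $A(k)$ (or, equivalently, its conjugate $\widehat A(k)$ acting on the $k$-independent domain $H^m(M)\subset L^2(M)$) is an elliptic differential operator of order $m$ on the \emph{compact} manifold $M$, so its domain $H^m(M)$ embeds compactly into $L^2(M)$ by the Rellich--Kondrachov theorem. Hence the only thing to check is that the resolvent set $\rho(A(k))$ is nonempty for every $k\in\mathbb C^d$; once that is known, $(A(k)-z_0 I)^{-1}$ is a bounded operator from $L^2$ into $H^m$, composing with the compact embedding gives a compact operator on $L^2$, and the analytic Fredholm theorem (see e.g. \cite{RS4, Ka}) yields that the spectrum consists of isolated eigenvalues of finite algebraic multiplicity.

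So the work is in proving $\rho(A(k))\neq\emptyset$. First I would recall that the principal symbol of $\widehat A(k)$ coincides with that of $A$ up to the substitution $\xi\mapsto\xi+k$ (the conjugation by $\mathcal U_k$ and the twisting only shift the covector by $k$ and add lower-order terms), so for $|\xi|$ large the principal symbol $a_0(x,\xi+k)$ is bounded below in modulus by $c|\xi|^m$. The hypothesis that $A$ has \emph{real} principal symbol is exactly what lets one push $z$ off the real axis: for $z=it$ with $t\in\mathbb R$, the full symbol $a_0(x,\xi+k)-it$ of $\widehat A(k)-itI$ is nonvanishing and elliptic with a lower bound improving as $|t|\to\infty$, because the real part $a_0$ and the imaginary part $-t$ cannot simultaneously vanish once $|t|>0$. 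Standard elliptic a priori estimates (Gårding-type inequality, uniform in $t$ for $|t|$ large) then give
\beq
\|u\|_{H^m(M)} \leq C\big(\|(\widehat A(k)-itI)u\|_{L^2(M)} + \|u\|_{L^2(M)}\big),
\eeq
and for $|t|$ sufficiently large the lower-order term can be absorbed, yielding injectivity with closed range; the same estimate for the formal adjoint $\widehat A(k)^* + i\bar t I$ gives density of the range. Hence $it\in\rho(\widehat A(k))=\rho(A(k))$ for $|t|$ large, which is all we need.

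The main obstacle — really the only subtle point — is handling the lower-order terms uniformly. The conjugated operator $\widehat A(k)$ has coefficients that are polynomial in $k$, and for a fixed but possibly large complex $k$ these lower-order contributions are sizable; one must be careful that the resolvent estimate survives. The clean way is: fix $k$, note that the principal symbol of $\widehat A(k)-itI$ is $a_0(x,\xi)-it$ which is invertible with $|a_0(x,\xi)-it|\ge \max(c|\xi|^m,|t|)$ for $|\xi|\ge R_0$, and that on the compact frequency region $|\xi|\le R_0$ one has $|a_0(x,\xi)-it|\ge |t|$ because $a_0$ is real; combining these gives ellipticity of $\widehat A(k)-itI$ with a uniform (in the large parameter $t$) constant, and the parameter-dependent elliptic estimate of Agmon--Nirenberg type then yields invertibility for $|t|$ large with $k$ fixed. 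Since this argument works for each $k\in\mathbb C^d$ separately, we conclude $A(k)$ has discrete spectrum for every $k$. (One could alternatively quote the general discreteness criteria of Agmon \cite{Agmon} referenced in the paper, but the real-principal-symbol hypothesis makes the direct argument above self-contained.)
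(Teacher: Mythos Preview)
Your argument is correct and complete: the compact embedding $H^m(M)\hookrightarrow L^2(M)$ plus a single resolvent point is exactly what is needed, and your use of the reality of the principal symbol to get $|a_0(x,\xi)-it|\ge\max(c|\xi|^m,|t|)$, followed by a large-parameter elliptic estimate to produce that resolvent point, is a valid route. One small imprecision: the principal symbol of $\widehat A(k)$ is exactly $a_0(x,\xi)$, not $a_0(x,\xi+k)$; the conjugation by $e^{ik\cdot h}$ only shifts lower-order terms. You use the correct symbol later, so this does not affect the argument.

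The paper's proof takes a different, shorter route. Rather than running the parameter-elliptic machinery directly, it observes that if $B$ denotes the real part of $A$, then the hypothesis on the principal symbol means $A$ and $B$ share the same principal symbol; hence $\widehat A(k)-\widehat B(0)$ is a differential operator of order $\le m-1$ on $M$. Since $\widehat B(0)$ is a \emph{self-adjoint} elliptic operator on a compact manifold (so its spectrum is trivially discrete), $\widehat A(k)$ is a lower-order perturbation of it, and one invokes Agmon's result that such perturbations preserve discreteness of the spectrum. In effect the paper offloads your large-$|t|$ resolvent estimate onto the self-adjointness of the anchor operator $\widehat B(0)$ and a black-box citation. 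Your approach is more self-contained and makes explicit where the reality of $a_0$ enters; the paper's approach is structurally cleaner and avoids redoing the parameter-elliptic estimates.
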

\begin{proof}
Let $B$ be the real part of the operator $A$. Since $A$ has real principal symbol, the principal symbols of $A$ and $B$ are the same. By pushing down to operators on $M$, the differential operator $\widehat{A}(0)-\widehat{B}(0)$ is of lower order.
Also, the principal symbols of the operators $\widehat{A}(k)$ and $\widehat{A}(0)$ are identical.
Thus, we see that $\widehat{A}(k)$ is a perturbation of the self-adjoint elliptic operator $\widehat{B}(0)$ by a lower order differential operator on the compact manifold $M$. It follows from \cite{Agmon} that the spectrum of $\widehat{A}(k)$ is discrete. This finishes the proof.
\end{proof}

If the spectrum $\sigma(A(k))$ is discrete, then the family of operators $\{\widehat{A}(k)\}_{k \in \mathbb{C}^d}$, has compact resolvents and is analytic of type (A) in the sense of Kato \cite{Ka}\footnote{A different approach to the analyticity of this operator family is taken in \cite{Ksurvey,Kbook}.}. Therefore, this family satisfies the upper-semicontinuity of the spectrum (see e.g., \cite{Ka, RS4}). We provide this statement here without a proof\footnote{Stronger results about properties of spectra of analytic Fredholm operator functions are available in \cite{ZKKP}}.
\begin{prop}
\label{semicont}
Consider $k_0 \in \mathbb{C}^d$. If $\Gamma$ is a compact subset of the complex plane such that $\Gamma \cap \sigma(A(k_0))=\emptyset$, then there exists $\delta>0$ depending on $\Gamma$ and $k_0$ such that $\Gamma \cap \sigma(A(k))=\emptyset$, for any $k$ in the ball $B_{k_0}(\delta)$ centered at $k_0$ with radius $\delta$.
\end{prop}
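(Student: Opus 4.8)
The statement to prove is Proposition~\ref{semicont}: upper-semicontinuity of the spectrum for the family $\{A(k)\}$. Let me sketch a proof.

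\bigskip

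The plan is to reduce to a standard fact about analytic families of operators of type (A) in the sense of Kato, applied to the equivalent model $\widehat A(k) = \mathcal U_k A(k)\mathcal U_k^{-1}$ introduced just above. First I would recall that for each $k$ the operator $\widehat A(k)$ acts on the fixed domain $H^m(M)\subset L^2(M)$, independently of $k$, and that as a differential expression it depends polynomially (hence analytically) on $k$: indeed, conjugating $A(k)$ by multiplication by $e^{-ik\cdot h(x)}$ replaces each derivative $\partial^\alpha$ by a polynomial in $k$ with periodic coefficients. Thus $\{\widehat A(k)\}$ is a holomorphic family of type (A). Since $A(k)$ and $\widehat A(k)$ are similar, $\sigma(A(k))=\sigma(\widehat A(k))$, so it suffices to prove the claim for $\widehat A(k)$.

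\bigskip

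Next I would use the hypothesis that $\Gamma$ is compact with $\Gamma\cap\sigma(A(k_0))=\emptyset$. This means that for every $z\in\Gamma$ the operator $\widehat A(k_0)-z$ has a bounded inverse $(\widehat A(k_0)-z)^{-1}: L^2(M)\to H^m(M)$. By compactness of $\Gamma$ and continuity of $z\mapsto (\widehat A(k_0)-z)^{-1}$ on the resolvent set, the norm $\sup_{z\in\Gamma}\|(\widehat A(k_0)-z)^{-1}\|_{\mathcal L(L^2(M))}=:C_0<\infty$. Now write, for $k$ near $k_0$,
$$
\widehat A(k)-z = \bigl(\widehat A(k_0)-z\bigr)\Bigl(I + (\widehat A(k_0)-z)^{-1}\bigl(\widehat A(k)-\widehat A(k_0)\bigr)\Bigr).
$$
The operator $\widehat A(k)-\widehat A(k_0)$ is a differential operator of order $\le m$ with coefficients that are continuous (indeed analytic) in $k$ and vanish at $k=k_0$; hence as an operator $H^m(M)\to L^2(M)$ its norm tends to $0$ as $k\to k_0$, uniformly — this uses only compactness of $M$ and the polynomial dependence on $k$. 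Write $R_z := (\widehat A(k_0)-z)^{-1}$, viewed as a bounded operator $L^2(M)\to H^m(M)$; its norm in $\mathcal L(L^2(M),H^m(M))$ is also bounded uniformly on $\Gamma$ by elliptic estimates (or simply by the closed graph theorem together with compactness of $\Gamma$), say by $C_1$. Then $\|R_z(\widehat A(k)-\widehat A(k_0))\|_{\mathcal L(L^2(M))}\le C_1\,\|\widehat A(k)-\widehat A(k_0)\|_{\mathcal L(H^m(M),L^2(M))}\to 0$. Choose $\delta>0$ so small that this quantity is $<1/2$ for all $z\in\Gamma$ and all $k\in B_{k_0}(\delta)$; then the Neumann series converges, $I + R_z(\widehat A(k)-\widehat A(k_0))$ is invertible, and therefore $\widehat A(k)-z$ is invertible for every $z\in\Gamma$ and every $k\in B_{k_0}(\delta)$. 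This gives $\Gamma\cap\sigma(\widehat A(k))=\emptyset$, hence $\Gamma\cap\sigma(A(k))=\emptyset$, as desired.

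\bigskip

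The only genuinely substantive point — and the one I would be most careful about — is the uniform bound $\sup_{z\in\Gamma}\|R_z\|_{\mathcal L(L^2(M),H^m(M))}<\infty$ together with $\|\widehat A(k)-\widehat A(k_0)\|_{\mathcal L(H^m(M),L^2(M))}\to 0$; everything else is the routine Neumann-series perturbation argument. The first follows because $z\mapsto R_z$ is operator-norm continuous on the resolvent set (into $\mathcal L(L^2(M))$), the range lies in $H^m(M)$, and the graph-norm of $H^m(M)$ is controlled by $\|\widehat A(k_0)\cdot\|+\|\cdot\|$ via elliptic regularity on the compact manifold $M$; hence $z\mapsto R_z$ is also continuous into $\mathcal L(L^2(M),H^m(M))$ and thus bounded on the compact set $\Gamma$. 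The second is immediate from the explicit polynomial-in-$k$ form of $\widehat A(k)$ and boundedness of the periodic coefficients and of $h$ on the (compact) fundamental domain. Alternatively, one can avoid these estimates entirely by simply invoking the general theorem on analytic families of type (A) (e.g.\ \cite{Ka}, \cite{RS4}), which directly asserts upper-semicontinuity of the spectrum; I would state the self-contained argument above and then remark that it is a special case of that general principle.

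\hfill$\square$
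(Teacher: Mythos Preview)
Your argument is correct. The paper in fact does not prove Proposition~\ref{semicont} at all: it simply observes that $\{\widehat A(k)\}$ is analytic of type (A) with compact resolvents and then states the proposition ``without a proof,'' citing \cite{Ka,RS4}. Your self-contained Neumann-series argument on the equivalent model $\widehat A(k)$ is exactly the standard justification behind that citation, so you have supplied what the paper deliberately left out rather than taken a different route.
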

\begin{remark}\indent
\begin{enumerate}[(i)]
\item
The Hilbert bundle $\mathcal{E}^m$ becomes the trivial bundle $\mathbb{C}^d \times H^m(M)$ via the holomorphic bundle isomorphism defined from the linear maps $\mathcal{U}_k$, where $k \in \mathbb{C}^d$.
\item
In general, one can use the analytic Fredholm theorem to see that the essential spectrum\footnote{Here we use the definition of the essential spectrum of an operator $T$ as the set of all $\lambda \in \mathbb{C}$ such that $T-\lambda$ is not Fredholm.} of $A(k)$ is empty for any $k \in \mathbb{C}^d$, but this is not enough to conclude that these spectra are discrete in the non-self-adjoint case. For example, if we consider the $\bZ$-periodic elliptic operator $A=e^{2i\pi x}D_x$ on $\mathbb{R}$, a simple argument shows that
\beqn
  \sigma(A(k))=\left\{
  \begin{array}{@{}ll@{}}
    \mathbb{C}, & \text{if}\ k\in 2\pi \bZ \\
    \emptyset, & \text{otherwise.}
  \end{array}\right.
\eeqn
A similar example for the higher-dimensional case $\mathbb{R}^d$, $d>1$ can be cooked up easily from the above example.
\end{enumerate}
\end{remark}
\section[Floquet transform]{Properties of Floquet transforms on abelian coverings}\label{S:auxfloquet}

We describe here some useful properties of the Floquet transform $\textbf{F}$ on abelian coverings (see more about this in \cite{Ksurvey}).
First, due to \mref{fl}, one can see that the Floquet transform $\textbf{F}f(k,x)$ of a nice function $f$, e.g., $f \in C^{\infty}_c(X)$, is periodic in the quasimomentum variable $k$ and moreover, it is a quasiperiodic function in the $x$-variable, i.e.,
\beq
\textbf{F}f(k,g\cdot x)=\gamma_k(g)\cdot\textbf{F}f(k,x)=e^{ik\cdot g}\cdot\mathcal{F}f(k,x), \quad \mbox{for any} \hspace{3pt} (g,x) \in G \times X.
\eeq
It follows that $\textbf{F}f(k,\cdot)$ belongs to $H^s_k(X)$, for any $k$ and $s$.
Therefore, it is enough to regard the Floquet transform of $f$ as a smooth section of the Hilbert bundle $\mathcal{E}^s$ over the torus $\mathbb{T}^d$ (which can be identified with the Brillouin zone $B$).

Let $K \Subset X$ be a domain such that $\bigcup\limits_{g \in G}gK=X$. Then given any real number $s$, we denote by $\mathcal{C}^s(X)$ the Frechet space consisting of all functions $\phi \in H^s_{loc}(X)$ such that for any $N \geq 0$, one has
\beq
\sup\limits_{g \in G}\|\phi\|_{H^s(gK)}\cdot \langle g \rangle^{N}<\infty.
\eeq
In terms of Definition \ref{polyspace},
\beq
\bigcap\limits_{N \geq 0} V^{\infty}_N(X)=\mathcal{C}^0(X)\cap C^{\infty}(X).
\eeq
The following theorem collects Plancherel type results for the Floquet transform\footnote{Details, as well as Paley-Wiener type results can be found in \cite{Kbook, Ksurvey, KP1, KP2}.}.
\begin{thm}
\label{pwfloquet}\indent
\begin{enumerate}[a.]
\item
The Floquet transform $\textbf{F}$ is an isometric isomorphism between the Sobolev space $H^s(X)$ and the space $L^2(\mathbb{T}^d, \mathcal{E}^s)$ of $L^2$-integrable sections of the vector bundle $\mathcal{E}^s$.
\item
The Floquet transform $\textbf{F}$ expands the periodic elliptic operator $A$ of order $m$ in $L^2(X)$ into a direct integral of the fiber operators $A(k)$ over $\mathbb{T}^d$.
\beq
\textbf{F}A\textbf{F}^{-1}=\int^{\oplus}\limits_{\mathbb{T}^d}A(k)\di{k}.
\eeq
Equivalently, $\textbf{F}(Af)(k)=A(k)\textbf{F}f(k)$ for any $f \in H^m(X)$.
\item
The Floquet transform
\beq
\textbf{F}: \mathcal{C}^s(X) \rightarrow C^{\infty}(\mathbb{T}^d, \mathcal{E}^s)
\eeq
is a topological isomorphism, where $C^{\infty}(\mathbb{T}^d, \mathcal{E}^s)$ is the space of smooth sections of the vector bundle $\mathcal{E}^s$.
Furthermore, under the Floquet transform $\textbf{F}$, the operator
\beq
A: \mathcal{C}^m(X) \rightarrow \mathcal{C}^0(X)
\eeq
becomes a morphism of sheaves of smooth sections arising from the holomorphic Fredholm morphism $A(k)$ between the two holomorphic Hilbert bundles $\mathcal{E}^m$ and $\mathcal{E}^0$ over the torus $\mathbb{T}^d$, i.e., it is an operator from $C^{\infty}(\mathbb{T}^d, \mathcal{E}^m)$ to $C^{\infty}(\mathbb{T}^d, \mathcal{E}^0)$ such that it acts on the fiber of $\mathcal{E}^m$ at $k$ as the fiber operator $A(k): H^m_k(X) \rightarrow L^2_k(X)$.
\item
The inversion $\textbf{F}^{-1}$ of the Floquet transform is given by the formula
\beq
\label{flinv}
f(x)=\frac{1}{(2\pi)^d}\int\limits_{\mathbb{T}^d}\textbf{F}f(k,x)\di{k},
\eeq
provided that one can make sense both sides of \mref{flinv} (as functions or distributions).
\end{enumerate}
\end{thm}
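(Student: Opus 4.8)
The plan is to reduce the whole statement to classical Fourier analysis on the dual pair $(G,\widehat G)=(\bZ^d,\T^d)$, the only genuinely covering-theoretic ingredient being a choice of fundamental domain. First I would fix a measurable fundamental domain $\cF$ for the $G$-action, so that $X=\bigsqcup_{g\in G}g\cdot\cF$ up to a null set, and record two elementary identifications: for every $k$, restriction to $\cF$ gives a $k$-\emph{independent} isometric identification of $H^s_k(X)=H^s(E_k)$ with $H^s(\cF)$ (this is literally how the norm on $H^s_k(X)$ is defined in Definition \ref{sobolevsection}); and $f\mapsto\big(g\mapsto[\,x\mapsto f(g\cdot x)\,]\big)$ is a unitary map $L^2(X)\cong\ell^2(G,L^2(\cF))$, by $G$-invariance of $\mu_X$. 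Under these identifications $\textbf{F}$ becomes exactly the fibrewise group Fourier transform $\ell^2(\bZ^d,L^2(\cF))\to L^2(\T^d,L^2(\cF))$, $(\varphi_g)_g\mapsto\sum_g\varphi_g e^{-ik\cdot g}$, which is a unitary isomorphism by Parseval on the torus with its normalized Haar measure (equivalently the normalized Lebesgue measure on $B=[-\pi,\pi]^d$). This proves part (a) for $s=0$; I would check the identification on the dense subspace $C^\infty_c(X)$, where the series for $\textbf{F}f$ is locally finite and all rearrangements are legitimate, and then extend by continuity. Part (d) is the matching Fourier inversion on $\bZ^d$: for $f\in C^\infty_c(X)$,
\[
\frac1{(2\pi)^d}\int_B\textbf{F}f(k,x)\,\di k=\sum_{g\in G}f(g\cdot x)\,\frac1{(2\pi)^d}\int_B e^{-ik\cdot g}\,\di k=f(x),
\]
since $0$ is the identity of $G$; one then extends to every class of $f$ — Sobolev functions via (a), or distributions — for which the two sides are meaningful.

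For general real $s$ I would fix a bound-below self-adjoint $G$-periodic elliptic operator of order $2$, say $\Lambda:=1-\Delta_X$ with $\Delta_X$ the Laplace--Beltrami operator. Then $\Lambda^{s/2}$ is a $G$-periodic elliptic pseudodifferential operator which is an isomorphism $H^s(X)\to L^2(X)$ (isometric for the natural norms), and fibrewise $\Lambda(k)^{s/2}\colon H^s_k(X)\to L^2_k(X)$ is an isomorphism as well; since $\Lambda$ is periodic, $\textbf{F}(\Lambda^{s/2}f)(k)=\Lambda(k)^{s/2}\textbf{F}f(k)$, so part (a) for arbitrary $s$ follows from the case $s=0$. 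Part (b) is a one-line computation on $C^\infty_c(X)$: because $A$ is a local operator commuting with the $G$-action and the sum defining $\textbf{F}f(k,\cdot)$ is locally finite,
\[
\textbf{F}(Af)(k,x)=\sum_g(Af)(g\cdot x)e^{-ik\cdot g}=A\Big(\sum_g f(g\cdot{})e^{-ik\cdot g}\Big)(x)=A(k)\,\textbf{F}f(k,x),
\]
the last equality because $\textbf{F}f(k,\cdot)$ is $\gamma_k$-automorphic and $A(k)$ is by definition $A$ acting on such sections. By part (a) (with $s=m$ on the left, $s=0$ on the right) both sides are continuous in $f\in H^m(X)$, so the identity persists there; rewriting it as an equality of operators gives the direct-integral decomposition.

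For part (c) I would invoke the standard correspondence between rapidly decreasing sequences and smooth functions on a torus: under the group-Fourier identification above, the Fr\'echet space $\mathcal C^s(X)$ — functions $\varphi\in H^s_{loc}(X)$ with $\sup_g\|\varphi\|_{H^s(gK)}\langle g\rangle^N<\infty$ for all $N$ — is precisely the space of $H^s(\cF)$-valued rapidly decreasing sequences on $\bZ^d$, and the Fourier transform is a topological isomorphism of that space onto $C^\infty(\T^d,H^s(\cF))$, the two families of seminorms being equivalent. Transporting through the global (analytic) trivializations $\mathcal U_k$ of $\mathcal E^s$ — which differ from restriction-to-$\cF$ only by the smooth factor $e^{ik\cdot h(x)}$, hence induce the same smooth structure — upgrades this to the topological isomorphism $\textbf{F}\colon\mathcal C^s(X)\to C^\infty(\T^d,\mathcal E^s)$. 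The ``furthermore'' clause is then formal: $A(k)$ is elliptic on the compact manifold $M$, hence Fredholm, and depends holomorphically on $k$ (after the conjugation $\widehat A(k)=\mathcal U_kA(k)\mathcal U_k^{-1}$ of Subsection \ref{a(k)}, $\widehat A(k)$ acts on the fixed space $H^m(M)$ with coefficients polynomial in $k$), so $\{A(k)\}$ is a holomorphic Fredholm morphism $\mathcal E^m\to\mathcal E^0$; reading the identity of part (b) through the isomorphisms of part (c) shows that $A\colon\mathcal C^m(X)\to\mathcal C^0(X)$ is exactly the induced map on smooth sections, acting on the fibre over $k$ as $A(k)$.

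The main obstacle I anticipate is not conceptual but the bookkeeping for non-integer and negative $s$: I would have to verify that the ``local'' seminorms $\|\cdot\|_{H^s(gK)}$ defining $H^s(X)$ and $\mathcal C^s(X)$ are compatible, up to constants uniform in $g$ (which is where $G$-periodicity and bounded geometry enter), with the intrinsic Sobolev structure on sections of $E_k$ transported by $\Lambda^{s/2}$, so that every identification above is genuinely bicontinuous and not merely algebraic. Once that is pinned down, the proof is nothing more than Parseval, Fourier inversion on $\bZ^d$, and the rapidly-decreasing-sequence / $C^\infty(\T^d)$ duality, carried through the combinatorics of the covering $X\to M$; ellipticity of $A$ is used only in the last sentence of part (c).
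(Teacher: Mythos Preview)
The paper does not prove this theorem at all: it is stated as a collection of known Plancherel-type results, with the footnote ``Details, as well as Paley-Wiener type results can be found in \cite{Kbook, Ksurvey, KP1, KP2}.'' So there is no proof in the paper to compare your proposal against.

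That said, your outline is essentially the standard argument one finds in those references: reduce to vector-valued Fourier series on $\bZ^d$ via a fundamental domain, invoke Parseval for (a), Fourier inversion for (d), the rapidly-decreasing/$C^\infty(\T^d)$ correspondence for (c), and the one-line intertwining computation for (b). Your identification of the main technical nuisance---uniform equivalence of the local $H^s$-seminorms over $G$-translates, especially for noninteger $s$---is exactly right, and is handled in the literature by the $G$-periodicity/bounded geometry of $X$ (or, as you suggest, by conjugating with powers of a periodic elliptic operator). Nothing in your sketch is wrong or missing a key idea; it is simply a proof the authors chose to outsource.
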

We prove a simple analog of the Riemann-Lebesgue lemma for the Floquet transform.
\begin{lemma}\indent
\label{flrl}
\begin{enumerate}[a.]
\item
Let $\widehat{f}(k,x)$ be a function in $L^1(\mathbb{T}^d,\mathcal{E}^0)$. Then the inverse Floquet transform $f:=\textbf{F}^{-1}\widehat{f}$ belongs to $L^{2}_{loc}(X)$ and
$$\sup_{g \in G}\|f\|_{L^2(g\mathcal{F})}<\infty.$$
Here $\mathcal{F}$ is a fixed fundamental domain.
Moreover, one also has
$$\lim_{|g| \rightarrow \infty}\|f\|_{L^2(g\mathcal{F})}=0.$$
\item
If $f \in V^{1}_0(X)$ then $\textbf{F}f(k,x) \in C(\mathbb{T}^d, \mathcal{E}^0)$.
\end{enumerate}
\end{lemma}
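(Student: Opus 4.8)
\textit{Proof plan for Lemma~\ref{flrl}.}

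The plan is to treat this as two parallel Riemann--Lebesgue-type arguments, exploiting the isometric/topological properties of the Floquet transform collected in Theorem~\ref{pwfloquet} together with density of ``nice'' functions. First I would fix the fundamental domain $\mathcal{F}$ and observe that restriction to $g\mathcal{F}$ of a section of $\mathcal{E}^0$ is controlled by the fiber norm: since $\mathbf{F}f(k,\cdot)$ is $\gamma_k$-automorphic, one has $\|\mathbf{F}f(k,\cdot)\|_{L^2(g\mathcal{F})}=\|\mathbf{F}f(k,\cdot)\|_{L^2(\mathcal{F})}=\|\mathbf{F}f(k,\cdot)\|_{L^2_k(X)}$ up to the constant coming from the character being unitary for real $k$. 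Then for part a., writing $f(x)=\frac{1}{(2\pi)^d}\int_{\mathbb{T}^d}\widehat{f}(k,x)\,dk$ via \mref{flinv}, I would estimate $\|f\|_{L^2(g\mathcal{F})}$ by pulling the $L^2(g\mathcal{F})$-norm inside the integral (Minkowski's integral inequality), giving
$$
\|f\|_{L^2(g\mathcal{F})} \le \frac{1}{(2\pi)^d}\int_{\mathbb{T}^d}\|\widehat{f}(k,\cdot)\|_{L^2(g\mathcal{F})}\,dk = \frac{C}{(2\pi)^d}\int_{\mathbb{T}^d}\|\widehat{f}(k,\cdot)\|_{L^2_k(X)}\,dk \le C'\|\widehat{f}\|_{L^1(\mathbb{T}^d,\mathcal{E}^0)},
$$
which is finite and independent of $g$; this simultaneously shows $f\in L^2_{loc}(X)$ and the uniform bound.

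For the decay $\|f\|_{L^2(g\mathcal{F})}\to 0$ as $|g|\to\infty$, I would use a density argument. The subspace of trigonometric-polynomial sections — finite sums $\sum_\nu c_\nu(x)e^{i\nu\cdot k}$ with $c_\nu$ smooth sections (equivalently, $\mathbf{F}$ of compactly supported $f$) — is dense in $L^1(\mathbb{T}^d,\mathcal{E}^0)$. For such a trigonometric polynomial the inverse Floquet transform is a finite linear combination of translates of a fixed $C^\infty_c(X)$ function, hence itself compactly supported, so $\|f\|_{L^2(g\mathcal{F})}=0$ for all but finitely many $g$. For general $\widehat f\in L^1$, pick $\widehat f_\varepsilon$ trigonometric with $\|\widehat f-\widehat f_\varepsilon\|_{L^1(\mathbb{T}^d,\mathcal{E}^0)}<\varepsilon$; then by the uniform bound already proved, $\limsup_{|g|\to\infty}\|f\|_{L^2(g\mathcal{F})}\le \limsup_{|g|\to\infty}\|\mathbf{F}^{-1}\widehat f_\varepsilon\|_{L^2(g\mathcal{F})} + C'\varepsilon = C'\varepsilon$, and letting $\varepsilon\to 0$ finishes part a.

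For part b., the point is that $f\in V^1_0(X)$ means $\{\|f\|_{L^2(g\mathcal{F})}\}_{g\in G}\in \ell^1(G)$, so the defining series $\mathbf{F}f(k,x)=\sum_{g\in G}f(g\cdot x)e^{-ik\cdot g}$ converges in $L^2(\mathcal{F})$ (equivalently in $\mathcal{E}^0$) uniformly in $k$: indeed $\|f(g\cdot\,)e^{-ik\cdot g}\|_{L^2(\mathcal{F})}=\|f\|_{L^2(g^{-1}\mathcal{F})}$ is the general term of a convergent series independent of $k$. Each partial sum is a trigonometric polynomial with values in $\mathcal{E}^0$, hence continuous from $\mathbb{T}^d$ to $\mathcal{E}^0$, and a uniform limit of continuous functions is continuous, giving $\mathbf{F}f\in C(\mathbb{T}^d,\mathcal{E}^0)$. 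I expect the only mildly delicate point to be bookkeeping the identification between $L^2(g\mathcal{F})$-norms on $X$, fiber norms $\|\cdot\|_{L^2_k(X)}$, and norms in the Hilbert bundle $\mathcal{E}^0$ — in particular making sure the constants relating them are uniform in $k$ over the compact torus $\mathbb{T}^d$ (which holds because for real $k$ the character $\gamma_k$ is unitary), so that the Minkowski and density steps go through cleanly. None of the individual estimates is hard; the proof is essentially ``Riemann--Lebesgue + Plancherel for $\mathbf{F}$'' executed on the covering.
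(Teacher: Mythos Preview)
Your proof is correct and follows essentially the same route as the paper: the uniform bound in part a.\ via Minkowski's integral inequality plus quasiperiodicity, the decay via a density/Riemann--Lebesgue argument, and part b.\ via absolute convergence of the defining series \mref{fl} in $L^2(\mathcal{F})$ uniformly in $k$.

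The only small difference is in the choice of dense subspace for the decay step: the paper approximates $\widehat f$ in $L^1(\mathbb{T}^d,\mathcal{E}^0)$ by sections in $L^2(\mathbb{T}^d,\mathcal{E}^0)$ and then invokes the Plancherel isomorphism (Theorem~\ref{pwfloquet}~a.) to conclude $f_\varepsilon\in L^2(X)$, hence $\|f_\varepsilon\|_{L^2(g\mathcal{F})}\to 0$; you instead approximate by trigonometric polynomials so that $\mathbf{F}^{-1}\widehat f_\varepsilon$ is compactly supported and the tail vanishes identically. Your version is slightly more elementary (it avoids Plancherel), while the paper's version avoids having to justify density of trigonometric-polynomial sections in the bundle-valued $L^1$ space---both are standard and lead to the same conclusion. (One cosmetic slip: in part b.\ you wrote $\|f\|_{L^2(g^{-1}\mathcal{F})}$ where the change of variables actually gives $\|f\|_{L^2(g\mathcal{F})}$, but since you sum over all $g\in G$ this is immaterial.)
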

\begin{proof}
We recall that $L-2$ sections of $\mathcal{E}^0$ can be identified with the elements of $L^2(\mathcal{F})$.

To prove the first statement, we use the identity $\mref{flinv}$ and the Minkowski's inequality to obtain
\beq
\begin{split}
\|f\|_{L^2(g\mathcal{F})}&=\frac{1}{(2\pi)^d}\left\|\int_{\mathbb{T}^d}\textbf{F}f(k,\cdot)\di{k}\right\|_{L^2(g\mathcal{F})}=\frac{1}{(2\pi)^d}\left\|\int_{\mathbb{T}^d}e^{ik\cdot g}\textbf{F}f(k,\cdot)\di{k}\right\|_{L^2(\mathcal{F})}\\
&\leq \frac{1}{(2\pi)^d}\int_{\mathbb{T}^d}\left\|\textbf{F}f(k,\cdot)\right\|_{L^2(\mathcal{F})}\di{k}=\frac{1}{(2\pi)^d}\|\textbf{F}f(k,x)\|_{L^1(\mathbb{T}^d, \mathcal{E}^0)}<\infty.
\end{split}
\eeq
To show that
\beq
\lim_{|g| \rightarrow \infty}\|f\|_{L^2(g\mathcal{F})}=0,
\eeq
one can easily modify the standard proof of the Riemann-Lebesgue lemma, i.e., by using Theorem \ref{pwfloquet} a. and then approximating $\textbf{F}f$ by a sequence of functions in $L^2(\mathbb{T}^d, \mathcal{E}^0)$.

The second statement follows directly from \mref{fl} and the triangle inequality.
\end{proof}

\section{A Schauder type estimate}\label{S:schauder}

For convenience, we state a well-known Schauder type estimate for solutions of a periodic elliptic operator $A$, which we need to refer to several times in this text. We also sketch its proof for the sake of completeness.

For any open subset  $\mathcal{O}$ of $X$ such that $\hat{K} \subset \mathcal{O}$, we define
\beq
G_{\hat{K}}^{\mathcal{O}}:=\{g \in G \mid g\hat{K} \subset \mathcal{O}\}.
\eeq
\begin{prop}
\label{schauderest}
Let $K\subset X$ be a compact set with non-empty interior and $\mathcal{O}$ be its open neighborhood. There exists a compact subset $\hat{K}\subset X$ such that $K \Subset \hat{K} \subset \mathcal{O}$ and the following statement holds: For any $\alpha \in \mathbb{R}^+$, there exists $C>0$ depending on $\alpha, K, \hat{K}$ such that
\beq
\label{schauderineq}
\|u\|_{H^{\alpha}(gK)} \leq C\cdot \|u\|_{L^2(g\hat{K})},
\eeq
for any $g \in G_{\hat{K}}^{\mathcal{O}}$ and any solution $u \in C^{\infty}(\mathcal{O})$ satisfying the equation $Au=0$ on $\mathcal{O}$.
\end{prop}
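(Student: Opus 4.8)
\textbf{Proof proposal for Proposition \ref{schauderest}.}

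The plan is to reduce the estimate to the standard interior elliptic regularity estimate on a single fixed domain and then transport it by the deck-group action, using that $A$ is $G$-periodic. First I would set up the geometry: choose, once and for all, a compact set $\hat K$ with $K \Subset \operatorname{int}(\hat K)$ and $\hat K \subset \mathcal O$, and an auxiliary open set $W$ with $K \Subset W \Subset \operatorname{int}(\hat K)$. (Such sets exist because $K$ is compact with non-empty interior and $\mathcal O$ is an open neighborhood of $K$; this is purely topological.) On this fixed configuration, the classical interior estimate for the elliptic operator $A$ — which has smooth coefficients — gives a constant $C>0$, depending only on $\alpha$, $W$, $\hat K$ and the coefficients of $A$ on $\hat K$, such that
$$\|v\|_{H^\alpha(W)}\le C\big(\|Av\|_{H^{\alpha-m}(\hat K)}+\|v\|_{L^2(\hat K)}\big)$$
for every $v\in C^\infty(\operatorname{int}\hat K)$; in particular, when $Av=0$ on $\operatorname{int}\hat K$ we get $\|v\|_{H^\alpha(W)}\le C\|v\|_{L^2(\hat K)}$, and restricting to $K\subset W$ yields $\|v\|_{H^\alpha(K)}\le C\|v\|_{L^2(\hat K)}$.

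Next I would use periodicity to globalize. Fix $g\in G_{\hat K}^{\mathcal O}$, so that $g\hat K\subset\mathcal O$ and $u$ solves $Au=0$ on a neighborhood of $g\hat K$. Let $v:=u\circ g$, i.e. $v(x)=u(g\cdot x)$; then $v\in C^\infty(\operatorname{int}\hat K)$. Because $A$ commutes with the $G$-action ($A$ is $G$-periodic), $Av=(Au)\circ g=0$ on $\operatorname{int}\hat K$. Applying the fixed-domain estimate to $v$ gives $\|v\|_{H^\alpha(K)}\le C\|v\|_{L^2(\hat K)}$. Finally, since the action of $g$ is an isometry of the Riemannian manifold $X$ preserving the measure $\mu_X$, the change of variables $x\mapsto g\cdot x$ identifies Sobolev norms: $\|v\|_{H^\alpha(K)}=\|u\|_{H^\alpha(gK)}$ and $\|v\|_{L^2(\hat K)}=\|u\|_{L^2(g\hat K)}$. (Here one uses that the Sobolev norm $H^\alpha$ on a domain, defined via the Riemannian metric, is invariant under Riemannian isometries; for integer $\alpha$ this is immediate from the chain rule and isometry invariance of the metric and volume, and for general real $\alpha$ it follows by interpolation/duality, or one may simply take $\alpha$ a sufficiently large integer, which suffices for the applications since larger $\alpha$ gives a stronger estimate.) Combining, $\|u\|_{H^\alpha(gK)}\le C\|u\|_{L^2(g\hat K)}$ with $C$ independent of $g$, which is \eqref{schauderineq}.

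The only mild subtlety — and the place I would be most careful — is the uniformity of the constant $C$ over all $g\in G$. This is exactly what periodicity buys: after pulling back by $g$, the operator, the domains $K$ and $\hat K$, and hence the relevant interior estimate are literally the same for every $g$, so a single application of the classical theorem on the fixed pair $(K,\hat K)$ produces one constant that works uniformly. A secondary point is the isometry-invariance of the fractional Sobolev norm on a domain; if one wants to avoid discussing fractional Sobolev spaces on manifolds with boundary altogether, one can prove the estimate for integer $\alpha$ and note that the statement for non-integer $\alpha$ follows from the integer case with $\lceil\alpha\rceil$ in place of $\alpha$ together with the continuous inclusion $H^{\lceil\alpha\rceil}(gK)\hookrightarrow H^\alpha(gK)$ (whose embedding constant is again $g$-independent by periodicity). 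I expect no genuine obstacle here; the content is entirely in organizing the periodicity/isometry bookkeeping so the constant does not depend on $g$.
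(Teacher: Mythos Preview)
Your argument is correct and, in a sense, more elementary than the paper's. The paper proceeds via a global almost-local periodic parametrix: one finds a periodic pseudodifferential operator $B$ with $BA=1+T$ for a periodic almost-local smoothing operator $T$; then for a solution $Au=0$ one has $u=-Tu$, and the smoothing plus almost-local properties give $\|Tu\|_{H^\alpha(gK)}\le C\|u\|_{L^2(g\hat K)}$ with $C$ independent of $g$ because $T$ is $G$-periodic. Your route instead fixes a single interior estimate on $(K,\hat K)$ and pulls back by the isometric $G$-action, which is perfectly valid here and avoids invoking the parametrix machinery explicitly. The paper's parametrix formulation has the advantage that it transports immediately to the more general situation of Remark~\ref{schauderuniform} (a $C^\infty$-bounded uniformly elliptic, not necessarily periodic, operator on a co-compact covering), where your pull-back trick is unavailable because $A$ need not commute with the deck group; there one really needs the uniform smoothing operator on the whole manifold. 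Your approach, on the other hand, is cleaner when periodicity is present and makes the source of the $g$-uniformity completely transparent.
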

\begin{proof}
Let $B$ be an almost local \footnote{I.e., for some $\varepsilon>0$, the support of the Schwartz kernel of $B$ is contained in an $\varepsilon$-neighborhood of the diagonal of $X \times X$.} pseudodifferential parametrix of $A$ such that $B$ commutes with actions of the deck group $G$ (see e.g., \cite[Lemma 2.1.1]{Kbook} or \cite[Proposition 3.4]{Shubin_spectral}). Hence, $BA=1+T$ for some almost-local and periodic smoothing operator $T$ on $X$.
This implies that for some compact neighborhood $\hat{K}$ (depending on the support of the Schwartz kernel of $T$ and the subset $K$) and for any $\alpha \geq 0$, one can find some $C>0$ so that for any smooth function $v$ on a neighborhood of $\hat{K}$, one gets
$$\|Tv\|_{H^{\alpha}(K)} \leq C\cdot \|v\|_{L^2(\hat{K})}.$$
In particular, for any $g \in G_{\hat{K}}^{\mathcal{O}}$ and $u \in C^{\infty}(\mathcal{O})$,
\beq
\label{schauder1}
\|Tu^g\|_{H^{\alpha}(K)} \leq C\cdot \|u^g\|_{L^2(\hat{K})},
\eeq
where $u^g$ is the $g$-shift of the function $u$ on $\mathcal{O}$. Since $T$ is $G$-periodic, from \mref{schauder1}, we obtain
\beq
\label{schauder2}
\|Tu\|_{H^{\alpha}(gK)} \leq C\cdot \|u\|_{L^2(g\hat{K})}.
\eeq

The important point here is the uniformity of the constant $C$ with respect to $g \in G_{\hat{K}}^{\mathcal{O}}$.

Suppose now that $Au=0$ on $\mathcal{O}$. Thus, $u=BAu-Tu=-Tu$ on $\mathcal{O}$. This identity and \mref{schauder2} imply the estimate
\beq
\label{schauder}
\|u\|_{H^{\alpha}(gK)}=\|Tu\|_{H^{\alpha}(gK)}\leq C\cdot \|u\|_{L^2(g\hat{K})}, \quad \forall g \in G_{\hat{K}}^{\mathcal{O}}.
\eeq
\end{proof}
\begin{remark}
\label{schauderuniform}
We need to emphasize that Proposition \ref{schauderest} holds in a more general context. Namely, it is true for any \textit{$C^{\infty}$-bounded uniformly elliptic operator} $P$ on a co-compact Riemannian covering $\mathcal{X}$ with a discrete deck group $G$.
In this setting, $P$ is invertible modulo a \textit{uniform smoothing operator} $T$ on $\mathcal{X}$ (see \cite[Definition 3.1]{Shubin_spectral} and \cite[Proposition 3.4]{Shubin_spectral}). Now the estimate \mref{schauder2} follows easily from the uniform boundedness of the derivatives of any order of the Schwartz kernel of $T$ on canonical coordinate charts and a routine argument of partition of unity.
Another possible approach is to invoke uniform local apriori estimates \cite[Lemma 1.4]{Shubin_spectral}.
\end{remark}
\section{A variant of Dedekind's lemma}\label{S:dedekind}
It is a well-known theorem by Dedekind (see e.g., \cite[Lemma 2.2]{morandi}) that distinct unitary characters of an abelian group $G$ are linearly independent as functions from $G$ to a field $\mathbb{F}$. The next lemma is a refinement of Dedekind's lemma when $\mathbb{F}=\mathbb{C}$. We notice that a proof by induction method can be found in \cite[Lemma 4.4]{randlescoste}. For the sake of completeness, we will provide our analytic proof using Stone-Weierstrass's theorem.
\begin{lemma}
\label{dedekind}
Consider a finite number of distinct unitary characters $\gamma_1, \ldots, \gamma_{\ell}$ of the abelian group $\mathbb{Z}^d$. Then there are vectors $g_1, \ldots, g_{\ell}$ in $\mathbb{Z}^d$ and $C>0$ such that for any $v=(v_1, \ldots, v_{\ell})\in \mathbb{C}^{\ell}$, we have
$$\max\limits_{1 \leq s \leq \ell}\left|\sum_{r=1}^{\ell} v_r \cdot \gamma_{r}(g_s)\right| \geq C\cdot \max\limits_{1 \leq r \leq \ell}|v_r|.$$
\end{lemma}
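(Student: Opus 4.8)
The statement is equivalent to the assertion that the linear map $\Phi:\mathbb{C}^\ell\to\mathbb{C}^\ell$ sending $v=(v_1,\dots,v_\ell)$ to the vector with $s$-th coordinate $\sum_{r=1}^\ell v_r\gamma_r(g_s)$ is bounded below (for a suitable choice of $g_1,\dots,g_\ell$), which in finite dimensions is just the statement that $\Phi$ is injective, i.e.\ that the matrix $\bigl(\gamma_r(g_s)\bigr)_{s,r}$ is invertible. So the real content is: one can choose $g_1,\dots,g_\ell\in\mathbb{Z}^d$ so that this matrix is invertible, and then $C$ is obtained by a compactness/norm-equivalence argument on the finite-dimensional space $\mathbb{C}^\ell$. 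The existence of such $g_s$ is exactly the classical Dedekind independence lemma, but the point here is to get it via Stone-Weierstrass, which also makes the quantitative constant transparent.

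\textbf{Key steps.} First, identify each unitary character $\gamma_r$ of $\mathbb{Z}^d$ with a point $k_r\in\mathbb{T}^d=\mathbb{R}^d/(2\pi\mathbb{Z})^d$, so that $\gamma_r(g)=e^{ik_r\cdot g}$, and the $k_r$ are pairwise distinct points of the torus. Second, consider the functions $\chi_r:\mathbb{T}^d\to\mathbb{C}$, $\chi_r(k)=$ "evaluation", or rather work on the dual side: the functions $g\mapsto e^{ik_r\cdot g}$ span a subspace of $C(\mathbb{T}^d)$ under Pontryagin duality; more directly, take $\ell$ disjoint open balls $U_1,\dots,U_\ell$ in $\mathbb{T}^d$ with $k_r\in U_r$, and use Stone-Weierstrass (the trigonometric polynomials, i.e.\ finite linear combinations of the $e^{ig\cdot k}$ with $g\in\mathbb{Z}^d$, are dense in $C(\mathbb{T}^d)$) to produce a trigonometric polynomial $P$ with $P(k_r)$ prescribed — for instance one can find, for each $r$, a trigonometric polynomial $P_r$ with $P_r(k_s)=\delta_{rs}$. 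Writing $P_r(k)=\sum_{g}c^{(r)}_g e^{ig\cdot k}$, the relation $P_r(k_s)=\delta_{rs}$ says $\sum_g c^{(r)}_g\gamma_s(g)=\delta_{rs}$, which already exhibits a left inverse to the infinite "generalized Vandermonde" and in particular shows the $\gamma_s$ are linearly independent. Third, to get the finite matrix invertible: since only finitely many Fourier coefficients $c^{(r)}_g$ are nonzero, let $g_1,\dots,g_m$ enumerate all $g\in\mathbb{Z}^d$ occurring in any $P_r$; then the $\ell\times m$ matrix $V=(\gamma_r(g_j))$ has a left inverse, hence rank $\ell$, hence some $\ell\times\ell$ submatrix $V'=(\gamma_r(g_{j_s}))_{s,r}$ is invertible — rename these $g_{j_s}$ as $g_1,\dots,g_\ell$. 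Fourth, with $V'$ invertible, for any $v\in\mathbb{C}^\ell$ we have $v=(V')^{-1}(V'v)$, so $\|v\|_\infty\le\|(V')^{-1}\|_{\infty\to\infty}\,\|V'v\|_\infty$, and $\|V'v\|_\infty=\max_s|\sum_r v_r\gamma_r(g_s)|$; thus $C:=\|(V')^{-1}\|_{\infty\to\infty}^{-1}>0$ works.

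\textbf{Main obstacle.} There is no deep obstacle; the only thing to be a little careful about is the Stone-Weierstrass step — one must check that the algebra of trigonometric polynomials on $\mathbb{T}^d$ is closed under complex conjugation (it is, since $\overline{e^{ig\cdot k}}=e^{-ig\cdot k}$ and $-g\in\mathbb{Z}^d$), separates points of $\mathbb{T}^d$, and contains the constants, so that the complex Stone-Weierstrass theorem applies and yields density in $C(\mathbb{T}^d;\mathbb{C})$; then constructing the $P_r$ with $P_r(k_s)=\delta_{rs}$ requires only approximating a continuous function that equals $1$ near $k_r$ and $0$ near the other $k_s$ closely enough (within $1/2$, say) on the finite set $\{k_1,\dots,k_\ell\}$ and then rescaling/correcting by a further finite linear combination — or, more cleanly, just approximate within $\varepsilon<1/(2\ell)$ and note the resulting values matrix $(P_r(k_s))$ is diagonally dominant, hence invertible, which is already enough without insisting on exact $\delta_{rs}$. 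I would take the diagonally-dominant route to avoid the minor nuisance of exact interpolation. The rest is routine linear algebra and the equivalence of norms on $\mathbb{C}^\ell$.
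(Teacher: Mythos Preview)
Your proof is correct, and it shares with the paper the use of Stone--Weierstrass together with bump functions supported near the distinct points $k_r\in\mathbb{T}^d$, but the overall route is genuinely different. The paper argues by contradiction on the product torus $(\mathbb{T}^d)^\ell$: assuming $\det\bigl(\gamma_r(g_s)\bigr)$ vanishes for \emph{all} choices of $(g_1,\dots,g_\ell)$, it expands the determinant as an alternating sum, observes this forces $\sum_{\sigma\in S_\ell}\operatorname{sign}(\sigma)\,f(\gamma_{\sigma(1)},\dots,\gamma_{\sigma(\ell)})=0$ for every trigonometric polynomial and hence (by Stone--Weierstrass on $(\mathbb{T}^d)^\ell$) for every $f\in C((\mathbb{T}^d)^\ell)$, and then plugs in a tensor product of bumps to reach a contradiction. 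Your argument stays on a single copy of $\mathbb{T}^d$, constructively approximates the $\delta_{rs}$ interpolation data by trigonometric polynomials to obtain a diagonally dominant (hence invertible) $\ell\times\ell$ matrix $(P_r(k_s))$, deduces that the rectangular matrix $(\gamma_s(g_j))$ has full rank $\ell$, and then extracts an invertible $\ell\times\ell$ minor. Your approach is more hands-on and yields slightly more (an explicit left inverse built from Fourier coefficients), at the cost of the extra linear-algebra step of passing from the rectangular matrix to a square submatrix; the paper's contradiction is slicker and avoids that step, but is less constructive and requires working on the $\ell$-fold product.
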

\begin{proof}
By abuse of notation, we can regard $\gamma_1, \ldots, \gamma_{\ell}$ as distinct points of the torus $\mathbb{T}^d$.

For each tuple $(g_1, \ldots, g_{\ell})$ in $(\bZ^{d})^{\ell}$, let $W(g_1, \ldots, g_{\ell})$ be a $\ell \times \ell$-matrix whose $(s,r)$-entry
$\displaystyle W(g_1, \ldots, g_{\ell})_{s,r}$ is $\gamma_{s}^{g_r}$, for any $1 \leq r,s \leq \ell$. We equip $\mathbb{C}^{\ell}$ with the maximum norm. Then the conclusion of the lemma is equivalent to the invertibility of some operator $W(g_1, \ldots, g_{\ell})$ acting from $\mathbb{C}^{\ell}$ to $\mathbb{C}^{\ell}$.

Suppose for contradiction that the determinant function $\det{W(g_1, \ldots, g_{\ell})}$ is zero on $(\bZ^d)^{\ell}$, i.e., for any $g_1, \ldots, g_{\ell} \in \mathbb{Z}^d$, one has
$$0=\det{W(g_1, \ldots, g_{\ell})}=\sum_{\sigma \in S_{\ell}} \sign{(\sigma)}\cdot \left(\gamma_{\sigma(1)}^{g_1}\ldots \gamma_{\sigma(\ell)}^{g_{\ell}}\right),$$
where $S_{\ell}$ is the permutation group on $\{1, \ldots, \ell\}$.
Thus, the above relation also holds for any trigonometric polynomial $P(\gamma_1, \ldots, \gamma_{\ell})$ on $(\mathbb{T}^{d})^{\ell}$, i.e.,
$$\sum_{\sigma \in S_{\ell}} \sign{(\sigma)}\cdot P(\gamma_{\sigma(1)}, \ldots, \gamma_{\sigma(\ell)})=0.$$
By using the fact that the trigonometric polynomials are dense in $C((\mathbb{T}^d)^{\ell})$ in the uniform topology (Stone-Weierstrass theorem), we conclude that
\beq
\label{stone}
\sum_{\sigma \in S_{\ell}} \sign{(\sigma)}\cdot f(\gamma_{\sigma(1)}, \ldots, \gamma_{\sigma(\ell)})=0,
\eeq
for any continuous function $f$ on $(\mathbb{T}^d)^{\ell}$.

Now for each $1 \leq r \leq \ell$, let us select some smooth cutoff functions $\omega_{r}$ supported on a neighborhood of the point $\gamma_r$ such that $\omega_{r}(\gamma_s)=0$ whenever $s \neq r$. We define $f \in C((\mathbb{T}^d)^{\ell})$ as follows
$$f(x_1, \ldots , x_{\ell}):=\prod_{r=1}^{\ell} \omega_{r}(x_r), \quad x_1, \ldots, x_{\ell} \in \mathbb{T}^d.$$
Hence, $f(\gamma_{\sigma(1)}, \ldots , \gamma_{\sigma(\ell)})$ is non-zero if and only if $\sigma$ is the trivial permutation.
By substituting $f$ into \mref{stone}, we get a contradiction, which proves our lemma.
\end{proof}

\section[Other technical notes]{Proofs of some other technical statements}
\label{technical}
In this section, we will use the notation $\cF$ for the closure of a fundamental domain for $G$-action on the covering $X$.
\subsection{Proof of Theorem \ref{dimvp}}
If $u \in V^{\infty}_{\floor{N-d/p}}(A)$, then $u \in V^{\infty}_{N_0}(A)$ for some nonnegative integer $N_0$ such that $N_0<N-d/p$. Thus,
\beq
\sum_{g \in G} \|u\|^p_{L^2(g\cF)} \langle g \rangle^{-pN} \lesssim \sum_{g \in G} \langle g \rangle^{p(N_0-N)}<\infty.
\eeq
Hence, $V^{\infty}_{\floor{N-d/p}}(A) \subseteq V^{p}_N(A)$.

Now suppose that $F_{A, \mathbb{R}}=\{k_1, \ldots, k_{\ell}\}$ (modulo $G^*$-shifts), where $\ell \in \mathbb{N}$. It suffices to show that
\beq
\label{vp}
V^{p}_N(A) \subseteq V^{\infty}_{\floor{N-d/p}}(A).
\eeq
A key ingredient of the proof of \mref{vp} is the following statement.
\begin{lemma}
\label{induction}
Suppose that $\mathcal{N}>d/p$.
\begin{enumerate}[(i)]
\item
If $u \in V^p_{\mathcal{N}}(A) \cap V^{\infty}_{\mathcal{M}}(A)$ for some $0 \leq \mathcal{M}<\mathcal{N}+1-d/p$, then $u \in V^{\infty}_{\mathcal{M}\cprime}(A)$ for some $\mathcal{M}\cprime < \mathcal{N}-d/p$. In particular, $u \in V^{\infty}_{\mathcal{N}-d/p}(A)$.
\item
If u is in $V^p_{\mathcal{N}}(A) \cap V^{\infty}_{\mathcal{N}+1-d/p}(A)$, then $u\in V^{\infty}_{\mathcal{N}-d/p}(A)$.
\end{enumerate}
\end{lemma}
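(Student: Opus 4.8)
The plan is to combine the Floquet decomposition of polynomially growing solutions (Theorem~\ref{Liouvillethm}(ii)) with a sharp lower bound on the $L^2$-growth of such a decomposition along a positive-density subset of the lattice $\mathbb Z^d$; the $\ell^p$-summability encoded in membership in $V^p_{\mathcal N}(A)$ then forces the top degree $J$ occurring in the decomposition to satisfy $J<\mathcal N-d/p$, which yields both (i) and (ii) simultaneously. Concretely, assume $u\neq0$ (otherwise trivial). Since $F_{A,\mathbb R}$ is finite modulo $G^*$, the Liouville property holds, $V^\infty_{\mathcal M}(A)$ is finite-dimensional, and by Theorem~\ref{Liouvillethm}(ii) we may fix one representation $u=\sum_{r=1}^{\ell}e_{k_r}(x)P_r(x)$ with $P_r(x)=\sum_{|\alpha|\le J}[x]^{\alpha}p_{r,\alpha}(x)$, the $p_{r,\alpha}$ being $G$-periodic and $J$ chosen minimal, so that $p_{r,\alpha}\neq0$ for at least one pair $(r,\alpha)$ with $|\alpha|=J$. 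Put $R^{*}=\{r:\ p_{r,\alpha}\neq0\text{ for some }|\alpha|=J\}\neq\varnothing$ and, for $r\in R^{*}$, the $L^2(K)$-valued homogeneous polynomial $Q_r(g)=\sum_{|\alpha|=J}g^{\alpha}p_{r,\alpha}$, where $K\Subset X$ is any compact set whose $G$-translates cover $X$ (the spaces in question being independent of this choice). Using $e_{k_r}(g\cdot x)=\gamma_{k_r}(g)e_{k_r}(x)$, the periodicity of $p_{r,\alpha}$, and Lemma~\ref{powers} (which gives $[g\cdot x]^{\alpha}=g^{\alpha}+O(\langle g\rangle^{|\alpha|-1})$ uniformly for $x\in K$), one gets, in $L^2(K)$,
$$u(g\cdot x)=\sum_{r\in R^{*}}\gamma_{k_r}(g)\,e_{k_r}(x)\,Q_r(g)(x)+O\big(\langle g\rangle^{J-1}\big).$$
In particular $\|u\|_{L^2(gK)}\lesssim\langle g\rangle^{J}$, so $u\in V^\infty_J(A)$; it remains to show $J<\mathcal N-d/p$.

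The key point is to show that $u$ genuinely grows like $\langle g\rangle^{J}$ along a set of positive density, for which the contributions of the distinct real quasimomenta $k_r$ must be separated so that no cancellation occurs in the displayed leading term. To this end apply Lemma~\ref{dedekind} to the distinct unitary characters $\{\gamma_{k_r}\}_{r\in R^{*}}$: there are $g_1,\dots,g_{|R^{*}|}\in\mathbb Z^d$ such that the matrix $(\gamma_{k_r}(g_s))_{s,r}$ is invertible; applying the bound coordinatewise in the Hilbert space $L^2(K)$ gives a constant $c'>0$ with $\max_s\big\|\sum_{r\in R^{*}}\gamma_{k_r}(g_s)W_r\big\|_{L^2(K)}\ge c'\max_{r}\|W_r\|_{L^2(K)}$ for all tuples $(W_r)$ in $L^2(K)$. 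Now substitute $g\mapsto g+g_s$ in the expansion above, using $\gamma_{k_r}(g+g_s)=\gamma_{k_r}(g)\gamma_{k_r}(g_s)$, $Q_r(g+g_s)=Q_r(g)+O(\langle g\rangle^{J-1})$, and $\langle g+g_s\rangle\asymp\langle g\rangle$, and apply the last display with $W_r=\gamma_{k_r}(g)e_{k_r}Q_r(g)$; since the $k_r$ are real one has $|\gamma_{k_r}(g)|=1$, so $\|W_r\|_{L^2(K)}=\|e_{k_r}Q_r(g)\|_{L^2(K)}\gtrsim\|Q_r(g)\|_{L^2(K)}$. This produces, for all $g$ with $|g|$ large,
$$\max_{1\le s\le|R^{*}|}\|u\|_{L^2((g+g_s)K)}\ \gtrsim\ \max_{r\in R^{*}}\|Q_r(g)\|_{L^2(K)}-O\big(\langle g\rangle^{J-1}\big).$$
The function $\Phi(g):=\sum_{r\in R^{*}}\|Q_r(g)\|_{L^2(K)}^{2}$ is a nonnegative homogeneous polynomial of degree $2J$ on $\mathbb R^d$ which is not identically zero (some $Q_r\not\equiv0$ by the choice of $J$), hence $\Phi(g)\ge\delta|g|^{2J}$ on a solid cone $\mathcal C\subset\mathbb R^d$, so $\max_{r\in R^{*}}\|Q_r(g)\|_{L^2(K)}\gtrsim|g|^{J}$ there. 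Combining with the last display, for every $g\in\mathcal C\cap\mathbb Z^d$ with $|g|$ large enough there is $s(g)$ with $\|u\|_{L^2((g+g_{s(g)})K)}\gtrsim\langle g+g_{s(g)}\rangle^{J}$; the image $S:=\{g+g_{s(g)}\}$ is at most $|R^{*}|$-to-one over $\mathcal C\cap\mathbb Z^d$ and therefore satisfies $\#(S\cap B_R)\gtrsim R^{d}$.

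It remains to conclude. Since $u\in V^p_{\mathcal N}(A)$,
$$\infty>\sum_{g\in\mathbb Z^d}\|u\|_{L^2(gK)}^{p}\langle g\rangle^{-p\mathcal N}\ \gtrsim\ \sum_{g\in S}\langle g\rangle^{p(J-\mathcal N)},$$
and because $S$ contains (up to a bounded translation) the lattice points of a solid cone, the right-hand series converges only when $p(\mathcal N-J)>d$, i.e. $J<\mathcal N-d/p$. Thus $u\in V^\infty_{J}(A)$ with $J$ an integer strictly below $\mathcal N-d/p$; taking $\mathcal M':=J$ gives (i), and $V^\infty_{J}(A)\subseteq V^\infty_{\mathcal N-d/p}(A)$ (since $J\le[\mathcal N-d/p]$ and $V^\infty_N(A)=V^\infty_{[N]}(A)$) gives the ``in particular'' clause of (i) and statement (ii). (Note the argument uses only $u\in V^p_{\mathcal N}(A)\cap V^\infty_{\mathcal M}(A)$ with $\mathcal N>d/p$; the hypothesis $\mathcal M<\mathcal N+1-d/p$ in (i) merely reflects how the estimate is fed into the induction in the proof of Theorem~\ref{dimvp}.)

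The main obstacle is the separation step: ensuring that the genuine degree-$J$ part of $u$ is not annihilated by destructive interference among the different real quasimomenta $k_r$. This is exactly what the substitution $g\mapsto g+g_s$, together with the quantitative Dedekind estimate (Lemma~\ref{dedekind}) promoted to $L^2(K)$-valued vectors, is designed to overcome; the crucial place where realness of the $k_r$ enters is the identity $|\gamma_{k_r}(g)|=1$, without which the leading coefficients would be exponentially weighted. Everything else — the uniform control of the $O(\langle g\rangle^{J-1})$ remainders in $g$, and the elementary fact that a nonzero homogeneous polynomial is bounded below by $|g|^{\deg}$ on a solid cone (hence on a positive-density set of lattice points) — is routine and can be dispatched with Lemma~\ref{powers}, the $L^2$-growth estimate for Floquet functions, and a standard counting argument.
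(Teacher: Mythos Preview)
Your argument is correct and takes a genuinely different route from the paper's. The paper first passes from $\|u\|_{L^2(gK)}$ to pointwise values via Schauder estimates and Sobolev embedding (its Step~2), then in Step~5 approximates the lattice sum $\sum_g|P(g)|^p\langle g\rangle^{-p\mathcal N}$ by the integral $\int_{\mathbb R^d}|P(z)|^p\langle z\rangle^{-p\mathcal N}\,dz$ and evaluates the latter in polar coordinates; the discretization error is $O(\langle g\rangle^{p(N_0-1)})$, and summability of that error is exactly where the a~priori bound $N_0\le\mathcal M<\mathcal N+1-d/p$ enters. Part~(ii) is then handled separately by applying (i) with $\mathcal N+\varepsilon$ and letting $\varepsilon\to0$. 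You bypass both the Sobolev step and the integral approximation: working directly with $L^2(K)$-valued polynomials and using that a nonzero nonnegative homogeneous polynomial is bounded below on a solid cone, you obtain a lower bound $\|u\|_{L^2(hK)}\gtrsim\langle h\rangle^{J}$ on a positive-density subset of the lattice, from which the strict inequality $J<\mathcal N-d/p$ follows at once. This disposes of (i) and (ii) simultaneously and shows, as you observe, that the hypothesis on $\mathcal M$ is not actually needed; in fact your argument proves the inclusion $V^p_N(A)\subseteq V^\infty_{\floor{N-d/p}}(A)$ of Theorem~\ref{dimvp} directly, without the iteration the paper builds around this lemma. One small imprecision: the set $S=\{g+g_{s(g)}\}$ need not literally contain the lattice points of a translated cone, but your earlier count $\#(S\cap B_R)\gtrsim R^d$ is correct, and Abel summation already gives $\sum_{h\in S}\langle h\rangle^{-\alpha}=\infty$ for every $\alpha\le d$, which is all that is required.
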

Instead of proving Lemma \ref{induction} immediately, let us assume first its validity and  prove \mref{vp}. Consider $u \in V^p_N(A)$.

\textit{Case 1}. $p>1$.

We prove by induction that if $0 \leq s \leq d-1$, then
\beq
\label{inductionstep}
u \in V^p_{N+d/p-(s+1)/p}(A) \cap V^{\infty}_{N-s/p}(A).
\eeq
The statement holds for $s=0$ since clearly, $V^p_N(A) \subseteq V^{\infty}_{N}(A)$ and $N+d/p-1/p \geq N$.
Now suppose that \mref{inductionstep} holds for $s$ such that $s+1 \leq d-1$.
Since $1-1/p>0$, we can apply Lemma \ref{induction} (i) to $u$ and the pair $(\mathcal{N}, \mathcal{M})=(N+d/p-(s+1)/p, N-s/p)$ to deduce that $u \in V^{\infty}_{N-(s+1)/p}(A)$. Therefore, \mref{inductionstep} also holds for $s+1$.
In the end, we have
\beq
u \in V^p_{N}(A) \cap V^{\infty}_{N-(d-1)/p}(A).
\eeq Applying Lemma \ref{induction} (i) again, we conclude that $u$ belongs to $V^{\infty}_{\mathcal{M}\cprime}(A)$ for some $\mathcal{M}\cprime<N-d/p$. In other words, $u$ is in $V^{\infty}_{\floor{N-d/p}}(A)$.

\textit{Case 2}. $p=1$.

As in Case 1, we apply Lemma \ref{induction} (ii) and induction to prove that
\beq
u \in V^1_{N+d-1-s}(A) \cap V^{\infty}_{N-s}(A)
\eeq
for any $0 \leq s \leq d-1$. Hence,
\beq
u \in V^1_N(A) \cap V^{\infty}_{N+1-d}(A).
\eeq
Due to Lemma \ref{induction} (ii) again, one concludes that
\beq
u \in V^1_N(A) \cap V^{\infty}_{N-d}(A).
\eeq
Applying now Lemma \ref{induction} (i) to $u$ and the pair $(\mathcal{N}, \mathcal{M})=(N, N-d)$, we conclude that
\beq
u \in V^{\infty}_{\floor{N-d}}(A)=V^{\infty}_{N-(d+1)}(A).
\eeq
Thus, Theorem \ref{dimvp} follows from Lemma \ref{induction}.

Let us turn now to our proof of Lemma \ref{induction}, which consists of several steps.
\begin{enumerate}[(i)]
\item
\textbf{Step 1.}
The lemma is trivial if $u=0$. So from now on, we assume that $u$ is non-zero.
Since $V^p_{\mathcal{N}}(A) \subseteq V^{\infty}_{\mathcal{N}}
(A)$, we can apply Theorem \ref{Liouvillethm} (ii) to represent $u \in V^{p}_{\mathcal{N}}(A)$ as a finite sum of Floquet solutions of $A$, i.e.,
\beq
u=\sum_{r=1}^{\ell} u_r.
\eeq
Here $u_r$ is a Floquet function of order $M_r \leq {\mathcal{N}}$ with quasimomentum $k_r$.
Let $N_0$ be the highest order among all the orders of the Floquet functions $u_r$ appearing in the above representation.
Without loss of generality, we can assume that there exists $r_0 \in [1,\ell]$ such that for any $r \leq r_0$, the order $M_r$ of $u_r$ is maximal among all of these Floquet functions.
Thus, $M_r=N_0 \leq \mathcal{M}$ when $r \in [1,r_0]$.
To prove our lemma, it suffices to show that $N_0<\mathcal{N}-d/p$.

\textbf{Step 2.} According to Proposition \ref{schauderest}, we can pick a compact neighborhood $\hat{\cF}$ of $\cF$ such that for any $\alpha \geq 0$,
\beq
\|u\|_{H^{\alpha}(g\cF)}\leq C \cdot \|u\|_{L^2(g\hat{\cF})}
\eeq
for some $C>0$ independent of $g \in G$.

Let $\alpha>n/2$, then the Sobolev embedding theorem yields the estimate
\beq
\label{schauder}
\|u\|_{C^{0}(g\cF)} \lesssim \|u\|_{L^2(g\hat{\cF})}, \quad \forall g \in G.
\eeq
From \mref{schauder} and the fact that $u \in V_{\mathcal{N}}^p(A)$, we obtain
\beq
\label{a.e1}
\sup_{x \in \cF}\left(\sum_{g \in G}\left|u(g\cdot x)\right|^p \langle g \rangle^{-p\mathcal{N}}\right) \lesssim
\sum_{g \in G}\|u\|_{L^2(g\hat{\cF})}^p \langle g \rangle^{-p\mathcal{N}} <\infty.
\eeq

\textbf{Step 3.}
One can write
\beq
u(x)=\sum_{r=1}^{\ell}u_r(x)=\sum_{r=1}^{r_0}e_{k_r}(x)\sum_{|j|=N_0}a_{j,r}(x)[x]^{j}+O(|x|^{N_0-1}).
\eeq
Here each function $a_{j,r}$ is $G$-periodic and the remainder term $O(|x|^{N_0-1})$ is an exponential-polynomial with periodic coefficients of order at most $N_0-1$.
Hence, for any $(g,x) \in G \times \cF$, we get
\beq
u(g \cdot x)=\sum_{r=1}^{r_0}e^{ik_r\cdot g}\sum_{|j|=N_0}e_{k_r}(x)a_{j,r}(x)[g \cdot x]^{j}+O(\langle g\rangle^{N_0-1}).
\eeq
Since $N_0-1 \leq \mathcal{M}-1<\mathcal{N}-d/p$, the series
\beq
\sum_{g \in \bZ^d} \langle g \rangle^{p(N_0-1)}\cdot\langle g \rangle^{-p\mathcal{N}}
\eeq
converges.
From this and \mref{a.e1}, we deduce that
\beq
\sup\limits_{x \in \cF}\sum_{g \in G}\left|\sum_{r=1}^{r_0}e^{ik_r\cdot g}\sum_{|j|=N_0}e_{k_r}(x)a_{j,r}(x)[g \cdot x]^{j}\right|^p \langle g \rangle^{-p\mathcal{N}}<\infty.
\eeq
By Lemma \ref{powers},
\beq
|[g\cdot x]^j-g^j|=O(\langle g \rangle^{N_0-1})
\eeq
 for any multi-index $j$ such that $|j|=N_0$.
This implies that
\beq
\label{a.e2}
\sup\limits_{x \in \cF}\sum_{g \in G}\left|\sum_{|j|=N_0}\sum_{r=1}^{r_0}e_{k_r}(x)a_{j,r}(x)e^{ik_r\cdot g}g^{j}\right|^p \langle g \rangle^{-p\mathcal{N}}<\infty.
\eeq

\textbf{Step 4.} We will use Lemma \ref{dedekind} to reduce the condition \mref{a.e2} to the one without exponential terms $e^{ik_r \cdot g}$, so we could assume that $F_{A, \mathbb{R}}=\{0\}$ (modulo $G^*$-shifts).

Indeed, let $\gamma_1, \ldots, \gamma_{r_0}$ be distinct unitary characters of $G$ that are defined via $\gamma_r(g):=e^{ik_r \cdot g}$, where $r \in \{1, \ldots, r_0\}$ and $g \in G$. Now, due to Lemma \ref{dedekind}, there are $g_1, \ldots, g_{r_0} \in G$ and a constant $C>0$ such that for any vector $(v_1, \ldots, v_{r_0}) \in \mathbb{C}^{r_0}$, we have the following inequality:
\beq
\label{a.e3}
C\cdot \max\limits_{1 \leq s \leq r_0}\left|\sum_{r=1}^{r_0} v_r \cdot e^{ik_r \cdot g_s}\right| \geq \max\limits_{1 \leq r\leq r_0}|v_r|.
\eeq
Now, for any $(g,x) \in G \times \cF$ and $1 \leq s \leq r_0$, we apply \mref{a.e3} to the vector
\beq
(v_1, \ldots, v_{r_0}):=\left(\sum_{|j|=N_0}e_{k_r}(x)a_{j,r}(x)(g+g_s)^{j}\langle g+g_s \rangle^{-\mathcal{N}}e^{ik_r \cdot g}\right)_{1 \leq r \leq r_0}
\eeq
to
deduce that
\beq
\label{a.e4}
\begin{split}
&\max\limits_{1 \leq r \leq r_0}\left|\sum_{|j|=N_0}e_{k_r}(x)a_{j,r}(x)(g+g_s)^{j}\right|^p\langle g+g_s \rangle^{-p\mathcal{N}}\\
=
&\max\limits_{1 \leq r \leq r_0}\left|\sum_{|j|=N_0}e_{k_r}(x)a_{j,r}(x)(g+g_s)^{j}\langle g+g_s \rangle^{-\mathcal{N}}e^{ik_r \cdot g}\right|^p\\
\lesssim
&\max\limits_{1 \leq s \leq r_0} \left|\sum\limits_{r=1}^{r_0}\left(\sum_{|j|=N_0}e_{k_r}(x)a_{j,r}(x)(g+g_s)^{j}\langle g+g_s \rangle^{-\mathcal{N}} e^{ik_r \cdot g}\right)e^{ik_r \cdot g_s}\right|^p\\
\lesssim
&\sum\limits_{s=1}^{r_0} \left|\sum\limits_{r=1}^{r_0}\sum_{|j|=N_0}e_{k_r}(x)a_{j,r}(x)(g+g_s)^{j}\cdot e^{ik_r \cdot (g+g_s)}\right|^p\cdot \langle g+g_s \rangle^{-p\mathcal{N}}
\end{split}
\eeq
Summing the estimate \mref{a.e4} over $g \in G$, we derive
\beq
\label{a.e5}
\begin{split}
&\max\limits_{1 \leq r \leq r_0}\sup_{x \in \cF}\sum_{g \in G}\left|\sum_{|j|=N_0}e_{k_r}(x)a_{j,r}(x)g^{j}\right|^p\langle g \rangle^{-p\mathcal{N}}\\
=
&\max\limits_{1 \leq r,s \leq r_0}\sup_{x \in \cF}\sum_{g \in G}\left|\sum_{|j|=N_0}e_{k_r}(x)a_{j,r}(x)(g+g_s)^{j}\right|^p\langle g+g_s \rangle^{-p\mathcal{N}}\\
\lesssim
&\sup_{x \in \cF}\sum\limits_{s=1}^{r_0} \sum_{g \in G}\left|\sum\limits_{r=1}^{r_0}\sum_{|j|=N_0}e_{k_r}(x)a_{j,r}(x)(g+g_s)^{j}\cdot e^{ik_r \cdot (g+g_s)}\right|^p\cdot \langle g+g_s \rangle^{-p\mathcal{N}}
\\
\lesssim
&\sup_{x \in \cF}\sum_{g \in G}\left|\sum\limits_{r=1}^{r_0}\sum_{|j|=N_0}e_{k_r}(x)a_{j,r}(x)g^{j}\cdot e^{ik_r \cdot g}\right|^p\cdot \langle g \rangle^{-p\mathcal{N}}<\infty.
\end{split}
\eeq
From \mref{a.e2} and \mref{a.e5}, we get
\beq
\label{a.e6}
\sum_{g \in G}\left|\sum_{|j|=N_0}e_{k_r}(x)a_{j,r}(x)g^{j}\right|^p \cdot \langle g \rangle^{-p\mathcal{N}}<\infty,
\eeq
for any $1 \leq r \leq r_0$ and $x \in \cF$.

\textbf{Step 5.} At this step, we prove the following claim: If $P$ is a non-zero homogeneous polynomial of degree $N_0$ in $d$-variables such that $N_0<\mathcal{N}+1-d/p$ and
\beq
\label{a.e7}
\sum_{g \in \bZ^d} \left|P(g)\right|^p \cdot \langle g \rangle^{-p\mathcal{N}}<\infty,
\eeq
then $N_0<\mathcal{N}-d/p$.

Our idea is to approximate the series in \mref{a.e7} by the integral
$$\mathcal{I}:=\int\limits_{\mathbb{R}^d}|P(z)|^p\cdot \langle z \rangle^{-p\mathcal{N}}\di{z}.$$
In fact, for any $z \in [0,1)^d+g$, one can use the triangle inequality and the assumption that the order of $P$ is $N_0$ to achieve the following estimate
$$|P(z)|^p \leq 2^{p-1}\left(|P(g)|^p+|P(z)-P(g)|^p\right) \lesssim |P(g)|^p+\langle g \rangle^{p(N_0-1)}.$$
Integrating the above estimate over the cube
\beq
[0,1)^d+g
\eeq
and then summing over all $g \in \bZ^d$, we deduce
\beq
\begin{split}
\mathcal{I}&=\sum\limits_{g \in \bZ^d} \int\limits_{[0,1)^d+g} |P(z)|^p\cdot \langle z \rangle^{-p\mathcal{N}}\di{z} \\
&\lesssim \sum_{g \in \bZ^d} \left|P(g)\right|^p \cdot \langle g \rangle^{-p\mathcal{N}}+\sum_{g \in \bZ^d} \langle g \rangle^{p(N_0-1-\mathcal{N})}<\infty,
\end{split}
\eeq
where we have used \mref{a.e7} and the assumption $(N_0-1-\mathcal{N})p<-d$.

We now rewrite the integral $\mathcal{I}$ in polar coordinates:
\beq
\mathcal{I}=\int\limits_{0}^{\infty} \int\limits_{\mathbb{S}^{d-1}}|P(r\omega)|^p \langle r \rangle^{-p\mathcal{N}}r^{d-1}\di\omega \di{r}=\int\limits_{0}^{\infty}\langle r \rangle^{-p\mathcal{N}}r^{d-1+pN_0}\di{r}\cdot \int\limits_{\mathbb{S}^{d-1}}|P(\omega)|^p \di\omega.
\eeq
Suppose for contradiction that $(N_0-\mathcal{N})p \geq -d$. Then
\beq
\int\limits_{0}^{\infty}\langle r \rangle^{-p\mathcal{N}}r^{d-1+pN_0}\di{r}=\infty.
\eeq
Thus, the finiteness of $\mathcal{I}$ implies that
\beq
\int\limits_{\mathbb{S}^{d-1}}|P(\omega)|^p \di\omega=0.
\eeq
Hence, $P(\omega)=0$ for any $\omega \in \mathbb{S}^{d-1}$. By homogeneity, $P$ must be zero, which is a contradiction that proves our claim.

\textbf{Step 6}.
Since $u$ is non-zero, there are some $r \in \{1, \ldots, r_0\}$ and $x \in\cF$ such that the following homogeneous polynomial of degree $N_0$ in $\mathbb{R}^d$
\beq
P(z):=\sum_{|j|=N_0}e_{k_r}(x)a_{j,r}(x)z^{j}
\eeq
is non-zero. Due to \mref{a.e6} and the condition
\beq N_0 \leq \mathcal{M}<\mathcal{N}+1-d/p \mbox{ (see Step 1)},
\eeq
the inequality $N_0<\mathcal{N}-d/p$ must be satisfied according to Step 5. This finishes the proof of the first part of the lemma.
\item
Consider
\beq
u \in V^{p}_{\mathcal{N}}(A) \cap V^{\infty}_{\mathcal{N}+1-d/p}(A).
\eeq
In particular, for any $\varepsilon>0$,
\beq u \in V^{p}_{\mathcal{N}+\varepsilon}(A) \cap V^{\infty}_{\mathcal{N}+\varepsilon+1-d/p}(A).
\eeq
We repeat the proof of Lemma \ref{induction} (i) for $(\mathcal{N}+\varepsilon)$ (instead of $\mathcal{N}$ in part (i)). As in the Step 1 of the previous proof, we first decompose $u$ as a finite sum of Floquet solutions and let $N_0$ be the highest order among all the orders of the Floquet functions appearing in that decomposition.
Repeating all the steps of the part (i), we conclude that
\beq
N_0<\mathcal{N}+\varepsilon-d/p
\eeq
for any $\varepsilon>0$.
By letting $\varepsilon \rightarrow 0^+$, $N_0 \leq \mathcal{N}-d/p$. We conclude that $u \in V^{\infty}_{\mathcal{N}-d/p}(A)$. This yields the second part of the lemma.
\end{enumerate}

\subsection{Proof of Theorem \ref{UCinfty}}\indent
\begin{enumerate}[a.]
\item
Consider $u \in V^{p}_N(A)$.
Due to Theorem \ref{dimvp} and the condition that $N\leq d/p$,
\beq
V^{p}_N(A) \subseteq V^{p}_{d/p+1/2}(A)=V^{\infty}_0(A).
\eeq
Using Theorem \ref{Liouvillethm} (ii), we get
\beq
\label{repn}
u(x)=\sum_{r=1}^{\ell}e_{k_r}(x)a_r(x),
\eeq
for some periodic functions $a_r(x)$.

Using \mref{schauder} and the assumption that $u \in V^{p}_N(A)$, we derive
\beq
\label{a.e8}
\sup_{x \in \cF}\sum_{g \in G}\left|\sum_{r=1}^{\ell}e_{k_r}(x)a_r(x)e^{ik_r\cdot g}\right|^p\cdot\langle g \rangle^{-pN}<\infty.
\eeq

Now one can modify (from the estimate \mref{a.e6}instead of \mref{a.e2}) the argument in Step 4 of the proof of Lemma \ref{induction} to get
\beq
\label{a.e9}
\max_{1\leq r\leq\ell}\sup_{ x \in \cF}\left|e_{k_r}(x)a_r(x)\right|^p\cdot \sum_{g \in \bZ^d}\langle g \rangle^{-pN}<\infty.
\eeq
Hence, the assumption $-pN\geq -d$ implies that $\max\limits_{1\leq r\leq\ell}\sup\limits_{x \in \cF}\left|e_{k_r}(x)a_r(x)\right|=0$. Thus, $u$ must be zero.

\item
Let $u$ be an arbitrary element in $V^{\infty}_N(A)$. Since $N<0$, we can assume that $u$ has the form \mref{repn}.
To prove that $u=0$, it is enough to show that $e_{k_r}(x)a_r(x)=0$ for any $x \in \cF$ and $1 \leq r \leq \ell$. One can repeat the same argument of the previous part to prove this claim. However, we will provide a different proof using Fourier analysis on the torus $\mathbb{T}^d$.

For each $x \in \cF$, we introduce the following distribution on $\mathbb{T}^d$
\beq
\label{delta_f}
f(k):=\sum_{r=1}^{\ell}e_{k_r}(x)a_r(x)\delta(k-k_r),
\eeq
where $\delta(\cdot-k_r)$ is the Dirac delta distribution on the torus $\mathbb{T}^d$ at the quasimomentum $k_r$. In terms of Fourier series, we obtain
\beq
\hat{f}(g)=\sum_{r=1}^{\ell}e_{k_r}(x)a_r(x)e^{-ik_r\cdot g}.
\eeq
As in \mref{a.e8}, the assumption $u \in V^{\infty}_N(A)$ is equivalent to
\beq
\sup_{g \in \bZ^d}\left|\hat{f}(g)\right| \cdot\langle g \rangle^{-N}<\infty.
\eeq
Let $\phi$ be a smooth function on $\mathbb{T}^d$. Using Parseval's identity and H\"older's inequality, we obtain
\beq
\label{a.e10}
\left|\sum_{r=1}^{\ell}e_{k_r}(x)a_r(x)\phi(k_r)\right|=\left|\langle f, \phi \rangle \right|=\left|\sum_{g\in \bZ^d}\hat{f}(g)\hat{\phi}(-g)\right|
\lesssim\sum_{g \in \bZ^d}|\hat{\phi}(g)|\cdot \langle g \rangle^{N}.
\eeq

Let us now pick $\delta>0$ small enough such that $k_{s} \notin B(k_r, 2\delta)$ if $s \neq r$.
Then we choose a cut-off function $\phi_r$ such that $\supp{\phi_r} \subseteq B(k_r, 2\delta)$  and $\phi_r=1$ on $B(k_r, \delta)$.
For $1 \leq r \leq \ell$, we define functions in $C^{\infty}(\mathbb{T}^d)$ as follows:
\beq
\phi^{\varepsilon}_{r}(k):=\phi_{r}(\varepsilon^{-1} k), \quad (0<\varepsilon<1).
\eeq

To bound the Fourier coefficients of $\phi^{\varepsilon}_{r}$ in terms of $\varepsilon$, we use integration by parts. Indeed, for any nonnegative real number $s$,
\beq
\begin{split}
(2\pi)^d|\widehat{\phi^{\varepsilon}_{r}}(g)|&=
\left
|\int_{\mathbb{T}^d}\phi^{\varepsilon}_{r}(k) e^{-ik\cdot g}dk\right|
=\varepsilon^{d}\cdot\left
|\int_{B(k_r, 2\delta)}\phi_{r}(k) e^{-i\varepsilon k\cdot g}dk\right|
\\
&=\varepsilon^{d}\langle \varepsilon g\rangle^{-2[s]-2}\cdot\left
|\int_{B(k_r, 2\delta)}(1-\Delta)^{[s]+1} \phi_{r}(k)\cdot e^{-ik\cdot \varepsilon g}dk\right|
\\
&\leq  \varepsilon^{d}\langle \varepsilon g\rangle^{-2[s]-2}\cdot \sup_k \left|(1-\Delta)^{[s]+1} \phi_{r}(k)\right| \lesssim \varepsilon^{d-s}\langle g \rangle^{-s}.
\end{split}
\eeq
In the last inequality, we make use of the fact that
\beq
\langle \varepsilon g \rangle^{-2[s]-2} \leq\langle \varepsilon g \rangle^{-s} \leq \varepsilon^{-s} \langle g \rangle^{-s}
\eeq
whenever $\varepsilon \in (0,1)$.
In particular, by choosing any $s \in (\max(0,N+d), d)$, one has
\beq
\label{nsp}
|\widehat{\phi^{\varepsilon}_{r}}(g)|
\cdot \langle g \rangle^{N}\lesssim \varepsilon^{d-s}\cdot \langle g \rangle^{N-s}.
\eeq
Now we substitute $\phi:=\phi^{\varepsilon}_{r}$ in \mref{a.e10}, use \mref{nsp}, and then take $\varepsilon \rightarrow 0^+$ to derive
\beq
|e_{k_r}(x)a_r(x)| \lesssim \lim_{\varepsilon \rightarrow 0}\varepsilon^{d-s}\sum_{g \in \bZ^d}\langle g \rangle^{N-s}=0.
\eeq
\end{enumerate}
$\square$

\subsection{Proof of Corollary \ref{RReqmu}}
It suffices to prove that $\Image{\tilde{P}}=\tilde{\Gamma}_{\mu}(\mathcal{X},P)$, since according to \mref{codim}, \beq
\dim\Ker{\tilde{P^*}}=\codim{\Image{\tilde{P}}}=0
\eeq
and then the conclusion of Corollary \ref{RReqmu} holds true as we mentioned in Section \ref{GSRR}. Now, given any $f \in \tilde{\Gamma}_{\mu}(\mathcal{X},P)$, one has $\langle f, \tilde{L}^- \rangle=0$ and $f=Pu$ for some $u \in \Dom{P}$, since $\Image{P}=\Domp{P^*}$. According to the assumption, we can find a solution $w=u-v$ in $\Dom{P}$ of the equation $Pw=Pu-Pv=f$ such that $\langle w, L^- \rangle=0$. Let $w_0$ be the restriction of $w$ on $\mathcal{X} \setminus D^+$. Clearly, $w_0$ belongs to the space $\Gamma(\mathcal{X}, \mu, P)$.
Since $Pw$ is smooth on $\mathcal{X}$, $\tilde{P}w_0=Pw=f$ by the definition of the extension operator $\tilde{P}$. This shows that $f \in \Image{\tilde{P}}$, which finishes the proof.

\begin{remark}
In the special case when $D^-=\emptyset$, $L^-=\{0\}$, one can prove the Riemann-Roch equality \mref{RReq} directly, i.e., without referring to the extension operators $\tilde{P}$ and $\tilde{P^*}$. For reader's convenience, we present this short proof following \cite{GSadv}. We define the space
$\Gamma(\mu, P):=\{u \in \mathcal{D}'(\mathcal{X}) \mid u \in \Dom_{D^+}{P}, Pu \in L^+\}$. Then it is easy to check that the following sequences are exact:
$$0 \rightarrow \tilde{L}^+ \xrightarrow{i} \Gamma(\mu, P) \xrightarrow{r} L(\mu, P) \rightarrow 0$$
$$0 \rightarrow \Ker{P} \xrightarrow{i} \Gamma(\mu, P) \xrightarrow{P} L^+ \rightarrow 0,$$
where $i$ and $r$ are natural inclusion and restriction maps. Here the surjectivity of $P$ from $\Gamma(\mu, P)$ to $L^+$ is a consequence of the existence of a properly supported pseudodifferential parametrix of $P$ (modulo a properly supported smoothing operator) and $C^{\infty}_c(\mathcal{X}) \subseteq \Domp{P^*}=\Image{P}$. Note that $\Ker{P^*}=\{0\}$.   Hence, it follows that
\beq
\dim L(\mu,P)=\dim \Gamma(\mu, P)-\dim \tilde{L}^+=\dim \Ker{P}+\dim L^+-\dim \tilde{L}^+=\Index{P}+\deg_P(\mu).
\eeq
\end{remark}

\chapter[Remarks]{Final Remarks and Acknowledgments}\label{C:remarks}

\section{Remarks and conclusions}
\begin{itemize}
\item An interesting discussion of issues related to the difference operators Definition \ref{Floquetsol} of Floquet functions can be found in the recent papers \cite{KhaArx,MinhLin}. There, a study of polynomial-like elements in vector spaces equipped with group actions is provided. These elements are defined via iterated difference operators. In the case of a full rank lattice acting on an Euclidean space, they are exactly polynomials with periodic coefficients, and thus are closely related to solutions of periodic differential equations. The main theorem of that work confirms that if the space of polynomial-like elements of degree zero is of finite dimension then for any $n \in \mathbb{Z}_+$, the space consisting of all polynomial-like elements of degree at most $n$ is also finite dimensional. Non-abelian groups are considered, with the hope to transfer at least some of the Liouville theorems to the case of nilpotent co-compact coverings (compare with \cite{LinPinchover}), albeit this goal has not been achieved yet.
\item
The Remark \ref{rrl2d-example} shows that Assumption $(\mathcal{A}2)$ cannot be dropped in Theorem \ref{RRLineq}. Besides the example given in Remark \ref{rrl2d-example}, we provide a heuristic explanation here.
It is known (see, e.g. \cite{ZKKP}) that if $\{A_t\}$ is a family of Fredholm operators that is continuous with respect to the parameter $t$, the kernel dimension $\dim \Ker{A_t}$ is upper semicontinuous. The idea in combining Riemann-Roch and Liouville theorems by considering dimensions of spaces of solutions with polynomial growth as some Fredholm indices would imply that the upper-semicontinuity property should hold also for these dimensions (Corollary \ref{uppersemicont}). On the other hand, as shown in \cite{Shterenberg}, there exists a continuous family $\{M_t\}$ of periodic operators on $\mathbb{R}^2$ such that for each $N \geq 0$,
\beq
\dim V^{\infty}_N(M_t)=2\dim V^{\infty}_N(M_{2\sqrt{3}})
\eeq
if $2\sqrt{3}<t<2\sqrt{3}+\varepsilon$ for some $\varepsilon>0$ and thus, $\dim V_N(M_t)$ is not upper-semicontinuous at $t=2\sqrt{3}$ (see \cite{KP2}). In this example, the minimum $0$ of the lowest band $\lambda_1(k)$ of the operator $M_{2\sqrt{3}}$ is degenerate \cite{Shterenberg}. This explains why our approach requires the ``non-degeneracy'' type condition $(\mathcal{A}2)$ for avoiding some intractable cases like the previous example. Notice that in general, Assumption $\mathcal{A}$ and Liouville type results are not stable under small perturbations.
\item
Since both the Riemann-Roch type results of \cite{GSadv, GSinv} and the Liouville type results of \cite{KP2} hold for elliptic systems, the results of this work could be easily extended (at the expense of heavier notations) to linear elliptic matrix operators (e.g., of Maxwell type) acting between vector bundles.
\item
Our results do not cover the important case when $A=\bar{\partial}$ on an abelian covering of a compact complex manifold. We plan to study this case in a separate paper.
\item A natural question that arises is of having the divisor being also periodic, rather than compact (compare with \cite{shubinRR}), which would require measuring ``dimensions'' of some infinite dimensional spaces, which in turn would require using some special von Neumann algebras and the corresponding traces. This task does not seem to be too daunting, but the authors have not addressed it here.
\item
This work shows that adding a pole at infinity (Liouville theorems) to the Riemann-Roch type results is not automatic and not always works. Moreover, the choice of the $L_p$ space for measuring growth in Liouville theorems is very relevant for the results (in \cite{KP1,KP2} only $p=\infty$ was considered).
In particular, when $p=2$, the Liouville-Riemann-Roch equality we obtained in \mref{LRRL2} could be viewed as an analog of the $L^2$-Riemann-Roch theorem in \cite{shubinRR} for finite divisors.
\end{itemize}

\section{Acknowledgements}\label{S:ackn}
The work of the authors was partially supported by NSF DMS grants. The authors express their gratitude to NSF for the support. M. Kha also thanks AMS and the Simons foundation for their travel grant support. Thanks also go to Y.~Pinchover and M.~Shubin for the insightful discussions of the topic.\backmatter
\bibliographystyle{amsalpha}
\begin{bibdiv}
\begin{biblist}

\bib{Agmondecay}{book}{
    AUTHOR = {Agmon, Shmuel},
     TITLE = {On positivity and decay of solutions of second order elliptic
              equations on {R}iemannian manifolds},
 BOOKTITLE = {Methods of functional analysis and theory of elliptic
              equations ({N}aples, 1982)},
     PAGES = {19--52},
 PUBLISHER = {Liguori, Naples},
      YEAR = {1983},
   MRCLASS = {58G25 (35J10)},
  MRNUMBER = {819005},
MRREVIEWER = {G. Albinus},
}
\bib{Agmonperiodic}{article}{
    AUTHOR = {Agmon, Shmuel},
     TITLE = {On positive solutions of elliptic equations with periodic
              coefficients in {${\bf R}^n$}, spectral results and
              extensions to elliptic operators on {R}iemannian manifolds},
 BOOKTITLE = {Differential equations ({B}irmingham, {A}la., 1983)},
    SERIES = {North-Holland Math. Stud.},
    VOLUME = {92},
     PAGES = {7--17},
 PUBLISHER = {North-Holland, Amsterdam},
      YEAR = {1984},
   MRCLASS = {35J15 (35B05 35P99 58G25)},
  MRNUMBER = {799327},
MRREVIEWER = {W. Allegretto},
}
\bib{Agmon}{book}{
    AUTHOR = {Agmon, Shmuel},
    TITLE = {Lectures on elliptic boundary value problems},
    PUBLISHER = {AMS Chelsea Publishing, Providence, RI},
      YEAR = {2010},
     PAGES = {x+216},
      ISBN = {978-0-8218-4910-1},
   MRCLASS = {35-02 (35Jxx 35P05)},
  MRNUMBER = {2589244 (2011c:35004)},
}

\bib{AM}{book}{
AUTHOR = {Ashcroft, N. W.},
AUTHOR = {Mermin, N. D.},
TITLE = {Solid State Physics},
PUBLISHER = {Holt, Rinehart and Winston},
ADDRESS = {New York-London},
YEAR = {1976},
}

\bib{AtSi1}{article}{
    AUTHOR = {Atiyah, M. F.},
    AUTHOR = {Singer, I. M.},
     TITLE = {The index of elliptic operators. {I}},
   JOURNAL = {Ann. of Math. (2)},
  FJOURNAL = {Annals of Mathematics. Second Series},
    VOLUME = {87},
      YEAR = {1968},
     PAGES = {484--530},
      ISSN = {0003-486X},
   MRCLASS = {57.50},
  MRNUMBER = {0236950},
MRREVIEWER = {F. Hirzebruch},
       DOI = {10.2307/1970715},
       URL = {https://doi-org.lib-ezproxy.tamu.edu:9443/10.2307/1970715},
}
		
\bib{AtSi3}{article}{
    AUTHOR = {Atiyah, M. F.},
    AUTHOR = {Singer, I. M.},
     TITLE = {The index of elliptic operators. {III}},
   JOURNAL = {Ann. of Math. (2)},
  FJOURNAL = {Annals of Mathematics. Second Series},
    VOLUME = {87},
      YEAR = {1968},
     PAGES = {546--604},
      ISSN = {0003-486X},
   MRCLASS = {57.50},
  MRNUMBER = {0236952},
MRREVIEWER = {F. Hirzebruch},
       DOI = {10.2307/1970717},
       URL = {https://doi-org.lib-ezproxy.tamu.edu:9443/10.2307/1970717},
}
		
\bib{AtSi4}{article}{
    AUTHOR = {Atiyah, M. F.},
    AUTHOR = {Singer, I. M.},
     TITLE = {The index of elliptic operators. {IV}},
   JOURNAL = {Ann. of Math. (2)},
  FJOURNAL = {Annals of Mathematics. Second Series},
    VOLUME = {93},
      YEAR = {1971},
     PAGES = {119--138},
      ISSN = {0003-486X},
   MRCLASS = {57.50},
  MRNUMBER = {0279833},
MRREVIEWER = {F. Hirzebruch},
       DOI = {10.2307/1970756},
       URL = {https://doi-org.lib-ezproxy.tamu.edu:9443/10.2307/1970756},
}

\bib{AtSi5}{article}{
    AUTHOR = {Atiyah, M. F.},
    AUTHOR = {Singer, I. M.},
     TITLE = {The index of elliptic operators. {V}},
   JOURNAL = {Ann. of Math. (2)},
  FJOURNAL = {Annals of Mathematics. Second Series},
    VOLUME = {93},
      YEAR = {1971},
     PAGES = {139--149},
      ISSN = {0003-486X},
   MRCLASS = {57.50},
  MRNUMBER = {0279834},
MRREVIEWER = {F. Hirzebruch},
       DOI = {10.2307/1970757},
       URL = {https://doi-org.lib-ezproxy.tamu.edu:9443/10.2307/1970757},
}
		
\bib{AtSi_indexBAMS}{article}{
    AUTHOR = {Atiyah, M. F.},
    AUTHOR = {Singer, I. M.},
     TITLE = {The index of elliptic operators on compact manifolds},
   JOURNAL = {Bull. Amer. Math. Soc.},
  FJOURNAL = {Bulletin of the American Mathematical Society},
    VOLUME = {69},
      YEAR = {1963},
     PAGES = {422--433},
      ISSN = {0002-9904},
   MRCLASS = {57.50},
  MRNUMBER = {0157392},
MRREVIEWER = {R. Bott},
       DOI = {10.1090/S0002-9904-1963-10957-X},
       URL = {https://doi-org.lib-ezproxy.tamu.edu:9443/10.1090/S0002-9904-1963-10957-X},
}

\bib{Atiyah}{article}{
    AUTHOR = {Atiyah, M. F.},
     TITLE = {Elliptic operators, discrete groups and von {N}eumann
              algebras},
 BOOKTITLE = {Colloque ``{A}nalyse et {T}opologie'' en l'{H}onneur de
              {H}enri {C}artan ({O}rsay, 1974)},
     PAGES = {43--72. Ast\'erisque, No. 32-33},
 PUBLISHER = {Soc. Math. France, Paris},
      YEAR = {1976},
   MRCLASS = {58G10 (46L10 22E45)},
  MRNUMBER = {0420729},
MRREVIEWER = {R. D. Moyer},
}

\bib{AvLin}{article}{
    AUTHOR = {Avellaneda, Marco},
    AUTHOR = {Lin, Fang-Hua},
     TITLE = {Un th\'eor\`eme de {L}iouville pour des \'equations elliptiques \`a
              coefficients p\'eriodiques},
   JOURNAL = {C. R. Acad. Sci. Paris S\'er. I Math.},
  FJOURNAL = {Comptes Rendus de l'Acad\'emie des Sciences. S\'erie I.
              Math\'ematique},
    VOLUME = {309},
      YEAR = {1989},
    NUMBER = {5},
     PAGES = {245--250},
      ISSN = {0764-4442},
   MRCLASS = {35J15 (35B99)},
  MRNUMBER = {1010728},
MRREVIEWER = {Maurice Gaultier},
}
\bib{ComBerk}{article}{
AUTHOR = {Berkolaiko, G.},
AUTHOR = {Comech, A.},
TITLE = {Symmetry and {D}irac points in graphene spectrum},
JOURNAL = {J. Spectr. Theory},
FJOURNAL = {Journal of Spectral Theory},
VOLUME = {8},
YEAR = {2018},
NUMBER = {3},
PAGES = {1099--1147},
ISSN = {1664-039X},
MRCLASS = {35J10 (20C35 35B10 35B20 35P05)},
MRNUMBER = {3831157},
DOI = {10.4171/JST/223},
URL = {https://doi.org/10.4171/JST/223},
}

\bib{BerKment}{book}{
    AUTHOR = {Berkolaiko, Gregory}
    AUTHOR = {Kuchment, Peter},
    TITLE = {Introduction to quantum graphs},
    PUBLISHER = {AMS, Providence, RI},
      YEAR = {2013},
     PAGES = {x+216},
}
\bib{BS_2001}{article}{
    AUTHOR = {Birman, Michael},
    AUTHOR = {Suslina, Tatyana},
     TITLE = {Threshold effects near the lower edge of the spectrum for
              periodic differential operators of mathematical physics},
 BOOKTITLE = {Systems, approximation, singular integral operators, and
              related topics ({B}ordeaux, 2000)},
    SERIES = {Oper. Theory Adv. Appl.},
    VOLUME = {129},
     PAGES = {71--107},
 PUBLISHER = {Birkh\"auser, Basel},
      YEAR = {2001},
   MRCLASS = {35P05 (35B10 35J15 47F05 47N20 74G10 81Q10)},
  MRNUMBER = {1882692 (2003f:35220)},
MRREVIEWER = {Rainer Hempel},
}
\bib{BS_2003}{article}{
     AUTHOR = {Birman, Michael},
     AUTHOR = {Suslina, Tatyana},
     TITLE = {Periodic second-order differential operators. {T}hreshold
              properties and averaging},
   JOURNAL = {Algebra i Analiz},
  FJOURNAL = {Rossi\u\i skaya Akademiya Nauk. Algebra i Analiz},
    VOLUME = {15},
      YEAR = {2003},
    NUMBER = {5},
     PAGES = {1--108},
      ISSN = {0234-0852},
   MRCLASS = {47F05 (35B10 35J10 35P05 35Q40)},
  MRNUMBER = {2068790 (2005k:47097)},
MRREVIEWER = {Yulia E. Karpeshina},
}

\bib{ColdingMin}{article}{
    AUTHOR = {Colding, T.H.},
    AUTHOR = {Minicozzi II, W.P.},
    TITLE = {Harmonic functions on manifolds},
    JOURNAL = {Ann. of Math.},
  FJOURNAL = {Inventiones Mathematicae},
    VOLUME = {146},
      YEAR = {1997},
    NUMBER = {3},
     PAGES = {725--747},
}

\bib{ColdingMin_excursion}{article}{
AUTHOR = {Colding, Tobias H.},
AUTHOR = {Minicozzi, II, William P.},
TITLE = {An excursion into geometric analysis},
BOOKTITLE = {Surveys in differential geometry. {V}ol. {IX}},
SERIES = {Surv. Differ. Geom.},
VOLUME = {9},
PAGES = {83--146},
PUBLISHER = {Int. Press, Somerville, MA},
YEAR = {2004},
MRCLASS = {53C21 (35J60 49Q05 53-02 53A10 53C42 53C44)},
MRNUMBER = {2195407},
MRREVIEWER = {John Urbas},
DOI = {10.4310/SDG.2004.v9.n1.a4},
URL = {https://doi-org.lib-ezproxy.tamu.edu:9443/10.4310/SDG.2004.v9.n1.a4},
}

\bib{ColdMinICM}{article}{
AUTHOR = {Colding, Tobias H.},
AUTHOR = {Minicozzi, II, William P.},
TITLE = {Liouville properties},
JOURNAL = {ICCM Not.},
FJOURNAL = {ICCM Notices. Notices of the International Congress of Chinese
Mathematicians},
VOLUME = {7},
YEAR = {2019},
NUMBER = {1},
PAGES = {16--26},
ISSN = {2326-4810},
MRCLASS = {01A70},
MRNUMBER = {3960545},
DOI = {10.4310/ICCM.2019.v7.n1.a10},
URL = {https://doi-org.srv-proxy2.library.tamu.edu/10.4310/ICCM.2019.v7.n1.a10},
}

\bib{CFKS}{book}{
    AUTHOR = {Cycon, H. L.},
    AUTHOR = {Froese, R. G.},
    AUTHOR = {Simon, B.},
     TITLE = {Schr\"odinger operators with application to quantum mechanics
              and global geometry},
    SERIES = {Texts and Monographs in Physics},
   EDITION = {Study},
 PUBLISHER = {Springer-Verlag, Berlin},
      YEAR = {1987},
     PAGES = {x+319},
      ISBN = {3-540-16758-7},
   MRCLASS = {35-02 (35J10 47F05 58G40 81C10)},
  MRNUMBER = {883643},
MRREVIEWER = {M. Demuth},
}
\bib{Ea}{book}{
    AUTHOR = {Eastham, M. S. P.},
     TITLE = {The spectral theory of periodic differential equations},
    SERIES = {Texts in Mathematics (Edinburgh)},
 PUBLISHER = {Scottish Academic Press, Edinburgh; Hafner Press, New York},
      YEAR = {1973},
     PAGES = {viii+130},
      ISBN = {7-011-1936-5},
   MRCLASS = {34-02 (34C25 34L05 35P05 47A10 47F05)},
  MRNUMBER = {3075381},
}
\bib{Eich}{book}{
    AUTHOR = {Eichhorn, J{\"u}rgen},
     TITLE = {Global analysis on open manifolds},
 PUBLISHER = {Nova Science Publishers, Inc., New York},
      YEAR = {2007},
     PAGES = {x+644},
      ISBN = {978-1-60021-563-6; 1-60021-563-7},
   MRCLASS = {58-02 (35-02 46Exx 58J05 58J20 58J50)},
  MRNUMBER = {2343536},
MRREVIEWER = {Yuri A. Kordyukov},
}
\bib{FefWei}{article}{
    AUTHOR = {Fefferman, Charles L.},
    AUTHOR = {Weinstein, Michael I.},
     TITLE = {Honeycomb lattice potentials and {D}irac points},
   JOURNAL = {J. Amer. Math. Soc.},
  FJOURNAL = {Journal of the American Mathematical Society},
    VOLUME = {25},
      YEAR = {2012},
    NUMBER = {4},
     PAGES = {1169--1220},
      ISSN = {0894-0347},
   MRCLASS = {35J10 (35B10 35B20 35P05 37L60)},
  MRNUMBER = {2947949},
MRREVIEWER = {Ivan Veseli{\'c}},
}
\bib{FKTassym}{article} {
    AUTHOR = {Feldman, Joel},
    AUTHOR = {Kn{\"o}rrer, Horst},
    AUTHOR = {Trubowitz, Eugene},
     TITLE = {Asymmetric {F}ermi surfaces for magnetic {S}chr\"odinger
              operators},
   JOURNAL = {Comm. Partial Differential Equations},
  FJOURNAL = {Communications in Partial Differential Equations},
    VOLUME = {25},
      YEAR = {2000},
    NUMBER = {1-2},
     PAGES = {319--336},
      ISSN = {0360-5302},
     CODEN = {CPDIDZ},
   MRCLASS = {35Q40 (47F05 81Q10 82B10)},
  MRNUMBER = {1737550 (2001e:35145)},
MRREVIEWER = {Guido Gentile},
}
\bib{Gelfand}{article}{
    AUTHOR = {Gel{\cprime}fand, I. M.},
     TITLE = {Expansion in characteristic functions of an equation with
              periodic coefficients},
   JOURNAL = {Doklady Akad. Nauk SSSR (N.S.)},
    VOLUME = {73},
      YEAR = {1950},
     PAGES = {1117--1120},
   MRCLASS = {36.0X},
  MRNUMBER = {0039154},
MRREVIEWER = {I. E. Segal},
}

\bib{Gelfand_index}{article}{
    AUTHOR = {Gel\cprime fand, I. M.},
     TITLE = {On elliptic equations},
   JOURNAL = {Russian Math. Surveys},
  FJOURNAL = {Russian Mathematical Surveys},
    VOLUME = {15},
      YEAR = {1960},
    NUMBER = {3},
     PAGES = {113--123},
      ISSN = {0036-0279},
   MRCLASS = {35.44},
  MRNUMBER = {0123085},
MRREVIEWER = {A. N. Milgram},
       DOI = {10.1070/RM1960v015n03ABEH004094},
       URL = {https://doi-org.lib-ezproxy.tamu.edu:9443/10.1070/RM1960v015n03ABEH004094},
}

\bib{Glazman}{book}{
    AUTHOR = {Glazman, I. M.},
     TITLE = {Direct methods of qualitative spectral analysis of singular
              differential operators},
    SERIES = {Translated from the Russian by the IPST staff},
 PUBLISHER = {Israel Program for Scientific Translations, Jerusalem, 1965;
              Daniel Davey \& Co., Inc., New York},
      YEAR = {1966},
     PAGES = {ix+234},
   MRCLASS = {47.65 (47.30)},
  MRNUMBER = {0190800},
}
\bib{Gri1}{article}{
    AUTHOR = {Grigorchuk, R. I.},
     TITLE = {Degrees of growth of finitely generated groups and the theory
              of invariant means},
   JOURNAL = {Izv. Akad. Nauk SSSR Ser. Mat.},
  FJOURNAL = {Izvestiya Akademii Nauk SSSR. Seriya Matematicheskaya},
    VOLUME = {48},
      YEAR = {1984},
    NUMBER = {5},
     PAGES = {939--985},
      ISSN = {0373-2436},
   MRCLASS = {20F05 (43A07)},
  MRNUMBER = {764305},
MRREVIEWER = {P. Gerl},
}
\bib{Gro_poly}{article}{
    AUTHOR = {Gromov, Mikhael},
     TITLE = {Groups of polynomial growth and expanding maps},
   JOURNAL = {Inst. Hautes \'Etudes Sci. Publ. Math.},
  FJOURNAL = {Institut des Hautes \'Etudes Scientifiques. Publications
              Math\'ematiques},
    NUMBER = {53},
      YEAR = {1981},
     PAGES = {53--73},
      ISSN = {0073-8301},
     CODEN = {PMIHA6},
   MRCLASS = {53C20 (22E40 58F15)},
  MRNUMBER = {623534},
MRREVIEWER = {J. A. Wolf},
       URL = {http://www.numdam.org.lib-ezproxy.tamu.edu:2048/item?id=PMIHES_1981__53__53_0},
}

\bib{GScoRen}{article}{
    AUTHOR = {Gromov, Mikhael},
    AUTHOR = {Shubin, Mikhail A.},
     TITLE = {The {R}iemann-{R}och theorem for general elliptic operators},
   JOURNAL = {C. R. Acad. Sci. Paris S\'{e}r. I Math.},
  FJOURNAL = {Comptes Rendus de l'Acad\'{e}mie des Sciences. S\'{e}rie I.
              Math\'{e}matique},
    VOLUME = {314},
      YEAR = {1992},
    NUMBER = {5},
     PAGES = {363--367},
      ISSN = {0764-4442},
   MRCLASS = {58G10 (14C40)},
  MRNUMBER = {1153716},
MRREVIEWER = {John Roe},
}

\bib{GSadv}{article}{
     AUTHOR = {Gromov, Michael},
    AUTHOR = {Shubin, Mikhail A.},
     TITLE = {The {R}iemann-{R}och theorem for elliptic operators},
 BOOKTITLE = {I. {M}. {G}el\cprime fand {S}eminar},
    SERIES = {Adv. Soviet Math.},
    VOLUME = {16},
     PAGES = {211--241},
 PUBLISHER = {Amer. Math. Soc., Providence, RI},
      YEAR = {1993},
   MRCLASS = {58G12 (32L99 58G10)},
  MRNUMBER = {1237831},
MRREVIEWER = {J{\"u}rgen Eichhorn},
}
\bib{GSinv}{article}{
    AUTHOR = {Gromov, Michael},
    AUTHOR = {Shubin, Mikhail A.},
     TITLE = {The {R}iemann-{R}och theorem for elliptic operators and
              solvability of elliptic equations with additional conditions
              on compact subsets},
   JOURNAL = {Invent. Math.},
  FJOURNAL = {Inventiones Mathematicae},
    VOLUME = {117},
      YEAR = {1994},
    NUMBER = {1},
     PAGES = {165--180},
      ISSN = {0020-9910},
     CODEN = {INVMBH},
   MRCLASS = {58G05 (14C40 35J45 46E99 58G10)},
  MRNUMBER = {1269429},
MRREVIEWER = {Nicolae Anghel},
}
\bib{Gru}{article}{
    AUTHOR = {Grushin, V. V.},
     TITLE = {Application of the multiparameter theory of perturbations of
              {F}redholm operators to {B}loch functions},
   JOURNAL = {Mat. Zametki},
  FJOURNAL = {Rossi\u\i skaya Akademiya Nauk. Matematicheskie Zametki},
    VOLUME = {86},
      YEAR = {2009},
    NUMBER = {6},
     PAGES = {819--828},
      ISSN = {0025-567X},
   MRCLASS = {47A55 (35J10 35P05 47A10 47A53 47A56 47F05 81Q10)},
  MRNUMBER = {2643450},
}
%
\bib{Ka}{book}{
    AUTHOR = {Kato, Tosio},
     TITLE = {Perturbation theory for linear operators},
   EDITION = {Second Edition},
 PUBLISHER = {Springer-Verlag, Berlin-New York},
      NOTE = {Grundlehren der Mathematischen Wissenschaften, Band 132},
      YEAR = {1976},
     PAGES = {xxi+619},
   MRCLASS = {47-XX},
  MRNUMBER = {0407617 (53 \#11389)},
}

\bib{KhaJFA}{article}{
     AUTHOR = {Kha, Minh},
     TITLE = {Green's function asymptotics of periodic elliptic operators on
              abelian coverings of compact manifolds},
   JOURNAL = {J. Funct. Anal.},
  FJOURNAL = {Journal of Functional Analysis},
    VOLUME = {274},
      YEAR = {2018},
    NUMBER = {2},
     PAGES = {341--387},
      ISSN = {0022-1236},
   MRCLASS = {58J37 (35J08 35J10 35J15 35P05 58J05 58J50)},
  MRNUMBER = {3724142},
       DOI = {10.1016/j.jfa.2017.10.016},
       URL = {https://doi-org.lib-ezproxy.tamu.edu:9443/10.1016/j.jfa.2017.10.016},
}

\bib{KhaArx}{article}{
     AUTHOR = {Kha, Minh},
     TITLE = {A short note on additive functions on Riemannian co-compact coverings},
   JOURNAL = {arXiv:1511.00185},
      YEAR = {2015},
       URL = {https://arxiv.org/abs/1511.00185},
}

\bib{KKR}{article}{
   AUTHOR = {Kha, Minh},
    AUTHOR = {Kuchment, Peter},
    AUTHOR = {Raich, Andrew},
     TITLE = {Green's function asymptotics near the internal edges of
              spectra of periodic elliptic operators. {S}pectral gap
              interior},
   JOURNAL = {J. Spectr. Theory},
  FJOURNAL = {Journal of Spectral Theory},
    VOLUME = {7},
      YEAR = {2017},
    NUMBER = {4},
     PAGES = {1171--1233},
      ISSN = {1664-039X},
   MRCLASS = {35J08 (35J10 35J15 35P05 47A10 81R12)},
  MRNUMBER = {3737891},
MRREVIEWER = {Michael A. Perelmuter},
       DOI = {10.4171/JST/188},
       URL = {https://doi-org.lib-ezproxy.tamu.edu:9443/10.4171/JST/188},
}

\bib{MinhLin}{article}{
    AUTHOR = {Kha, Minh},
    AUTHOR = {Lin, Vladimir},
     TITLE = {Polynomial-like elements in vector spaces with group actions
              {S}elim {G}rigorievich {K}rein {C}entennial},
 BOOKTITLE = {Differential equations, mathematical physics, and
              applications: {S}elim {G}rigorievich {K}rein centennial},
    SERIES = {Contemp. Math.},
    VOLUME = {734},
     PAGES = {193--217},
 PUBLISHER = {Amer. Math. Soc., Providence, RI},
      YEAR = {2019},
   MRCLASS = {54C40 (22F05 35C11 35J15)},
  MRNUMBER = {3989839},
       DOI = {10.1090/conm/734/14772},
       URL = {https://doi-org.srv-proxy2.library.tamu.edu/10.1090/conm/734/14772},
}

\bib{KS}{article}{
    AUTHOR = {Kirsch, Werner},
    AUTHOR = {Simon, Barry},
     TITLE = {Comparison theorems for the gap of {S}chr\"odinger operators},
   JOURNAL = {J. Funct. Anal.},
  FJOURNAL = {Journal of Functional Analysis},
    VOLUME = {75},
      YEAR = {1987},
    NUMBER = {2},
     PAGES = {396--410},
      ISSN = {0022-1236},
     CODEN = {JFUAAW},
   MRCLASS = {35P15 (35J10 81C10 82A05)},
  MRNUMBER = {916759 (89b:35127)},
MRREVIEWER = {David Gurarie},
}
\bib{KloppRalston}{article}{
    AUTHOR = {Klopp, Fr{\'e}d{\'e}ric},
    AUTHOR = {Ralston, James},
     TITLE = {Endpoints of the spectrum of periodic operators are
              generically simple},
   JOURNAL = {Methods Appl. Anal.},
  FJOURNAL = {Methods and Applications of Analysis},
    VOLUME = {7},
      YEAR = {2000},
    NUMBER = {3},
     PAGES = {459--463},
      ISSN = {1073-2772},
   MRCLASS = {47F05 (35B10 35J10 35P05 47A10)},
  MRNUMBER = {1869296 (2002i:47055)},
MRREVIEWER = {George D. Raikov},
}
\bib{KOS}{article}{
    AUTHOR = {Kobayashi, Toshiyuki},
    AUTHOR = {Ono, Kaoru},
    AUTHOR = {Sunada, Toshikazu},
     TITLE = {Periodic {S}chr\"odinger operators on a manifold},
   JOURNAL = {Forum Math.},
  FJOURNAL = {Forum Mathematicum},
    VOLUME = {1},
      YEAR = {1989},
    NUMBER = {1},
     PAGES = {69--79},
      ISSN = {0933-7741},
     CODEN = {FOMAEF},
   MRCLASS = {58G25 (35J10 47F05)},
  MRNUMBER = {978976 (89k:58288)},
MRREVIEWER = {Harold Donnelly},
}
\bib{Kbook}{book}{
    AUTHOR = {Kuchment, Peter},
     TITLE = {Floquet theory for partial differential equations},
    SERIES = {Operator Theory: Advances and Applications},
    VOLUME = {60},
 PUBLISHER = {Birkh\"auser Verlag, Basel},
      YEAR = {1993},
     PAGES = {xiv+350},
      ISBN = {3-7643-2901-7},
   MRCLASS = {35-02 (35C15 35P10 47N20)},
  MRNUMBER = {1232660 (94h:35002)},
MRREVIEWER = {Yehuda Pinchover},
}
\bib{Ksurvey}{article}{
    AUTHOR     = {Kuchment, Peter},
     TITLE        = {An overview of periodic elliptic operators},
    JOURNAL    = {Bulletin (New Series) of the American Mathematical Society},
    VOLUME     = {53},
    YEAR         = {July 2016},
    NUMBER     = {3},
     PAGES      = {343-414},
     URL          = {http://dx.doi.org/10.1090/bull/1528},
}
\bib{KP1}{article}{
    AUTHOR = {Kuchment, Peter},
    AUTHOR = {Pinchover, Yehuda},
     TITLE = {Integral representations and {L}iouville theorems for
              solutions of periodic elliptic equations},
   JOURNAL = {J. Funct. Anal.},
  FJOURNAL = {Journal of Functional Analysis},
    VOLUME = {181},
      YEAR = {2001},
    NUMBER = {2},
     PAGES = {402--446},
      ISSN = {0022-1236},
     CODEN = {JFUAAW},
   MRCLASS = {35J05 (35B10 35C15)},
  MRNUMBER = {1821702 (2001m:35067)},
MRREVIEWER = {Fr{\'e}d{\'e}ric Klopp},
}
\bib{KP2}{article}{
    AUTHOR = {Kuchment, Peter},
    AUTHOR = {Pinchover, Yehuda},
     TITLE = {Liouville theorems and spectral edge behavior on abelian
              coverings of compact manifolds},
   JOURNAL = {Trans. Amer. Math. Soc.},
  FJOURNAL = {Transactions of the American Mathematical Society},
    VOLUME = {359},
      YEAR = {2007},
    NUMBER = {12},
     PAGES = {5777--5815},
      ISSN = {0002-9947},
     CODEN = {TAMTAM},
   MRCLASS = {58J05 (35B05 35J15 35P05 58J50)},
  MRNUMBER = {2336306 (2008h:58037)},
MRREVIEWER = {Alberto Parmeggiani},
}
\bib{KPcarbon}{article}{
    AUTHOR = {Kuchment, Peter},
    AUTHOR = {Post, Olaf},
     TITLE = {On the spectra of carbon nano-structures},
   JOURNAL = {Comm. Math. Phys.},
  FJOURNAL = {Communications in Mathematical Physics},
    VOLUME = {275},
      YEAR = {2007},
    NUMBER = {3},
     PAGES = {805--826},
      ISSN = {0010-3616},
     CODEN = {CMPHAY},
   MRCLASS = {81Q10 (34L40 47E05 81Q99 81U30)},
  MRNUMBER = {2336365},
}
\bib{KR}{article}{
    AUTHOR = {Kuchment, Peter},
    AUTHOR = { Raich, Andrew}
     TITLE = {Green's function asymptotics near the internal edges of
              spectra of periodic elliptic operators. {S}pectral edge case},
   JOURNAL = {Math. Nachr.},
  FJOURNAL = {Mathematische Nachrichten},
    VOLUME = {285},
      YEAR = {2012},
    NUMBER = {14-15},
     PAGES = {1880--1894},
      ISSN = {0025-584X},
   MRCLASS = {35J08 (31B35 35J15 35P05 47A10)},
  MRNUMBER = {2988010},
}
\bib{spin}{book}{
    AUTHOR = {Lawson, Jr., H. Blaine},
    AUTHOR = {Michelsohn, Marie-Louise},
     TITLE = {Spin geometry},
    SERIES = {Princeton Mathematical Series},
    VOLUME = {38},
 PUBLISHER = {Princeton University Press, Princeton, NJ},
      YEAR = {1989},
     PAGES = {xii+427},
      ISBN = {0-691-08542-0},
   MRCLASS = {53-02 (53A50 53C20 57R75 58G10)},
  MRNUMBER = {1031992},
MRREVIEWER = {N. J. Hitchin},
}

\bib{LiBook}{book}{
    AUTHOR = {Li, Peter},
     TITLE = {Geometric analysis},
    SERIES = {Cambridge Studies in Advanced Mathematics},
    VOLUME = {134},
 PUBLISHER = {Cambridge University Press, Cambridge},
      YEAR = {2012},
     PAGES = {x+406},
      ISBN = {978-1-107-02064-1},
   MRCLASS = {58-02 (35P15 53C21 58J32 58J35)},
  MRNUMBER = {2962229},
MRREVIEWER = {Fr\'{e}d\'{e}ric Robert},
       DOI = {10.1017/CBO9781139105798},
       URL = {https://doi-org.srv-proxy2.library.tamu.edu/10.1017/CBO9781139105798},
}

\bib{LiWang01}{article}{
    AUTHOR = {Li, Peter},
    AUTHOR = {Wang, Jiaping},
     TITLE = {Polynomial growth solutions of uniformly elliptic operators of
              non-divergence form},
   JOURNAL = {Proc. Amer. Math. Soc.},
  FJOURNAL = {Proceedings of the American Mathematical Society},
    VOLUME = {129},
      YEAR = {2001},
    NUMBER = {12},
     PAGES = {3691--3699},
      ISSN = {0002-9939},
   MRCLASS = {35J15 (35B10)},
  MRNUMBER = {1860504},
MRREVIEWER = {Maria A. Ragusa},
       DOI = {10.1090/S0002-9939-01-06167-6},
       URL = {https://doi-org.lib-ezproxy.tamu.edu:9443/10.1090/S0002-9939-01-06167-6},
}
		
\bib{LiWang00}{article}{
    AUTHOR = {Li, Peter},
    AUTHOR = {Wang, Jiaping},
     TITLE = {Counting dimensions of {$L$}-harmonic functions},
   JOURNAL = {Ann. of Math. (2)},
  FJOURNAL = {Annals of Mathematics. Second Series},
    VOLUME = {152},
      YEAR = {2000},
    NUMBER = {2},
     PAGES = {645--658},
      ISSN = {0003-486X},
   MRCLASS = {58J05 (31B05 35J15)},
  MRNUMBER = {1804533},
MRREVIEWER = {Man Chun Leung},
       DOI = {10.2307/2661394},
       URL = {https://doi-org.lib-ezproxy.tamu.edu:9443/10.2307/2661394},
}

\bib{LinPinchover}{article}{
    AUTHOR = {Lin, Vladimir Ya.},
    AUTHOR = {Pinchover, Yehuda},
     TITLE = {Manifolds with group actions and elliptic operators},
   JOURNAL = {Mem. Amer. Math. Soc.},
  FJOURNAL = {Memoirs of the American Mathematical Society},
    VOLUME = {112},
      YEAR = {1994},
    NUMBER = {540},
     PAGES = {vi+78},
      ISSN = {0065-9266},
     CODEN = {MAMCAU},
   MRCLASS = {58G03 (35C15 35J15)},
  MRNUMBER = {1230774 (95d:58119)},
MRREVIEWER = {Vadim A. Ka{\u\i}manovich},
}
\bib{Luck}{article}{
    AUTHOR = {L{\"u}ck, Wolfgang},
     TITLE = {Survey on geometric group theory},
   JOURNAL = {M\"unster J. Math.},
  FJOURNAL = {M\"unster Journal of Mathematics},
    VOLUME = {1},
      YEAR = {2008},
     PAGES = {73--108},
      ISSN = {1867-5778},
   MRCLASS = {20F65},
  MRNUMBER = {2502495 (2010a:20092)},
MRREVIEWER = {Fran{\c{c}}ois Dahmani},
}

\bib{mizohata}{book}{
   author={Mizohata, Sigeru},
   title={The theory of partial differential equations},
   note={Translated from the Japanese by Katsumi Miyahara},
   publisher={Cambridge University Press, New York},
   date={1973},
   pages={xii+490},
   review={\MR{0599580}},
}

\bib{morandi}{book}{
    AUTHOR = {Morandi, Patrick},
     TITLE = {Field and {G}alois theory},
    SERIES = {Graduate Texts in Mathematics},
    VOLUME = {167},
 PUBLISHER = {Springer-Verlag, New York},
      YEAR = {1996},
     PAGES = {xvi+281},
      ISBN = {0-387-94753-1},
   MRCLASS = {12-01 (12Fxx)},
  MRNUMBER = {1410264},
MRREVIEWER = {Martin Epkenhans},
       DOI = {10.1007/978-1-4612-4040-2},
       URL = {http://dx.doi.org.lib-ezproxy.tamu.edu:2048/10.1007/978-1-4612-4040-2},
}

\bib{MoserStruwe}{article}{
    AUTHOR = {Moser, J\"urgen},
    AUTHOR = {Struwe, Michael},
     TITLE = {On a {L}iouville-type theorem for linear and nonlinear
              elliptic differential equations on a torus},
   JOURNAL = {Bol. Soc. Brasil. Mat. (N.S.)},
  FJOURNAL = {Boletim da Sociedade Brasileira de Matem\'atica. Nova S\'erie},
    VOLUME = {23},
      YEAR = {1992},
    NUMBER = {1-2},
     PAGES = {1--20},
      ISSN = {0100-3569},
   MRCLASS = {35B40 (35B27 35J60 49Q99 58E15)},
  MRNUMBER = {1203171},
MRREVIEWER = {Jes\'us Hern\'andez},
       DOI = {10.1007/BF02584809},
       URL = {https://doi-org.lib-ezproxy.tamu.edu:9443/10.1007/BF02584809},
}

\bib{nadirashvili}{article}{
    AUTHOR = {Nadirashvili, N. S.},
     TITLE = {Harmonic functions with a given set of singularities},
   JOURNAL = {Funktsional. Anal. i Prilozhen.},
  FJOURNAL = {Akademiya Nauk SSSR. Funktsional\cprime ny\u\i \ Analiz i ego
              Prilozheniya},
    VOLUME = {22},
      YEAR = {1988},
    NUMBER = {1},
     PAGES = {75--76},
      ISSN = {0374-1990},
   MRCLASS = {31C12 (58E20)},
  MRNUMBER = {936707},
MRREVIEWER = {Jin Hao Zhang},
       DOI = {10.1007/BF01077730},
       URL = {https://doi-org.lib-ezproxy.tamu.edu:9443/10.1007/BF01077730},
}

\bib{NY}{book}{
    AUTHOR = {Nowak, Piotr W.},
    AUTHOR = {Yu, Guoliang},
     TITLE = {Large scale geometry},
    SERIES = {EMS Textbooks in Mathematics},
 PUBLISHER = {European Mathematical Society (EMS), Z\"urich},
      YEAR = {2012},
     PAGES = {xiv+189},
      ISBN = {978-3-03719-112-5},
   MRCLASS = {58-01 (19K56 20F69 46-01 58B34 58D10)},
  MRNUMBER = {2986138},
MRREVIEWER = {Mikhail Ostrovskii},
       DOI = {10.4171/112},
       URL = {http://dx.doi.org.lib-ezproxy.tamu.edu:2048/10.4171/112},
}
\bib{Pinsky}{article}{
    AUTHOR = {Pinsky, Ross G.},
     TITLE = {Second order elliptic operators with periodic coefficients:
              criticality theory, perturbations, and positive harmonic
              functions},
   JOURNAL = {J. Funct. Anal.},
  FJOURNAL = {Journal of Functional Analysis},
    VOLUME = {129},
      YEAR = {1995},
    NUMBER = {1},
     PAGES = {80--107},
      ISSN = {0022-1236},
     CODEN = {JFUAAW},
   MRCLASS = {35J15 (31B35 35B20)},
  MRNUMBER = {1322643},
MRREVIEWER = {Ya. A. Ro{\u\i}tberg},
}

\bib{randlescoste}{article}{
AUTHOR = {Randles, E.},
AUTHOR = {Saloff-Coste, L.},
TITLE = {Convolution powers of complex functions on {$\Bbb Z^d$}},
 JOURNAL = {Rev. Mat. Iberoam.},
 FJOURNAL = {Revista Matem\'{a}tica Iberoamericana},
 VOLUME = {33},
 YEAR = {2017},
 NUMBER = {3},
 PAGES = {1045--1121},
 ISSN = {0213-2230},
 MRCLASS = {42A85 (60F99 60G50)},
 MRNUMBER = {3713040},
MRREVIEWER = {Georg Berschneider},
 DOI = {10.4171/RMI/964},
 URL = {https://doi.org/10.4171/RMI/964},
}

\bib{RS4}{book}{
    AUTHOR = {Reed, Michael},
    AUTHOR = {Simon, Barry},
     TITLE = {Methods of modern mathematical physics. {IV}. {A}nalysis of
              operators},
 PUBLISHER = {Academic Press, New
              York-London},
      YEAR = {1978},
     PAGES = {xv+396},
      ISBN = {0-12-585004-2},
   MRCLASS = {47-02 (81.47)},
  MRNUMBER = {0493421 (58 \#12429c)},
MRREVIEWER = {P. R. Chernoff},
}
\bib{Riemann}{article}{
    AUTHOR = {Riemann, Bernhard},
     TITLE = {Theorie der Abel'schen Functionen},
   JOURNAL = {J. Reine Angew. Math.},
  FJOURNAL = {Journal für die Reine und Angewandte Mathematik},
    VOLUME = {54},
      YEAR = {1857},
     PAGES = {115--155},
     DOI   = {10.1515/crll.1857.54.115.},
}

\bib{Roch}{article}{
    AUTHOR = {Roch, Gustav},
     TITLE = {Ueber die Anzahl der willkurlichen Constanten in algebraischen Functionen},
   JOURNAL = {J. Reine Angew. Math.},
  FJOURNAL = {Journal für die Reine und Angewandte Mathematik},
    VOLUME = {64},
      YEAR = {1865},
     PAGES = {372--376},
     DOI   = {10.1515/crll.1865.64.372},
}

\bib{SaloffCoste}{article}{
    AUTHOR = {Saloff-Coste, Laurent},
     TITLE = {Analysis on {R}iemannian co-compact covers},
 BOOKTITLE = {Surveys in differential geometry. {V}ol. {IX}},
    SERIES = {Surv. Differ. Geom., IX},
     PAGES = {351--384},
 PUBLISHER = {Int. Press, Somerville, MA},
      YEAR = {2004},
   MRCLASS = {58J35 (58J65 60B15 60G50)},
  MRNUMBER = {2195413 (2007i:58033)},
}

\bib{Schnol}{article}{
    AUTHOR = {\v Snol$\prime$, \`E. \`E.},
     TITLE = {On the behavior of the eigenfunctions of {S}chr\"odinger's
              equation},
   JOURNAL = {Mat. Sb. (N.S.) 42 (84) (1957), 273-286; erratum},
  FJOURNAL = {Matematicheski\u\i \ Sbornik. Novaya Seriya},
    VOLUME = {46 (88)},
      YEAR = {1957},
     PAGES = {259},
   MRCLASS = {35.80},
  MRNUMBER = {0125315},
}	
\bib{Shterenberg}{article}{
    AUTHOR = {Shterenberg, R. G.},
     TITLE = {An example of a periodic magnetic {S}chr\"odinger operator
              with a degenerate lower edge of the spectrum},
   JOURNAL = {Algebra i Analiz},
  FJOURNAL = {Rossi\u\i skaya Akademiya Nauk. Algebra i Analiz},
    VOLUME = {16},
      YEAR = {2004},
    NUMBER = {2},
     PAGES = {177--185},
      ISSN = {0234-0852},
   MRCLASS = {35Q40 (35J10 35P05 35P15 47F05 81Q10 81V10)},
  MRNUMBER = {2068347 (2005d:35220)},
MRREVIEWER = {Vitaly A. Volpert},
}
%
\bib{Shubin_spectral}{article}{
    AUTHOR = {Shubin, M. A.},
     TITLE = {Spectral theory of elliptic operators on noncompact manifolds},
      NOTE = {M{\'e}thodes semi-classiques, Vol. 1 (Nantes, 1991)},
   JOURNAL = {Ast\'erisque},
  FJOURNAL = {Ast\'erisque},
    NUMBER = {207},
      YEAR = {1992},
     PAGES = {5, 35--108},
      ISSN = {0303-1179},
   MRCLASS = {58G25 (35P05 47F05)},
  MRNUMBER = {1205177 (94h:58175)},
MRREVIEWER = {M. S. Agranovich},
}
\bib{weakbloch}{article}{
author = {Shubin, M. A.},
journal = {Séminaire Équations aux dérivées partielles (Polytechnique)},
keywords = {Riemannian manifold; decay of Green functions; structural inverse operators; subexponential growth},
language = {eng},
pages = {1-20},
publisher = {Ecole Polytechnique, Centre de Mathématiques},
title = {Weak Bloch property and weight estimates for elliptic operators},
url = {http://eudml.org/doc/111997},
year = {1989-1990},
}
\bib{shubinRR}{article}{
    AUTHOR = {Shubin, M. A.},
     TITLE = {{$L^2$} {R}iemann-{R}och theorem for elliptic operators},
   JOURNAL = {Geom. Funct. Anal.},
  FJOURNAL = {Geometric and Functional Analysis},
    VOLUME = {5},
      YEAR = {1995},
    NUMBER = {2},
     PAGES = {482--527},
      ISSN = {1016-443X},
   MRCLASS = {58G12 (31C12 46L99)},
  MRNUMBER = {1334877},
MRREVIEWER = {John Roe},
       DOI = {10.1007/BF01895677},
       URL = {https://doi-org.srv-proxy2.library.tamu.edu/10.1007/BF01895677},
}
\bib{Sob}{article}{
    AUTHOR = {Sobolev, A.},
     TITLE = {Periodic operators: the method of gauge transform},
   JOURNAL = {Lectures at the I. Newton Institute, https://www.newton.ac.uk/event/pepw01/timetable},
      YEAR = {2015},
       URL = {https://www.newton.ac.uk/event/pepw01/timetable},
}

\bib{Sunada1}{article}{
    AUTHOR = {Sunada, Toshikazu},
     TITLE = {A periodic {S}chr\"odinger operator on an abelian cover},
   JOURNAL = {J. Fac. Sci. Univ. Tokyo Sect. IA Math.},
  FJOURNAL = {Journal of the Faculty of Science. University of Tokyo.
              Section IA. Mathematics},
    VOLUME = {37},
      YEAR = {1990},
    NUMBER = {3},
     PAGES = {575--583},
      ISSN = {0040-8980},
     CODEN = {JFTMAT},
   MRCLASS = {58G25 (35J10)},
  MRNUMBER = {1080871 (92e:58222)},
MRREVIEWER = {J{\"u}rgen Eichhorn},
}
\bib{WangGS}{article}{
    AUTHOR = {Wang, Lih-Chung},
     TITLE = {Stability in {G}romov-{S}hubin index theorem},
   JOURNAL = {Proc. Amer. Math. Soc.},
  FJOURNAL = {Proceedings of the American Mathematical Society},
    VOLUME = {125},
      YEAR = {1997},
    NUMBER = {5},
     PAGES = {1399--1405},
      ISSN = {0002-9939},
   MRCLASS = {58G10 (14C40 46N20)},
  MRNUMBER = {1363439},
MRREVIEWER = {Nicolae Anghel},
}
\bib{Wilcox}{article}{
    AUTHOR = {Wilcox, Calvin H.},
     TITLE = {Theory of {B}loch waves},
   JOURNAL = {J. Analyse Math.},
  FJOURNAL = {Journal d'Analyse Math\'ematique},
    VOLUME = {33},
      YEAR = {1978},
     PAGES = {146--167},
      ISSN = {0021-7670},
     CODEN = {JOAMAV},
   MRCLASS = {82A60 (35P10 35Q20 42C05 78A99 81H20)},
  MRNUMBER = {516045 (82b:82068)},
MRREVIEWER = {A. H. Klotz},
}

\bib{Yau87}{article}{
    AUTHOR = {Yau, Shing-Tung},
     TITLE = {Nonlinear analysis in geometry},
   JOURNAL = {Enseign. Math. (2)},
  FJOURNAL = {L'Enseignement Math\'ematique. Revue Internationale. 2e S\'erie},
    VOLUME = {33},
      YEAR = {1987},
    NUMBER = {1-2},
     PAGES = {109--158},
      ISSN = {0013-8584},
   MRCLASS = {58-02 (58G30)},
  MRNUMBER = {896385},
}

\bib{Yau93}{article}{
    AUTHOR = {Yau, Shing-Tung},
     TITLE = {Open problems in geometry},
 BOOKTITLE = {Differential geometry: partial differential equations on
              manifolds ({L}os {A}ngeles, {CA}, 1990)},
    SERIES = {Proc. Sympos. Pure Math.},
    VOLUME = {54},
     PAGES = {1--28},
 PUBLISHER = {Amer. Math. Soc., Providence, RI},
      YEAR = {1993},
   MRCLASS = {53-02},
  MRNUMBER = {1216573},
}

\bib{ZKKP}{article}{
    AUTHOR = {Za{\u\i}denberg, M. G.},
    AUTHOR = {Kre{\u\i}n, S. G.},
    AUTHOR = {Ku{\v{c}}ment, P. A.},
    AUTHOR = {Pankov, A. A.},
     TITLE = {Banach bundles and linear operators},
   JOURNAL = {Uspehi Mat. Nauk},
  FJOURNAL = {Akademiya Nauk SSSR i Moskovskoe Matematicheskoe Obshchestvo.
              Uspekhi Matematicheskikh Nauk},
    VOLUME = {30},
      YEAR = {1975},
    NUMBER = {5(185)},
     PAGES = {101--157},
      ISSN = {0042-1316},
   MRCLASS = {58B99 (47A55 32L99)},
  MRNUMBER = {0415661 (54 \#3741)},
MRREVIEWER = {Jurgen Leiterer},
}
\end{biblist}
\end{bibdiv}
\printindex
\end{document}